\newcommand{\E}{\mathbb{E}}
\newcommand{\R}{\mathbb{R}}
\newcommand{\N}{\mathbb{N}}
\newcommand{\D}{\mathcal{D}} 
\newcommand{\IA}{\mathcal{A}}
\newcommand{\IB}{\mathcal{B}}
\newcommand{\IL}{\Lambda}
\newcommand{\Lf}{{\rm L}_F}
\newcommand{\IX}{\mathcal{X}}
\newcommand{\IY}{\mathcal{Y}}
\newcommand{\IW}{\mathcal{W}}
\newcommand{\IP}{\mathcal{P}}
\newcommand{\s}{\rm st}
\newcommand{\e}{\rm e}
\newcommand{\XX}{\mathbb{X}}
\newcommand{\YY}{\mathbb{Y}}
\newcommand{\X}{\mathbf{X}}
\newcommand{\Y}{\mathbf{Y}}
\newcommand{\PP}{\mathbf{P}}
\newtheorem{theo}{Theorem}[section]
\newtheorem{rem}[theo]{Remark}
\newtheorem{propo}[theo]{Proposition}
\newtheorem{lemma}[theo]{Lemma}
\newtheorem{ass}{Assumption}
\begin{document}

\title{
Analysis of a modified Euler scheme for parabolic semilinear stochastic PDEs
}

\author{Charles-Edouard Br\'ehier}

\address{Univ Lyon, Université Claude Bernard Lyon 1, CNRS UMR 5208, Institut Camille Jordan, 43 blvd. du 11 novembre 1918, F-69622 Villeurbanne cedex, France}
\email{brehier@math.univ-lyon1.fr}

\keywords{Stochastic partial differential equations, Euler schemes, invariant distributions, infinite dimensional Kolmogorov equations, asymptotic preserving schemes, Markov Chain Monte Carlo methods}
\subjclass{60H35;65C30;60H15}

\begin{abstract}
We propose a modification of the standard linear implicit Euler integrator for the weak approximation of parabolic semilinear stochastic PDEs driven by additive space-time white noise. The new method can easily be combined with a finite difference method for the spatial discretization. The proposed method is shown to have improved qualitative properties compared with the standard method. First, for any time-step size, the spatial regularity of the solution is preserved, at all times. Second, the proposed method preserves the Gaussian invariant distribution of the infinite dimensional Ornstein--Uhlenbeck process obtained when the nonlinearity is absent, for any time-step size. The weak order of convergence of the proposed method is shown to be equal to $1/2$ in a general setting, like for the standard Euler scheme. A stronger weak approximation result is obtained when considering the approximation of a Gibbs invariant distribution, when the nonlinearity is a gradient: one obtains an approximation in total variation distance of order $1/2$, which does not hold for the standard method. This is the first result of this type in the literature. A key point in the analysis is the interpretation of the proposed modified Euler scheme as the accelerated exponential Euler scheme applied to a modified stochastic evolution equation. Finally, it is shown that the proposed method can be applied to design an asymptotic preserving scheme for a class of slow-fast multiscale systems, and to construct a Markov Chain Monte Carlo method which is well-defined in infinite dimension. We also revisit the analysis of the standard and the accelerated exponential Euler scheme, and we prove new results with approximation in the total variation distance, which serve to illustrate the behavior of the proposed modified Euler scheme.
\end{abstract}

\maketitle

\section{Introduction}\label{sec:intro}

In the last 25 years, the numerical analysis of stochastic partial differential equations (SPDEs) has been an active field of research. We refer for instance to the monograph~\cite{LPS} for a comprehensive introduction and references therein for historical overview of the field. In this article, we consider a class of parabolic semilinear equations which may be written as
\begin{equation}\label{eq:SPDEintro-field}
\left\lbrace
\begin{aligned}
&\partial_tX(t,\xi)=\partial_\xi\bigl(a(\xi)\partial_\xi X(t,\xi)\bigr)+f(X(t,\xi))+\dot{W}(t,\xi),\quad \forall t>0,\xi\in(0,1),\\
&X(t,0)=X(t,1)=0,\quad \forall t>0,\\
&X(0,\xi)=x_0(\xi),\quad \forall \xi\in(0,1),
\end{aligned}
\right.
\end{equation}
where the unknown $X:(t,\xi)\in [0,T]\times[0,1]\mapsto X(t,\xi)\in\R$ is a random field, homogeneous Dirichlet boundary conditions are imposed, $x_0$ is a given initial value, $a:[0,1]\to(0,\infty)$ and $f:\R\to\R$ are two sufficiently smooth real-valued functions, and $\dot{W}$ is space-time white noise.

It is convenient to interpret the SPDE~\eqref{eq:SPDEintro-field} as a stochastic evolution equation (SEE) in the sense of~\cite{DPZ}
\begin{equation}\label{eq:SPDEintro}
dX(t)=-\IL X(t)dt+F(X(t))dt+dW(t),\quad X(0)=x_0,
\end{equation}
where the unknown $X:t\in[0,T]\mapsto X(t)\in L^2(0,1)$ is a continuous stochastic process with values in the infinite dimensional Hilbert space $H=L^2(0,1)$. The nonlinearity $F:H\to H$ is assumed to be globally Lipschitz continuous. See Section~\ref{sec:setting} for details, in particular for the definition and properties of the linear operator $\IL$. The SEE~\eqref{eq:SPDEintro} is driven by a cylindrical Wiener process. In the sequel, we only deal with SEEs of type~\eqref{eq:SPDEintro}.

To approximate solutions of SPDEs~\eqref{eq:SPDEintro-field} and SEEs~\eqref{eq:SPDEintro}, it is necessary to apply spatial and temporal discretization procedures. On the one hand, the spatial approximation may be performed using either a spectral Galerkin method or a finite differences scheme. To employ the spectral Galerkin technique, one needs to know the eigenvalues and eigenfunctions of the linear operator $\IL$, which is not the case in general (in the case of~\eqref{eq:SPDEintro-field}, this is the case if the function $a$ is constant). The finite differences scheme can be applied in greater generality, therefore this is the method which is chosen in this work. On the other hand, the temporal approximation may be performed for instance using a standard semi-implicit Euler integrator, or an exponential integrator. The application of the exponential Euler scheme also requires to know the eigenvalues and eigenfunctions of the linear operator $\IL$ (or of its spatial approximation using a finite difference method), which is not the case in general. In this work, we study a variant of the standard Euler scheme. Note that the exponential Euler integrator is also treated as a matter of comparison for the proposed method.

Instead of considering a fully discrete scheme, which combines a finite differences scheme and a (modification of the) standard Euler scheme, in this work we analyze only the temporal discretization. Results may be generalized to the fully discrete framework, with the introduction of additional notation, and qualitative properties and quantitative error estimates being uniform with respect to the spatial discretization parameter. Focusing only on the temporal approximation allows us to emphasize how the proposed method overcomes some limitations of the standard scheme. In the sequel, the spatial discretization is thus omitted in the statements and in the proofs.

The standard Euler scheme applied to the SEE~\eqref{eq:SPDEintro} reads
\begin{equation}\label{eq:scheme-standard-intro}
X_{n+1}^{\tau,\s}=(I+\tau\IL)^{-1}\Bigl(X_n^{\tau,\s}+\tau F(X_n^{\tau,\s})+W(t_{n+1})-W(t_n)\Bigr),
\end{equation}
with initial value $X_0^{\tau,s}=x_0$, where the time-step size is denoted by $\tau$ and $t_n=n\tau$. In practice, if a finite difference method is applied for spatial discretization, note that it is sufficient to solve linear systems using a LU decomposition of the resulting linear operator. The standard linear implicit Euler scheme~\eqref{eq:scheme-standard-intro} has been studied extensively in the literature, let us recall that
\begin{itemize}
\item it has strong order of convergence $1/4$, see for instance~\cite{Printems},
\item it has weak order of convergence $1/2$, see for instance~\cite{Debussche:11},
\item when the Lipschitz constant of $F$ is sufficiently small, the invariant distribution of the SEE is approximated  with order of convergence $1/2$, see for instance~\cite{B:2014}.
\end{itemize}
However, the standard Euler scheme~\eqref{eq:scheme-standard-intro} suffers from a major issue: the spatial regularity of the solution is not preserved, more precisely for any fixed value of the time-step size $\tau$ and any integer $n\in\N$, the random variable $X_n^{\tau,\s}$ is more regular than $X(t_n)$, when the regularity is measured either in the sense of H\"older or Sobolev spaces. Furthermore, this difference in the qualitative behavior of the exact and numerical solution has an impact on quantitative error estimates. Indeed, as shown in~\cite{B:2020}, one needs to consider test functions which are at least of class $\mathcal{C}^2$ to obtain a weak order of convergence $1/2$. The distributions of the $H$-valued random variables $X_n^{\tau,\s}$ and $X(t_n)$ are singular, and therefore the distribution of $X(t_n)$ cannot be approximated in the total variation distance sense using the approximation $X_n^{\tau,\s}$ obtained using the standard Euler scheme.

\subsection*{Contributions}

\subsubsection*{The modified Euler scheme}

The objective of this work is to introduce a modified Euler scheme, which can be easily combined with a finite differences method for the spatial approximation, and which overcomes the limitations of the standard Euler scheme mentioned above. The proposed modified Euler scheme is defined as follows:
\begin{equation}\label{eq:scheme-intro}
X_{n+1}^{\tau}=\IA_\tau\bigl(X_n^{\tau}+\tau F(X_n^{\tau})\bigr)+\IB_{\tau,1}\sqrt{\tau}\Gamma_{n,1}+\IB_{\tau,2}\sqrt{\tau}\Gamma_{n,2},
\end{equation}
where $\bigl(\Gamma_{n,1}\bigr)_{n\ge 0}$ and $\bigl(\Gamma_{n,2}\bigr)_{n\ge 0}$ are two independent sequences of equally distributed independent {\it cylindrical Gaussian random variables}, meaning that in distribution $\Gamma_{n,1}=\Gamma_{n,2}=\tau^{-\frac12}\bigl(W(t_{n+1})-W(t_n)\bigr)$ are rescaled increments of the cylindrical Wiener process. The definition of the modified Euler scheme~\eqref{eq:scheme-intro} requires the introduction of three linear operators $\IA_\tau$, $\IB_{\tau,1}$ and $\IB_{\tau,2}$: they are required to satisfy the conditions (see~\eqref{eq:operators} in Section~\ref{sec:scheme-1st})
\begin{equation}\label{eq:operators-intro}
\IA_\tau=(I+\tau\IL)^{-1}~,\quad \IB_{\tau,1}=\frac{1}{\sqrt{2}}(I+\tau\IL)^{-1}~,\quad \IB_{\tau,2}\IB_{\tau,2}^\star=\frac12(I+\tau\IL)^{-1}
\end{equation}
where $L^\star$ denotes the adjoint of a linear operator $L$. It is worth mentioning that the modified Euler scheme~\eqref{eq:scheme-intro} can be seen as a modification of the standard Euler scheme~\eqref{eq:scheme-standard-intro}, with a different treatment of the stochastic term, but the same treatment of the linear operator $\IL$ and of the nonlinear operator $F$. Furthermore, the linear operator $\IB_{\tau,2}$ is not determined uniquely by the third condition~\eqref{eq:operators-intro}: in particular, it is not required to be a self-adjoint operator, instead in practice, when a finite differences method is applied, it is sufficient to compute a Cholesky decomposition of the resulting linear operator. Note that the iterations in the modified Euler scheme~\eqref{eq:scheme-intro} have higher computational cost than the iterations in the standard scheme~\eqref{eq:scheme-standard-intro}. In addition, two cylindrical Gaussian random variables are needed at each iteration of the modified Euler scheme~\eqref{eq:scheme-intro}, instead of one for the standard Euler scheme~\eqref{eq:scheme-standard-intro}. The conditions appearing in~\eqref{eq:operators-intro} are justified in Section~\ref{sec:scheme} and are designed to improve the qualitative and quantitative behavior of the standard scheme, as explained below. Note that the standard Euler scheme~\eqref{eq:scheme-standard-intro} can be written in a form similar to~\eqref{eq:scheme-intro} and would be recovered by setting $\IB_{\tau,2,\s}=\IB_{\tau,1}$ (the definitions of $\IA_\tau$ and $\IB_{\tau,1}$ being unchanged). In other words, the condition
\[
\IB_{\tau,2,\s}\IB_{\tau,2,\s}^\star=\frac12(I+\tau\IL)^{-2}=\frac12 \IA_\tau^2
\]
would be satisfied for the standard Euler scheme instead of the third condition
\[
\IB_{\tau,2}\IB_{\tau,2}^\star=\frac12(I+\tau\IL)^{-1}=\frac12 \IA_\tau
\]
from~\eqref{eq:operators-intro} for the modified Euler scheme. This observation that the powers of the operator $\IA_\tau$ differ in the conditions for $\IB_{\tau,2}$ and $\IB_{\tau,2,\s}$ is crucial to understand how the modified Euler scheme overcomes the limitations of the standard Euler scheme mentioned above.

\subsubsection*{Main qualitative and quantitative results}

We are now in position to state and discuss the main results of this manuscript. We refer to Section~\ref{sec:setting} for precise assumptions on the linear operator $\IL$ and the nonlinearity $F$, and to Section~\ref{sec:results} for rigorous statements of the results. Let us first discuss the qualitative behavior of the modified Euler scheme~\eqref{eq:scheme-intro}. The main result in this direction is the following: for any time-step size $\tau$ and any $n\in\N$, if $F=0$, the distributions of the $H$-valued random variables $X_n^\tau$ and $X(t_n)$ are equivalent, see Theorem~\ref{theo:equivalence}. This result is proved using the Feldman-Hajeck criterion. As a consequence, one then checks that the spatial regularity of the numerical solution $X_n^\tau$ and of the exact solution $X(t_n)$ coincide, see Theorem~\ref{theo:regularity}. The proof is straightforward, since the spatial regularity is determined by the behavior of the stochastic contribution, not by the initial value or the nonlinearity, in the considered framework.

In order to illustrate the qualitative superiority of the modified Euler scheme~\eqref{eq:scheme-intro} over the standard Euler scheme~\eqref{eq:scheme-standard-intro}, let us provide a numerical experiment. The SPDE~\eqref{eq:SPDEintro}, with $a=1$ and $f=0$, is approximated using a finite difference method with mesh size  $h=10^{-3}$. The time-step size is chosen as $\tau=2^{-8}$. The final time is set to $T=1$. Figure~\ref{fig1} (fixed time $T=1$) and Figure~\ref{fig2} (all times $t_n\in[0,1]$) illustrate the preservation of the regularity property (Theorem~\ref{theo:regularity}) for the modified Euler scheme (left figure), compared with the higher regularity obtained when using the standard Euler scheme (right figure). The realizations are sampled using the same Wiener path: this means that the Gaussian random variables satisfy the equality $\Gamma_{n,1}+\Gamma_{n,2}=\sqrt{2}\Gamma_n$ for all $n\ge 0$.

\begin{figure}[h]
\centering
\includegraphics*[width=0.48\textwidth,keepaspectratio]{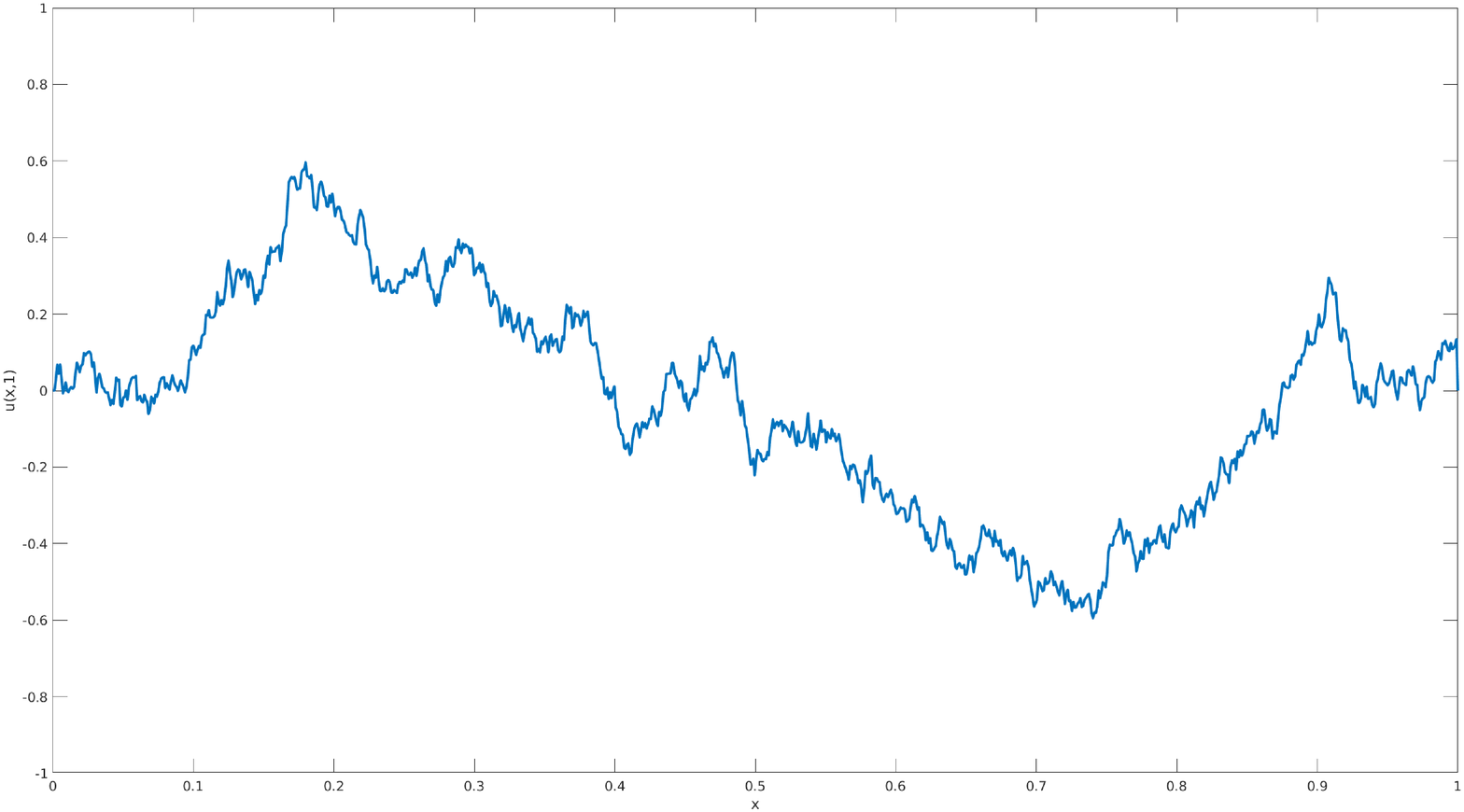}
\includegraphics*[width=0.48\textwidth,keepaspectratio]{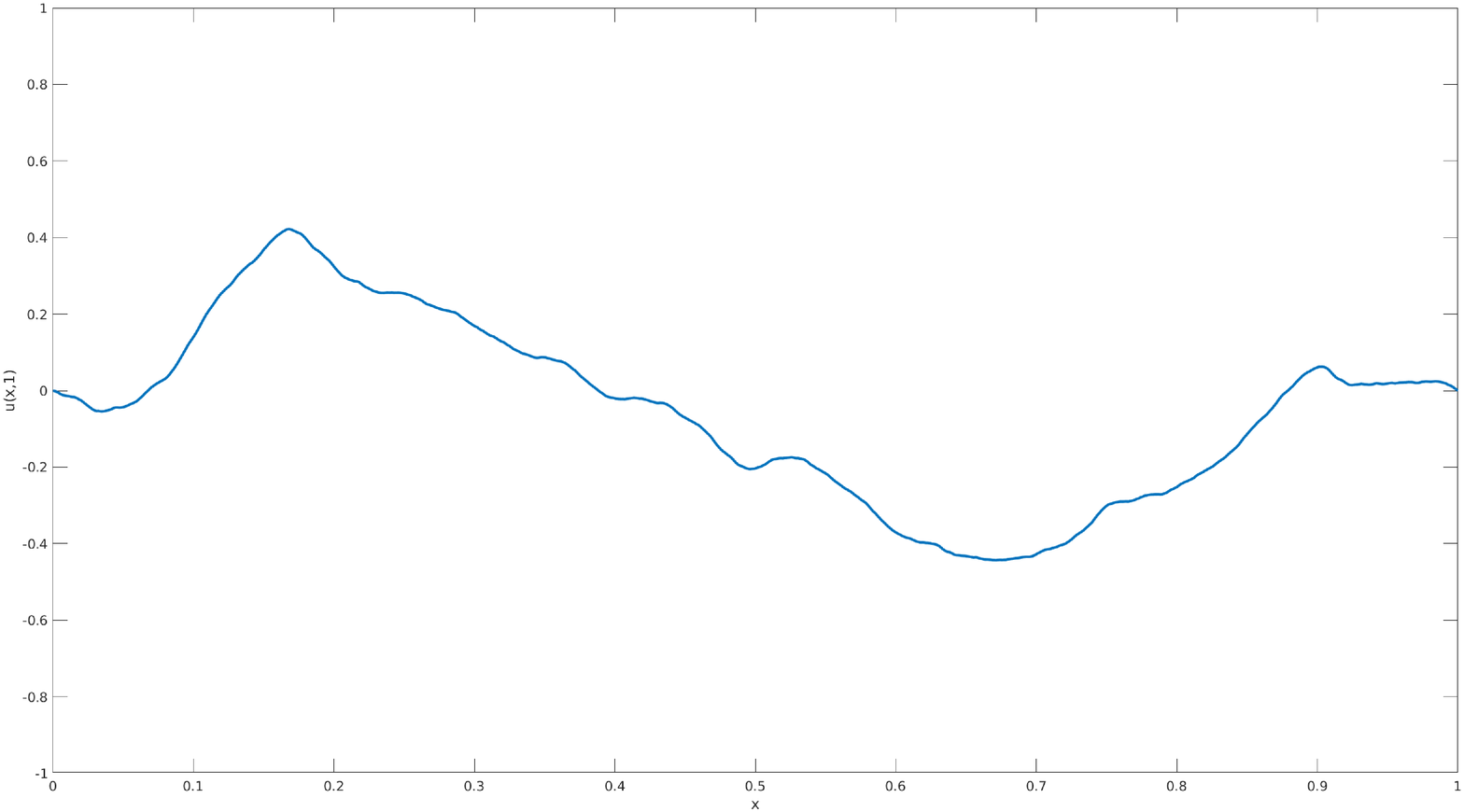}
\caption{Plots of the approximate solution at time $T=1$ obtained using the modified Euler scheme~\eqref{eq:scheme-intro} (left) and the standard Euler scheme~\eqref{eq:scheme-standard-intro} (right). Parameters: $h=10^{-3}$ and $\tau=2^{-8}$.}
\label{fig1}
\end{figure}

\begin{figure}[h]
\centering
\includegraphics*[width=0.48\textwidth,keepaspectratio]{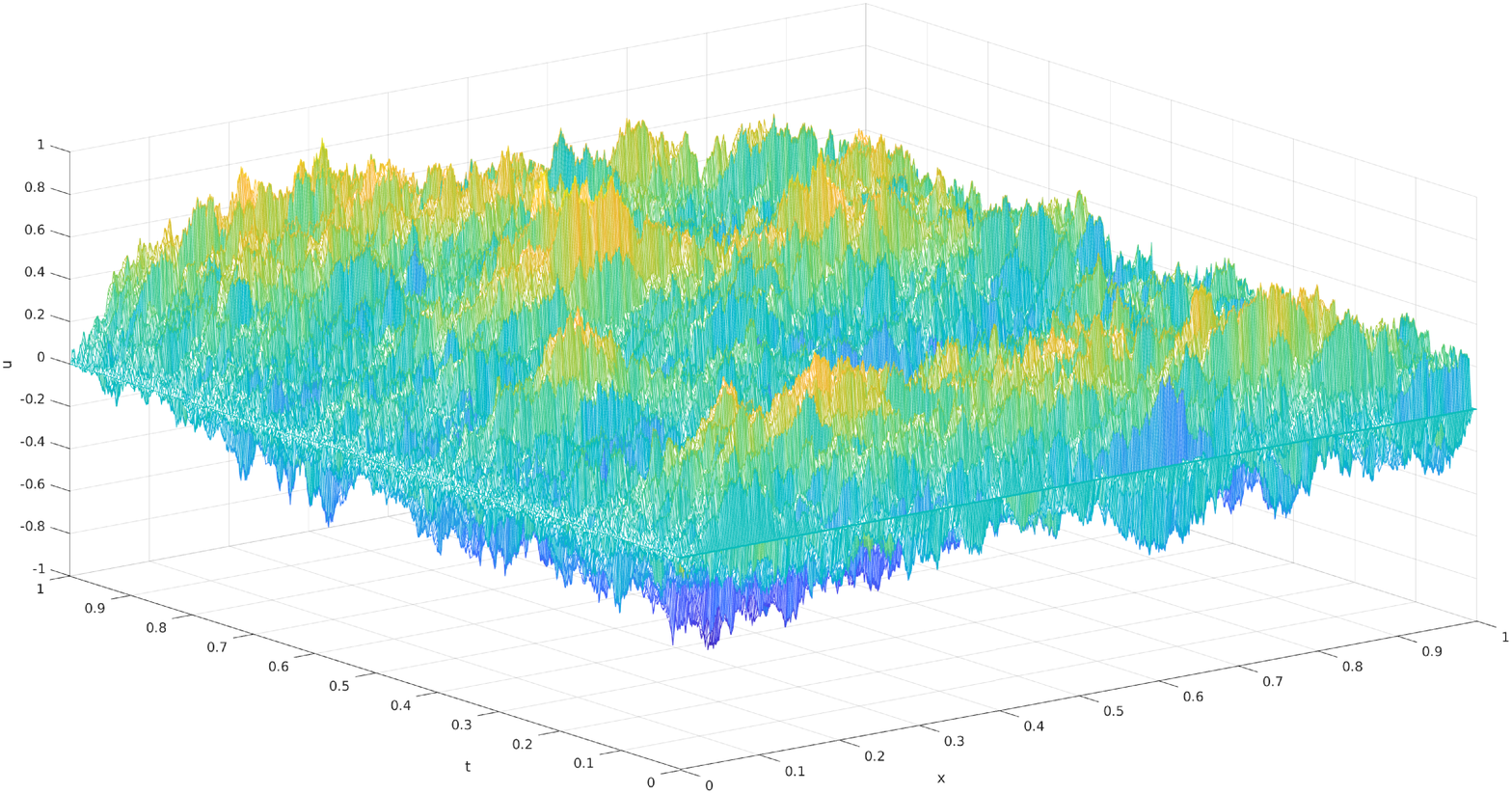}
\includegraphics*[width=0.48\textwidth,keepaspectratio]{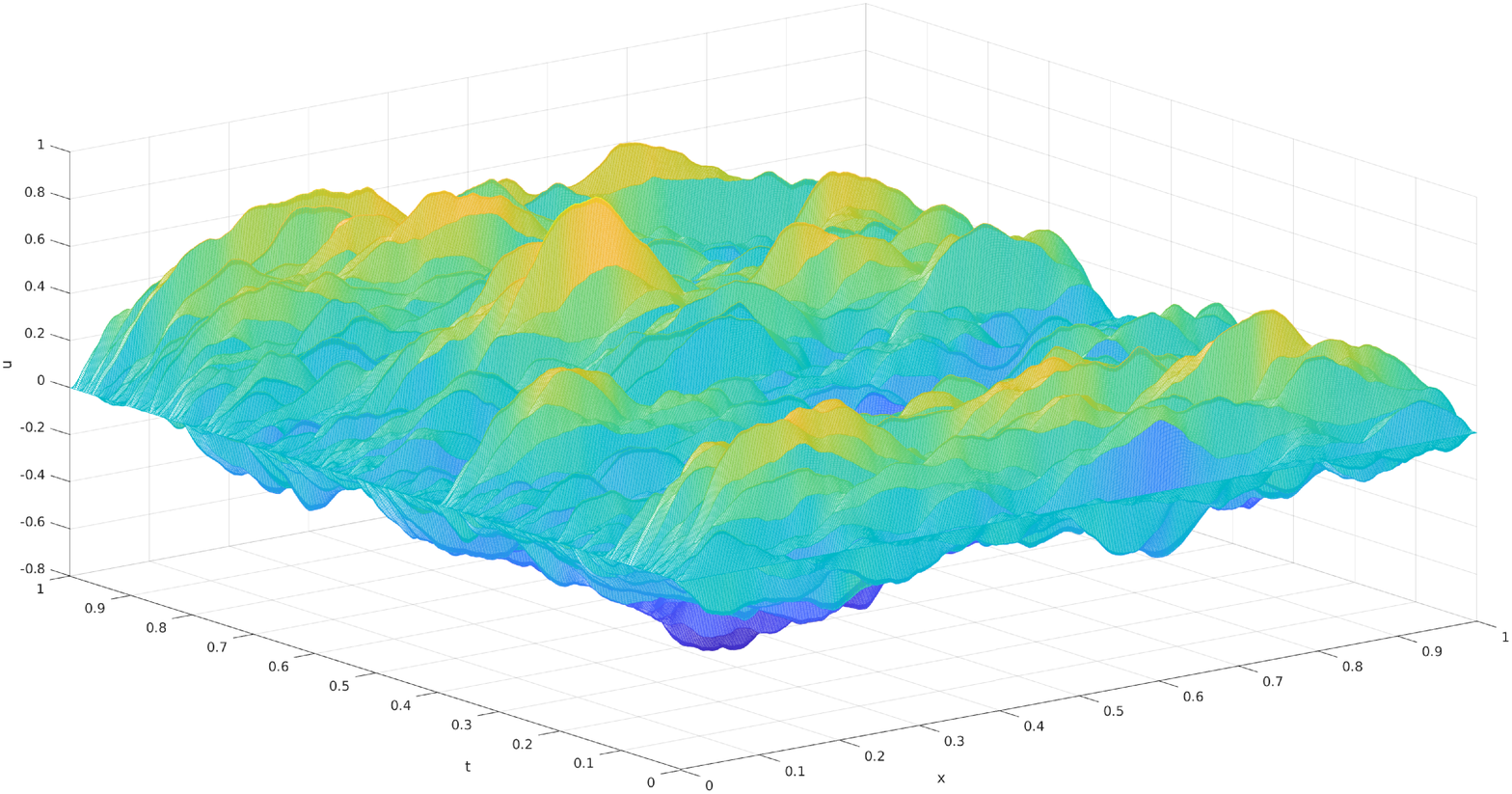}
\caption{Plots of the approximate solution at times $t_n=n\tau\le 1$, obtained using the modified Euler scheme~\eqref{eq:scheme-intro} (left) and the standard Euler scheme~\eqref{eq:scheme-standard-intro} (right). Parameters: $h=10^{-3}$ and $\tau=2^{-8}$.}
\label{fig2}
\end{figure}

For quantitative weak error estimates, two results are stated. The main and most original result of this manuscript is Theorem~\ref{theo:weakinv}. Assume that the mapping satisfies the {\it gradient assumption} $F=-DV$ where $V:H\to\R$ is a given real-valued function and $D$ denotes the Fr\'echet derivative. To ensure ergodicity of the SEE~\eqref{eq:SPDEintro}, assume that the Lipschitz constant of $F$ is sufficiently small (see Assumption~\ref{ass:ergo} below), then the SEE~\eqref{eq:SPDEintro} admits a unique invariant distribution, which is the Gibbs distribution
\[
d\mu_\star(x)=\mathcal{Z}^{-1}\exp\bigl(-2V(x)\bigr)d\nu(x)
\]
where $\nu$ is the centered Gaussian distribution with covariance operator $\frac12\IL^{-1}$, and $\mathcal{Z}$ is a normalization constant. Under the same assumptions, for any time-step size $\tau$, the modified Euler scheme~\eqref{eq:scheme-intro} admits a unique invariant distribution $\mu_\infty^\tau$, and one has the following approximation result (see Theorem~\ref{theo:weakinv})
\[
d_{\rm TV}(\mu_\infty^\tau,\mu_\star)\le C_\delta \tau^{\frac12-\delta}
\]
where $\delta\in(0,\frac12)$ is an arbitrarily small parameter, $C_\delta\in(0,\infty)$, and $d_{\rm TV}$ denotes the total variation distance. In other words, one has weak error estimates
\[
\big|\int\varphi d\mu_\infty^\tau-\int\varphi d\mu_\star\big|\le C_\delta\vvvert\varphi\vvvert\tau^{\frac12-\delta}
\]
where the test functions $\varphi:H\to\R$ only need to be assumed measurable and bounded and $\vvvert\varphi\vvvert=\underset{x\in H}\sup~|\varphi(x)|$.

In addition, when $F=0$, then $\mu_\infty^\tau=\nu$ for any time-step size $\tau$: the Gaussian invariant distribution is preserved in the Ornstein--Uhlenbeck case when using the modified Euler scheme.

Let us mention that the main ingredient for the proof of Theorem~\ref{theo:weakinv} is the interpretation of the modified Euler scheme as the accelerated exponential Euler scheme applied to the modified stochastic evolution equation
\begin{equation}\label{eq:modifiedSPDE-intro}
d\IX_\tau(t)=-\IL_\tau \IX_\tau(t)dt+Q_{\tau}F(\IX_\tau(t))dt+Q_{\tau}^{\frac12}dW(t),\quad \IX_\tau(0)=x_0,
\end{equation}
which depends on two auxiliary linear operators $\IL_\tau$ and $Q_\tau$: we refer to Section~\ref{sec:scheme-3rd} for their definitions. Precisely, for any time-step size $\tau$ and any integer $n\ge 0$, one has equality in distribution $X_n^\tau=\IX_{\tau,n}$, where the sequence $\bigl(\IX_{\tau,n}\bigr)_{n\ge 0}$ is defined by
\begin{equation}\label{eq:scheme-IX-intro}
\IX_{\tau,n+1}=e^{-\tau\IL_\tau}\IX_{\tau,n}+\IL_\tau^{-1}(I-e^{-\tau\IL_\tau})Q_\tau F(\IX_{\tau,n})+\int_{t_n}^{t_{n+1}}e^{-(t_{n+1}-s)\IL_\tau}Q_\tau^{\frac12}dW(s)
\end{equation}
with initial value $\IX_{\tau,0}=x_0$. It is crucial to observe that, when the gradient condition $F=-DV$ is satisfied, the Gibbs distribution $\mu_\star$ is the invariant distribution of the modified SEE~\eqref{eq:modifiedSPDE-intro}, for any value of the time-step size $\tau$. Other technical assumptions and results are needed to obtain Theorem~\ref{theo:weakinv}, we refer to Sections~\ref{sec:results_invar} and~\ref{sec:proofs-weakinv}, in particular to see why the interpretation as an accelerated exponential Euler scheme is helpful to reduce the regularity requirements on the test functions $\varphi$ to prove weak error estimates.

When the gradient condition $F=-DV$ is not satisfied, the SEE~\eqref{eq:SPDEintro} admits a unique invariant distribution $\mu_\infty$, which has no known expression in general. Theorem~\ref{theo:weak-ergo} provides weak error estimates
\[
\big|\int\varphi d\mu_\infty^\tau-\int\varphi d\mu_\infty\big|\le C_\delta\vvvert\varphi\vvvert_2\tau^{\frac12-\delta}
\]
for test functions $\varphi:H\to \R$ which are assumed to be of class $\mathcal{C}^2$ with bounded second order derivatives. This result is similar to the one proved in~\cite{B:2014} for the standard Euler scheme. In addition, for any time $T\in(0,\infty)$, in a general setting, one has weak error estimates
\[
\big|\E[\varphi(X_N^\tau)]-\E[\varphi(X(T))]\big|\le C_\delta(T,x_0)\vvvert\varphi\vvvert_2 \tau^{\frac12-\delta},
\]
with $T=N\tau$, for test functions $\varphi:H\to \R$ which are assumed to be of class $\mathcal{C}^2$ with bounded second order derivatives, see Theorem~\ref{theo:weak}. Like above, this result is similar to the one obtaind in~\cite{Debussche:11} for the standard Euler scheme. Even if Theorems~\ref{theo:weak} and~\ref{theo:weak-ergo} do not show improvements for the modified Euler scheme compared with the standard Euler method, it is worth providing detailed proofs to justify that the modified Euler scheme is not meant to be used only for the approximation of the Gibbs invariant distribution, which would be restrictive. Note that whether the regularity requirement on the test functions $\varphi$ may be weakened in Theorems~\ref{theo:weak} and~\ref{theo:weak-ergo} is an open question.

The main result, Theorem~\ref{theo:weakinv}, is compared with the results which are obtained for the standard Euler scheme~\eqref{eq:scheme-standard-intro} in Section~\ref{sec:results_standard}, and for the accelerated exponential Euler scheme in Section~\ref{sec:results_exponential}.

On the one hand, when $F=0$, the invariant distribution of the standard Euler scheme~\eqref{eq:scheme-standard-intro} is a Gaussian distribution $\nu^\tau$. It is straightforward to check that $\nu^\tau$ and $\nu$ are singular probability distribution for all $\tau>0$, this may be seen as a result of the non preservation of the spatial regularity by the standard Euler method. Using an interpretation of the standard Euler scheme as the accelerated exponential Euler scheme applied to a modified stochastic evolution equation, one proves an approximation result
\[
d_{\rm TV}(\mu_\infty^\tau,\mu_\star^\tau)\le C_\delta\tau^{\frac12-\delta},
\]
in the total variation distance, see Theorem~\ref{theo:weakinv-standard}, when the nonlinearity satisfies the gradient condition $F=-DV$. The main difference with Theorem~\ref{theo:weakinv} is the fact that $\mu_\star^\tau$ is not equal to the Gibbs distribution $\mu_\star$, instead it is a Gibbs distribution
\[
d\mu_\star^\tau(x)=(\mathcal{Z}^\tau)^{-1}e^{-2V(x)}d\nu^\tau(x)
\]
where the reference measure is the Gaussian distribution $\nu^\tau$. Theorem~\ref{theo:weakinv-standard} may not be useful in practice when using the standard Euler scheme since $\mu_\star^\tau$ and $\mu_\star$ are singular probability distributions. However, from a theoretical perspective, this result shows again that the limitations in the performances of the standard Euler scheme are due to the discretization of the stochastic part only. The proof of Theorem~\ref{theo:weakinv-standard}, given in Section~\ref{sec:standard}, employs the same techniques as the proof of Theorem~\ref{theo:weakinv}, and a few more delicate arguments.

On the other hand, let us consider the accelerated exponential Euler scheme, defined by
\begin{equation}\label{eq:exponentialscheme-intro}
X_{n+1}^{\tau,\e}=e^{-\tau\IL}X_n^{\tau,\e}+\IL^{-1}(I-e^{-\tau\IL})F(X_n^{\tau,\e})+\int_{t_n}^{t_{n+1}}e^{-(t_{n+1}-s)\IL}dW(s),\quad X_0^{\tau,\e}=x_0.
\end{equation}
Note that if $F=0$, then $X_n^{\tau,\e}=X(t_n)$ for any time-step size $\tau$ and any integer $n\ge 0$: the discretization in the Ornstein--Uhlenbeck case using the accelerated exponential Euler scheme is exact. As a result, it is not a surprising result that the spatial regularity is preserved when using that scheme. More interestingly, Theorem~\ref{theo:weak-expo} states that one can approximate the distribution of $X(t_n)$ in the total variation distance, at any time and without the requirement that the nonlinearity $F$ satisfies the gradient condition:
\[
\big|\E[\varphi(X_N^{\tau,\e})]-\E[\varphi(X(T))]\big|\le C_\delta(T,x_0)\vvvert\varphi\vvvert_0 \tau^{\frac12-\delta},
\]
with $T=N\tau$, where the test functions $\varphi:H\to\R$ only need to be assumed measurable and bounded. Theorem~\ref{theo:weak-ergo-expo} states a result for the approximation of the invariant distribution $\mu_\infty$ of the SEE~\eqref{eq:SPDEintro},
\[
d_{\rm TV}(\mu_\infty^{\tau,\e},\mu_\infty)\le C_\delta\tau^{\frac12-\delta},
\]
where $\mu_\infty^{\tau,\e}$ is the unique invariant distribution of the accelerated exponential Euler scheme~\eqref{eq:exponentialscheme-intro} with time-step size $\tau$. Compared with the accelerated exponential Euler scheme, approximation results in the total variation distance are obtained for the proposed modified Euler scheme~\eqref{eq:scheme-intro} in a more restrictive setting, namely for the approximation of the Gibbs invariant distribution when the gradient condition $F=-DV$ is satisfied by the nonlinearity. However, the range of application of the modified Euler scheme is larger, since it does not require to known the eigenvalues and the eigenfunctions of the linear operator $\IL$. Theorems~\ref{theo:weak-expo} and~\ref{theo:weak-ergo-expo} are new results on the accelerated exponential Euler scheme. The proof of Theorem~\ref{theo:weak-expo} exhibits the main arguments which are needed in the proof of the main result, Theorem~\ref{theo:weakinv}, and in particular why it is convenient and crucial to interpret the modified Euler scheme~\eqref{eq:scheme-intro} as the accelerated exponential Euler scheme applied to the modified stochastic evolution equation~\eqref{eq:modifiedSPDE-intro}, as explained above.

\subsubsection*{Two applications of the modified Euler scheme}

The last two main contributions of this manuscript are two applications of the modified Euler scheme, which again illustrate its superiority compared with the standard Euler method. First, in Section~\ref{sec:AP}, we study a class of slow-fast stochastic evolution systems
\begin{equation}\label{eq:SPDE-slowfast-intro}
\left\lbrace
\begin{aligned}
d\XX^\epsilon(t)&=-\IL\XX^\epsilon(t)dt+G\bigl(\XX^\epsilon(t),\YY^\epsilon(t)\bigr)\\
d\YY^\epsilon(t)&=-\frac{1}{\epsilon}\IL\YY^\epsilon(t)dt+\frac{\sigma(\XX^\epsilon(t))}{\sqrt{\epsilon}}dW(t),
\end{aligned}
\right.
\end{equation}
depending on a time scale separation parameter $\epsilon$, where $G:H\times H\to H$ and $\sigma:H\to\R$ are bounded and globally Lipschitz continuous mappings. When $\epsilon\to 0$, the averaging principle states the convergence of the slow component $\XX^\epsilon$ to the solution $\overline{\XX}$ of an evolution equation
\[
d\overline{\XX}(t)=-\IL\overline{\XX}(t)dt+\overline{G}(\overline{\XX}(t)),
\]
where the nonlinearity $\overline{G}:H\to H$ is defined as
\[
\overline{G}(x)=\int G\bigl(x,\sigma(x)y\bigr)d\nu(y)=\E[G(x,\frac{\sigma(x)}{\sqrt{2}}\IL^{-\frac12}\Gamma)],
\]
where $\nu$ is the invariant distribution of the Ornstein--Uhlenbeck process, and $\Gamma$ is a cylindrical Gaussian random variable. Using the modified Euler scheme to discretize the fast component, we design an asymptotic preserving scheme
\begin{equation}\label{eq:APscheme-intro}
\left\lbrace
\begin{aligned}
\XX_{n+1}^{\epsilon,\tau}&=\IA_\tau\bigl(\XX_n^{\epsilon,\tau}+\tau G(\XX_n^{\epsilon,\tau},\YY_{n+1}^{\epsilon,\tau})\bigr)\\
\YY_{n+1}^{\epsilon,\tau}&=\IA_{\frac{\tau}{\epsilon}}\YY_n^{\epsilon,\tau}+\sigma(\XX_n^{\epsilon,\tau})\sqrt{\frac{\tau}{\epsilon}}\IB_{\frac{\tau}{\epsilon},1}\Gamma_{n,1}+\sigma(\XX_n^{\epsilon,\tau})\sqrt{\frac{\tau}{\epsilon}}\IB_{\frac{\tau}{\epsilon},2}\Gamma_{n,2}.
\end{aligned}
\right.
\end{equation}
First, it is shown that there exists a limiting scheme, $\XX_n^{\epsilon,\tau}\underset{\epsilon\to 0}\to \XX_0^{0,\tau}$ for any time-step size $\tau$ and any integer $\tau$, where the convergence is understood as convergence in distribution. Second, it is shown that the limiting scheme, given by
\[
\XX_{n+1}^{0,\tau}=\IA_\tau\bigl(\XX_n^{0,\tau}+\tau G(\XX_n^{0,\tau},\sigma(\XX_b^{0,\tau})Q^{\frac12}\Gamma_n)\bigr),
\]
satisfies $\XX_N^{0,\tau}\underset{\tau\to 0}\to \overline{\XX}(T)$, with $T=N\tau$, where the convergence is understood as convergence in distribution. Since for any fixed $\epsilon$, one also has $\XX_N^{\epsilon,\tau}\underset{\tau\to 0}\to \XX^{\epsilon}(T)$, the proposed scheme~\eqref{eq:APscheme-intro} satisfies the asymptotic preserving property, and the time-step size $\tau$ can be chosen independently of the time scale separation parameter $\epsilon$. Note that if the fast component $\YY^\epsilon$ of the system~\eqref{eq:SPDE-slowfast-intro} is discretized using the standard Euler scheme, the resulting limiting scheme is not consistent with the limiting equation given by the averaging principle. After proving that the scheme~\eqref{eq:APscheme-intro} is asymptotic preserving, it is natural to investigate whether the scheme is uniformly accurate: we refer to~\cite{B} for the proof of uniform weak error estimates.

Second, in Section~\ref{sec:MCMC}, another application of the modified Euler scheme is presented: it can be employed as a proposal kernel to apply a Markov Chain Monte Carlo method, based on the Metropolis--Hastings rule for the computation of the acceptance-rejection probability, where the target probability distribution is the Gibbs distribution $\mu_\star$. Theorem~\ref{theo:MCMC} states that for any value of the time-step size $\tau$, the Markov chain defined by
\begin{equation}\label{eq:MCMC-intro}
\left\lbrace
\begin{aligned}
\hat{\X}_{n+1}^{\tau}&=\IA_\tau\X_n^\tau+\sqrt{\tau}\IB_{\tau,1}\Gamma_{n,1}+\sqrt{\tau}\IB_{\tau,2}\Gamma_{n,2}\\
\X_{n+1}^{\tau}&=\mathds{1}_{U_n\le a(\X_n^{\tau},\hat{\X}_{n+1}^{\tau})}\hat{\X}_{n+1}^{\tau}+\mathds{1}_{U_n>a(\X_n^{\tau},\hat{\X}_{n+1}^{\tau})}\X_{n}^{\tau},
\end{aligned}
\right.
\end{equation}
where $U_n$ is a uniformly distributed random variable (independent of the cylindrical Gaussian random variables $\Gamma_{n,1}$ and $\Gamma_{n,2}$), and the acceptance-rejection ratio is defined by
\begin{equation}\label{eq:acceptanceMCMC-intro}
a(x,\hat{x})=\min(1,e^{2(V(x)-V(\hat{x}))})
\end{equation}
is ergodic, with unique invariant distribution $\mu_\star$. In addition, the convergence is exponentially fast. This result would not hold if the standard Euler scheme was used to define the proposal kernel.

\subsection*{Comparison with the literature}

Let us now review the relevant literature in order to illustrate the novelties of this work. We refer to the monograph~\cite{LPS} for an introduction to computational methods for SPDEs. We also refer to the monograph~\cite{Kruse} for a presentation of approximation results for parabolic semilinear SEEs of type~\eqref{eq:SPDEintro}, with a focus on the standard Euler scheme for the temporal discretization and on finite element methods for the spectral discretization.

The standard Euler scheme used to discretize the SPDE~\eqref{eq:SPDEintro-field} (combined with a finite difference method) has been studied by many authors: see for instance~\cite{MR1480861,MR1699161}. When the problem is interpreted as a SEE~\eqref{eq:SPDEintro} (which is the point of view considered in this work), the strong and weak orders of convergence of the standard Euler scheme have been identified in~\cite{Printems} and~\cite{DebusschePrintems:09,Debussche:11} respectively. Theorem~\ref{theo:weak}, in particular, is proved using tools similar to those introduced in~\cite{Debussche:11} for the weak error analysis: analysis of regularity properties for solutions of infinite dimensional Kolmogorov equations (see Sections~\ref{sec:auxiliary-kolmogorov-original} and~\ref{sec:auxiliary-kolmogorov-modified}) and Malliavin calculus techniques. However, details are different since the proof of Theorem~\ref{theo:weak} is based on an original point of view introduced in this article, namely the interpretation of the modified Euler scheme as the accelerated exponential Euler scheme for the modified SEE~\eqref{eq:modifiedSPDE-intro}. For regularity results on infinite dimensional Kolmogorov equations, we refer to the monograph~\cite{Cerrai} and to~\cite{AnderssonHefterJentzenKurniawan}. For other weak approximation results using similar techniques, we refer for instance to~\cite{AnderssonLarsson} (finite element approximation), to~\cite{BrehierDebussche} (standard Euler scheme in the multiplicative noise case) or to~\cite{WangGan}. Other strategies are used to prove weak error results for instance in~\cite{AnderssonKruseLarsson}, in~\cite{Conus-Jentzen-Kurniawan,JentzenKurniawan} or in~\cite{BrehierHairerStuart}. The accelerated exponential Euler scheme has been introduced and studied in~\cite{JentzenKloeden,Jentzen}. Weak convergence results for this method have been proved in~\cite{Wang}.

All the works mentioned above deal with SPDEs~\eqref{eq:SPDEintro-field} and SEEs~\eqref{eq:SPDEintro} under the assumption that the nonlinearity $F$ is globally Lipschitz continuous. In the last decade, there has been a huge interest in the approximation of problems with locally Lipschitz continuous nonlinearities: this class encompasses for instance the Allen--Cahn equation. We refer to~\cite{BrehierGoudenege,CuiHong:19,Cai-Gan-Wang:21} for weak convergence results. In this work, we only focus on the globally Lipschitz continuous case, see Section~\ref{sec:extensions} for a discussion on possible generalization to the locally Lipschitz continuous case.

Concerning the approximation of invariant distributions of SEEs~\eqref{eq:SPDEintro}, under appropriate conditions ensuring ergodicity~\cite{DPZergo}, Theorem~\ref{theo:weak-ergo} is a variant of the results obtained in~\cite{B:2014} and~\cite{BrehierKopec} for the standard Euler scheme and finite element approximation. In~\cite{BrehierKopec}, the error analysis is performed using regularity properties of solutions of infinite dimensional Poisson equations. In~\cite{BV}, a postprocessed version of the standard Euler scheme has been introduced, with the objective to increase the order of convergence of the approximation of the invariant distribution. There are similarities between the method proposed in~\cite{BV} and the modified Euler scheme~\eqref{eq:scheme-intro}, see Remark~\ref{rem:postproc} for details: the two methods preserve the Gaussian distribution $\nu$ in the Ornstein--Uhlenbeck case ($F=0$), however~\cite{BV} does not provide error estimates (with higher weak order of convergence) for general nonlinearities $F$. When the gradient condition $F=-DV$ is satisfied, higher-order schemes for the approximation of the invariant Gibbs distribution $\mu_\star$ can be designed using a preconditioning technique, possibly combined with the postprocessing approach, see~\cite{BDV} (see also~\cite{HairerStuartVoss1,HairerStuartVoss2}). For parabolic semilinear SEEs~\eqref{eq:SPDEintro} with non-globally Lipschitz continuous nonlinearities $F$, we refer to the recent articles~\cite{B:2022,CuiHongSun:21,Chen-Gan-Wang:20}, and to~\cite{BG} for the application of the preconditioning technique in this case. We refer also to the monograph~\cite{MR3967113} and references therein for the analysis of this question for some stochastic Schr\"odinger equations. Results on the approximation of the invariant distribution for viscous stochastic conservation laws are provided in~\cite{BoyavalMartelReygner}. Let us emphasize that all the weak convergence results mentioned above on the approximation of the invariant distribution require test functions which are at least of class $\mathcal{C}^2$.

To the best of our knowledge, Theorems~\ref{theo:weakinv},~\ref{theo:weakinv-standard},~\ref{theo:weak-expo} and~\ref{theo:weak-ergo-expo} are the first results in the literature giving an approximation in the total variation distance for numerical approximation of stochastic evolution equations~\eqref{eq:SPDEintro}. The modified Euler scheme therefore overcomes the limitations identified in~\cite{B:2020} for the standard Euler scheme, when considering the approximation of the Gibbs invariant distribution. In the finite dimensional case, this type of result has been proved in the seminal works~\cite{BallyTalay1},~\cite{BallyTalay2}. The recent preprint~\cite{ChenChiHongSheng} studies a related question for the stochastic partial differential equation: the authors prove an approximation result for the density of the real-valued random variables $X(t,\xi)$ solving~\eqref{eq:SPDEintro-field} for given $t\in(0,\infty)$ and $\xi\in(0,1)$, when using the accelerated exponential Euler scheme. However, considering real-valued Gaussian random variables $X(t,\xi)$ and $H$-valued Gaussian random variables $X(t)$ is a very different matter.

Let us finally discuss how the two applications of the modified Euler scheme mentioned above are related to the literature. On the one hand, the analysis of the asymptotic preserving scheme~\eqref{eq:APscheme-intro} for the multiscale system~\eqref{eq:SPDE-slowfast-intro} is a generalization in an infinite dimensional framework of the recent work~\cite{BR}, where the notion of AP schemes for a class of stochastic differential equations has been introduced. The averaging principle (convergence of $\XX^\epsilon$ to $\overline{\XX}$ when $\epsilon\to 0$) has been studied by many authors, we refer to~\cite{MR2480788} for the convergence and to~\cite{B:2012} for weak error estimates. Instead of using the Heterogeneous Multiscale Method like in~\cite{B:2013}, the AP scheme~\eqref{eq:APscheme-intro} also provides an accurate approximation when $\epsilon$ is not assumed to be small. However, the construction of the AP scheme~\eqref{eq:APscheme-intro} is limited to a fast process which solves an Ornstein--Uhlenbeck dynamics. Compared with~\cite{BR}, in this work we only prove that the scheme is asymptotic preserving and do not investigate whether the scheme is uniformly accurate. We refer to~\cite{B} for the proof of uniform weak error estimates for the AP scheme~\eqref{eq:APscheme-intro} applied to the multiscale system~\eqref{eq:SPDE-slowfast-intro} (when $\sigma$ is constant). On the other hand, in order to sample the Gibbs distribution $\mu_\star$, Markov Chain Monte Carlo (MCMC) methods have been constructed using the preconditioned Crank--Nicolson (pCN) as the proposal kernel. In~\cite{CotterRobertsStuart}, it is proved that applying a proposal kernel using the theta-method with $\theta\neq 1/2$ applied to a preconditioned Ornstein--Uhlenbeck dynamics (which preserves the Gaussian invariant distribution $\nu$) leads to a method which is ill-defined in infinite dimension. It is proved in~\cite{HairerStuartVollmer} that the pCN proposal kernel provides a MCMC sampler which has a spectral gap which is independent of dimension, using the techniques introduced in~\cite{HairerMattinglyScheutzow}. The performance of the pCN MCMC sampler is also analyzed in~\cite{MattinglyPillaiStuart} using the point of view of diffusion limits. In this article, we design a new MCMC sampler~\eqref{eq:MCMC-intro}--\eqref{eq:acceptanceMCMC-intro} where the proposal kernel is the modified Euler scheme, and Theorem~\ref{theo:MCMC} is proved following the approach of~\cite{HairerStuartVollmer}. Our analysis only justifies that the proposed MCMC method is well-defined and applicable, however it does not provide information to choose the auxiliary time-step size $\tau$ in an optimal way. In addition, comparing the performances of the different MCMC samplers is out of scope of this work.

\subsection*{Organization of the manuscript}

This manuscript is organized as follows. Section~\ref{sec:setting} provides the necessary notation and assumptions. The modified Euler scheme is introduced in Section~\ref{sec:scheme}. Equivalent formulations of the modified Euler scheme are given in Sections~\ref{sec:scheme-1st},~\ref{sec:scheme-2nd} and~\ref{sec:scheme-3rd}. The main results are stated in Section~\ref{sec:results}. First, qualitative properties (preservation of the spatial regularity and of the Gaussian invariant distribution in the Ornstein--Uhlenbeck case) are studied in Section~\ref{sec:results_quali}. Second, Section~\ref{sec:results_invar} is devoted to the major result of this article, Theorem~\ref{theo:weakinv}, which gives approximation in the total variation distance with order $1/2$ of the Gibbs invariant distribution $\mu_\star$ when the nonlinearity $F$ is a gradient $-DV$. Third, Section~\ref{sec:results_weak} provides Theorem~\ref{theo:weak}, which states that the weak order of convergence of the method is equal to $1/2$ (for sufficiently smooth test functions) in a general setting. Finally, Sections~\ref{sec:results_standard} and~\ref{sec:results_exponential} are devoted to comparing the modified Euler scheme with the standard Euler method and the accelerated exponential Euler method respectively. Auxiliary results are stated and proved in Section~\ref{sec:auxiliary}. In particular, regularity results for solutions of infinite dimensional Kolmogorov equations are given in Sections~\ref{sec:auxiliary-kolmogorov-modified} and~\ref{sec:auxiliary-kolmogorov-original}. Section~~\ref{sec:proofs} is devoted to proving first Theorem~\ref{theo:weakinv}, second Theorem~\ref{theo:weak}. Sections~\ref{sec:standard} and~\ref{sec:expo} contain the proofs of the results given in Sections~\ref{sec:results_standard} (standard Euler scheme) and~\ref{sec:results_exponential} (accelerated exponential Euler scheme) respectively. Finally, Section~\ref{sec:extensions} provides two applications of the modified Euler scheme. First, an asymptotic preserving scheme is introduced for a class of systems with two time scales, in a regime governed by the averaging principle. Second, a Markov Chain Monte Carlo sampler which is well-defined and has a spectral gap in infinite dimension (Section~\ref{sec:MCMC}, Theorem~\ref{theo:MCMC}) is studied. Moroever, the generalization of the proposed modified Euler scheme for other types of parabolic semilinear SPDEs is discussed in Section~\ref{sec:generalizations}.

\section{Setting}\label{sec:setting}

This section is organized as follows. General notation is first introduced in Section~\ref{sec:notation}. Sections~\ref{sec:IL} and~\ref{sec:F} then provide the abstract conditions on the linear and nonlinear operators $\IL$ and $F$ respectively. Examples operators satisfying the abstract conditions are described in Section~\ref{sec:example}. Some properties of the stochastic evolution equation are provided in Section~\ref{sec:SPDE}. Section~\ref{sec:invariant} deals with the invariant distribution of the stochastic evolution equation. Finally Section~\ref{sec:Galerkin} presents an auxiliary approximation procedure which is used implicitly in the sequel.

\subsection{Notation}\label{sec:notation}

Let us first introduce general notation. The set of integers is denoted by $\N=\{1,2,\ldots\}$, and $\N_0=\{0\}\cup\N$. In the sequel, the values of positive real numbers $C\in(0,\infty)$ may change from line to line.

The time-step size of the numerical schemes is denoted by $\tau$. For all $n\in\N_0$, let $t_n=n\tau$. The  moment and error estimates below are stated for values $\tau\in(0,\tau_0)$, where $\tau_0$ is an arbitrary positive real number, and without loss of generality one may assume that $\tau_0<1$. The values of constants $C$ are allowed to depend on $\tau_0$, but they are independent of $\tau\in(0,\tau_0)$. If $t_1,t_2\ge 0$, set $t_1\wedge t_2=\min(t_1,t_2)$.

The state space is a separable Hilbert space $H$, equipped with inner product and norm denoted by $\langle\cdot,\cdot\rangle$ and $|\cdot|$ respectively. The set of bounded linear operators from $H$ to $H$ is denoted by $\mathcal{L}(H)$, which is a Banach space with the norm $\|\cdot\|_{\mathcal{L}(H)}$ defined by
\[
\|L\|_{\mathcal{L}(H)}=\underset{x\in H\setminus\{0\}}\sup~\frac{|Lx|}{|x|}.
\]
In addition, $\mathcal{L}_2(H)\subset\mathcal{L}(H)$ denotes the set of Hilbert--Schmidt operators from $H$ to $H$. The set $\mathcal{L}_2(H)$ is an Hilbert space, with the norm $\|\cdot\|_{\mathcal{L}_2(H)}$ defined by
\[
\|L\|_{\mathcal{L}_2(H)}^2=\sum_{j\in\N}|L{\bf e}_j|^2,
\]
where $\bigl({\bf e}_j\bigr)_{j\in\N}$ is an arbitrary complete orthonormal system of $H$.

The random variables and the stochastic processes considered in this article are defined on a probability space denoted by $(\Omega,\mathcal{F},\mathbb{P})$. This probability space is equipped with a filtration $\bigl(\mathcal{F}_t\bigr)_{t\ge 0}$ which is assumed to satisfy the usual conditions. The expectation operator is denoted by $\E[\cdot]$.

Let $\bigl(\beta_j\bigr)_{j\in\N}$ denote a sequence of independent standard real-valued Wiener processes, adapted to the filtration $\bigl(\mathcal{F}_t\bigr)_{t\ge 0}$. This means in particular that for each $j\in\N$, $\bigl(\beta_j(t)\bigr)_{t\ge 0}$ is a Gaussian process, such that one has $\beta_j(0)=0$, $\E[\beta_j(t)]=0$ for all $t\ge 0$ and $\E[|\beta_j(t_2)-\beta_j(t_1)|^2]=t_2-t_1$ for all $t_2\ge t_1\ge 0$.

The cylindrical Wiener process $\bigl(W(t)\bigr)_{t\ge 0}$ on $H$ is formally defined as
\begin{equation}\label{eq:Wiener}
W(t)=\sum_{j\in\N}\beta_j(t){\bf e}_j
\end{equation}
where $\bigl({\bf e}_j\bigr)_{j\in\N}$ is an arbitrary complete orthonormal system of $H$. Recall that if $L\in\mathcal{L}_2(H)$, $\bigl(LW(t)\bigr){t\ge 0}$ is a well-defined Gaussian process, with $\E[|LW(t)|^2]=\|L\|_{\mathcal{L}_2(H)}^2t$. However, $\bigl(W(t)\bigr)_{t\ge 0}$ does not take values in $H$: for all $t>0$, $\E[|W(t)|^2]=\infty$, and even $|W(t)|=\infty$ almost surely. We refer to~\cite[Chapter~4]{DPZ} for a description of the cylindrical Wiener processes and of the related theory of stochastic integration in Hilbert spaces. Let us recall a version of the It\^o isometry formula in this setting: if $\Phi:t\in[0,T]\mapsto \Phi(t)\in\mathcal{L}_2(H)$ is a continuous deterministic mapping, the random variable
\[
\int_0^T\Phi(t)dW(t)=\sum_{j\in\N}\int_0^T\Phi(t){\bf e}_jd\beta_j(t)
\]
is a centered $H$-valued Gaussian random variable, with
\[
\E[|\int_0^T\Phi(t)dW(t)|^2]=\int_0^T\|\Phi(t)\|_{\mathcal{L}_2(H)}^{2}dt=\sum_{j\in\N}\int_0^T |\Phi(t){\bf e}_j|^2 dt.
\]
One of the proofs below requires tools from Malliavin calculus~\cite{Nualart}. We do not give precise definitions, instead let us state the notation used in this article and quote the most useful results. If $\Theta$ is an $H$-valued random variable, $\mathcal{D}_s^h\Theta\in H$ is the Malliavin derivative of $\Theta$ at time $s$ in direction $h\in H$. For instance, this means that
\[
\mathcal{D}_s^h\bigl(\int_0^TL(t)dW(t)\bigr)=L(s)h
\]
if $t\in[0,T]\mapsto L(t)\in\mathcal{L}(H)$ is an adapted process. In addition, if $\Theta$ is $\mathcal{F}_t$-measurable, then $\mathcal{D}_s^h\Theta=0$ for all $s>t$. The Malliavin derivative satisfies a chain rule property: if $\Phi:H\to H$ is of class $\mathcal{C}^1$ with bounded derivative, then for all $s\ge 0$ and $h\in H$ one has
\[
\mathcal{D}_s^h\phi(\Theta)=D\phi(\Theta).\mathcal{D}_s^h\Theta.
\]
The same type of notation and results are satisfied for $\R$-valued random variables $\theta$. Finally, one has the following integration by parts formula, which is essential for the proof of weak error estimates, see~\cite{Debussche:11}: if $\theta$ is $\R$-valued random variable and if $(t,s)\mapsto\phi(t,s)\in\R$ is a given deterministic function, for all $j\in\N$, one has
\begin{equation}\label{eq:MalliavinIBP}
\E\bigl[\theta\int_0^t\phi(t,s)d\beta_j(s)\bigr]=\int_0^t\E[\mathcal{D}_s^{e_j}\theta \phi(t,s)d\beta_j(s)].
\end{equation}

In addition, introduce the following notation. If $\varphi:H\to\R$ is a mapping of class $\mathcal{C}^{0}$, $\mathcal{C}^1$ or $\mathcal{C}^2$ respectively, set
\begin{align*}
\vvvert\varphi\vvvert_0&=\underset{x\in H}\sup~|\varphi(x)|,\\
\vvvert\varphi\vvvert_1&=\underset{x,h\in H}\sup~\frac{|D\varphi(x).h|}{|h|},\\
\vvvert\varphi\vvvert_2&=\underset{x,h_1,h_2\in H}\sup~\frac{|D^2\varphi(x).(h_1,h_2)|}{|h_1||h_2|}.
\end{align*}
Note that $\vvvert \varphi\vvvert_0<\infty$ if and only if $\varphi$ is bounded. Similarly, $\vvvert\varphi\vvvert_1<\infty$ and $\vvvert\varphi\vvvert_2<\infty$ when $\varphi$ has a bounded first order derivative, respectively a bounded second order derivative.

The set of bounded and measurable mappings from $H$ to $\R$ is denoted by $\mathcal{B}_b(H)$. For any $\varphi\in\mathcal{B}_b(H)$, set
\[
\vvvert\varphi\vvvert=\underset{x\in H}\sup~|\varphi(x)|.
\]
The total variation distance between two Borel probability distributions $\mu_1$ and $\mu_2$ defined on $H$ is defined by
\[
d_{\rm TV}(\mu_1,\mu_2)=\underset{\varphi\in\mathcal{B}_b(H),\varphi\neq 0}\sup~\frac{\big|\int \varphi d\mu_1-\int \varphi d\mu_2\big|}{\vvvert \varphi\vvvert}. 
\]
Introduce also distances $d_0$ and $d_1$ defined by
\begin{align*}
d_0(\mu_1,\mu_2)&=\underset{\varphi\in\mathcal{C}^0(H),\varphi\neq 0}\sup~\frac{\big|\int \varphi d\mu_1-\int \varphi d\mu_2\big|}{\vvvert \varphi\vvvert_0}\\
d_2(\mu_1,\mu_2)&=\underset{\varphi\in\mathcal{C}^2(H),\varphi\neq 0}\sup~\frac{\big|\int \varphi d\mu_1-\int \varphi d\mu_2\big|}{\vvvert \varphi\vvvert_0+\vvvert\varphi\vvvert_1+\vvvert\varphi\vvvert_2}.
\end{align*}
Observe that the inequality $d_2(\mu_1,\mu_2)\le d_0(\mu_1,\mu_2)$ holds. Recall (see for instance~\cite[Chapter~3,Section~4]{EthierKurtz}) that for any bounded and measurable function $\varphi\in\mathcal{B}_b(H)$, there exists a sequence $\bigl(\varphi_k\bigr)_{k\in\N}$ of bounded and continuous functions which converges boundedly and pointwise to $\varphi$, i.\,e. which satisfies $\underset{k\in\N}\sup~\vvvert\varphi_k\vvvert<\infty$ and $\varphi_k(x)\underset{k\to\infty}\to \varphi(x)$ for all $x\in H$. As a consequence one has the equality
\begin{equation}\label{eq:distances}
d_{\rm TV}(\mu_1,\mu_2)=d_0(\mu_1,\mu_2)
\end{equation}
for all Borel probability distributions $\mu_1$ and $\mu_2$.

Finally, if $X$ is a $H$-valued random variable, the distribution of $X$ is denoted by $\rho_X$: this means that
\[
\E[\varphi(X)]=\int \varphi(x)d\rho_X(x)
\]
for all $\varphi\in\mathcal{B}_b(H)$. If $\mu$ is a Borel probability distribution on $H$, one has
\[
d_{\rm TV}(\rho_X,\mu)=\underset{\varphi\in\mathcal{B}_b(H),\vvvert\varphi\vvvert\le 1}\sup~\big|\E[\varphi(X)]-\int\varphi d\mu\big|=\underset{\varphi\in\mathcal{C}^0(H),\vvvert\varphi\vvvert_0\le 1}\sup~\big|\E[\varphi(X)]-\int\varphi d\mu\big|.
\]

\subsection{Assumptions on the linear operator}\label{sec:IL}

The stochastic evolution equation~\eqref{eq:SPDEintro} is driven by an unbounded self-adjoint linear operator $-\IL:D(\IL)\subset H\to H$, which is assumed to satisfy the following conditions.

\begin{ass}\label{ass:Lambda}
There exists a complete orthonormal system $\bigl(e_j\bigr)_{j\in\N}$ of $H$ and a non-decreasing sequence $\bigl(\lambda_j\bigr)_{j\in\N}$ of positive real numbers, such that
\[
\IL e_j=\lambda_je_j
\]
for all $j\in\N$. In addition, it is assumed that there exists $c_{\IL}\in(0,\infty)$ that $\lambda_j\sim c_{\IL}j^2$ when $j\to\infty$.
\end{ass}

The self-adjoint unbounded linear operator $-\IL$ generates a semigroup which is denoted by $\bigl(e^{-t\IL}\bigr)_{t\ge 0}$. Precisely, for all $t\ge 0$ and $x\in H$, set
\[
e^{-t\IL}x=\sum_{j\in\N}e^{-t\lambda_j}\langle x,e_j\rangle e_j.
\]
For all $t\ge 0$, $e^{-t\IL}$ is a bounded self-adjoint linear operator on $H$, with $\|e^{-t\IL}\|_{\mathcal{L}(H)}\le e^{-\lambda_1t}\le 1$.

In addition, for all $\alpha\in[-1,1]$, define the self-adjoint linear operators $\IL^{\alpha}$ such that
\[
\IL^\alpha e_j=\lambda_j^\alpha e_j
\]
for all $j\in\N$. Equivalently,
\[
\IL^\alpha x=\sum_{j\in\N}\lambda_j^\alpha \langle x,e_j\rangle e_j.
\]
If $\alpha\in[-1,0]$, $\IL^\alpha$ is a bounded linear operator from $H$ to $H$ and the expression above is well-defined for all $x\in H$. For all $\alpha\in[0,1]$, introduce the notation
\[
|x|_\alpha=\bigl(\sum_{j\in\N}\lambda_j^{2\alpha}\langle x,e_j\rangle^2\bigr)^{\frac12}\in[0,\infty],
\]
then $\IL^\alpha$ is an unbounded self-adjoint linear operator with domain $D(\IL^\alpha)=H^\alpha$, defined by
\[
H^\alpha=\{x\in H;~|x|_\alpha<\infty\}.
\]
The definition of $\IL^\alpha$ when $\alpha=1$ coincides with the definition of $\IL$. When $\alpha=0$, $\IL^0$ is the identity operator denoted by $I$, and $|\cdot|_0=|\cdot|$ is the usual norm in the Hilbert space $H$. Note that for all $-1\le \alpha_1\le \alpha_2\le 1$, there exists $C_{\alpha_1,\alpha_2}\in(0,\infty)$ such that $|\IL^{\alpha_1}x|\le C_{\alpha_1,\alpha_2}|\IL^{\alpha_2}x|$ for all $x\in D(\IL^{\alpha_2})$.

One of the main ingredients used in the analysis below is the following smoothing property: for all $\alpha\in[0,1]$, one has
\begin{equation}\label{eq:smoothing}
\underset{t\in(0,\infty)}\sup~t^\alpha \|\IL^\alpha e^{-t\IL}\|_{\mathcal{L}(H)}<\infty.
\end{equation}
The smoothing property~\eqref{eq:smoothing} is often used in the following form in the sequel:
\[
|e^{-t\IL}x|\le C_\alpha t^{-\alpha}|\IL^{-\alpha}x|
\]
for all $t\in(0,\infty)$ and $x\in H$.

In addition, the following property is satisfied: for all $\alpha\in[0,1]$, one has
\begin{equation}\label{eq:regularity}
\underset{t\in(0,\infty)}\sup~\frac{\|\IL^{-\alpha}\bigl(e^{-t\IL}-I\bigr)\|_{\mathcal{L}(H)}}{t^\alpha}<\infty.
\end{equation}
The proofs of the two standard inequalities~\eqref{eq:smoothing} and~\eqref{eq:regularity} are straightforward and are omitted.

\subsection{Assumptions on the nonlinearity}\label{sec:F}

The nonlinear operator $F:H\to H$ is assumed to be globally Lipschitz continuous.
\begin{ass}\label{ass:F}
There exists $\Lf\in(0,\infty)$ such that for all $x_1,x_2\in H$
\[
|F(x_2)-F(x_1)|\le \Lf|x_2-x_1|.
\]
\end{ass}
Assumption~\ref{ass:F} is the minimal condition which ensures the well-posedness of the stochastic evolution equation (see Section~\ref{sec:SPDE}) and which permits the definition of numerical schemes below. In the sequel, Assumption~\ref{ass:F} is always assumed to be satisfied. However note that the analysis of the long-time behavior and the proof of weak error estimates requires to impose additional assumptions on the nonlinearity $F$ which are provided below.

Let us first describe the conditions related to the long time behavior of the stochastic evolution equation. If Assumption~\ref{ass:ergo} below is satisfied, the process is ergodic, see Section~\ref{sec:invariant}.
\begin{ass}\label{ass:ergo}
Let $\Lf$ be defined in Assumption~\ref{ass:F}.

Assume that $\Lf<\lambda_1$, where $\lambda_1=\underset{j\in\N}\min~\lambda_j$ (see Assumption~\ref{ass:Lambda}).
\end{ass}
The ergodicity of the process ensures the existence of a unique invariant distribution denoted by $\mu_\infty$. In general, no expression of $\mu_\infty$ is known, however an expression is available when Assumption~\ref{ass:gradient} below is satisfied.
\begin{ass}\label{ass:gradient}
There exists a function $V:H\to\R$ of class $\mathcal{C}^1$ such that for all $x\in H$, $F(x)=-DV(x)$, where $D$ denotes the Fr\'echet derivative operator. 
\end{ass}

Let us now state the two regularity assumptions on the nonlinearity $F$ which are required to prove the weak error estimates below.
\begin{ass}\label{ass:Fregul1}
For all $\delta\in(0,\frac14)$, there exists $C_{\delta}\in(0,\infty)$ such that for all $x_1,x_2\in H^{\frac{1-\delta}{4}}$, one has
\[
\big|\IL^{-\frac12+\frac{\delta}{4}}\bigl(F(x_2)-F(x_1)\bigr)\big|\le C_{\delta}\bigl(1+|x_1|_{\frac{1-\delta}{4}}+|x_2|_{\frac{1-\delta}{4}}\bigr)\big|\IL^{-\frac14+\delta}(x_2-x_1)\big|.
\]
\end{ass}

\begin{ass}\label{ass:Fregul2}
The nonlinearity $F$ is twice differentiable and there exist $\alpha_F\in[0,1)$ and $C_F\in(0,\infty)$, such that for all $x,h_1,h_2\in H$, one has
\[
|\IL^{-\alpha_F}D^2F(x).(h_1,h_2)|\le C_{F}|h_1| |h_2|.
\]
\end{ass}
Note that by the global Lipschitz continuity of $F$ (Assumption~\ref{ass:F}), when $F$ is differentiable one has
\[
|DF(x).h|\le \Lf|h|
\]
for all $x,h\in H$.

Whereas Assumption~\ref{ass:F} is always assumed to be satisfied in the sequel, the four other assumptions may not always satisfied simultaneously, and which of these assumptions are required to be satisfied is written explicitly for each of the results stated below.

\subsection{Description of an example}\label{sec:example}

The objective of this section is to show that the stochastic partial differential equation~\eqref{eq:SPDEintro-field} firs in the abstract framework described above.

Let $H=L^2(0,1)$, and let $a:[0,1]\to R$ be a smooth mapping, with $\min(a)>0$. The operator $\IL$ is defined by
\[
\IL x(\xi)=-\partial_\xi\bigl(a(\xi)x(\xi)\bigr)
\]
for all $x\in D(\IL)=H_0^1(0,1)\cap H^2(0,1)$ satisfy Assumption~\ref{ass:Lambda}. The choice of the domain is related to the homogeneous Dirichlet boundary conditions imposed in~\eqref{eq:SPDEintro-field}. When $a(\cdot)=1$, $-\IL$ is the standard Laplace operator with homogeneous Dirichlet boundary conditions. In that case, $\lambda_j=(j\pi)^2$ and $e_j=\sqrt{2}\sin(j\pi\cdot)$ for all integers $j\in\N$.

Let us now deal with the nonlinearity $F$ in this setting. Let $f:\R\to\R$ be a mapping of class $\mathcal{C}^2$ with bounded first and second order derivatives. Define the nonlinearity $F:H\to H$ such that
\[
F(x)=f\bigl(x(\cdot)\bigr)
\]
for all $x\in H=L^2(0,1)$. The operator $F$ is called a Nemytskii operator. Let us check that the regularity assumptions from Section~\ref{sec:F} are satisfied in this example.
$\bullet$ Since $f$ is globally Lipschitz continuous, it is straightforward to check that $F$ is also globally Lipschitz continuous.
$\bullet$ Assumption~\ref{ass:F} is satisfied when $\underset{z\in\R}\sup~|f'(z)|<\lambda_1$.
$\bullet$ Assumption~\ref{ass:gradient} is satisfied with
\[
V(x)=-\int_0^1 v(x(\xi))d\xi
\]
where the mapping $v:\R\to\R$ is an antiderivative of $f$, i.\,e. $v'=f$. Indeed, for all $x,h\in H$, one has
\[
DV(x).h=-v'(x(\cdot))h(\cdot)=f(x(\cdot))h(\cdot)=F(x)h
\]

To check that Assumptions~\ref{ass:Fregul1} and~\ref{ass:Fregul2} are satisfied, some auxiliary inequalities are needed. Owing to~\cite[Theorem~16.12]{Yagi}, for all $\alpha\in[0,\frac14)$, one has
\[
H^\alpha=W^{2\alpha,2}(0,1)
\]
and for all $\alpha\in(\frac14,1]$, one has
\[
H^\alpha=W_0^{2\alpha,2}(0,1)=\{x\in W^{2\alpha,2}(0,1);~x(0)=x(1)=0\},
\]
where $W^{2\alpha,2}(0,1)$ are the standard fractional Sobolev spaces. Moreover, the norms $|\cdot|_{\alpha}$ and $|\cdot|_{W^{2\alpha,2}(0,1)}$ are equivalent: for all $\alpha\in[0,1]\setminus\{\frac14\}$, there exists $C_\alpha\in(0,\infty)$ such that
\[
C_\alpha^{-1}|\cdot|_\alpha\le |\cdot|_{W^{2\alpha,2}(0,1)}\le C_\alpha|\cdot|_\alpha.
\]
Let us introduce the Banach spaces $L^\infty(0,1)$ and $L^1(0,1)$, and recall several useful inequalities. First, for all $\delta>0$, there exists $C_\delta\in(0,\infty)$ such that
\[
|x|_{L^\infty(0,1)}\le C_\delta |x|_{W^{\frac12+2\delta}}\le C_\delta |x|_{\frac14+\delta}.
\]
By a duality argument, one then obtains the inequality
\begin{equation}\label{eq:Sobolev}
|\IL^{-\frac14-\delta}x|_{L^2(0,1)}\le |x|_{L^1(0,1)}
\end{equation}
for all $x\in L^1(0,1)$. Moreover, for all $\delta>0$, there exists $C_{\delta}\in(0,\infty)$ such that
\begin{equation}\label{eq:produit}
|\IL^{-\frac14+\delta}(x_1x_2)|_{L^1(0,1)}\le C_{\delta}|\IL^{-\frac14+2\delta}x_1|_{L^2(0,1)}|\IL^{\frac14-\delta}x_2|_{L^2(0,1)}.
\end{equation}
Finally, for all $\delta\in(0,\frac14)$, and for any Lipschitz continuous function $g:\R^2\to\R$, there exists $C_{\delta}(g)\in(0,\infty)$ such that for all $x_1,x_2\in H^{\frac14-\frac{\delta}{4}}$, one has
\begin{equation}\label{eq:compo}
|g(x_1,x_2)|_{\frac14-\frac{\delta}{2}}\le C_{\delta}(g)\bigl(|x_1|_{\frac14-\frac{\delta}{4}}+|x_2|_{\frac14-\frac{\delta}{4}}\bigr).
\end{equation}
We refer to~\cite[Section~3.2]{BrehierDebussche} for the proofs of these inequalities, using properties of the standard fractional Sobolev spaces.

Using the inequalities above, we are now in position to check the remaining assumptions.

$\bullet$ Assumption~\ref{ass:Fregul1} is satisfied. Let $\delta\in(0,\frac14)$ and $x_1,x_2\in H^{\frac14-\delta}$. Observe that
\[
F(x_2)-F(x_1)=(x_2-x_1)g(x_1,x_2)
\]
where for all $z_1,z_2\in \R$ $g(z_1,z_2)=\int_0^1 f'\bigl((1-\theta)z_1+\theta z_2\bigr)d\theta$. Applying succesively~\eqref{eq:Sobolev},~\eqref{eq:produit} and~\eqref{eq:compo} (with a constant $C$ which may vary from line to line and depends on $\delta$), one obtains
\begin{align*}
|\IL^{-\frac12+\frac{\delta}{4}}\bigl(F(x_2)-F(x_1)\bigr)|_{L^2(0,1)}&\le C|\IL^{-\frac14+\frac{\delta}{2}}\bigl(F(x_2)-F(x_1)\bigr)|_{L^1(0,1)}\\
&\le C|\IL^{-\frac14+\delta}(x_2-x_1)|_{L^2(0,1)}|\IL^{\frac14-\frac{\delta}{2}}g(x_1,x_2)|_{L^2(0,1)}\\
&\le C|\IL^{-\frac14+\delta}(x_2-x_1)|_{L^2(0,1)}\bigl(|\IL^{\frac14-\frac{\delta}{4}}x_1|_{L^2(0,1)}+|\IL^{\frac14-\frac{\delta}{4}}x_2|_{L^2(0,1)}\bigr).
\end{align*}

$\bullet$ Assumption~\ref{ass:Fregul2} is satisfied: this is a straightforward consequence of the inequality~\eqref{eq:Sobolev} and of the identity
\[
D^2F(x).(h_1,h_2)=f''(x(\cdot))h_1(\cdot)h_2(\cdot).
\]
One can then choose $\alpha_F=\frac14+\delta$ for arbitrarily small $\delta\in(0,\frac34)$, then the inequality holds with $C_F=C_\varepsilon \underset{z\in\R}\sup~|f''(z)|$.

\subsection{Well-posedness and regularity properties}\label{sec:SPDE}

We are now in position to study the well-posedness property of the stochastic evolution equation
\begin{equation}\label{eq:SPDE}
dX(t)=-\IL X(t)dt+F(X(t))dt+dW(t),\quad X(0)=x_0,
\end{equation}
where the linear operator $\IL$ is introduced in Section~\ref{sec:IL}, the nonlinearity $F$ is introduced in Section~\ref{sec:F}, and the cylindrical Wiener process $\bigl(W(t)\bigr)_{t\ge 0}$ is introduced in Section~\ref{sec:notation}. The initial value $x_0$ is an arbitrary element of $H$. For simplicity, it is assumed that $x_0$ is deterministic, however the extension of the results below to random $\mathcal{F}_0$-measurable initial values with suitable moment bounds is straightforward and is omitted.

An $H$-valued continuous stochastic process $\bigl(X(t)\bigr)_{t\ge 0}$ is called a mild solution of the stochastic evolution equation~\eqref{eq:SPDE} if it satisfies for all $t\ge 0$
\begin{equation}\label{eq:mild}
X(t)=e^{-t\IL}x_0+\int_{0}^{t}e^{-(t-s)\IL}F(X(s))ds+\int_{0}^{t}e^{-(t-s)\IL}dW(s).
\end{equation}
It is convenient to introduce the stochastic convolution $\bigl(W^{\IL}(t)\bigr)_{t\ge 0}$ defined by
\begin{equation}\label{eq:StochasticConvolution}
W^{\IL}(t)=\int_{0}^{t}e^{-(t-s)\IL}dW(s)
\end{equation}
for all $t\ge 0$. Owing to Assumption~\ref{ass:Lambda}, the stochastic convolution defines a $H$-valued Gaussian process. In particular, applying It\^o's isometry formula yields for all $t\ge 0$
\[
\E[|W^{\IL}(t)|^2]=\int_0^t \|e^{-s\IL}\|_{\mathcal{L}_2(H)}^2ds=\sum_{j\in\N}\int_0^t e^{-2s\lambda_j}ds\le \sum_{j\in\N}\frac{1}{2\lambda_j}<\infty.
\]
The following well-posedness result is then obtained applying a standard fixed point argument. We refer for instance to~\cite[Section~7.1]{DPZ}.
\begin{propo}\label{propo:SPDE}
Let Assumptions~\ref{ass:Lambda} and~\ref{ass:F} be satisfied. Then the stochastic evolution equation~\eqref{eq:SPDE} admits a unique mild solution $\bigl(X(t)\bigr)_{t\ge 0}$, for any arbitrary initial value $x_0\in H$.
\end{propo}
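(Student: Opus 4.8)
The plan is to establish existence and uniqueness of the mild solution via a fixed point argument on the integral formulation \eqref{eq:mild}. First I would fix a final time $T>0$ and introduce the Banach space $\mathcal{E}_T$ of $H$-valued continuous adapted processes $\bigl(Y(t)\bigr)_{t\in[0,T]}$ with $\E[\sup_{t\in[0,T]}|Y(t)|^2]<\infty$, equipped with the norm $\|Y\|_{\mathcal{E}_T}=\bigl(\E[\sup_{t\in[0,T]}|Y(t)|^2]\bigr)^{1/2}$. The relevant preliminary fact is that the stochastic convolution $\bigl(W^{\IL}(t)\bigr)_{t\ge0}$ defined in \eqref{eq:StochasticConvolution} has continuous sample paths in $H$ and belongs to $\mathcal{E}_T$; the second moment bound $\E[|W^{\IL}(t)|^2]\le\sum_{j\in\N}\frac{1}{2\lambda_j}<\infty$ is already available from the excerpt using Assumption~\ref{ass:Lambda}, and the path continuity follows from a standard Kolmogorov-type argument exploiting the smoothing property \eqref{eq:smoothing}.

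Next I would define the map $\Psi$ on $\mathcal{E}_T$ by
\[
\Psi(Y)(t)=e^{-t\IL}x_0+\int_0^t e^{-(t-s)\IL}F(Y(s))\,ds+W^{\IL}(t),
\]
and check that $\Psi$ maps $\mathcal{E}_T$ into itself: the deterministic term $e^{-t\IL}x_0$ is continuous with $|e^{-t\IL}x_0|\le|x_0|$ since $\|e^{-t\IL}\|_{\mathcal{L}(H)}\le1$; the drift integral is continuous in $t$ and bounded using the global Lipschitz bound (Assumption~\ref{ass:F}), which gives $|F(Y(s))|\le|F(0)|+\Lf|Y(s)|$, hence $\sup_{t\in[0,T]}\bigl|\int_0^t e^{-(t-s)\IL}F(Y(s))\,ds\bigr|\le T\bigl(|F(0)|+\Lf\sup_{s\in[0,T]}|Y(s)|\bigr)$; and $W^{\IL}\in\mathcal{E}_T$ by the above. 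Then I would show $\Psi$ is a contraction for $T$ small: for $Y_1,Y_2\in\mathcal{E}_T$, only the drift term contributes to the difference, and $|\Psi(Y_1)(t)-\Psi(Y_2)(t)|\le\int_0^t\Lf|Y_1(s)-Y_2(s)|\,ds\le\Lf T\sup_{s\in[0,T]}|Y_1(s)-Y_2(s)|$, so $\|\Psi(Y_1)-\Psi(Y_2)\|_{\mathcal{E}_T}\le\Lf T\,\|Y_1-Y_2\|_{\mathcal{E}_T}$, which is a strict contraction once $\Lf T<1$. The Banach fixed point theorem then yields a unique mild solution on $[0,T]$; taking $T$ with $\Lf T<1$ and iterating over successive intervals $[kT,(k+1)T]$ (the solution value at $kT$ being the new initial condition, still in $H$ and $\mathcal{F}_{kT}$-measurable) extends the solution uniquely to all of $[0,\infty)$, and a standard patching argument shows global uniqueness.

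The main obstacle, and the only genuinely nontrivial point, is verifying that the stochastic convolution $W^{\IL}$ has \emph{continuous} $H$-valued sample paths (not merely finite second moments): this requires the factorization method of Da Prato--Zabczyk, writing $W^{\IL}(t)=\frac{\sin(\pi\kappa)}{\pi}\int_0^t(t-s)^{\kappa-1}e^{-(t-s)\IL}Z(s)\,ds$ with $Z(s)=\int_0^s(s-r)^{-\kappa}e^{-(s-r)\IL}\,dW(r)$ for a suitable $\kappa\in(0,1/4)$, checking via It\^o's isometry and \eqref{eq:smoothing} that $\E[|Z(s)|^p]<\infty$ for $p$ large enough (using $\sum_j\lambda_j^{-1+2\kappa}<\infty$, which holds since $\lambda_j\sim c_\IL j^2$ forces $-2+4\kappa<-1$), and then applying a deterministic regularity lemma for the fractional integral operator. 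Since the excerpt explicitly defers this to \cite[Section~7.1]{DPZ}, I would simply cite that reference for the path-continuity of $W^{\IL}$ and for the well-posedness conclusion, and present the fixed point argument above as the proof.

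\begin{proof}
Fix an arbitrary $T\in(0,\infty)$. Let $\mathcal{E}_T$ denote the Banach space of $H$-valued continuous adapted processes $\bigl(Y(t)\bigr)_{t\in[0,T]}$ such that
\[
\|Y\|_{\mathcal{E}_T}^2=\E\bigl[\underset{t\in[0,T]}\sup~|Y(t)|^2\bigr]<\infty.
\]
Owing to Assumption~\ref{ass:Lambda}, the stochastic convolution $\bigl(W^{\IL}(t)\bigr)_{t\in[0,T]}$ defined by~\eqref{eq:StochasticConvolution} has continuous $H$-valued sample paths and satisfies $\E[\sup_{t\in[0,T]}|W^{\IL}(t)|^2]<\infty$; this follows from the factorization method, see~\cite[Section~7.1]{DPZ}, using the bound $\sum_{j\in\N}\lambda_j^{-1+2\kappa}<\infty$ for $\kappa\in(0,\frac14)$ which holds since $\lambda_j\sim c_{\IL}j^2$. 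In particular $W^{\IL}\in\mathcal{E}_T$.

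Define the mapping $\Psi$ on $\mathcal{E}_T$ by
\[
\Psi(Y)(t)=e^{-t\IL}x_0+\int_0^t e^{-(t-s)\IL}F(Y(s))\,ds+W^{\IL}(t),\quad t\in[0,T].
\]
For any $Y\in\mathcal{E}_T$, the mapping $t\mapsto\Psi(Y)(t)$ is continuous with values in $H$: the term $e^{-t\IL}x_0$ is continuous with $|e^{-t\IL}x_0|\le|x_0|$ since $\|e^{-t\IL}\|_{\mathcal{L}(H)}\le 1$; by Assumption~\ref{ass:F} one has $|F(Y(s))|\le|F(0)|+\Lf|Y(s)|$, hence the drift integral is continuous in $t$ and
\[
\underset{t\in[0,T]}\sup~\big|\int_0^t e^{-(t-s)\IL}F(Y(s))\,ds\big|\le T\bigl(|F(0)|+\Lf\underset{s\in[0,T]}\sup~|Y(s)|\bigr);
\]
and $W^{\IL}\in\mathcal{E}_T$. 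Therefore $\Psi(Y)\in\mathcal{E}_T$, and moreover $\Psi(Y)$ is adapted. For $Y_1,Y_2\in\mathcal{E}_T$, using $\|e^{-(t-s)\IL}\|_{\mathcal{L}(H)}\le 1$ and Assumption~\ref{ass:F},
\[
|\Psi(Y_1)(t)-\Psi(Y_2)(t)|\le\int_0^t\Lf|Y_1(s)-Y_2(s)|\,ds\le\Lf T\underset{s\in[0,T]}\sup~|Y_1(s)-Y_2(s)|,
\]
so that $\|\Psi(Y_1)-\Psi(Y_2)\|_{\mathcal{E}_T}\le\Lf T\|Y_1-Y_2\|_{\mathcal{E}_T}$. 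Choosing $T$ such that $\Lf T<1$, the mapping $\Psi$ is a strict contraction on $\mathcal{E}_T$, hence admits a unique fixed point, which is by definition the unique mild solution of~\eqref{eq:SPDE} on $[0,T]$.

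Since the choice of $T$ with $\Lf T<1$ depends only on $\Lf$, and since the value $X(kT)$ of the constructed solution at the right endpoint is $\mathcal{F}_{kT}$-measurable with $\E[|X(kT)|^2]<\infty$, one can iterate the construction over the successive intervals $[kT,(k+1)T]$ for $k\ge 0$, using $X(kT)$ as the initial value on the $(k+1)$-th interval. Patching the solutions obtained on these intervals yields a unique mild solution $\bigl(X(t)\bigr)_{t\ge 0}$ defined for all $t\ge 0$ and for any initial value $x_0\in H$. This concludes the proof.
\end{proof}
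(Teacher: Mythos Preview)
Your proof is correct and follows precisely the standard fixed point argument that the paper invokes: the paper does not spell out any details and simply writes ``The following well-posedness result is then obtained applying a standard fixed point argument. We refer for instance to~\cite[Section~7.1]{DPZ}.'' Your write-up is a faithful expansion of exactly that argument, including the correct identification of the factorization method as the step needed for path continuity of $W^{\IL}$.
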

The dependence of the mild solution with respect to the initial value $x_0$ is often omitted to simplify the notation. The notation $\E_x[\varphi(X(t))]$ may be used below to compute the expected value of $\varphi(X(t))$ when the solution $X(t)$ has initial value $X(0)=x$.

In addition, the following spatial and temporal regularity properties are satisfied: for all $\alpha\in[0,\frac14)$ and all $T\in(0,\infty)$, there exists $C_\alpha(T)\in(0,\infty)$ such that for all $x_0\in H$ and all $t,t_1,t_2\in(0,T]$, one has
\begin{align*}
\bigl(\E[|X(t)|_\alpha^2]\bigr)^{\frac12}&\le C_\alpha(T)\bigl(1+t^{-\alpha}|x_0|),\\
\bigl(\E[|X(t_2)-X(t_1)|^2]\bigr)^{\frac12}&\le C_\alpha(T)|t_2-t_1|^{\alpha}(1+(t_1\wedge t_2)^{-\alpha}|x_0|).
\end{align*}
The proof of those regularity properties is based on combinations of the smoothing property~\eqref{eq:smoothing} with the error estimate~\eqref{eq:regularity}, using the mild formulation~\eqref{eq:mild}. Since the result is standard, the detailed proof is omitted.

\subsection{Invariant distribution}\label{sec:invariant}

Let us now study the long-time behavior of the mild solution $\bigl(X(t)\bigr)_{t\ge 0}$ given by~\eqref{eq:mild}, when Assumption~\ref{ass:ergo} is satisfied. We refer to the monograph~\cite{DPZergo} for a general presentation of ergodicity results for parabolic semilinear SPDEs.

First, in a general setting, one has the following result.
\begin{propo}\label{propo:invar}
Let Assumption~\ref{ass:ergo} be satisfied. Then the stochastic evolution equation admits a unique invariant distribution $\mu_\infty$, and there exists $C\in(0,\infty)$ such that for any function $\varphi:H\to \R$ of class $\mathcal{C}^1$ with bounded derivative, for all $T\ge 0$ and $x\in H$, one has
\[
\big|\E_x[\varphi(X(T))]-\int\varphi d\mu_\infty\big|\le C\vvvert\varphi\vvvert_1 e^{-(\lambda_1-\Lf)T}(1+|x|).
\]
\end{propo}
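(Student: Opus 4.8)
The plan is to prove existence, uniqueness, and the quantitative exponential convergence estimate simultaneously via a coupling/synchronous-coupling argument exploiting the contractivity induced by Assumption~\ref{ass:ergo} (the spectral gap condition $\Lf<\lambda_1$). First I would set up two mild solutions $\bigl(X(t)\bigr)_{t\ge0}$ and $\bigl(\tilde X(t)\bigr)_{t\ge0}$ of~\eqref{eq:SPDE} driven by the \emph{same} cylindrical Wiener process $W$, with deterministic initial values $x$ and $\tilde x$ respectively. The difference $R(t)=X(t)-\tilde X(t)$ is then \emph{deterministic-looking} in the sense that the noise cancels: by~\eqref{eq:mild},
\[
R(t)=e^{-t\IL}(x-\tilde x)+\int_0^t e^{-(t-s)\IL}\bigl(F(X(s))-F(\tilde X(s))\bigr)ds.
\]
Using $\|e^{-t\IL}\|_{\mathcal{L}(H)}\le e^{-\lambda_1 t}$ (from Section~\ref{sec:IL}) and the global Lipschitz bound $|F(X(s))-F(\tilde X(s))|\le\Lf|R(s)|$ from Assumption~\ref{ass:F}, one gets $|R(t)|\le e^{-\lambda_1 t}|x-\tilde x|+\Lf\int_0^t e^{-\lambda_1(t-s)}|R(s)|ds$ almost surely, and Gr\"onwall's lemma applied to $e^{\lambda_1 t}|R(t)|$ yields the pathwise contraction $|R(t)|\le e^{-(\lambda_1-\Lf)t}|x-\tilde x|$.

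Next I would upgrade this pathwise bound to a statement about the Markov semigroup $P_t\varphi(x)=\E_x[\varphi(X(t))]$. For $\varphi$ of class $\mathcal{C}^1$ with bounded derivative, the mean value inequality gives $|\varphi(X(t))-\varphi(\tilde X(t))|\le\vvvert\varphi\vvvert_1\,|R(t)|\le\vvvert\varphi\vvvert_1 e^{-(\lambda_1-\Lf)t}|x-\tilde x|$, hence taking expectations
\[
\bigl|P_t\varphi(x)-P_t\varphi(\tilde x)\bigr|\le\vvvert\varphi\vvvert_1\, e^{-(\lambda_1-\Lf)t}\,|x-\tilde x|.
\]
To produce the invariant distribution I would show $\bigl(P_t\varphi(x)\bigr)_{t\ge0}$ is Cauchy as $t\to\infty$: by the Markov property and the estimate above applied at the fixed initial point $x$ versus the ``shifted'' law, for $t'\ge t$ one has $|P_{t'}\varphi(x)-P_t\varphi(x)|=|P_t(P_{t'-t}\varphi)(x)-P_t\varphi(x)|$, and one controls this by integrating the one-step contraction against the law of $X(t)$, using the moment bound $\E_x[|X(t)|]\le C(1+|x|)$ (which follows from the mild formulation, the $\mathcal{L}_2$-bound on the stochastic convolution in Section~\ref{sec:SPDE}, and Gr\"onwall). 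This gives $|P_{t'}\varphi(x)-P_t\varphi(x)|\le C\vvvert\varphi\vvvert_1 e^{-(\lambda_1-\Lf)t}(1+|x|)$, so the limit $\ell(\varphi)=\lim_{t\to\infty}P_t\varphi(x)$ exists, is independent of $x$, and defines (by a monotone-class/Riesz argument, or by Krylov--Bogoliubov using tightness from the moment bounds) a probability measure $\mu_\infty$ with $\int\varphi\,d\mu_\infty=\ell(\varphi)$; invariance and uniqueness are immediate from $P_t\mu_\infty=\mu_\infty$ and the contraction, and passing to the limit $t'\to\infty$ in the Cauchy estimate yields exactly
\[
\bigl|\E_x[\varphi(X(T))]-\int\varphi\,d\mu_\infty\bigr|\le C\vvvert\varphi\vvvert_1\, e^{-(\lambda_1-\Lf)T}(1+|x|).
\]

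The genuinely routine parts are the Gr\"onwall estimates and the moment bounds; the only mildly delicate point is the passage from the semigroup estimate on $\mathcal{C}^1_b$ test functions to the \emph{existence} of $\mu_\infty$ as a bona fide Borel probability measure on the infinite-dimensional space $H$ — one must invoke either tightness (Krylov--Bogoliubov, justified by the uniform-in-time moment bound $\sup_t\E[|X(t)|_\alpha^2]<\infty$ for some $\alpha>0$, which gives relative compactness in $H$ since the embedding $H^\alpha\hookrightarrow H$ is compact under Assumption~\ref{ass:Lambda}) together with the Cauchy property to pin down the unique limit, or a direct construction of $\mu_\infty$ as the law of $\int_{-\infty}^0 e^{s\IL}dW(s)+\cdots$ via a stationary mild solution. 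I expect this compactness/tightness bookkeeping — rather than the contraction itself — to be where the proof needs the most care, though since the result is classical (\cite{DPZergo}) the authors may simply cite it and present only the contraction estimate.
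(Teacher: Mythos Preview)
Your proposal is correct and follows the standard coupling approach; in fact you correctly anticipate the situation: the paper does not give a proof of this proposition at all, citing it as a classical result from~\cite{DPZergo}. When the paper does prove the analogous statement for the modified equation (Proposition~\ref{propo:modified-wellposed-bound}), it uses the same synchronous-coupling idea but obtains the contraction via the energy estimate $\frac12\frac{d}{dt}|R(t)|^2=-\langle\IL R(t),R(t)\rangle+\langle F(X(t))-F(\tilde X(t)),R(t)\rangle\le-(\lambda_1-\Lf)|R(t)|^2$ rather than the mild-formulation-plus-Gr\"onwall route you take; both yield the same rate $e^{-(\lambda_1-\Lf)t}$.
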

As already explained, in general no expression of the invariant distribution $\mu_\infty$ is known, except when Assumption~\ref{ass:gradient} is satisfied.

First, when $F=0$, the invariant distribution of the stochastic convolution $\bigl(W^{\IL}(t)\bigr)_{t\ge 0}$ defined by~\eqref{eq:StochasticConvolution} is the centered Gaussian distribution
\begin{equation}\label{eq:nu}
\nu=\mathcal{N}(0,\frac12\IL^{-1}),
\end{equation}
which is the distribution of the $H$-valued Gaussian random variable $Z=\sum_{j\in\N}\frac{\gamma_j}{\sqrt{2\lambda_j}}e_j$ where $\bigl(\gamma_j\bigr)_{j\in\N}$ is a sequence of independent standard real-valued Gaussian random variables.

Second, when $F=-DV$ has a gradient structure (Assumption~\ref{ass:gradient}), for some $V:H\to \R$, the invariant distribution $\mu_\infty$ is a Gibbs distribution with respect to the reference Gaussian distribution $\nu$. We refer to~\cite[Theorem~8.6.3]{DPZergo}.
\begin{propo}\label{propo:mu_star}
Let Assumptions~\ref{ass:ergo} and~\ref{ass:gradient} be satisfied. Then the invariant distribution $\mu_\infty$ of the stochastic evolution equation~\eqref{eq:SPDE} is equal to the Gibbs distribution $\mu_\star$ defined by
\begin{equation}\label{eq:mu_star}
d\mu_\star(x)=\mathcal{Z}^{-1}e^{-2V(x)}d\nu(x)
\end{equation}
with normalization constant $\mathcal{Z}=\int e^{-2V(x)}d\nu(x)\in(0,\infty)$. Moreover, one has the bound
\begin{equation}\label{eq:mu_star-bound}
\int |x|d\mu_\star(x)<\infty.
\end{equation}
\end{propo}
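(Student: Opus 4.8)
The plan is to exhibit $\mu_\star$ as an invariant distribution of the stochastic evolution equation~\eqref{eq:SPDE} and then invoke the uniqueness provided by Proposition~\ref{propo:invar} to conclude $\mu_\infty=\mu_\star$. The structural observation driving the proof is that under Assumption~\ref{ass:gradient} the drift $-\IL x+F(x)=-\bigl(\IL x+DV(x)\bigr)$ is itself a gradient, so the dynamics is formally the overdamped Langevin equation associated with the potential $U(x)=\tfrac12\langle\IL x,x\rangle+V(x)$. Its reversible measure is formally proportional to $e^{-2U(x)}dx=e^{-2V(x)}e^{-\langle\IL x,x\rangle}dx$, and since the Gaussian measure $\nu=\mathcal{N}(0,\tfrac12\IL^{-1})$ carries precisely the formal density $e^{-\langle\IL x,x\rangle}$, this reproduces the claimed formula $d\mu_\star=\mathcal{Z}^{-1}e^{-2V}d\nu$. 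Making this rigorous is the content of the proof.

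To turn this into a rigorous statement I would work with the Kolmogorov generator $\mathcal{L}\varphi(x)=\tfrac12\sum_{j}D^2\varphi(x)(e_j,e_j)+\langle -\IL x+F(x),D\varphi(x)\rangle$ acting on smooth cylindrical test functions, for which the trace term is a finite sum. The first step is the Gaussian integration by parts formula for $\nu$: writing $\mathcal{L}_0$ for the Ornstein--Uhlenbeck generator (the case $F=0$) and using that $\nu$ has inverse covariance $2\IL$, one obtains the Dirichlet form identity
\[
\int \varphi\,\mathcal{L}_0\psi\,d\nu=-\tfrac12\int\langle D\varphi,D\psi\rangle\,d\nu.
\]
The second step is to tilt by $\rho=\mathcal{Z}^{-1}e^{-2V}$, so that $d\mu_\star=\rho\,d\nu$ and $D\rho=-2\rho\,DV$. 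Applying the identity above to $\varphi\rho$ in place of $\varphi$ and using $F=-DV$ gives, after the cancellation of the two terms containing $\langle DV,D\psi\rangle$,
\[
\int \varphi\,\mathcal{L}\psi\,d\mu_\star=-\tfrac12\int\langle D\varphi,D\psi\rangle\,d\mu_\star,
\]
which is symmetric in $\varphi$ and $\psi$. Taking $\varphi\equiv 1$ yields the infinitesimal invariance identity $\int \mathcal{L}\psi\,d\mu_\star=0$ for all cylindrical $\psi$.

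The remaining step is to deduce genuine invariance of $\mu_\star$ under the transition semigroup $\bigl(P_t\bigr)_{t\ge0}$ from this infinitesimal identity. I would do this either by differentiating $t\mapsto\int P_t\psi\,d\mu_\star$ and using $\frac{d}{dt}P_t\psi=\mathcal{L}P_t\psi$ together with the regularity of $P_t\psi$, or, more robustly, via the Galerkin approximation of Section~\ref{sec:Galerkin}: on the span of $e_1,\dots,e_N$ the projected nonlinearity $P_NF(P_N\cdot)=-D\bigl(V\circ P_N\bigr)$ is again a gradient, so the classical finite-dimensional Langevin computation shows that $\mu_\star^N\propto e^{-2V(P_N\cdot)}d\nu_N$ is exactly invariant; passing to the limit $N\to\infty$ using uniform moment bounds and convergence of the Galerkin solutions transfers invariance to $\mu_\star$. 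Once $\mu_\star$ is known to be invariant, Proposition~\ref{propo:invar} forces $\mu_\infty=\mu_\star$. I expect this passage from infinitesimal to full invariance, together with the rigorous justification of the Gaussian integration by parts in infinite dimensions, to be the main technical obstacle; the underlying algebra is the elementary computation above.

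Finally, for the integrability assertions I would exploit that $F=-DV$ is globally Lipschitz (Assumption~\ref{ass:F}), whence $|DV(x)|\le|F(0)|+\Lf|x|$ and therefore $V(x)\ge V(0)-|F(0)||x|-\tfrac{\Lf}{2}|x|^2$, giving the pointwise bound $e^{-2V(x)}\le C\,e^{2|F(0)||x|+\Lf|x|^2}$. The covariance $\tfrac12\IL^{-1}$ of $\nu$ has largest eigenvalue $\tfrac{1}{2\lambda_1}$, so Fernique's theorem yields $\int e^{a|x|^2}d\nu<\infty$ for every $a<\lambda_1$. The ergodicity Assumption~\ref{ass:ergo}, namely $\Lf<\lambda_1$, is exactly what is needed to absorb the linear term and conclude that $\mathcal{Z}=\int e^{-2V}d\nu\in(0,\infty)$ and likewise $\int|x|\,e^{-2V}d\nu<\infty$, that is $\int|x|\,d\mu_\star(x)<\infty$. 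It is worth emphasizing that the matching of the Fernique threshold $\lambda_1$ with the ergodicity condition $\Lf<\lambda_1$ is precisely what makes the Gibbs measure well defined here.
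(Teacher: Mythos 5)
Your argument is correct and follows the same route that the paper simply delegates to \cite[Theorem~8.6.3]{DPZergo} (and sketches explicitly for the analogous Proposition~\ref{propo:mu_star_modified} on the modified equation): reversibility of the tilted Gaussian measure for the gradient system, established on the spectral Galerkin approximations of Section~\ref{sec:Galerkin} and passed to the limit, followed by uniqueness from Proposition~\ref{propo:invar}. Your integrability argument via Fernique's theorem, with the integrability threshold $a<\lambda_1$ matching the ergodicity condition $\Lf<\lambda_1$ of Assumption~\ref{ass:ergo}, correctly justifies $\mathcal{Z}\in(0,\infty)$ and the moment bound~\eqref{eq:mu_star-bound}.
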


\subsection{Spectral Galerkin approximation}\label{sec:Galerkin}

In order to justify most of the arguments provided below, it is convenient to introduce an auxiliary finite dimensional approximation procedure. In particular, in the finite dimensional framework, the derivatives may be interpreted as Fr\'echet derivatives, all the linear operators are bounded, and the solutions of the auxiliary Kolmogorov partial differential equations can be understood in a classical sense. On the contrary, differentiability conditions in infinite dimension may require more care, and giving meaning to solutions of infinite dimensional Kolmogorov equations is more involved.

The auxiliary approximation procedure is standard in the literature. In this work, one can employ a spectral Galerkin approximation: for all $J\ge 1$, let $P^J$ be the orthogonal projection operator defined by
\[
P^Jx=\sum_{j=1}^{J}\langle x,e_j\rangle e_j
\]
and introduce the stochastic evolution equation
\[
dX^J(t)=-\IL X^J(t)dt+P^JF(X^J(t))dt+P^JdW(t),\quad X^J(0)=P^Jx_0.
\]
In fact, $\bigl(X^J(t)\bigr)_{t\ge 0}$ is solution of a stochastic differential equation with values in the finite dimensional space $H^J={\rm span}(e_1,\ldots,e_J)$.

To obtain the results stated below, it suffices to combine two arguments:
\begin{itemize}
\item proving moment and error estimates for $X^J$ which are uniform with respect to $J\in\N$,
\item letting $J\to\infty$.
\end{itemize}
To simplify the notation, in the sequel, the dimension $J$ is omitted. All the proofs of moment and error estimates should be understood as being performed for $X^J$, with bounds independent of $J$. Similary, the regularity properties for solutions of Kolmogorov equations should also be understood in an approximate finite dimensional framework, with bounds independent of $J$. This standard convention is used everywhere in the sequel.

\section{Description of the modified Euler scheme}\label{sec:scheme}

The objective of this section is to provide the definition of the proposed modified Euler scheme. Three equivalent formulations of the integrator are given below, however these formulations serve different purposes: the practical implementation is performed using the first one, whereas the second and third ones are employed to prove moment bounds and error estimates.

To define the numerical integrator, the following definition is convenient. A random variable $\Gamma$ is called a \emph{cylindrical Gaussian random variable} if
\[
\Gamma=\sum_{n\in\N}\gamma_n e_n
\]
where $\bigl(e_n\bigr)_{n\in\N}$ is the complete orthonormal system of $H$ given in Assumption~\ref{ass:Lambda}, and $\bigl(\gamma_n\bigr)_{n\in\N}$ are independent real valued standard Gaussian random variables ($\E[\gamma_n]=0$, $\E[\gamma_n^2]=1$ for all $n\in\N$ and $\E[\gamma_n\gamma_m]=0$ for all $n\neq m\in\N$). Note that increments of the cylindrical Wiener processes $\Delta W_n=W(t_{n+1})-W(t_n)$ for $n\in\N_0$ satisfy the equality in distribution $\Delta W_n=\sqrt{\tau}\Gamma_n$, where $\bigl(\Gamma_n\bigr)_{n\in\N}$ is a sequence of independent cylindrical Gaussian random variables. Observe that cylindrical Gaussian random variables $\Gamma$ do not take values in $H$: $\E[|\Gamma|^2]=\infty$, and even $|\Gamma|^2=\infty$ almost surely. However, if $L\in\mathcal{L}_2(H)$ is an Hilbert--Schmidt operator, then $L\Gamma$ is a well-defined $H$-valued centered Gaussian random variable with $\E[|L\Gamma|^2]=\|L\|_{\mathcal{L}_2(H)}^2$.

In order to explain the construction of the proposed modified Euler integrator, let us introduce the standard linear implicit Euler scheme. Let $\bigl(\Gamma_n\bigr)_{n\in\N_0}$ be a sequence of independent cylindrical Gaussian random variables, then set for all $n\in\N_0$
\begin{equation}\label{eq:scheme-standard}
X_{n+1}^{\tau,\s}=\IA_\tau\Bigl(X_n^{\tau,\s}+\tau F(X_n^{\tau,\s})+\sqrt{\tau}\Gamma_n\Bigr),
\end{equation}
with initial value $X_0^{\tau,\s}=x_0$, where the bounded linear operator $\IA_\tau$ is defined by
\begin{equation}\label{eq:IA}
\IA_\tau=(I+\tau\IL)^{-1}.
\end{equation}
The integrator~\eqref{eq:scheme-standard} formally satisfies the equality
\[
X_{n+1}^{\tau,\s}=X_n^{\tau,\s}-\tau\IL X_{n+1}^{\tau,\s}+\tau F(X_n^{\tau,\s})+\Gamma_n,
\]
which justifies to interpret~\eqref{eq:scheme-standard} as a semi-implicit Euler scheme, where the linearity is treated implicitly and the nonlinearity is treated explicitly. The formulation~\eqref{eq:scheme-standard} is more suitable since $\IA_\tau$ is a bounded linear operator, whereas $\IL$ is unbounded. In addition, $\IA_\tau\Gamma_n$ is a well-defined $H$-valued Gaussian random variable: indeed $\IA_\tau$ is an Hilbert--Schmidt operator for all $\tau>0$, with
\[
\|\IA_\tau\|_{\mathcal{L}_2(H)}^2=\sum_{j\in\N}|\IA_\tau e_j|^2=\sum_{j\in\N}\frac{1}{(1+\lambda_j\tau)^2}<\infty.
\]
As a consequence, for any initial value $x_0\in H$, one has $X_n^{\tau,\s}\in H$ for all $n\in\N$.

Properties of the standard Euler scheme~\eqref{eq:scheme-standard} are recalled in Section~\ref{sec:standard}.

\subsection{Definition of the modified Euler scheme}\label{sec:scheme-1st}

We are now in position to define the modified Euler scheme. Let $\bigl(\Gamma_{n,1}\bigr)_{n\in\N_0}$ and $\bigl(\Gamma_{n,2}\bigr)_{n\in\N_0}$ be two independent sequences of independent cylindrical Gaussian random variables. Set for all $n\in\N_0$
\begin{equation}\label{eq:scheme}
X_{n+1}^{\tau}=\IA_\tau\bigl(X_n^{\tau}+\tau F(X_n^{\tau})\bigr)+\IB_{\tau,1}\sqrt{\tau}\Gamma_{n,1}+\IB_{\tau,2}\sqrt{\tau}\Gamma_{n,2},
\end{equation}
with initial value $X_0^\tau=x_0$, where the linear operators $\IA_\tau$, $\IB_{\tau,1}$ and $\IB_{\tau,2}$ are assumed to satisfy
\begin{equation}\label{eq:operators}
\IA_\tau=(I+\tau\IL)^{-1},\quad \IB_{\tau,1}=\frac{1}{\sqrt{2}}(I+\tau\IL)^{-1},\quad \IB_{\tau,2}\IB_{\tau,2}^\star=\frac12(I+\tau\IL)^{-1},
\end{equation}
where $L^\star$ is the adjoint of a linear operator $L$. As already explained above, the random variable $\IB_{\tau,1}\Gamma_{n,1}$ is a well-defined $H$-valued Gaussian random variable since $\IB_{\tau,1}=\frac{1}{\sqrt{2}}\IA_\tau$ is an Hilbert--Schmidt linear operator. If the linear operator $\IB_{\tau,2}$ satisfies the third condition in~\eqref{eq:operators}, then $\IB_{\tau,2}$ is also an Hilbert--Schmidt linear operator (under Assumption~\ref{ass:Lambda}), thus $\IB_{\tau,2}\Gamma_n$ is also well-defined. Indeed, one has
\[
\|\IB_{\tau,2}\|_{\mathcal{L}_2(H)}^2=\sum_{j\in\N}|\IB_{\tau,2} e_j|^2=\sum_{j\in\N}|\IB_{\tau,2}^\star e_j|^2=\sum_{j\in\N}\langle \IB_{\tau,2}\IB_{\tau,2}^\star e_j,e_j\rangle=\sum_{j\in\N}\frac{1}{2(1+\lambda_j\tau)}<\infty.
\]

The motivations for imposing the conditions~\eqref{eq:operators} for the linear operators are the following (more details are given below). First, if $F=0$, then the proposed scheme~\eqref{eq:scheme} preserves the invariant distribution $\nu$ of the stochastic evolution equation~\eqref{eq:SPDE}. Second, contrary to other methods (such as an exponential Euler scheme) which satisfy the first requirement, the proposed scheme can be implemented without knowing the eigenvalues $\bigl(\lambda_j\bigr)_{j\in\N}$ and the eigenfunctions $\bigl(e_n\bigr)_{j\in\N}$ of $\IL$.

The proposed scheme~\eqref{eq:scheme} is a modification of the standard Euler scheme~\eqref{eq:scheme-standard}, in particular note that $\IA_\tau$ is given by~\eqref{eq:IA} in both cases. However, let us highlight the major difference between the two schemes: the definition of the proposed integrator requires the use of two sequences of independent cylindrical Gaussian random variables. We refer to Remark~\ref{rem:construction} below for an explanation of this requirement.

Note that there exist multiple choices to define linear operators $\IB_{\tau,2}$ such that the third condition in~\eqref{eq:operators} is satisfied. Precisely, all choices of the linear operators such that~\eqref{eq:operators} is fulfilled give sequences of random variables $\bigl(X_n^{\tau}\bigr)_{n\in\N_0}$ which are equal in distribution. This is consistent with the fact that the distribution of a $H$-valued Gaussian random variable $L\Gamma$ only depends on its covariance operator $LL^\star$. A naive choice would be to set
\[
\IB_{\tau,2}x=\sum_{j\in\N}\frac{1}{\sqrt{2(1+\lambda_j\tau)}}\langle x,e_j\rangle e_j
\]
for all $x\in H$: then $\IB_{\tau,2}$ would be the square root of the self-adjoint operator $\frac12\IA_\tau$. However, the definition of $\IB_{\tau,2}$ above would require the knowledge of the eigenvalues $\bigl(\lambda_j\bigr)_{j\in\N}$ and the eigenfunctions $\bigl(e_j\bigr)_{j\in\N}$ of $\IL$. To avoid this requirement, which may be restrictive in pratice, note that it is instead possible to use a Cholesky decomposition of the operator $\frac12\IA_\tau=\frac12(I+\tau\IL)^{-1}$. More precisely, in the context of the example described in Section~\ref{sec:example} corresponding to the stochastic partial differential equation~\eqref{eq:SPDEintro-field}, the implementation of the scheme requires a spatial discretization procedure, which may be performed using a finite differences approximation (with mesh size denoted by $h$), the Cholesky decomposition is then performed at the finite dimensional approximation level. Computing the Cholesky decomposition is generally less expensive than identifying the eigenvalues and the eigenfunctions of the linear operator $\IL_h$ (which is a tridiagonal matrix for the example). In the sequel, the spatial approximation is omitted and we focus only on the temporal discretization. The convergence results below may be generalized at the finite dimensional approximation level, with error bounds independent of $h$ (see Remark~\ref{rem:fe}).

Based on the discussion above, it is clear that the proposed scheme~\eqref{eq:scheme} is a modification of the standard scheme~\eqref{eq:scheme-standard} which has a more expensive implementation, due to the need to compute an additional Gaussian random variable $\IB_{\tau,2}\Gamma_{n,2}$ at each iteration. However, the huge benefits of using the modified Euler scheme~\eqref{eq:scheme} will be stated and illustrated below: the main results are stated in Section~\ref{sec:results}, whereas comparisons with an exponential Euler scheme and the standard Euler scheme are provided in Section~\ref{sec:expo} and~\ref{sec:standard} respectively.

\subsection{Second formulation of the modified Euler scheme}\label{sec:scheme-2nd}

Let us introduce an equivalent formulation of the proposed integrator~\eqref{eq:scheme}, where a single sequence $\bigl(\Gamma_n\bigr)_{n\in\N_0}$ of independent cylindrical Gaussian random variables is needed. This formulation is not used in practice.

Define the self-adjoint linear operator $\IB_\tau$ such that
\[
\IB_\tau^2=\IB_{\tau,1}^2+\IB_{\tau,2} \IB_{\tau,2}^\star=\frac12\bigl(\IA_\tau^2+\IA_\tau\bigr)=\frac12(2I+\tau\IL)(I+\tau\IL)^{-2}
\]
where $\IA_\tau$, $\IB_{\tau,1}$ and $\IB_{\tau,2}$ satisfy the conditions~\eqref{eq:operators}. The linear operator $\IB_\tau$ is given by
\begin{equation}\label{eq:IB}
\IB_\tau x=\sum_{j\in\N}\frac{\sqrt{2+\lambda_j\tau}}{\sqrt{2}~(1+\lambda_j\tau)}\langle x,e_j\rangle e_j
\end{equation}
for all $x\in H$.

For all $n\in\N_0$, set
\begin{equation}\label{eq:scheme2}
\hat{X}_{n+1}^\tau=\IA_\tau\bigl(\hat{X}_n^\tau+\tau F(\hat{X}_n^\tau)\bigr)+\sqrt{\tau}\IB_\tau\Gamma_n
\end{equation}
with initial value $\hat{X}_0^\tau=x_0$, where $\IA_\tau$ and $\IB_\tau$ are given by~\eqref{eq:IA} and~\eqref{eq:IB} respectively, and where $\bigl(\Gamma_n\bigr)_{n\in\N_0}$ is a sequence of independent cylindrical Gaussian random variables. Then one has the following result: the sequences $\bigl(X_n^\tau\bigr)_{n\in\N_0}$ and $\bigl(\hat{X}_n^\tau\bigr)_{n\in\N_0}$ are equal in distribution, for any value $\tau\in(0,\tau_0)$ of the time-step size. This result is a straightforward consequence of the following equality in distribution
\begin{equation}\label{eq:distribution_increments}
\IB_{\tau,1}\Gamma_{n,1}+\IB_{\tau,2}\Gamma_{n,2}=\IB_{\tau}\Gamma_n,
\end{equation}
if $\Gamma_{n,1}$ and $\Gamma_{n,2}$ are two independent cylindrical Gaussian random variables. Indeed, the random variables in the left and the right hand sides of~\eqref{eq:distribution_increments} are centered $H$-valued Gaussian random variables with the same covariance operator. Observe that $\IB_\tau$ is indeed an Hilbert--Schmidt linear operator, so that $\IB_{\tau}\Gamma_n$ is a well-defined $H$-valued Gaussian random variable.

In the sequel, the same notation $\bigl(X_n^\tau\bigr)_{n\in\N_0}$ is used for both formulations~\eqref{eq:scheme} and~\eqref{eq:scheme2} of the modified Euler scheme, since all equalities are understood as equalities in distribution. The second formulation~\eqref{eq:scheme2} is more convenient for the analysis the scheme, however this formulation could be implemented only if the eigenvalues $\bigl(\lambda_j\bigr)_{j\in\N}$ and the eigenfunctions $\bigl(e_j\bigr)_{j\in\N}$ of $\IL$ were known, whereas the first formulation~\eqref{eq:scheme} can be implemented without this requirement as explained in Section~\ref{sec:scheme-1st}.

The formulation~\eqref{eq:scheme2} clearly shows why the proposed scheme is a modification of the standard Euler scheme~\eqref{eq:scheme-standard}: the random variable $\IA_\tau\Gamma_n$ in~\eqref{eq:scheme-standard} is replaced by $\IB_\tau\Gamma_n$ in~\eqref{eq:scheme2}.

Let us now justify why introducing the modification of the standard Euler scheme with $\IB_\tau$ such that~\eqref{eq:IB} holds is relevant.
\begin{propo}\label{propo:invarGauss}
Assume that $F=0$. Let $\IA_\tau$ and $\IB_\tau$ be given by~\eqref{eq:IA} and~\eqref{eq:IB} respectively. Then, for any value $\tau\in(0,\tau_0)$ of the time-step size, the unique invariant distribution of the numerical scheme~\eqref{eq:scheme2} is the Gaussian distribution $\nu$ given by~\eqref{eq:nu}: if $X_0^\tau$ is a random variable with distribution $\nu$, independent of the sequence $\bigl(\Gamma_n\bigr)_{n\in\N_0}$ of cylindrical Gaussian random variables, then the distribution of $X_n^\tau$ is equal to $\nu$ for all $n\in\N_0$.
\end{propo}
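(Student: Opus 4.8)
The plan is to exploit the fact that, when $F=0$, the recursion \eqref{eq:scheme2} is a linear Gaussian autoregression in which all the operators involved ($\IA_\tau$, $\IB_\tau$, $\IL$) are simultaneously diagonalized in the orthonormal basis $(e_j)_{j\in\N}$ of Assumption~\ref{ass:Lambda}, with eigenvalues $a_j=(1+\lambda_j\tau)^{-1}$ for $\IA_\tau$, $b_j=\sqrt{2+\lambda_j\tau}/(\sqrt{2}(1+\lambda_j\tau))$ for $\IB_\tau$, and $\lambda_j$ for $\IL$. I would establish the two assertions separately: first that $\nu$ is invariant (a direct one-step covariance computation), and then that it is the unique invariant distribution (asymptotic stability of the chain).

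For invariance, suppose $X_0^\tau\sim\nu=\mathcal{N}(0,\tfrac12\IL^{-1})$ is independent of $\Gamma_0$. Since $\IB_\tau$ is Hilbert--Schmidt, $\sqrt{\tau}\IB_\tau\Gamma_0$ is a centered $H$-valued Gaussian variable with covariance $\tau\IB_\tau\IB_\tau^\star$, and therefore $X_1^\tau=\IA_\tau X_0^\tau+\sqrt{\tau}\IB_\tau\Gamma_0$ is a centered $H$-valued Gaussian variable with covariance operator $\IA_\tau(\tfrac12\IL^{-1})\IA_\tau^\star+\tau\IB_\tau\IB_\tau^\star$, which is diagonal with $j$-th eigenvalue $a_j^2/(2\lambda_j)+\tau b_j^2$. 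Using \eqref{eq:IA}--\eqref{eq:IB} one gets $1-a_j^2=\lambda_j\tau(2+\lambda_j\tau)/(1+\lambda_j\tau)^2$ and $\tau b_j^2=\tau(2+\lambda_j\tau)/(2(1+\lambda_j\tau)^2)$, whence the key algebraic identity $\tau b_j^2=(1-a_j^2)/(2\lambda_j)$; this gives $a_j^2/(2\lambda_j)+\tau b_j^2=1/(2\lambda_j)$ for every $j$. Hence the covariance of $X_1^\tau$ is again $\tfrac12\IL^{-1}$, i.e. $X_1^\tau\sim\nu$, and by induction $X_n^\tau\sim\nu$ for all $n\in\N_0$. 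This identity, which is the whole purpose of the definition \eqref{eq:IB} of $\IB_\tau$, is the only place that specific choice enters.

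For uniqueness I would argue by asymptotic stability. Starting from any deterministic $x_0\in H$ (hence from any initial distribution), iterating \eqref{eq:scheme2} with $F=0$ yields $X_n^\tau=\IA_\tau^n x_0+\sqrt{\tau}\sum_{k=0}^{n-1}\IA_\tau^k\IB_\tau\Gamma_{n-1-k}$. Since $\|\IA_\tau^n\|_{\mathcal{L}(H)}\le(1+\lambda_1\tau)^{-n}\to 0$, the first term goes to $0$ in $H$; the second term is a centered $H$-valued Gaussian with covariance $\tau\sum_{k=0}^{n-1}\IA_\tau^k\IB_\tau\IB_\tau^\star(\IA_\tau^\star)^k$, which is diagonal with $j$-th eigenvalue $\tau b_j^2\sum_{k=0}^{n-1}a_j^{2k}\to\tau b_j^2/(1-a_j^2)=1/(2\lambda_j)$. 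By Assumption~\ref{ass:Lambda} we have $\sum_j 1/(2\lambda_j)<\infty$, so the limiting covariance $\tfrac12\IL^{-1}$ is trace class, the partial sums converge to it in trace norm, and consequently the distribution of $X_n^\tau$ converges weakly to the Gaussian measure $\mathcal{N}(0,\tfrac12\IL^{-1})=\nu$ given by \eqref{eq:nu}, independently of the initial distribution. This forces $\nu$ to be the unique invariant distribution.

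I do not expect any genuine obstacle here: the only substantive content is the algebraic identity $\tau b_j^2=(1-a_j^2)/(2\lambda_j)$ encoding the choice of $\IB_\tau$, together with the routine justification — via Assumption~\ref{ass:Lambda} — of the diagonalization and of the trace-norm convergence of the covariance series in the infinite-dimensional space $H$.
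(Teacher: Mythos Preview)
Your proof is correct and follows essentially the same approach as the paper: both rest on the algebraic identity $\tau\IB_\tau^2=\tfrac12\IL^{-1}(I-\IA_\tau^2)$ (which you write coordinate-wise as $\tau b_j^2=(1-a_j^2)/(2\lambda_j)$) to identify the invariant covariance as $\tfrac12\IL^{-1}$. The paper's version is terser---it simply states that the invariant covariance of the linear Gaussian recursion is $\tau(I-\IA_\tau^2)^{-1}\IB_\tau^2$ and verifies the operator identity---while you give a more explicit treatment, separating invariance from uniqueness and justifying the latter via trace-norm convergence of the covariance series; this extra care is sound but not strictly needed here.
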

Note that the standard Euler scheme~\eqref{eq:scheme-standard} does not preserve the invariant distribution $\nu$ when $F=0$, see Section~\ref{sec:standard} for more details and the issues which are raised by this non-preservation of the invariant distribution. We also refer to Section~\ref{sec:extensions} for two applications of Proposition~\ref{propo:invarGauss}, which justify the superiority of the modified Euler scheme over the standard Euler method, in this context where the process is Gaussian: the definition of asymptotic preserving schemes for a class of multiscale stochastic evolution systems in an averaging regime (Section~\ref{sec:AP}) and the definition of a Markov Chain Monte Carlo proposal kernel (Section~\ref{sec:MCMC}).

\begin{proof}
The proof is straightforward. On the one hand, the unique invariant distribution of~\eqref{eq:scheme2} when $F=0$ is the centered $H$-valued Gaussian random variable with covariance operator equal to
\[
\tau(I-\IA_\tau^2)^{-1}\IB_\tau^2.
\]
On the other hand, using the definitions of $\IA_\tau$ and $\IB_\tau$, one has the identity
\[
I-\IA_\tau^2=2\tau\IL\IB_\tau^2.
\]
Therefore one has
\begin{equation}\label{eq:identity-AB}
\tau\IB_\tau^2(I-\IA_\tau^2)^{-1}=\frac12\IL^{-1}
\end{equation}
which is the covariance of the centered Gaussian distribution $\nu$.
\end{proof}

\subsection{Third formulation of the modified Euler scheme}\label{sec:scheme-3rd}

In this section, we introduce the third formulation of the modified Euler scheme, which is a crucial tool to prove the main results below. This formulation consists in interpreting the modified Euler scheme~\eqref{eq:scheme}, or equivalently its second formulation~\eqref{eq:scheme2}, as the accelerated exponentiel Euler scheme associated with a modified stochastic evolution equation, of the type
\begin{equation}\label{eq:modifiedSPDE}
d\IX_\tau(t)=-\IL_\tau \IX_\tau(t)dt+Q_{\tau}F(\IX_\tau(t))dt+Q_{\tau}^{\frac12}dW(t),
\end{equation}
depending on two self-adjoint linear operators $\IL_\tau$ and $Q_\tau$ defined below. The initial value is $\IX_\tau(0)=x_0$. The mild formulation of the solution $\bigl(\IX_\tau(t)\bigr)_{t\ge 0}$ of the modified equation~\eqref{eq:modifiedSPDE} is given by
\begin{equation}\label{eq:modifiedSPDE-mild}
\IX_\tau(t)=e^{-t\IL_\tau}x_0+\int_{0}^{t}e^{-(t-s)\IL_\tau}Q_\tau F(\IX_\tau(s))ds+\int_0^t e^{-(t-s)\IL_\tau}Q_\tau^{\frac12}dW(s),
\end{equation}
for all $t\ge 0$, and the accelerated exponential Euler scheme is obtained by
\begin{equation}\label{eq:scheme-IX}
\IX_{\tau,n+1}=e^{-\tau\IL_\tau}\IX_{\tau,n}+\IL_\tau^{-1}(I-e^{-\tau\IL_\tau})Q_\tau F(\IX_{\tau,n})+\int_{t_n}^{t_{n+1}}e^{-(t_{n+1}-s)\IL_\tau}Q_\tau^{\frac12}dW(s),
\end{equation}
for all $n\in\N_0$, using the identity $\int_{t_n}^{t_{n+1}}e^{-(t_{n+1}-s)\IL_\tau}ds=\IL_\tau^{-1}(I-e^{-\tau\IL_\tau})$. Comparing~\eqref{eq:scheme2} and~\eqref{eq:scheme-IX}, for any value $\tau\in(0,\tau_0)$ of the time-step size, the equalities in distribution
\[
\bigl(X_n^\tau\bigr)_{n\in\N_0}=\bigl(\hat{X}_n^\tau\bigr)_{n\in\N_0}=\bigl(\IX_{\tau,n}\bigr)_{n\in\N_0}
\]
are satisfied when the following equalities hold:
\begin{align*}
&(I+\tau\IL)^{-1}=e^{-\tau\IL_\tau}\\
&(I+\tau\IL)^{-1}=\IL_\tau^{-1}(I-e^{-\tau\IL_\tau})Q_\tau\\
&\IB_\tau^2=\int_{0}^{\tau}e^{-s\IL_\tau}Q_\tau e^{-s\IL_\tau}ds.
\end{align*}
This leads to define the linear operators in the modified stochastic evolution equation~\eqref{eq:modifiedSPDE} as follows. For all $j\in\N$ and all $\tau\in(0,\tau_0)$, set
\begin{equation}\label{eq:modifiedILQ-eigen}
\begin{aligned}
\lambda_{\tau,j}&=\frac{\log(1+\tau\lambda_j)}{\tau}>0\\
q_{\tau,j}&=\frac{\log(1+\tau\lambda_j)}{\lambda_j\tau}>0,
\end{aligned}
\end{equation}
and define the self-adjoint linear operators $\IL_\tau$, $Q_\tau$ and $Q_\tau^{\frac12}$ by
\begin{equation}\label{eq:modifiedILQ}
\begin{aligned}
\IL_\tau x&=\sum_{j\in\N}\lambda_{\tau,j}\langle x,e_j\rangle e_j,\\
Q_\tau x&=\sum_{j\in\N}q_{\tau,j}\langle x,e_j\rangle e_j,\\
Q_\tau^{\frac12}&=\sum_{j\in\N}\sqrt{q_{\tau,j}}\langle x,e_j\rangle e_j,
\end{aligned}
\end{equation}
for all $x\in H$. It is straightforward to check that the conditions above are satisfied with these definitions of $\IL_\tau$ and $Q_\tau$.

Note that the linear operators $Q_\tau$ and $\IL_\tau$ commute, and they both commute with the linear operator $\IL$. The linear operators $e^{-t\IL_\tau}$ are defined by
\[
e^{-t\IL_\tau}x=\sum_{j\in\N}e^{-t\lambda_{\tau,j}}\langle x,e_j\rangle e_j
\]
for all $x\in H$, all $t\ge 0$ and all $\tau\in(0,\tau_0)$. Since $\lambda_{\tau,j}\ge 0$ for all $j\in\N$ and all $\tau\in(0,\tau_0)$, for all $t\ge 0$ the linear operator $e^{-t\IL_\tau}$ is bounded, with $\|e^{-t\IL_\tau}\|_{\mathcal{L}(H)}\le 1$.

It is worth mentioning that $\IL_\tau$ is an unbounded operator, with $\lambda_{\tau,j}$ growing like $\log(j)$ when $j\to\infty$, whereas $\lambda_j$ grows like $j^2$. This major difference in the behaviors of $\IL_\tau$ and $\IL$ leads to technical difficulties in the analysis below. Observe also that $Q_\tau^{\frac12}$ is an Hilbert--Schmidt linear operator for any $\tau\in(0,\tau_0)$: $\sum_{j\in\N}q_{\tau,j}<\infty$. As a consequence, the Gaussian random variables $\int_0^t e^{-(t-s)\IL_\tau}Q_\tau^{\frac12}dW(s)$ and $\int_{t_n}^{t_{n+1}}e^{-(t_{n+1}-s)\IL_\tau}Q_\tau^{\frac12}dW(s)$ appearing in the mild formulation~\eqref{eq:modifiedSPDE-mild} and in the associated scheme~\eqref{eq:scheme-IX} are well-defined with values in $H$. More precisely, it is straightforward to check that the following well-posedness result holds. The details of the proof are omitted.
\begin{propo}\label{propo:modified-wellposed}
Let Assumptions~\ref{ass:Lambda} and~\ref{ass:F} be satisfied. For any $\tau\in(0,\tau_0)$, let $\IL_\tau$ and $Q_\tau$ be defined by~\eqref{eq:modifiedILQ}. Then, for any initial value $x_0\in H$, the modified stochastic evolution equation~\eqref{eq:modifiedSPDE} admits a unique global mild solution $\bigl(\IX_\tau(t)\bigr)_{t\ge 0}$, satisfying~\eqref{eq:modifiedSPDE-mild}.

Moreover, if Assumption~\ref{ass:ergo} is satisfied, the modified stochastic evolution equation~\eqref{eq:modifiedSPDE} admits a unique invariant distribution $\mu_{\tau,\infty}$.
\end{propo}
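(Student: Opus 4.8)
The plan is to establish the two assertions — existence and uniqueness of a global mild solution, and existence and uniqueness of an invariant distribution under Assumption~\ref{ass:ergo} — by adapting the classical theory of semilinear stochastic evolution equations with globally Lipschitz drift and additive noise, exactly as for Propositions~\ref{propo:SPDE} and~\ref{propo:invar}, the only subtlety being the unusual spectral behaviour of $\IL_\tau$. Throughout I fix $\tau\in(0,\tau_0)$ and use the elementary facts, immediate from~\eqref{eq:modifiedILQ-eigen}, that $\lambda_{\tau,j}=\tau^{-1}\log(1+\tau\lambda_j)\ge\lambda_{\tau,1}>0$, so that $-\IL_\tau$ generates a strongly continuous semigroup of contractions with $\|e^{-t\IL_\tau}\|_{\mathcal{L}(H)}\le e^{-\lambda_{\tau,1}t}$; that $q_{\tau,j}=\lambda_{\tau,j}/\lambda_j\in(0,1)$ (using $\log(1+x)<x$) is non-increasing in $j$, so $Q_\tau$ and $Q_\tau^{\frac12}$ are bounded self-adjoint operators with $\|Q_\tau\|_{\mathcal{L}(H)}=q_{\tau,1}<1$; and that $\lambda_j\sim c_{\IL}j^2$ (Assumption~\ref{ass:Lambda}) forces $q_{\tau,j}=O((\log j)/j^2)$, hence $Q_\tau^{\frac12}$ is Hilbert--Schmidt with $\sum_{j\in\N}q_{\tau,j}<\infty$.

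For well-posedness I would first verify that the stochastic convolution $\int_0^t e^{-(t-s)\IL_\tau}Q_\tau^{\frac12}dW(s)$ is a well-defined centered $H$-valued Gaussian process admitting a continuous modification: It\^o's isometry gives $\E[|\cdot|^2]\le\sum_{j\in\N}q_{\tau,j}/(2\lambda_{\tau,j})=\sum_{j\in\N}1/(2\lambda_j)<\infty$ thanks to the identity $q_{\tau,j}/\lambda_{\tau,j}=1/\lambda_j$, and continuity follows from the factorization method (see~\cite{DPZ}) since $\int_0^T s^{-2\alpha}\|e^{-s\IL_\tau}Q_\tau^{\frac12}\|_{\mathcal{L}_2(H)}^2\,ds\le C_\alpha\sum_{j\in\N}\lambda_{\tau,j}^{2\alpha}/\lambda_j<\infty$ for every $\alpha\in(0,\frac12)$ (again because $\lambda_{\tau,j}^{2\alpha}/\lambda_j=O((\log j)^{2\alpha}/j^2)$). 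A Banach fixed point argument for the map $\Phi(\IX)(t)=e^{-t\IL_\tau}x_0+\int_0^t e^{-(t-s)\IL_\tau}Q_\tau F(\IX(s))\,ds+\int_0^t e^{-(t-s)\IL_\tau}Q_\tau^{\frac12}dW(s)$ on $C([0,T];H)$ equipped with a weighted supremum norm — using only $\|e^{-(t-s)\IL_\tau}Q_\tau\|_{\mathcal{L}(H)}\le1$ and the global Lipschitz bound of Assumption~\ref{ass:F} — then yields a unique mild solution on $[0,T]$; globality is automatic because the convolution kernel is nonsingular, so no blow-up can occur. This is the scheme of~\cite[Section~7.1]{DPZ}.

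For the invariant distribution, the decisive observation is that Assumption~\ref{ass:ergo} transfers to the modified equation: the modified nonlinearity $Q_\tau F$ is Lipschitz with constant at most $q_{\tau,1}\Lf$, and $\lambda_{\tau,1}-q_{\tau,1}\Lf=\tau^{-1}\log(1+\tau\lambda_1)\bigl(1-\Lf/\lambda_1\bigr)>0$ precisely because $\Lf<\lambda_1$. I would then run the standard dissipativity/coupling argument: for solutions $\IX_\tau^x$ and $\IX_\tau^y$ started from $x,y\in H$ and driven by the same cylindrical Wiener process, It\^o's formula gives $\frac{d}{dt}|\IX_\tau^x(t)-\IX_\tau^y(t)|^2\le-2(\lambda_{\tau,1}-q_{\tau,1}\Lf)|\IX_\tau^x(t)-\IX_\tau^y(t)|^2$, hence the pathwise contraction $|\IX_\tau^x(t)-\IX_\tau^y(t)|\le e^{-(\lambda_{\tau,1}-q_{\tau,1}\Lf)t}|x-y|$ for all $t\ge0$; It\^o's formula together with $\sum_{j\in\N}q_{\tau,j}<\infty$ likewise yields the uniform moment bound $\sup_{t\ge0}\E[|\IX_\tau(t)|^2]<\infty$. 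Combining the contraction with the Markov property (synchronous coupling) shows that the distributions of $\IX_\tau(t)$ form a Cauchy family, as $t\to\infty$, for the Wasserstein--$1$ distance $\mathcal{W}_1$; its limit is the unique invariant distribution $\mu_{\tau,\infty}$. Existence could alternatively be obtained via Krylov--Bogoliubov from a uniform-in-time bound on $\E[|\IX_\tau(t)|_\beta^2]$ for small $\beta\in(0,\frac14)$, which follows from~\eqref{eq:modifiedSPDE-mild} and $\sum_{j\in\N}\lambda_j^{2\beta-1}<\infty$, together with the compact embedding $H^\beta\hookrightarrow H$. This parallels Proposition~\ref{propo:invar}; see~\cite{DPZergo}.

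The one genuinely non-routine feature — and the reason the statement is recorded separately rather than quoted verbatim from Section~\ref{sec:SPDE} — is that $\IL_\tau$ has only logarithmic spectral growth, $\lambda_{\tau,j}\sim2\tau^{-1}\log j$, so $e^{-t\IL_\tau}$ regularizes far less strongly than $e^{-t\IL}$; one must therefore check that this weak smoothing still suffices for the continuity of the stochastic convolution and for the tightness needed in the invariant-measure argument. What rescues every such estimate is precisely the identity $q_{\tau,j}=\lambda_{\tau,j}/\lambda_j$, which turns all the relevant series into ones controlled by the fixed eigenvalues $\lambda_j\sim c_{\IL}j^2$ rather than by $\lambda_{\tau,j}$; once this is observed, the remaining computations repeat the standard theory verbatim and I would omit them, consistently with the assertion that the details are left out.
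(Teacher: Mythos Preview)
Your proposal is correct and matches the paper's approach. The paper omits the proof of Proposition~\ref{propo:modified-wellposed} entirely, deferring to the refined Proposition~\ref{propo:modified-wellposed-bound} in Section~\ref{sec:auxiliary-modified}, whose proof proceeds exactly as you outline: fixed point for well-posedness (details omitted there too), then a pathwise contraction argument for two solutions driven by the same noise, using $\langle\IL_\tau\cdot,\cdot\rangle\ge\lambda_{\tau,1}|\cdot|^2$ and $\|Q_\tau\|_{\mathcal{L}(H)}=q_{\tau,1}$ to obtain the rate $q_{\tau,1}(\lambda_1-\Lf)$, followed by standard arguments for existence and uniqueness of $\mu_{\tau,\infty}$.

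Two minor remarks. First, the paper writes the contraction for the auxiliary processes $\IY_\tau^i=\IX_\tau^i-\IW_\tau$, but since $\IX_\tau^2-\IX_\tau^1=\IY_\tau^2-\IY_\tau^1$ this is the same computation as yours; note that this difference solves a \emph{deterministic} equation (the additive noise cancels), so what you call ``It\^o's formula'' is just ordinary calculus. Second, your discussion of the factorization method and the key identity $q_{\tau,j}/\lambda_{\tau,j}=1/\lambda_j$ is more explicit than anything in the paper, and correctly isolates why the logarithmic growth of $\lambda_{\tau,j}$ causes no trouble here.
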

Note that Proposition~\ref{propo:modified-wellposed-bound} stated and proved below (see Section~\ref{sec:auxiliary}) gives a refined version of Proposition~\ref{propo:modified-wellposed}, with bounds which are uniform with respect to $\tau$.

When $\tau\to 0$, it is observed that for any fixed $j\in\N$, one has $\lambda_{\tau,j}\to \lambda_j$ and $q_{\tau,j}\to 1$. As a consequence, it is expected that $\IX_\tau(t)$ converges to $X(t)$, at least in distribution, for all $t\ge 0$. Proving this convergence result is part and giving the rate of convergence with respect to $\tau$ are part of the proof of Theorem~\ref{theo:weak}. The proof requires precise error estimates for the errors $\IL_\tau x-\IL_\tau x$ and $Q_\tau x-x$: see Lemma~\ref{lem:Q_tauIL_tau-error} in Section~\ref{sec:auxiliary-operators}.

The third formulation~\eqref{eq:scheme-IX} of the modified Euler scheme is crucial in the analysis, but it is not needed for the implementation of the scheme (which is performed using the initial formulation~\eqref{eq:scheme}). In particular, the linear operators $\IL_\tau$, $e^{-\tau\IL_\tau}$ or $Q_\tau$ do not need to be computed.

One of the main properties of the modified stochastic evolution equation~\eqref{eq:modifiedSPDE} is the following result concerning its invariant distribution, when the conditions of Proposition~\ref{propo:mu_star} are fulfilled.
\begin{propo}\label{propo:mu_star_modified}
Let Assumptions~\ref{ass:ergo} and~\ref{ass:gradient} be satisfied. For any value $\tau\in(0,\tau_0)$ of the time-step size, the unique invariant distribution $\mu_{\tau,\infty}$ (see Proposition~\ref{propo:modified-wellposed}) of the modified stochastic evolution equation~\eqref{eq:modifiedSPDE} is equal to the Gibbs distribution $\mu_\star$ given by~\eqref{eq:mu_star}.
\end{propo}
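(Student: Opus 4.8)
The plan is to reinterpret the modified stochastic evolution equation \eqref{eq:modifiedSPDE} as a preconditioned version of the original equation \eqref{eq:SPDE}, sharing the same drift potential and the same Gaussian reference measure, and then to conclude by the uniqueness statement of Proposition~\ref{propo:modified-wellposed}.

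The crucial algebraic observation is that, by the definitions \eqref{eq:modifiedILQ-eigen}, one has $\lambda_{\tau,j}=\lambda_j q_{\tau,j}$ for all $j\in\N$, hence $\IL_\tau=Q_\tau\IL=\IL Q_\tau$. Under Assumption~\ref{ass:gradient}, the modified equation \eqref{eq:modifiedSPDE} can therefore be rewritten as
\[
d\IX_\tau(t)=-Q_\tau\bigl(\IL\IX_\tau(t)+DV(\IX_\tau(t))\bigr)dt+Q_\tau^{\frac12}dW(t)=-\tfrac12 Q_\tau\nabla U(\IX_\tau(t))\,dt+Q_\tau^{\frac12}dW(t),
\]
where $U(x)=\langle\IL x,x\rangle+2V(x)$, so that $\nabla U(x)=2\IL x+2DV(x)$. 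This is precisely the original equation \eqref{eq:SPDE}, corresponding to the choice $Q_\tau=I$, preconditioned by the bounded, self-adjoint, positive operator $Q_\tau$ commuting with $\IL$. Note in passing that when $F=0$ the associated Ornstein--Uhlenbeck process has invariant distribution the centered Gaussian with covariance $\int_0^\infty e^{-2s\IL_\tau}Q_\tau\,ds=\tfrac12\IL_\tau^{-1}Q_\tau=\tfrac12\IL^{-1}$, i.e.\ the same $\nu$ given by \eqref{eq:nu} as for \eqref{eq:SPDE}; thus the Gaussian reference measure is unchanged.

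I would make this rigorous within the spectral Galerkin approximation of Section~\ref{sec:Galerkin}. On the finite-dimensional space $H^J$, the measure $\mu_\star$ becomes the probability measure $\mu_\star^J$ with Lebesgue density $\rho$ proportional to $\exp(-U(x))=\exp(-\langle\IL x,x\rangle-2V(x))$, so that $\nabla\log\rho(x)=-\nabla U(x)=2(-\IL x+F(x))$, and the drift of the Galerkin modified equation equals $-\IL_\tau x+Q_\tau F(x)=\tfrac12 Q_\tau\nabla\log\rho(x)$. Since the associated diffusion operator is the constant symmetric positive operator $Q_\tau$, an elementary integration by parts gives, for every $\varphi\in\mathcal{C}_b^2(H^J)$,
\[
\int_{H^J}\Bigl(\langle-\IL_\tau x+Q_\tau F(x),\nabla\varphi(x)\rangle+\tfrac12\mathrm{Tr}\bigl(Q_\tau D^2\varphi(x)\bigr)\Bigr)\rho(x)\,dx=\tfrac12\int_{H^J}\bigl(\langle Q_\tau\nabla\rho,\nabla\varphi\rangle-\langle Q_\tau\nabla\varphi,\nabla\rho\rangle\bigr)\,dx=0,
\]
which is exactly the stationarity of $\mu_\star^J$ for the Galerkin modified equation (the reversibility of a preconditioned overdamped Langevin dynamics). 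Letting $J\to\infty$, using $\mu_\star^J\to\mu_\star$ weakly together with the convergence of the Galerkin solutions to the mild solution of \eqref{eq:modifiedSPDE}, shows that $\mu_\star$ is invariant for \eqref{eq:modifiedSPDE}; since under Assumption~\ref{ass:ergo} this invariant distribution is unique (Proposition~\ref{propo:modified-wellposed}), one concludes $\mu_{\tau,\infty}=\mu_\star$. Alternatively, since the Gaussian reference $\nu$ is unchanged, one may bypass the Galerkin step and repeat verbatim the proof of Proposition~\ref{propo:mu_star} (see \cite[Theorem~8.6.3]{DPZergo}) with $\IL$ replaced by $\IL_\tau=Q_\tau\IL$ and $F$ by $Q_\tau F$.

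The only genuinely new ingredient is the identity $\IL_\tau=Q_\tau\IL$; everything else is structurally identical to the situation of Proposition~\ref{propo:mu_star}. Consequently the main (and only real) obstacle is the rigorous infinite-dimensional justification of the integration by parts above, given that $V$ is merely of class $\mathcal{C}^1$ and $Q_\tau^{\frac12}$ is Hilbert--Schmidt but not a priori trace class; routing the argument through the finite-dimensional Galerkin approximation (or through the already-established Proposition~\ref{propo:mu_star}) is precisely what makes this step routine.
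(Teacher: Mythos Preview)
Your proposal is correct and follows essentially the same route as the paper: the paper's proof sketch says to apply the spectral Galerkin procedure, identify the finite-dimensional invariant distribution as the Gibbs distribution with respect to the approximation of $\nu$, and pass to the limit, with details omitted. You carry this out explicitly, making precise the key algebraic identity $\IL_\tau=Q_\tau\IL$ (which the paper uses elsewhere, e.g.\ in the proof of Lemma~\ref{lem:utau-0}) and the resulting interpretation of \eqref{eq:modifiedSPDE} as a $Q_\tau$-preconditioned overdamped Langevin dynamics; this is exactly the ``crucial feature'' the paper alludes to when it says the presence of $Q_\tau$ and $Q_\tau^{1/2}$ in front of the nonlinearity and the Wiener process is what justifies the proposition.
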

Observe that if $F=0$, this result is consistent with Proposition~\ref{propo:invarGauss} above (when $F=0$, the accelerated exponential Euler scheme gives $\IX_{\tau,n}=\IX_\tau(t_n)$ for all $n\in\N$ by construction). The crucial feature of the modified equation~\eqref{eq:modifiedSPDE} associated with the modified Euler scheme~\eqref{eq:scheme} which justifies Proposition~\ref{propo:mu_star_modified} is the presence of the operators $Q_\tau$ and $Q_\tau^{\frac12}$ in front of the nonlinearity and of the Wiener process respectively. The proof of Proposition~\ref{propo:mu_star_modified} is similar to the proof of Proposition~\ref{propo:mu_star}: it consists in applying a spectral Galerkin approximation procedure (see Section~\ref{sec:Galerkin}), in identifying the invariant distribution for the finite dimensional approximation as a Gibbs distribution with respect to the finite dimensional approximation of $\nu$, and in taking the limit. The details are omitted. Note that in general the result of Proposition~\ref{propo:mu_star_modified} does not hold when $F$ does not have the gradient structure given by Assumption~\ref{ass:gradient}, hence the need to treat separately the gradient and the general case below.

\subsection{Additional remarks}\label{sec:scheme-rem}

Before proceeding with the statement and proofs of the main results concerning the modified Euler scheme, let us state two remarks concerning the construction of the integrator in its first formulation~\eqref{eq:scheme}, in particular to explain how the conditions~\eqref{eq:operators} for the linear operators $\IB_{\tau,1}$ and $\IB_{\tau,2}$ are found, and why two sequences of cylindrical Gaussian random variables appear in the formulation~\eqref{eq:scheme}. Remarks~\ref{rem:construction} and~\ref{rem:postproc} do not play any role in the analysis below.

\begin{rem}\label{rem:construction}
Assume that $F=0$. Introduce the self-adjoint linear operators
\[
\tilde{\IA}_\tau=\tilde{\IB}_\tau=(I+2\tau\IL)^{-1/2}.
\]
Then it is straightforward to check that the identity
\[
2\tau\IL\tilde{\IB}_\tau^2=1-\tilde{\IA}_\tau^2
\]
is satisfied. Introduce the auxiliary numerical scheme defined by
\begin{equation}\label{eq:scheme-aux}
\tilde{X}_{n+1}^{\tau}=\tilde{\IA}_\tau\tilde{X}_{n}^{\tau}+\sqrt{\tau}\tilde{\IB}_\tau\tilde{\Gamma}_n=\tilde{\IA}_\tau\bigl(\tilde{X}_n^\tau+\sqrt{\tau}\tilde{\Gamma}_n\bigr),
\end{equation}
with initial value $\tilde{X}_0^{\tau}=x_0$, where $\bigl(\tilde{\Gamma}_n\bigr)_{n\in\N_0}$ is a sequence of independent cylindrical Gaussian random variables. The identity above shows that the auxiliary scheme~\eqref{eq:scheme-aux} preserves the Gaussian invariant distribution $\nu$, for any value $\tau\in(0,\tau_0)$ of the time-step size. However, this auxiliary scheme is not suitable for a general practical implementation since computing $\tilde{\IA}_\tau$ and $\tilde{\IB}_\tau$ would require the knowledge of the eigenvalues $\bigl(\lambda_j\bigr)_{j\in\N}$ and of the eigenfunctions $\bigl(e_j\bigr)_{j\in\N}$ of $\IL$.

The formulation~\eqref{eq:scheme} of the modified Euler scheme is obtained setting
\[
X_n^{\tau}=\tilde{X}_{2n}^{\frac{\tau}{2}}
\]
with $\Gamma_{n,1}=\tilde{\Gamma}_{2n}$ and $\Gamma_{n,2}=\tilde{\Gamma}_{2n+1}$. This means that, formally, when $F=0$, the modified Euler scheme with time-step size $\tau$ is obtained by composing two steps of the auxiliary scheme~\eqref{eq:scheme-aux} with time-step size $\frac{\tau}{2}$ defined above. Indeed, one then has for all $n\in\N_0$
\begin{align*}
X_{n+1}^\tau&=\tilde{X}_{2n+2}^{\frac{\tau}{2}}\\
&=\tilde{\IA}_{\frac{\tau}{2}}\bigl(\tilde{X}_{2n+1}^{\frac{\tau}{2}}+\sqrt{\frac{\tau}{2}}\tilde{\Gamma}_{2n+1}\bigr)\\
&=\tilde{\IA}_{\frac{\tau}{2}}\Bigl(\tilde{\IA}_{\frac{\tau}{2}}\bigl(\tilde{X}_{2n}^{\frac{\tau}{2}}+\sqrt{\frac{\tau}{2}}\tilde{\Gamma}_{2n}\bigr)+\sqrt{\frac{\tau}{2}}\tilde{\Gamma}_{2n+1}\Bigr)\\
&=\tilde{\IA}_{\frac{\tau}{2}}^2X_n^\tau+\sqrt{\frac{\tau}{2}}\tilde{\IA}_{\frac{\tau}{2}}^2\Gamma_{n,1}+\sqrt{\frac{\tau}{2}}\tilde{\IA}_{\frac{\tau}{2}}\Gamma_{n,2},
\end{align*}
with the identities
\begin{align*}
\tilde{\IA}_{\frac{\tau}{2}}^2\Gamma_{n,1}&=(I+\tau\IL) {-1}=\IA_\tau\\
\frac{1}{\sqrt{2}}\tilde{\IA}_{\frac{\tau}{2}}^2&=\frac{1}{\sqrt{2}}(I+\tau\IL^{-1})=\IB_{\tau,1}\\
\bigl(\frac{1}{\sqrt{2}}\tilde{\IA}_{\frac{\tau}{2}}\bigr)\bigl(\frac{1}{\sqrt{2}}\tilde{\IA}_{\frac{\tau}{2}}\bigr)^\star&=\frac12(I+\tau\IL)^{-1}=\IB_{\tau,2}\IB_{\tau,2}^\star.
\end{align*}
In practice, $\IB_{\tau,2}\neq \frac{1}{\sqrt{2}}\tilde{\IA}_{\frac{\tau}{2}}$ in general, when a Cholesky decomposition is used to implement the scheme~\eqref{eq:scheme}.

The interpretation of the modified Euler scheme~\eqref{eq:scheme} using the auxiliary scheme~\eqref{eq:scheme-aux} gives a justification for the identification of the conditions on the operators $\IB_{\tau,1}$ and $\IB_{\tau,2}$ appearing in~\eqref{eq:operators}. In addition, this interpretation also shows that the cylindrical Gaussian random variables $\Gamma_{n,1}$ and $\Gamma_{n,2}$ may interpreted in terms of increments of the cylindrical Wiener process as follows:
\[
\Gamma_{n,1}=W(t_{n+\frac12})-W(t_n),\quad \Gamma_{n,2}=W(t_{n+1})-W(t_{n+\frac12}),
\]
with $t_n=n\tau$ and $t_{n+\frac12}=t_n+\frac{\tau}{2}=\frac{t_n+t_{n+1}}{2}$. The reason why two sequences $\bigl(\Gamma_{n,1})_{n\in\N_0}$ and $\bigl(\Gamma_{n,2}\bigr)_{n\in\N_0}$ of cylindrical Gaussian random variables appear in the formulation~\eqref{eq:scheme} of the modified Euler scheme is now clear using this interpretation based on the auxiliary scheme~\eqref{eq:scheme-aux}.

Finally, observe that the Gaussian distribution $\nu$ for any value $\tau\in(0,\tau_0)$ is preserved both by the auxiliary scheme~\eqref{eq:scheme-aux} and by the modified Euler scheme~\eqref{eq:scheme} when $F=0$, for any value $\tau\in(0,\tau_0)$ of the time-step size. This property is consistent with the equality $X_n^\tau=\tilde{X}_{2n}^{\frac{\tau}{2}}$ for all $n\in\N_0$.
\end{rem}

\begin{rem}\label{rem:postproc}
The definition of the modified Euler scheme~\eqref{eq:scheme} requires auxiliary linear operators similar to those appearing in the definition of the postprocessed integrator introduced in~\cite{BV}:
\begin{equation}\label{eq:postproc}
\left\lbrace
\begin{aligned}
X_{n+1}^{\tau,{\rm pp}}&=\IA_\tau\Bigl(X_n^{\tau,{\rm pp}}+\tau F\bigl(X_n^{\tau,{\rm pp}}+\sqrt{\tau}\frac12\IA_\tau\gamma_n\bigr)+\sqrt{\tau}\Gamma_n\Bigr),\\
\underline{X}_n^{\tau,{\rm pp}}&=X_n^{\tau,{\rm pp}}+\frac12\mathcal{J}_\tau\sqrt{\tau}\Gamma_n,
\end{aligned}
\right.
\end{equation}
where $\IA_\tau$ is given by~\eqref{eq:IA} and the linear operator $\mathcal{J}_\tau$ is such that $\mathcal{J}_\tau\mathcal{J}_\tau^\star=(I+\frac12\tau\IL)^{-1}$. The initial value is $X_0^{pp}=x_0$, and $\bigl(\Gamma_n\bigr)_{n\in\N}$ are independent cylindrical Gaussian random variables.

Like for the definition of $\IB_{\tau,2}$, there are multiple choices to choose $\mathcal{J}_{\tau}$, and in practice a Cholesky decomposition of $(I+\frac12\tau\IL)^{-1}$ can be employed. The operators $\frac12(I+\tau\IL)^{-1}$ and $(I+\frac12\tau\IL)^{-1}$ appearing in the two Cholesky decompositions to define $\IB_{\tau,2}$ and $\mathcal{J}_{\tau}$ have similar expressions, this observation is justified below.

The postprocessing integrator~\eqref{eq:postproc} is another type of modification of the standard Euler scheme~\eqref{eq:scheme-standard}. It has been introduced in~\cite{BV} to provide a better approximation of the invariant distribution $\mu_\infty$: it is proved that if $F=0$, then $\underline{X}_N^{\tau,{\rm pp}}$ converges in distribution to the Gaussian distribution $\nu$ (defined by~\eqref{eq:nu}) when $N\to\infty$, for any value of the time-step size $\tau\in(0,\tau_0)$ (while one has $X_n^{\tau,{\rm pp}}=X_n^{\tau,\s}$ for all $n\in\N_0$). This property justifies the requirement that $\mathcal{J}_\tau$ is a solution of $\mathcal{J}_\tau\mathcal{J}_\tau^\star=(I+\frac12\tau\IL)^{-1}$, and the similitude with the requirement in~\eqref{eq:operators} for $\IB_{\tau,2}$ such that $\nu$ is also preserved by the modified Euler scheme~\eqref{eq:scheme}.

Using the postprocessed integrator~\eqref{eq:postproc} is computationally less expensive than using the modified Euler scheme~\eqref{eq:scheme}: indeed it is required to compute $\underline{X}_n^{\tau,{\rm pp}}$ only at the last step $n=N$ of the numerical experiment, if the objective is to approximate the invariant distribution only. Note that it is not known whether the postprocessed integrator~\eqref{eq:postproc} leads to improved approximation of the invariant distribution $\mu_\infty$ in general (when $F\neq 0$). In addition, as will be explained below using the modified Euler scheme~\eqref{eq:scheme} results in better qualitative properties than using the standard Euler integrator~\eqref{eq:scheme-standard}, or than using the postprocessed integrator~\eqref{eq:postproc} if $\underline{X}_n^{\tau,{\rm pp}}$ is not computed at all time steps. Computing $\underline{X}_n^{\tau,{\rm pp}}$ at all time steps would result in a scheme with essentially the same computational cost as using the modified Euler scheme~\eqref{eq:scheme}.
\end{rem}

\begin{rem}
The recent article~\cite{ABV} presents another illustration of how the choice of the discretization of the noise may have an impact on the spatial regularity property of the numerical solution. In~\cite{ABV}, the linear part is discretized using explicit-stabilized integrators, instead of an implicit discretization with $\IA_\tau=(I+\tau\IL)^{-1}$, and two methods are proposed. One of the methods has a behavior similar to the one of the modified Euler scheme studied in this article. However, the method from~\cite{ABV} does not preserve the Gaussian invariant distribution $\nu$ in the Ornstein--Uhlenbeck case. 
\end{rem}
\section{Main results}\label{sec:results}

We are in position to state the main results of this article. In this section, the linear operator $\IL$ and the nonlinear operator $F$ satisfy at least Assumptions~\ref{ass:Lambda} and~\ref{ass:F} respectively. Recall that the time-step size is denoted by $\tau$ and satisfies $\tau\in(0,\tau_0)$.

Let us first recall the definition of the modified Euler scheme~\eqref{eq:scheme}, using its second formulation~\eqref{eq:scheme2}: one has
\begin{equation}\label{eq:res-modifiedscheme}
X_{n+1}^\tau=\IA_\tau\bigl(X_n^\tau+\tau F(X_n^\tau)\bigr)+\sqrt{\tau}\IB_\tau\Gamma_n,\quad X_0^\tau=x_0.
\end{equation}
To illustrate the main qualitative and quantitative results concerning the scheme~\eqref{eq:res-modifiedscheme}, it is convenient to introduce two integrators which have been extensively studied in the literature. First, the standard Euler scheme is given by~\eqref{eq:scheme-standard}: one has
\begin{equation}\label{eq:res-standardscheme}
X_{n+1}^{\tau,\s}=\IA_\tau\Bigl(X_n^{\tau,\s}+\tau F(X_n^{\tau,\s})+\sqrt{\tau}\Gamma_n\Bigr),\quad X_0^\tau=x_0.
\end{equation}
Second, the (accelerated) exponential Euler scheme is given as follows: for all $n\in\N_0$,
\begin{equation}\label{eq:res-exponentialscheme}
X_{n+1}^{\tau,\e}=e^{-\tau\IL}X_n^{\tau,\e}+\IL^{-1}(I-e^{-\tau\IL})F(X_n^{\tau,\e})+\int_{t_n}^{t_{n+1}}e^{-(t_{n+1}-s)\IL}dW(s),\quad X_0^{\tau,\e}=x_0.
\end{equation}

The qualitative behavior of the modified Euler scheme~\eqref{eq:res-modifiedscheme} is studied in Subsection~\ref{sec:results_quali}. Error estimates for this new integrator are then stated in Subsection~\ref{sec:results_invar} and~\ref{sec:results_weak}. The most relevant result which justifies the study of the proposed scheme is Theorem~\ref{theo:weakinv} in Subsection~\ref{sec:results_invar}, in a specific context (approximation of the Gibbs invariant distribution under the gradient structure assumption). On the contrary, the results in Subsection~\ref{sec:results_weak} are more standard and are verified for the other schemes, however precise statements and detailed proofs are provided since they show that the new scheme is applicable in a general framework. Old and new results on the standard and exponential Euler schemes are stated in Section~\ref{sec:results_standard} and~\ref{sec:results_exponential} respectively, in order to illustrate the properties of the modified Euler schemes compared with those methods.

\subsection{Qualitative behavior of the modified Euler scheme}\label{sec:results_quali}

Let us first assume that $x_0=0$ and $F=0$: therefore the solution of the stochastic evolution equation~\eqref{eq:SPDE} is the stochastic convolution $\bigl(W^{\IL}(t)\bigr)_{t\ge 0}$ defined by~\eqref{eq:StochasticConvolution}. In that setting, the solution of the modified Euler scheme is given by
\begin{equation}\label{eq:WNtau}
\begin{aligned}
W_N^\tau&=\sum_{n=0}^{N-1}\IA_\tau^{N-n-1}\Bigl(\IB_{\tau,1}\sqrt{\tau}\Gamma_{n,1}+\IB_{\tau,2}\sqrt{\tau}\Gamma_{n,2}\Bigr)\\
&=\sum_{n=0}^{N-1}\IA_\tau^{N-n-1}\IB_\tau\sqrt{\tau}\Gamma_n
\end{aligned}
\end{equation}
for all $N\in\N_0$, using the two equivalent formulations~\eqref{eq:scheme} and~\eqref{eq:scheme2}.

We are in position to state the first main result of this article.
\begin{theo}\label{theo:equivalence}
For all $\tau\in(0,\tau_0)$ and all $N\in\mathbb{N}$, the distributions $\rho(t_N)=\rho_{W^{\IL}(t_N)}$ and $\rho_N^\tau=\rho_{W_N^\tau}$ of the Gaussian random variables $W^{\IL}(t_N)$ and $W_N^\tau$ are equivalent. Moreover, they are both equivalent to the Gaussian distribution $\nu$ given by~\eqref{eq:nu}.
\end{theo}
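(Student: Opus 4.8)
The plan is to realise all three probability distributions as centered Gaussian measures on $H$ with covariance operators that are diagonal in the orthonormal basis $(e_j)_{j\in\N}$ of Assumption~\ref{ass:Lambda}, compute their eigenvalues explicitly, and then apply the Feldman--H\'ajek theorem (see~\cite{DPZ}). First I would record the covariance operators. The measure $\nu$ has covariance $\frac12\IL^{-1}$, hence eigenvalues $\frac{1}{2\lambda_j}$. For the stochastic convolution, It\^o's isometry gives
\[
\mathrm{Cov}\bigl(W^{\IL}(t_N)\bigr)=\int_0^{t_N}e^{-2s\IL}\,ds=\frac12\IL^{-1}\bigl(I-e^{-2t_N\IL}\bigr),
\]
with eigenvalues $\frac{1}{2\lambda_j}\bigl(1-e^{-2t_N\lambda_j}\bigr)$. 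For the numerical solution, starting from the expression~\eqref{eq:WNtau}, using that $W_N^\tau$ is a finite sum of $H$-valued centered Gaussian random variables, that $\IA_\tau$ and $\IB_\tau$ are self-adjoint and commute, and that the $\Gamma_n$ are independent cylindrical Gaussian random variables, one obtains
\[
\mathrm{Cov}\bigl(W_N^\tau\bigr)=\tau\sum_{n=0}^{N-1}\IA_\tau^{2(N-n-1)}\IB_\tau^2=\tau\IB_\tau^2(I-\IA_\tau^2)^{-1}\bigl(I-\IA_\tau^{2N}\bigr)=\frac12\IL^{-1}\bigl(I-\IA_\tau^{2N}\bigr),
\]
where the last equality is precisely the identity~\eqref{eq:identity-AB}; the eigenvalues are $\frac{1}{2\lambda_j}\bigl(1-(1+\tau\lambda_j)^{-2N}\bigr)$.

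Next I would invoke the Feldman--H\'ajek theorem, comparing each of $\rho(t_N)$ and $\rho_N^\tau$ with $\nu$. Since all three covariance operators are simultaneously diagonalisable with strictly positive eigenvalues, two of them with eigenvalue sequences $(a_j)_{j\in\N}$ and $(b_j)_{j\in\N}$ have equivalent distributions provided $\inf_j a_j/b_j>0$, $\sup_j a_j/b_j<\infty$ (equality of the Cameron--Martin spaces), and $\sum_{j\in\N}\bigl(a_j/b_j-1\bigr)^2<\infty$ (the Hilbert--Schmidt condition). For $\rho(t_N)$ versus $\nu$ the ratio is $r_j=1-e^{-2t_N\lambda_j}$, which satisfies $0<r_1\le r_j<1$ since $(\lambda_j)$ is non-decreasing and $t_N=N\tau>0$, and $\sum_j(r_j-1)^2=\sum_j e^{-4t_N\lambda_j}<\infty$ because $\lambda_j\sim c_\IL j^2\to\infty$. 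For $\rho_N^\tau$ versus $\nu$ the ratio is $r_j=1-(1+\tau\lambda_j)^{-2N}$, again with $0<r_1\le r_j<1$, and $\sum_j(r_j-1)^2=\sum_j(1+\tau\lambda_j)^{-4N}<\infty$ because $(1+\tau\lambda_j)^{-4N}\sim(\tau c_\IL)^{-4N}j^{-8N}$ with $N\ge1$. Hence $\rho(t_N)$ and $\rho_N^\tau$ are both equivalent to $\nu$, and therefore equivalent to each other by transitivity of equivalence of measures.

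The computations are routine; the only points needing care are the covariance identity~\eqref{eq:identity-AB} for the scheme and the verification that the Cameron--Martin spaces coincide (which is immediate here since the eigenvalue ratios lie in a fixed compact subinterval of $(0,\infty)$). I do not expect a substantial obstacle, but one should note that the hypothesis $N\ge1$ is genuinely used: for $N=0$ all three random variables equal the deterministic value $x_0=0$ and the statement is void, whereas for $N\ge1$ the positivity $\lambda_1>0$ and the growth $\lambda_j\sim c_\IL j^2$ from Assumption~\ref{ass:Lambda} are exactly what makes the Hilbert--Schmidt series converge.
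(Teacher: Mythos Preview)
Your proposal is correct and follows essentially the same approach as the paper: compute the three diagonal covariance operators, use the identity~\eqref{eq:identity-AB} to simplify the covariance of $W_N^\tau$, and apply the Feldman--H\'ajek criterion by checking the Hilbert--Schmidt condition on the eigenvalue ratios. The only cosmetic difference is that the paper phrases the Cameron--Martin condition as equality of ranges (both equal to $H^{1/2}$) rather than as a two-sided bound on the eigenvalue ratios, and the paper writes the Hilbert--Schmidt sums as $\sum_j e^{-2t_N\lambda_j}$ and $\sum_j(1+\tau\lambda_j)^{-2N}$ rather than their squares; either form suffices for finiteness.
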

Note that Theorem~\ref{theo:equivalence} does not hold for the standard Euler scheme~\eqref{eq:scheme-standard}: for all $\tau\in(0,\tau_0)$ and all $N\in\mathbb{N}$, the distributions of $W^{\IL}(t_n)$ and of
\[
W_N^{\tau,\s}=\sum_{n=0}^{N-1}\IA_\tau^{N-n}\sqrt{\tau}\Gamma_n
\]
are singular, see for instance~\cite{B:2020}.

\begin{proof}[Proof of Theorem~\ref{theo:equivalence}]
The proof follows from applying the Feldman-Hajek criterion, see for instance~\cite[Theorem~2.25]{DPZ}.

The covariance operator of the Gaussian distribution $\nu$ is denoted by $\mathcal{Q}=\frac12\IL^{-1}$, and for all $T\in(0,\infty)$, the covariance operator of the Gaussian distribution $\rho(T)=\rho_{W^{\IL}(T)}$ is given by
\[
\mathcal{Q}(T)=\int_0^{T}e^{-2t\IL}dt=\frac12\IL^{-1}\bigl(I-e^{-2T\IL})=\mathcal{Q}\bigl(I-e^{-2T\IL}).
\]
It suffices to check the two following items to check that $\rho(T)$ and $\nu$ are equivalent for all $T\in(0,\infty)$.
\begin{itemize}
\item The ranks $\mathcal{Q}(H)$ and $\mathcal{Q}(T)(H)$ of the operators $\mathcal{Q}$ and $\mathcal{Q}(T)$ respectively are equal to $H^{\frac12}$.
\item The linear operator $\mathcal{R}(T)=\bigl(\mathcal{Q}^{-\frac12}\mathcal{Q}(T)^{\frac12}\bigr)\bigl(\mathcal{Q}^{-\frac12}\mathcal{Q}(T)^{\frac12}\bigr)^\star-I$ is an Hilbert-Schmidt operator: indeed one has
\[
\sum_{j\in\N}|\mathcal{R}(T)e_j|^2=\sum_{j\in\N}e^{-2T\lambda_j}<\infty.
\]
\end{itemize}
For all $\tau\in(0,\tau_0)$ and $N\in\mathbb{N}$, the covariance operator of the Gaussian distribution $\rho_N^\tau=\rho_{W_N^\tau}$ is given by
\begin{align*}
\mathcal{Q}_N^\tau&=\tau\sum_{n=0}^{N-1}\IA_\tau^{N-n-1}\IB_\tau\bigl(\IA_\tau^{N-n-1}\IB_\tau\bigr)^{\star}\\
&=\tau\IB_\tau^2(I-\IA_\tau^2)^{-1}(I-\IA_\tau^{2N})\\
&=\frac{1}{2}\IL^{-1}(I-\IA_\tau^{2N})\\
&=\mathcal{Q}(I-\IA_\tau^{2N}),
\end{align*}
owing to the identity~\eqref{eq:identity-AB}. As above, it suffices to check the two following items to check that $\rho_N^\tau$ and $\nu$ are equivalent.
\begin{itemize}
\item The ranks $\mathcal{Q}(H)$ and $\mathcal{Q}_N^H(H)$ of the operators $\mathcal{Q}$ and $\mathcal{Q}_N^\tau$ respectively are equal to $H^{\frac12}$.
\item The linear operator $\mathcal{R}_N^\tau=\bigl(\mathcal{Q}^{-\frac12}(\mathcal{Q}_\tau)^{\frac12}\bigr)\bigl(\mathcal{Q}^{-\frac12}(\mathcal{Q}_N^\tau)^{\frac12}\bigr)^\star-I$ is an Hilbert-Schmidt operator: one has
\[
\sum_{j\in\N}|\mathcal{R}_N^\tau e_j|^2=\sum_{j\in\N}\frac{1}{(1+\tau\lambda_j)^{2N}}<\infty.
\]
\end{itemize}
The application of the Feldman-Hajek criterion then concludes the proof of Theorem~\ref{theo:equivalence}.
\end{proof}

In the semilinear case ($F\neq 0$), one has the following result, which shows that the modified Euler scheme~\eqref{eq:scheme} preserves the spatial regularity of the solution of the stochastic evolution equation~\eqref{eq:SPDE}, at all times.
\begin{theo}\label{theo:regularity}
Let $F$ satisfy Assumption~\ref{ass:F}, and let $x_0\in H$ be an arbitrary initial value. For all $\alpha\in[0,1)$, $\tau\in(0,\tau_0)$ and $N\ge 1$, the following statements are equivalent:
\begin{enumerate}
\item[(i)] $\E[|X(N\tau)|_\alpha^2]<\infty$,
\item[(ii)] $\E[|X_N^\tau|_\alpha^2]<\infty$,
\item[(iii)] $\alpha\in[0,\frac14)$.
\end{enumerate}
\end{theo}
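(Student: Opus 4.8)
The plan is to exploit the fact that, in this additive‑noise globally Lipschitz setting, the spatial regularity of both $X(N\tau)$ and $X_N^\tau$ is dictated solely by their Gaussian (stochastic) parts: the contributions of the initial datum and of the nonlinearity are smoother and influence neither statement. Concretely I would write
\[
X(N\tau)=Y+W^{\IL}(N\tau),\qquad X_N^\tau=Y^\tau+W_N^\tau,
\]
with $Y=e^{-N\tau\IL}x_0+\int_0^{N\tau}e^{-(N\tau-s)\IL}F(X(s))\,ds$, $W^{\IL}$ the stochastic convolution \eqref{eq:StochasticConvolution}, $Y^\tau=\IA_\tau^N x_0+\tau\sum_{n=0}^{N-1}\IA_\tau^{N-n}F(X_n^\tau)$, and $W_N^\tau$ given by \eqref{eq:WNtau}. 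The strategy is to prove $\E[|Y|_\alpha^2]<\infty$ and $\E[|Y^\tau|_\alpha^2]<\infty$ for every $\alpha\in[0,1)$, then to reduce items (i) and (ii) to the regularity of the centered Gaussian variables $W^{\IL}(N\tau)$ and $W_N^\tau$, which I would analyse by an explicit series in the eigenbasis $(e_j)_{j\in\N}$. This yields (i)$\Leftrightarrow$(iii) and (ii)$\Leftrightarrow$(iii), whence the full equivalence by transitivity.

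For the remainder terms: $|e^{-N\tau\IL}x_0|_\alpha\le C_\alpha(N\tau)^{-\alpha}|x_0|$ by the smoothing property \eqref{eq:smoothing}, while the deterministic integral is bounded in $L^2(\Omega;H^\alpha)$ using \eqref{eq:smoothing}, Minkowski's integral inequality, $\int_0^{N\tau}(N\tau-s)^{-\alpha}\,ds<\infty$ for $\alpha<1$, the linear growth of $F$ from Assumption~\ref{ass:F}, and the second moment bound $\sup_{s\le N\tau}\E[|X(s)|^2]<\infty$ recorded in Section~\ref{sec:SPDE}. For $Y^\tau$, the term $\IA_\tau^N x_0$ lies in $H^\alpha$ since $\sup_j\lambda_j^{2\alpha}(1+\tau\lambda_j)^{-2N}<\infty$ when $\alpha<1\le N$; the discrete convolution is controlled by the same ideas, using that $\|\IL^\alpha\IA_\tau^k\|_{\mathcal{L}(H)}$ is finite for $\alpha<1\le k$ (a uniform discrete smoothing estimate $\|\IL^\alpha\IA_\tau^k\|_{\mathcal{L}(H)}\le C_\alpha(k\tau)^{-\alpha}$ also holds, by maximising $u\mapsto u^\alpha(1+u)^{-k}$), together with the elementary bound $\sup_{n<N}\E[|X_n^\tau|^2]<\infty$. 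Hence $Y$ and $Y^\tau$ belong to $H^\alpha$ in $L^2(\Omega;H)$, and a fortiori almost surely, for all $\alpha\in[0,1)$.

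For the Gaussian parts I would use the covariance operators already computed in the proof of Theorem~\ref{theo:equivalence}: with $\mathcal{Q}=\tfrac12\IL^{-1}$ one has $\mathcal{Q}(N\tau)=\mathcal{Q}(I-e^{-2N\tau\IL})$ and $\mathcal{Q}_N^\tau=\mathcal{Q}(I-\IA_\tau^{2N})$, so
\[
\E[|W^{\IL}(N\tau)|_\alpha^2]=\tfrac12\sum_{j\in\N}\lambda_j^{2\alpha-1}\bigl(1-e^{-2N\tau\lambda_j}\bigr),\qquad
\E[|W_N^\tau|_\alpha^2]=\tfrac12\sum_{j\in\N}\lambda_j^{2\alpha-1}\bigl(1-(1+\tau\lambda_j)^{-2N}\bigr).
\]
Since $\lambda_j\sim c_{\IL}j^2$ by Assumption~\ref{ass:Lambda} and the bracketed factors are bounded away from $0$ and from $\infty$ for $j$ large, the general term is comparable to $j^{4\alpha-2}$, so both series converge if and only if $4\alpha-2<-1$, i.e. $\alpha<\tfrac14$. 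The implication (iii)$\Rightarrow$(i),(ii) follows from the triangle inequality in $L^2(\Omega;H^\alpha)$. For the converse, when $\alpha\in[\tfrac14,1)$ I would project onto the first $M$ modes: $\E[|P^MY|_\alpha^2]\le\E[|Y|_\alpha^2]<\infty$ uniformly in $M$, whereas $\E[|P^M W^{\IL}(N\tau)|_\alpha^2]\to\infty$, so $\E[|P^M X(N\tau)|_\alpha^2]^{1/2}\ge\E[|P^M W^{\IL}(N\tau)|_\alpha^2]^{1/2}-\E[|Y|_\alpha^2]^{1/2}\to\infty$, and monotone convergence gives $\E[|X(N\tau)|_\alpha^2]=\infty$; the argument for $X_N^\tau$ is identical.

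There is no genuinely hard step, and the paper's claim of straightforwardness is justified: the heart of the matter is the elementary series computation above, which makes transparent why the modified scheme reproduces exactly the $H^\alpha$, $\alpha<\tfrac14$, regularity threshold of the exact solution. The only points requiring a little care are the (routine) verification that the remainder terms $Y,Y^\tau$ sit in $H^\alpha$ for every $\alpha<1$, so that the nonlinearity does not shift the critical exponent, and the use of the truncations $P^M$ in the divergent regime to conclude that the relevant second moment is genuinely infinite rather than merely not controlled by the $\nu$-a.s.\ bounds.
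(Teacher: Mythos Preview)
Your proof is correct and follows essentially the same strategy as the paper: decompose $X(N\tau)$ and $X_N^\tau$ into a Gaussian part ($W^{\IL}(N\tau)$, resp.\ $W_N^\tau$) plus a remainder carrying the initial datum and the nonlinearity, show the remainder lies in $H^\alpha$ for the relevant range of $\alpha$, and then reduce the question to the explicit series for $\E[|W^{\IL}(N\tau)|_\alpha^2]$ and $\E[|W_N^\tau|_\alpha^2]$, using the covariance identities from the proof of Theorem~\ref{theo:equivalence}. You are in fact slightly more careful than the paper on two points: you verify the remainder bound for the full range $\alpha\in[0,1)$ (the paper only writes the argument for $\alpha\in(0,\tfrac12]$, invoking $\|\IL^{1/2}e^{-t\IL}\|$ and $\|\IL^{1/2}\IA_\tau\|$, though the extension is immediate), and you make explicit the reverse-triangle/projection step in the divergent regime $\alpha\ge\tfrac14$, which the paper leaves implicit in the phrase ``finite if and only if''.
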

It is worth mentioning that Theorem~\ref{theo:regularity} does not hold for the standard Euler scheme~\eqref{eq:scheme-standard}: one has
\[
\E[|X_N^{\tau,\s}|_\alpha^2]<\infty
\]
for all $\alpha\in(0,\frac12)$, see~\cite{B:2020}. This means that for a fixed time step size $\tau$, the approximate solution computed using the standard Euler scheme has higher spatial regularity than the exact solution, whereas the approximate solution computed using the modified Euler scheme preserves the spatial regularity property, expressed in terms of the Sobolev-like norms denoted by $|\cdot|_\alpha$. See Figures~\ref{fig1} and~\ref{fig2} for a numerical illustration of the different qualitative behaviors when using the modified Euler scheme and the standard Euler scheme.

The proof of Theorem~\ref{theo:regularity} is straightforward. Note that more precise moment bounds for $X_n^\tau$, which in particular are uniform over $\tau\in(0,\tau_0)$, are stated and proved below (see Lemma~\ref{lem:scheme-bound} in Section~\ref{sec:auxiliary-scheme}) and are instrumental in the error analysis.
\begin{proof}[Proof of Theorem~\ref{theo:regularity}]
First, it is straightforward to check that $\E[|X(N\tau)|^2]<\infty$ and $\E[|X_N^\tau|^2]<\infty$ for all $N\in\N$ and all $\tau\in(0,\tau_0)$. Second, owing to the inequality $\|\IL^{\frac12}e^{-t\IL}\|_{\mathcal{L}(H)}+\|\IL^{\frac12}\IA_\tau\|_{\mathcal{L}(H)}\le Ct^{-\frac12}+C\tau^{-\frac12}$ (using the~smoothing property~\eqref{eq:smoothing}), one obtains
\[
\E[|X(N\tau)-W^{\IL}(N\tau)|_\alpha^2]+\E[|X_N^\tau-W_N^\tau|_\alpha^2]<\infty
\]
for all $\alpha\in(0,\frac12]$. Finally, it remains to check that
\[
\E[|W^{\IL}(T)|_\alpha^2]=\int_0^T\|\IL^{\alpha}e^{-t\IL}\|_{\mathcal{L}_2(H)}^2 dt=\frac12\sum_{j\in\N}\frac{1-e^{-2T\lambda_j}}{2\lambda_j^{1-2\alpha}}
\]
and
\begin{align*}
\E[|W_N^\tau|_\alpha^2]&=\tau\sum_{n=0}^{N-1}\|\IL^{\alpha}\IA_\tau^{n}\IB_\tau\|_{\mathcal{L}_2(H)}^2\\
&=\tau\sum_{n=0}^{N-1}\sum_{j\in\N}\lambda_j^{2\alpha}|\IA_\tau e_j|^{2n}|\IB_\tau e_j|^2\\
&=\tau|\IB_\tau e_j|^2\sum_{j\in\N}\lambda_j^{2\alpha}\frac{1-|\IA_\tau e_j|^{2N}}{1-\|IA_\tau e_j|^2}\\
&=\sum_{j\in\N}\frac{1-\frac{1}{(1+\tau\lambda_j)^{2N}}}{2\lambda_j^{1-2\alpha}}
\end{align*}
are finite if and only if $\alpha<\frac14$, where the identity~\eqref{eq:identity-AB} has been used.

This concludes the proof of Theorem~\ref{theo:regularity}.
\end{proof}

Before proceeding with the statement of the error estimates, let us state the following result concerning the long-time behavior of the modified Euler scheme.
\begin{propo}\label{propo:invar-scheme}
If Assumption~\ref{ass:ergo} is satisfied, then for all $\tau\in(0,\tau_0)$, the modified Euler scheme~\eqref{eq:scheme} admits a unique invariant probability distribution $\mu_\infty^\tau$, which satisfies
\begin{equation}\label{eq:invar-scheme-bound}
\underset{\tau\in(0,\tau_0)}\sup~\int |x|_\alpha^2 d\mu_{\infty}^\tau<\infty.
\end{equation}
for all $\alpha\in[0,\frac14)$.

Moreover, there exists $C\in(0,\infty)$, such that for all functions $\varphi:H\to\R$ of class $\mathcal{C}^1$, for all $x_0\in H$, for all $\tau\in(0,\tau_0)$ and for all $N\in\N$, one has
\begin{equation}\label{eq:invar-scheme-ergo}
\big|\E[\varphi(X_N^\tau)]-\int\varphi d\mu_\infty^\tau\big|\le C\vvvert\varphi\vvvert_1 e^{-\kappa N\tau}(1+|x_0|),
\end{equation}
with $\kappa=\frac{\lambda_1-\Lf}{1+\tau_0\lambda_1}\in (0,\lambda_1-\Lf)$.

Finally, when $F=0$, then $\mu_\infty^\tau=\nu$ is the Gaussian distribution given by~\eqref{eq:nu}, which is the invariant distribution of the stochastic convolution~\eqref{eq:StochasticConvolution}.
\end{propo}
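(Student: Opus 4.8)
The engine behind all four assertions is a pathwise contraction of the one-step map, available because the noise is additive. I would run two copies $\bigl(X_n^\tau\bigr)_{n\ge 0}$ and $\bigl(Y_n^\tau\bigr)_{n\ge 0}$ of the scheme~\eqref{eq:scheme2} driven by the \emph{same} sequence $\bigl(\Gamma_n\bigr)$ but started from $x$ and $y$: the stochastic increments cancel, so that
\[
X_{n+1}^\tau-Y_{n+1}^\tau=\IA_\tau\Bigl((X_n^\tau-Y_n^\tau)+\tau\bigl(F(X_n^\tau)-F(Y_n^\tau)\bigr)\Bigr),
\]
and, using $\|\IA_\tau\|_{\mathcal{L}(H)}=(1+\tau\lambda_1)^{-1}$ together with Assumption~\ref{ass:F},
\[
|X_{n+1}^\tau-Y_{n+1}^\tau|\le c_\tau\,|X_n^\tau-Y_n^\tau|,\qquad c_\tau:=\frac{1+\tau\Lf}{1+\tau\lambda_1}.
\]
Under Assumption~\ref{ass:ergo} one has $c_\tau=1-\frac{\tau(\lambda_1-\Lf)}{1+\tau\lambda_1}\in(0,1)$, and since $\tau<\tau_0$ this yields $c_\tau\le 1-\kappa\tau\le e^{-\kappa\tau}$ with $\kappa=\frac{\lambda_1-\Lf}{1+\tau_0\lambda_1}\in(0,\lambda_1-\Lf)$, whence the almost sure estimate $|X_N^\tau-Y_N^\tau|\le e^{-\kappa N\tau}|x-y|$.

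Existence and uniqueness of $\mu_\infty^\tau$ is then obtained by a fixed point argument. Applied to optimally coupled initial conditions, the contraction above shows that the dual transition operator $P_\tau^\star$ (mapping the law of $X_0^\tau$ to that of $X_1^\tau$) is $c_\tau$-Lipschitz for the Wasserstein-$1$ distance on the complete space of Borel probability measures on $H$ with finite first moment; finiteness of first (indeed second) moments of the iterates, uniformly in $\tau$ and $n$, is provided by a discrete Gr\"onwall estimate on the mild representation $X_N^\tau=\IA_\tau^N x_0+\tau\sum_{n=0}^{N-1}\IA_\tau^{N-n}F(X_n^\tau)+W_N^\tau$ using $\tau\sum_{k\ge 1}\|\IA_\tau^k\|_{\mathcal{L}(H)}=\lambda_1^{-1}$, the bound $\Lf<\lambda_1$, and the uniform control of $\E[|W_N^\tau|^2]$ from the proof of Theorem~\ref{theo:equivalence} (this is Lemma~\ref{lem:scheme-bound}). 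The Banach fixed point theorem then yields a unique invariant measure $\mu_\infty^\tau$ in that space, together with $W_1\bigl(\rho_{X_N^\tau(x_0)},\mu_\infty^\tau\bigr)\le e^{-\kappa N\tau}\int|x_0-y|\,d\mu_\infty^\tau(y)$. Uniqueness among \emph{all} invariant probability measures (even those without a first moment) follows by coupling an arbitrary invariant $\mu$ with $\delta_{x_0}$ through common noise: $e^{-\kappa N\tau}|\xi-x_0|\to 0$ almost surely, so by dominated convergence $\rho_{X_N^\tau(x_0)}$ also converges to $\mu$ in a distance metrizing weak convergence, forcing $\mu=\mu_\infty^\tau$.

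For the moment bound~\eqref{eq:invar-scheme-bound} the plan is to estimate $\E[|X_N^\tau|_\alpha^2]$ uniformly in $\tau$ and let $N\to\infty$. Working modewise on the mild formula: the Gaussian part $W_N^\tau$ has covariance $\mathcal{Q}(I-\IA_\tau^{2N})\preceq\mathcal{Q}=\frac12\IL^{-1}$ (computed in the proof of Theorem~\ref{theo:equivalence}), hence $\E[|W_N^\tau|_\alpha^2]\le\frac12\sum_j\lambda_j^{2\alpha-1}$, which is finite exactly for $\alpha<\frac14$ (cf. the proof of Theorem~\ref{theo:regularity}) and independent of $\tau$ and $N$; the drift part contributes at most $B\lambda_j^{-1}$ on the $j$-th mode because $\tau\sum_{k\ge 1}(1+\tau\lambda_j)^{-k}=\lambda_j^{-1}$, where $B:=|F(0)|+\Lf\sup_{\tau,n}\bigl(\E[|X_n^\tau|^2]\bigr)^{1/2}<\infty$, giving a contribution $\le B^2\sum_j\lambda_j^{2\alpha-2}<\infty$; and the initial term $|\IL^\alpha\IA_\tau^N x_0|$ tends to $0$ as $N\to\infty$. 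Since $x\mapsto|x|_\alpha^2$ is nonnegative and lower semicontinuous on $H$ and $\rho_{X_N^\tau(x_0)}\to\mu_\infty^\tau$ weakly, one concludes $\int|x|_\alpha^2\,d\mu_\infty^\tau\le\liminf_N\E[|X_N^\tau|_\alpha^2]\le C_\alpha$ with $C_\alpha$ independent of $\tau$. I expect this to be the delicate step: the uniform-in-$\tau$ estimate has to be carried out modewise, because the operator norms $\|\IL^\alpha\IA_\tau^k\|_{\mathcal{L}(H)}$ decay only like $(k\tau)^{-\alpha}$ and the crude bound $\tau\sum_k\|\IL^\alpha\IA_\tau^k\|_{\mathcal{L}(H)}$ is \emph{not} uniform in $\tau$.

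Finally, the ergodicity rate~\eqref{eq:invar-scheme-ergo} follows from the pathwise contraction by a stationary coupling: let $\xi\sim\mu_\infty^\tau$ be independent of the noise and let $\bigl(Y_n^\tau\bigr)$ be the scheme started from $\xi$ and driven by the same noise as $\bigl(X_n^\tau\bigr)$ (started from $x_0$); then $Y_N^\tau\sim\mu_\infty^\tau$ for all $N$ by invariance, so for $\varphi$ of class $\mathcal{C}^1$ one has
\[
\Bigl|\E[\varphi(X_N^\tau)]-\int\varphi\,d\mu_\infty^\tau\Bigr|=\bigl|\E[\varphi(X_N^\tau)-\varphi(Y_N^\tau)]\bigr|\le\vvvert\varphi\vvvert_1\,\E[|X_N^\tau-Y_N^\tau|]\le\vvvert\varphi\vvvert_1\,e^{-\kappa N\tau}\,\E[|x_0-\xi|],
\]
and $\E[|x_0-\xi|]\le|x_0|+\int|y|\,d\mu_\infty^\tau\le C(1+|x_0|)$ by~\eqref{eq:invar-scheme-bound}. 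The last assertion, that $\mu_\infty^\tau=\nu$ when $F=0$, is precisely Proposition~\ref{propo:invarGauss}, the two formulations~\eqref{eq:scheme} and~\eqref{eq:scheme2} of the scheme having the same law.
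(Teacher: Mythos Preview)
Your argument is correct and shares its engine with the paper's proof: the pathwise contraction $|X_N^\tau-Y_N^\tau|\le c_\tau^N|x-y|$ with $c_\tau=\frac{1+\tau\Lf}{1+\tau\lambda_1}\le e^{-\kappa\tau}$, obtained by driving two copies of~\eqref{eq:scheme2} with the same noise, followed by a stationary coupling to deduce~\eqref{eq:invar-scheme-ergo}. The paper proceeds identically on this core step.

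The differences are in the two auxiliary points. For existence of $\mu_\infty^\tau$ the paper does not run a Banach fixed point in $W_1$; it invokes the Krylov--Bogoliubov criterion, using the uniform-in-$(\tau,n)$ moment bound in $H^\alpha$ (inequality~\eqref{eq:lem-scheme-bound-alpha}, and its time-uniform version~\eqref{eq:tildeIX-bound-ergo}) that was derived for the continuous-time interpolant $\tilde{\IX}_\tau$ via the smoothing inequalities~\eqref{eq:IL_tau-smoothing}--\eqref{eq:IL_tau-smoothing-bis} for the semigroup $e^{-t\IL_\tau}$. The same moment bound then yields~\eqref{eq:invar-scheme-bound} directly. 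Your route---a direct modewise estimate on the discrete mild formula, using $\tau\sum_{k\ge 1}(1+\tau\lambda_j)^{-k}=\lambda_j^{-1}$ and the explicit covariance $\mathcal{Q}(I-\IA_\tau^{2N})$ of $W_N^\tau$---is more elementary and entirely avoids the third (exponential-Euler) formulation of the scheme; the paper's route simply reuses machinery already built for the weak error analysis. Both reach the same conclusions, and your observation that the crude bound $\tau\sum_k\|\IL^\alpha\IA_\tau^k\|_{\mathcal{L}(H)}$ fails to be uniform in $\tau$ is exactly the reason the paper passes through the continuous interpolant instead.
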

Contrary to the previous results Theorems~\ref{theo:equivalence} and~\ref{theo:regularity}, the first and second parts of Theorem~\ref{propo:invar-scheme} hold also for the standard Euler scheme. Note that the last part of Theorem~\ref{propo:invar-scheme} is given by Proposition~\ref{propo:invarGauss} above, and is not satisfied by the standard Euler scheme.

The proof of Proposition~\ref{propo:invar-scheme} employs standard arguments but requires several technical moment estimates to check that $C$ and $\kappa$ do not depend on $\tau\in(0,\tau_0)$. The proof is thus postponed to Section~\ref{sec:auxiliary-scheme}.

\subsection{Approximation in the total variation distance of the Gibbs invariant distribution}\label{sec:results_invar}

We are in position to state the major result of this article.
\begin{theo}\label{theo:weakinv}
Let the nonlinearity $F$ satisfy Assumptions~\ref{ass:ergo},~\ref{ass:gradient},~\ref{ass:Fregul1} and~\ref{ass:Fregul2}. For all $\delta\in(0,\frac12)$ and $\tau_0\in(0,1)$, there exists $C_{\delta}\in(0,\infty)$ such that for all $\tau\in(0,\tau_0)$ one has
\begin{equation}\label{eq:theo-weakinv_dTVinvar}
d_{\rm TV}(\mu_\infty^\tau,\mu_\star)\le C_\delta\tau^{\frac12-\delta},
\end{equation}
where $\mu_\infty^\tau$ is the invariant distribution of $\bigl(X_n^\tau)_{n\ge 0}$ (Proposition~\ref{propo:invar-scheme}) and $\mu_\star$ is the invariant distribution of $\bigl(X(t)\bigr)_{t\ge 0}$ (Proposition~\ref{propo:invar}), with is the Gibbs distribution given by~\eqref{eq:mu_star} (Proposition~\ref{propo:mu_star}).

Moreover, for all $\delta\in(0,\frac12)$ and $\tau_0\in(0,1)$, there exists $C_{\delta}\in(0,\infty)$ such that for all $x_0\in H^{\frac14-\frac{\delta}{8}}$, all $\tau\in(0,\tau_0)$ and $N\in\N$, with $N\tau\ge 1$, and any bounded measurable function $\varphi\in\mathcal{B}_b(H)$, one has
\begin{equation}\label{eq:theo-weakinv_weakerror}
\big|\E[\varphi(X_N^\tau)]-\int\varphi d\mu_\star\big|\le C_{\delta}\vvvert\varphi\vvvert\Bigl(\tau^{\frac12-\delta}(1+|x_0|_{\frac14-\frac{\delta}{8}}^2)+e^{-\kappa N\tau}(1+|x_0|)\Bigr),
\end{equation}
with $\kappa=\frac{\log(1+\tau_0\lambda_1)}{\tau_0\lambda_1}(\lambda_1-\Lf)\in(0,\lambda_1-\Lf)$.
\end{theo}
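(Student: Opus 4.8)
The plan is to exploit the third formulation of the scheme. By Section~\ref{sec:scheme-3rd} one has the equality in distribution $X_N^\tau=\IX_{\tau,N}$, where $\bigl(\IX_{\tau,n}\bigr)_{n\ge0}$ is the accelerated exponential Euler scheme~\eqref{eq:scheme-IX} applied to the modified stochastic evolution equation~\eqref{eq:modifiedSPDE}, and by Proposition~\ref{propo:mu_star_modified} the Gibbs distribution $\mu_\star$ is \emph{exactly} the invariant distribution of~\eqref{eq:modifiedSPDE}. So both~\eqref{eq:theo-weakinv_dTVinvar} and~\eqref{eq:theo-weakinv_weakerror} reduce to controlling the error between the accelerated exponential Euler discretisation of~\eqref{eq:modifiedSPDE} and the exact modified process $\IX_\tau$, plus the ergodicity of the latter. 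The decisive point is that no discretisation of the Gaussian part $\nu$ is introduced in this picture (the noise is $Q_\tau^{1/2}dW$, integrated exactly), which is what makes bounded measurable test functions accessible.

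The main step is a uniform-in-time weak error estimate for the modified equation: for every $\varphi\in\mathcal{B}_b(H)$, uniformly over $\tau\in(0,\tau_0)$ and $N$ with $N\tau\ge1$,
\[
\big|\E_{x_0}[\varphi(\IX_{\tau,N})]-\E_{x_0}[\varphi(\IX_\tau(N\tau))]\big|\le C_\delta\vvvert\varphi\vvvert\,\tau^{\frac12-\delta}\bigl(1+|x_0|_{\frac14-\frac{\delta}{8}}^2\bigr).
\]
I would follow the proof of Theorem~\ref{theo:weak-expo} with $\IL$, $I$ replaced by $\IL_\tau$, $Q_\tau$. Writing $u_\tau(t,x)=\E_x[\varphi(\IX_\tau(t))]$ for the solution of the Kolmogorov equation of~\eqref{eq:modifiedSPDE} and $T=N\tau$, telescope
\[
\E[\varphi(\IX_{\tau,N})]-\E[\varphi(\IX_\tau(T))]=\sum_{n=0}^{N-1}\E\bigl[u_\tau(T-t_{n+1},\IX_{\tau,n+1})-u_\tau(T-t_{n+1},\tilde\IX_\tau(t_{n+1}))\bigr],
\]
where $\tilde\IX_\tau$ is the exact modified process on $[t_n,t_{n+1}]$ started at $\IX_{\tau,n}$. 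Since the scheme integrates $e^{-\cdot\IL_\tau}$ and the stochastic convolution exactly, the only local error comes from freezing the nonlinearity, i.e. from $\int_{t_n}^{t_{n+1}}e^{-(t_{n+1}-s)\IL_\tau}Q_\tau\bigl(F(\IX_\tau(s))-F(\IX_{\tau,n})\bigr)\,ds$. A Taylor expansion of $u_\tau(T-t_{n+1},\cdot)$, the regularity estimates for $Du_\tau$ and $D^2u_\tau$ from Section~\ref{sec:auxiliary-kolmogorov-modified} (with constants uniform in $\tau$, an integrable-in-$t$ singularity at $t=0$ and exponential decay as $t\to\infty$ coming from Assumption~\ref{ass:ergo}), Assumptions~\ref{ass:Fregul1}--\ref{ass:Fregul2}, and the Malliavin integration-by-parts formula~\eqref{eq:MalliavinIBP} applied to the increment $F(\IX_\tau(s))-F(\IX_{\tau,n})$ give a local error of order $\tau^{\frac32-\delta}$; here the $Q_\tau^{1/2}$ in front of the noise (non-degeneracy) together with the extra half power of $Q_\tau$ relative to the $Q_\tau$ in front of $F$ produces the gain, and is precisely why $\varphi$ only needs to be bounded and measurable. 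Summing the $N\simeq T/\tau$ local errors and using the exponential-in-time decay to keep the sum bounded uniformly in $T$ yields the estimate; the quadratic dependence on $|x_0|_{\frac14-\frac{\delta}{8}}$ comes from the product of regularity norms in Assumption~\ref{ass:Fregul1}.

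Next I would establish the exponential ergodicity of the modified equation: there is $\kappa=\frac{\log(1+\tau_0\lambda_1)}{\tau_0\lambda_1}(\lambda_1-\Lf)$ such that $\big|\E_{x_0}[\psi(\IX_\tau(t))]-\int\psi\,d\mu_\star\big|\le C\vvvert\psi\vvvert_1e^{-\kappa t}(1+|x_0|)$ for all $\mathcal{C}^1$ functions $\psi$, uniformly in $\tau$. This follows from a synchronous coupling of two copies of~\eqref{eq:modifiedSPDE}: denoting by $z(t)$ the difference and using $Q_\tau^{-1}\IL_\tau=\IL$, one gets $\frac{d}{dt}\langle Q_\tau^{-1}z(t),z(t)\rangle\le-2(\lambda_1-\Lf)|z(t)|^2$ by Assumption~\ref{ass:ergo} and $\|Q_\tau\|_{\mathcal{L}(H)}\le1$; translating the weighted norm back to $|\cdot|$ using $q_{\tau,1}\ge\frac{\log(1+\tau_0\lambda_1)}{\tau_0\lambda_1}$ yields the stated rate. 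Combining this with the weak error estimate above and peeling off a unit time interval, one writes $\E_{x_0}[\varphi(\IX_\tau(N\tau))]=\E_{x_0}[(P_1^\tau\varphi)(\IX_\tau(N\tau-1))]$, where $P_1^\tau$ is the time-one Markov semigroup of~\eqref{eq:modifiedSPDE}, whose regularising property gives $\vvvert P_1^\tau\varphi\vvvert_1\le C\vvvert\varphi\vvvert$ (uniform in $\tau$, valid since $N\tau\ge1$), and uses $\int P_1^\tau\varphi\,d\mu_\star=\int\varphi\,d\mu_\star$; together with the equality in distribution $X_N^\tau=\IX_{\tau,N}$ this gives~\eqref{eq:theo-weakinv_weakerror}.

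Finally,~\eqref{eq:theo-weakinv_dTVinvar} follows by applying~\eqref{eq:theo-weakinv_weakerror} to the scheme started from its own invariant distribution: by invariance $\int\varphi\,d\mu_\infty^\tau=\E_{\mu_\infty^\tau}[\varphi(X_N^\tau)]$, so averaging~\eqref{eq:theo-weakinv_weakerror} over $x_0\sim\mu_\infty^\tau$ (which is independent of the driving noise) and using the moment bound~\eqref{eq:invar-scheme-bound}, valid since $\frac14-\frac{\delta}{8}<\frac14$, gives $\big|\int\varphi\,d\mu_\infty^\tau-\int\varphi\,d\mu_\star\big|\le C_\delta\vvvert\varphi\vvvert\bigl(\tau^{\frac12-\delta}+e^{-\kappa N\tau}\bigr)$ for all $N$ with $N\tau\ge1$; letting $N\to\infty$ and taking the supremum over $\vvvert\varphi\vvvert\le1$ yields~\eqref{eq:theo-weakinv_dTVinvar}. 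The hard part is the second step: obtaining the order $\tfrac12-\delta$ \emph{for bounded measurable test functions and uniformly in the final time $T=N\tau$}. This needs the full regularity theory of the infinite-dimensional Kolmogorov equation of~\eqref{eq:modifiedSPDE} with all constants uniform in $\tau\in(0,\tau_0)$ despite $\IL_\tau$ being almost bounded (its eigenvalues grow only like $\log j$, unlike $\lambda_j\sim c_\IL j^2$, so the smoothing is weaker and effectively available only after times of order $\tau$, which is the source of the restriction $N\tau\ge1$), the Malliavin-calculus integration by parts upgrading the order from $1/4$ to $1/2$, and careful tracking of the exponential-in-time decay so that the $N\simeq T/\tau$ local errors sum to a bound independent of $T$.
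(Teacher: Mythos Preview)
Your overall strategy matches the paper's: use the third formulation, exploit that $\mu_\star$ is the exact invariant distribution of the modified equation~\eqref{eq:modifiedSPDE} (Proposition~\ref{propo:mu_star_modified}), split the error into an ergodicity term and a discretisation term, and upgrade from $\mathcal{C}^1$ to bounded test functions via the smoothing of $\IP_{\tau,1}$. Your deduction of~\eqref{eq:theo-weakinv_dTVinvar} from~\eqref{eq:theo-weakinv_weakerror} by averaging over $x_0\sim\mu_\infty^\tau$ is a valid alternative to the paper's choice $x_0=0$, $N\to\infty$.

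Your treatment of the discretisation term, however, is more complicated than needed and rests on estimates the paper does not provide. You propose a spatial Taylor expansion of $u_\tau(T-t_{n+1},\cdot)$ together with Malliavin integration by parts, which requires $D^2u_\tau$ estimates for bounded $\varphi$, uniformly in $\tau$. Section~\ref{sec:auxiliary-kolmogorov-modified} contains \emph{only first-order} estimates (Lemmas~\ref{lem:utau-1}--\ref{lem:utau-0}, Proposition~\ref{propo:utau-regularity}); the paper explicitly remarks that $D^2u_\tau$ never enters the argument. The paper's route is to introduce a global continuous-time auxiliary process $\tilde\IX_\tau$ (Equation~\eqref{eq:tildeIX}) and apply It\^o's formula to $t\mapsto u_\tau(T-t,\tilde\IX_\tau(t))$ on each $[t_n,t_{n+1}]$. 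Because $u_\tau$ solves the Kolmogorov equation~\eqref{eq:IKolmogorov} and $\tilde\IX_\tau$ carries the exact diffusion $Q_\tau^{1/2}dW$, the second-order and $\IL_\tau$ contributions cancel identically, leaving
\[
e_n^\tau=\int_{t_n}^{t_{n+1}}\E\bigl[Du_\tau(T-t,\tilde\IX_\tau(t)).\bigl(Q_\tau F(\tilde\IX_\tau(t_n))-Q_\tau F(\tilde\IX_\tau(t))\bigr)\bigr]\,dt,
\]
which is controlled using only Proposition~\ref{propo:utau-regularity}, Assumption~\ref{ass:Fregul1}, and the increment bound~\eqref{eq:tildeIX-increment-ergo}. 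No Malliavin calculus is used for Theorem~\ref{theo:weakinv}; formula~\eqref{eq:MalliavinIBP} appears only in the proof of Theorem~\ref{theo:weak}, for the error between the modified and original equations. So your approach would work in principle but requires additional second-order Kolmogorov regularity (with the delicate $\tau$-uniformity), whereas the paper's It\^o--Kolmogorov cancellation bypasses this entirely.

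A minor technical point on ergodicity: from $\frac{d}{dt}\langle Q_\tau^{-1}z,z\rangle\le -2(\lambda_1-\Lf)|z|^2$ you cannot ``translate back'' to exponential decay in $|\cdot|$, because $Q_\tau^{-1}$ is unbounded (so $\langle Q_\tau^{-1}z,z\rangle$ and $|z|^2$ are not comparable from above). The paper (proof of Proposition~\ref{propo:modified-wellposed-bound}) works directly with $|z|^2$: using $\|Q_\tau\|_{\mathcal{L}(H)}=q_{\tau,1}$ and $\lambda_{\tau,1}=q_{\tau,1}\lambda_1$ one gets $\frac{d}{dt}|z|^2\le -2q_{\tau,1}(\lambda_1-\Lf)|z|^2$, and $q_{\tau,1}\ge\frac{\log(1+\tau_0\lambda_1)}{\tau_0\lambda_1}$ yields the stated rate $\kappa$.
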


Note that the weak error estimate~\eqref{eq:theo-weakinv_weakerror} can be equivalently written as
\begin{equation}\label{eq:theo-weakinv_dTV-time}
d_{\rm TV}(\rho_{X_N^\tau},\mu_\star)\le C_{\delta}\Bigl(\tau^{\frac12-\delta}(1+|x_0|_{\frac14-\frac{\delta}{8}}^2)+e^{-\kappa N\tau}(1+|x_0|)\Bigr),
\end{equation}
where we recall that $\rho_{X_N^\tau}$ denotes the distribution of the $H$-valued random variable $X_N^\tau$. The condition $N\tau\ge 1$ is not restrictive when considering the regime of approximation of the invariant distribution. The condition $x_0\in H^{\frac14-\frac{\delta}{8}}$ for the initial value is not very restrictive. It may be weakened at the price of additional technical arguments, which are omitted in order to focus on the most original points of the approach to prove the main results.

To the best of our knowledge, Theorem~\ref{theo:weakinv} is the first result in the literature where a numerical approximation of the invariant distribution of an infinite dimensional stochastic evolution equation equation is obtained in the total variation distance. Indeed, previous results are obtained in the $d_2$ distance introduced in Section~\ref{sec:notation}, which requires regularity of the function $\varphi$ to obtain a weak error estimate of the type~\eqref{eq:theo-weakinv_weakerror}. For the standard Euler scheme, as explained in~\cite{B:2020}, the total variation distance
\[
d_{\rm TV}(\mu_\infty^{\tau,\s},\mu_\star)=2
\]
does not converge to $0$ when $\tau\to 0$ (where $\mu_\infty^{\tau,\s}$ denotes the invariant distribution of the standard Euler scheme~\eqref{eq:res-standardscheme}). The equality above holds when $F=0$, whereas using the modified Euler scheme one has $\mu_\infty^{\tau}=\nu=\mu_\star$ for all $t\ge 0$ when $F=0$ (see Proposition~\ref{propo:invarGauss}). Theorem~\ref{theo:weakinv} is thus a major improvement over existing results. New results with approximation in the total variation distance for the standard and the exponential Euler schemes are stated in Section~\ref{sec:results_standard} and~\ref{sec:results_exponential} below, and are compared with Theorem~\ref{theo:weakinv-standard}.

As will be explained below, the value $1/2$ for the order of convergence in Theorem~\ref{theo:weakinv} has a natural meaning: it is in correspondance with the temporal H\"older regularity $1/4$ for the solution of~\eqref{eq:SPDE}, and this order coincides with the usual order of convergence in the distance $d_2$ of the numerical scheme in general. It is expected that the value $1/2$ is optimal, but checking this is left open.

\begin{rem}\label{rem:fe}
In practice, a spatial discretization needs to be applied to implement the modified Euler scheme~\eqref{eq:res-modifiedscheme}, using either a finite differences, a finite element, or a spectral Galerkin method. The result of Theorem~\ref{theo:weakinv} needs to be interpreted carefully in this context. Let $h\in(0,h_0)$ denote the spatial discretization parameter (with $h\to 0$): then (for instance) the error estimate~\eqref{eq:theo-weakinv_dTVinvar} is written as
\[
d_{\rm TV}(\mu_\infty^{\tau,h},\mu_\star^h)\le C_\delta\tau^{\frac12-\delta}
\]
where $C_\delta\in(0,\infty)$ is independent of $h$, and where $\mu_\infty^{\tau,h}$ and $\mu_\star^h$ denote the invariant distributions, depending on $h$. Since the supports of these distributions is finite dimensional (with dimension depending on $h$), one has
\[
d_{\rm TV}(\mu_\star^h,\mu_\star)=d_{\rm TV}(\mu_\infty^{\tau,h},\mu_\infty^{\tau})=2
\]
does not converge to $0$ when $h\to 0$. Therefore, the total variation distance $d_{\rm TV}(\mu_\infty^{\tau,h},\mu_\star)$ does not satisfy an error estimate of the type~\eqref{eq:theo-weakinv_dTVinvar}.
\end{rem}

Let us give the most important arguments of the proof of Theorem~\ref{theo:weakinv}. The objective is to prove a weak error estimate of the type~\eqref{eq:theo-weakinv_weakerror}, for bounded and continuous functions $\varphi$ (this is sufficient, see Section~\ref{sec:notation}). First, the modified Euler scheme~\eqref{eq:res-modifiedscheme} is understood using its third formulation~\eqref{eq:scheme-IX}: it is interpreted as the (accelerated) exponential Euler scheme applied to the modified stochastic evolution equation~\eqref{eq:modifiedSPDE}. With this interpretation, the left-hand side of the weak error estimate may be decomposed as
\begin{equation}\label{eq:decomperror}
\E[\varphi(X_N^\tau)]-\int\varphi d\mu_\star=\E[\varphi(\IX_\tau(N\tau))]-\int\varphi d\mu_\star+\E[\varphi(\IX_{\tau,N})]-\E[\varphi(\IX_\tau(N\tau))],
\end{equation}
where $\bigl(\IX_\tau(t)\bigr)_{t\ge 0}$ denotes the solution of~\eqref{eq:modifiedSPDE} and using the identity $\IX_{n,\tau}=X_n^\tau$ for all $n\in\N_0$, see Section~\ref{sec:scheme-3rd}.

On the one hand, the first term on the right-hand side of~\eqref{eq:decomperror} vanishes when $N\to\infty$, owing to Proposition~\ref{propo:mu_star_modified}: under Assumptions~\ref{ass:gradient}, $\mu_\star$ is the unique invariant distribution of the modified equation~\eqref{eq:modifiedSPDE} for all $\tau\in(0,\tau_0)$. This is the first crucial observation which leads to Theorem~\ref{theo:weakinv}. Additional technical arguments are required to proved the error estimate.

On the other hand, the second term on the right-hand side of~\eqref{eq:decomperror} can be treated using the following result.
\begin{propo}\label{propo:utau-regularity}
Let Assumptions~\ref{ass:ergo} and~\ref{ass:Fregul2} be satisfied and $\varphi:H\to \R$ be a bounded and continuous function. For all $\tau\in(0,\tau_0)$, $t\ge 0$ and $x\in H$, set
\[
u_\tau(t,x)=\E_x[\varphi(\IX_\tau(t))].
\]
For all $t>0$, $u_\tau(t,\cdot)$ is differentiable and one has the following estimate: for all $\delta\in(0,\frac12)$, there exists $C_{\delta}\in(0,\infty)$ such that for all $\tau\in(0,\tau_0)$ and for all $t\in(2\tau,\infty)$, one has 
\begin{equation}\label{eq:utau-regularity}
\big|Du_\tau(t,x).h\big|\le C_{\delta} e^{-\kappa t}\vvvert\varphi\vvvert_0 \bigl(1\wedge (t-2\tau)\bigr)^{-\frac12}\Bigl(\sqrt{\tau}|h|+\bigl(1\wedge (t-2\tau)\bigr)^{-\frac12+\delta}|\IL^{-\frac12+\delta}h|\Bigr)
\end{equation}
for all $x,h\in H$, with $\kappa=\frac{\log(1+\tau_0\lambda_1)}{\tau\lambda_1}(\lambda_1-\Lf)>0$.
\end{propo}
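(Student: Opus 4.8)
The plan is to differentiate the function $u_\tau(t,x)=\E_x[\varphi(\IX_\tau(t))]$ using the variation process associated with the modified SEE~\eqref{eq:modifiedSPDE}, and then to exploit the smoothing properties of the semigroup $e^{-t\IL_\tau}$ together with the crucial factor $Q_\tau$ (or $Q_\tau^{1/2}$) in front of the nonlinearity and the noise. Since $\varphi$ is only assumed bounded and continuous (not differentiable), I cannot differentiate under the expectation directly; instead, following the strategy introduced in~\cite{Debussche:11} and recalled in Section~\ref{sec:notation}, I would split the time interval and use the Bismut--Elworthy--Li type integration-by-parts formula via Malliavin calculus~\eqref{eq:MalliavinIBP}. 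First I would work in the spectral Galerkin approximation of Section~\ref{sec:Galerkin} (so that all quantities are finite dimensional and classical) and establish the bound with constants independent of $J$, then pass to the limit. The first step is to introduce the derivative process $\eta^h_\tau(t)=D\IX_\tau(t).h$, which solves the linearized equation $d\eta^h_\tau(t)=-\IL_\tau\eta^h_\tau(t)dt+Q_\tau DF(\IX_\tau(t)).\eta^h_\tau(t)dt$ with $\eta^h_\tau(0)=h$, and to derive a pathwise bound $|\eta^h_\tau(t)|\le e^{-(\lambda_{\tau,1}-\ell_\tau)t}|h|$ where $\ell_\tau$ accounts for the Lipschitz constant of $Q_\tau F$; using $\|Q_\tau\|_{\mathcal{L}(H)}\le 1$ and $\lambda_{\tau,1}=\tau^{-1}\log(1+\tau\lambda_1)$ this yields the exponential decay rate $\kappa$ claimed in the statement. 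One also needs the smoothing estimate: since $q_{\tau,j}\lambda_{\tau,j}^{-1}=\lambda_j^{-1}$, the operators $\IL_\tau^{-\alpha}Q_\tau$ and $\IL_\tau^{-\alpha}Q_\tau^{1/2}$ have norms comparable to $\IL^{-\alpha}$ times bounded factors; combined with $\sup_t t^\alpha\|\IL_\tau^\alpha e^{-t\IL_\tau}\|_{\mathcal{L}(H)}<\infty$ uniformly in $\tau$, this is what lets a loss of $(1\wedge(t-2\tau))^{-1/2+\delta}$ of regularity be recovered. These operator estimates, uniform in $\tau\in(0,\tau_0)$, should be collected from (or proved in the spirit of) the auxiliary lemmas of Section~\ref{sec:auxiliary-operators}.

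The main step is the integration-by-parts argument. I would write, for $t>2\tau$ and using the Markov property at time $2\tau$ (or rather at a time of order $\min(t,1)$),
\[
u_\tau(t,x)=\E_x\bigl[u_\tau(t-s_0,\IX_\tau(s_0))\bigr],\qquad s_0=\tfrac12\bigl(1\wedge(t-2\tau)\bigr)+2\tau \ \text{(roughly)},
\]
and differentiate in $x$. The derivative hits $\IX_\tau(s_0)$, producing $Du_\tau(t-s_0,\IX_\tau(s_0)).\eta^h_\tau(s_0)$; since $u_\tau(t-s_0,\cdot)$ is itself only known to be bounded, one iterates once more and then uses the Malliavin integration-by-parts formula~\eqref{eq:MalliavinIBP} on the innermost expectation over the interval, say, $[s_0-\sigma_0,s_0]$ with $\sigma_0\asymp 1\wedge(t-2\tau)$, to transfer the derivative onto the stochastic integral $\int e^{-(s_0-r)\IL_\tau}Q_\tau^{1/2}dW(r)$. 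This is precisely where the factor $Q_\tau^{1/2}$ matters: the Malliavin derivative of this Gaussian term in direction $e_j$ is $e^{-(s_0-r)\IL_\tau}Q_\tau^{1/2}e_j$, and inverting it (to write $h=\sum_j\langle \cdot\rangle$) costs a factor $q_{\tau,j}^{-1/2}$, which is controlled because $q_{\tau,j}\ge c>0$ uniformly for $\lambda_j\tau\le \lambda_j\tau_0$ bounded and behaves like $(\log\lambda_j\tau)/(\lambda_j\tau)$ for large $j$ — the logarithmic growth is harmless after one uses $\IL^{-1/2+\delta}$. Carrying out the It\^o isometry then yields the two terms in~\eqref{eq:utau-regularity}: the $\sqrt{\tau}|h|$ term comes from the short-time contribution where no smoothing is available (the interval of length $\asymp\tau$ near the endpoint), and the $(1\wedge(t-2\tau))^{-1/2+\delta}|\IL^{-1/2+\delta}h|$ term comes from integrating $\int_0^{\sigma_0}\|\IL_\tau^{1/2-\delta}e^{-r\IL_\tau}\|^2_{\mathcal{L}(H)}\,dr\lesssim \sigma_0^{2\delta}$ against the regularized initial direction.

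The hard part will be making the integration-by-parts estimate uniform in $\tau$, because $\IL_\tau$ is only logarithmically unbounded whereas $\IL$ grows like $j^2$: the interplay between the "slow" semigroup $e^{-t\IL_\tau}$ and the "fast" weights $\IL^{-1/2+\delta}$ must be handled carefully so that constants do not blow up as $\tau\to 0$, and one must check that the splitting time $s_0$ (which is forced to exceed $2\tau$) does not degrade the exponential factor $e^{-\kappa t}$ — this is why the statement has $(t-2\tau)$ rather than $t$ in the singular factors but keeps $e^{-\kappa t}$ (up to adjusting $\kappa$, which is absorbed since the claimed $\kappa$ is strictly less than $\lambda_1-\Lf$). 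A secondary technical point is justifying that $u_\tau(t,\cdot)$ is indeed $\mathcal{C}^1$ for $t>0$ despite $\varphi$ being merely continuous: this follows once the a priori bound~\eqref{eq:utau-regularity} is established on the Galerkin level with $\varphi$ replaced by smooth approximations and one passes to the limit using the bounded-pointwise approximation recalled after~\eqref{eq:distances}. Assumption~\ref{ass:Fregul2} enters to control the second-order term $Q_\tau D^2F$ that appears when one differentiates the derivative process a second time (needed because of the double use of the Markov property), via $\|\IL_\tau^{-\alpha_F}Q_\tau D^2F(x).(h_1,h_2)\|\lesssim |h_1||h_2|$ uniformly in $\tau$; and Assumption~\ref{ass:ergo} guarantees the dissipativity needed for the global-in-time exponential decay.
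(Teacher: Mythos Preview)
Your ingredients are right --- the Bismut--Elworthy--Li formula to pass from $\vvvert\varphi\vvvert_0$ to $\vvvert\cdot\vvvert_1$, the dissipativity for the exponential factor, and smoothing of $e^{-t\IL_\tau}$ for the $|\IL^{-\frac12+\delta}h|$ term --- but the paper organizes them quite differently and more cleanly. Rather than a single integrated argument with a complicated splitting time $s_0$, the paper first establishes three separate lemmas in Section~\ref{sec:auxiliary-kolmogorov-modified}: Lemma~\ref{lem:utau-1} (smoothing in $h$, assuming $\varphi\in\mathcal C^1$, giving the $\sqrt\tau|h|+(t-\tau)^{-\alpha}|\IL^{-\alpha}h|$ structure), Lemma~\ref{lem:utau-ergo} (exponential decay, assuming $\varphi\in\mathcal C^1$), and Lemma~\ref{lem:utau-0} (regularization from $\mathcal C^0$ to $\mathcal C^1$, costing $(t-\tau)^{-1/2}$). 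The proof of the proposition is then a two-line combination via the semigroup property~\eqref{eq:Ptau}: for $t\in(2\tau,2)$ write $\IP_{\tau,t}\varphi=\IP_{\tau,t/2}(\IP_{\tau,t/2}\varphi)$ and apply Lemma~\ref{lem:utau-1} then Lemma~\ref{lem:utau-0}; for $t\ge 2$ write $\IP_{\tau,t}\varphi=\IP_{\tau,1}(\IP_{\tau,t-2}(\IP_{\tau,1}\varphi))$ and apply the three lemmas in turn. This modular decomposition avoids your iterated Markov-property bookkeeping.

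Two technical points where your sketch drifts. First, the crucial mechanism controlling the unbounded $Q_\tau^{-1/2}$ in the Bismut--Elworthy--Li formula~\eqref{eq:Dutau-0} is not ``the logarithmic growth is harmless after $\IL^{-1/2+\delta}$'': it is the uniform bound $\|Q_\tau^{-1/2}e^{-\tau\IL_\tau}\|_{\mathcal L(H)}<\infty$ (see~\eqref{eq:IL_tau-IL_tauQ_tau1/2}), applied after a semigroup shift by exactly $\tau$. This is why $(t-\tau)^{-1/2}$, and ultimately $(t-2\tau)$, appears. Second, your explanation of Assumption~\ref{ass:Fregul2} is off: no second derivative $D^2F$ enters the argument at all --- only $DF$ appears, in the linear equation for $\eta_\tau^h$. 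The assumption is invoked (see the remark after Lemma~\ref{lem:utau-1}) only to justify differentiability of $u_\tau(t,\cdot)$, not to bound any second-order term.
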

As will be explained in Section~\ref{sec:auxiliary-kolmogorov-modified}, the mapping $(t,x)\in(0,\infty)\times H\to \R$ is solution of the Kolmogorov equation
\[
\partial_t u_\tau=\mathcal{L}_\tau u_\tau
\]
with initial value $u_\tau(0,\cdot)=\varphi$, where $\mathcal{L}_\tau$ is the infinitesimal generator of the modified stochastic evolution equation~\eqref{eq:modifiedSPDE}. Proposition~\ref{propo:utau-regularity} has the form of a strong Feller property for the modified equation, the challenge is to obtain estimates which hold uniformly with respect to $\tau\in(0,\tau_0)$ in a certain sense: this is why the condition $t>2\tau$ appears in the statement above, and why an additionnal term $\sqrt{\tau}|h|$ also appears in~\eqref{eq:utau-regularity}.

Proposition~\ref{propo:utau-regularity} is used as follows to prove an upper bound on the second term of the right-hand side of~\eqref{eq:decomperror}: one has the identity
\[
\E[\varphi(\IX_{\tau,N})]-\E[\varphi(\IX_\tau(N\tau))]=\E[u_\tau(0,\IX_{\tau,N})]-\E[u_\tau(N\tau,\IX_{\tau,0})]
\]
and the analysis of the error then follows a usual strategy: the identification of an appropriate continuous time process $\bigl(\tilde{\IX}_\tau(t)\bigr)_{t\ge 0}$ such that $\tilde{\IX}_\tau(t_n)=\IX_{\tau,n}$ for all $n\in\N$, the use of a telescoping sum argument and of It\^o's formula and of the property that $u_\tau$ solves the Kolmogorov equation. Since $\bigl(\IX_{\tau,n}\bigr)_{n\in\N}$ is obtained by applying the (accelerated) exponential Euler scheme to the modified SPDE~\eqref{eq:modifiedSPDE}, the auxiliary process $\bigl(\tilde{\IX}_\tau(t)\bigr)_{t\ge 0}$ is chosen such that a single error term (which vanishes if $F=0$) appears in the resulting expression of the weak error, which can be treated using Proposition~\ref{propo:utau-regularity} above. Note that for all $\delta\in(0,\frac12)$, the right-hand side of~\eqref{eq:utau-regularity} gives a singularity which is integrable. In addition, the form of Proposition~\ref{propo:utau-regularity} is consistent with Assumption~\ref{ass:Fregul1}, giving the required regularity condition on the nonlinearity $F$ in order to obtain the order of convergence $1/2$.

The arguments explained above are only formal, many technical estimates are required to give the proofs of the auxiliary Proposition~\ref{propo:utau-regularity} and then of the main Theorem~\ref{theo:weakinv}. The analysis is postponed to Section~\ref{sec:proofs}.

\begin{rem}
The regularity condition on the nonlinearity $F$ in Assumption~\ref{ass:Fregul1} is used only to obtain the weak rate of convergence $\frac12$ in Theorem~\ref{theo:weakinv}. If $F$ only satisfies Assumptions~\ref{ass:ergo},~\ref{ass:gradient} and~\ref{ass:Fregul2}, the convergence in total variation distance still holds, with order of convergence $\frac14$ instead of $\frac12$: one would get
\[
d_{\rm TV}(\mu_\infty^\tau,\mu_\star)\le C_\delta\tau^{\frac14-\delta}.
\]
Such a result also shows an improvement over the standard linear Euler scheme, when Assumption~\ref{ass:gradient} is satisfied.
\end{rem}

\subsection{Weak error estimates in a general setting}\label{sec:results_weak}

The major result of this article, Theorem~\ref{theo:weakinv}, stated above, gives an error estimate in the total variation distance $d_{\rm TV}$ for the approximation of the Gibbs distribution $\mu_\star$, which is the invariant distribution of the stochastic evolution equation~\eqref{eq:SPDE} when Assumption~\ref{ass:gradient} holds. The next result shows that the modified Euler scheme also provides weak error estimates to approximate the distribution $\rho_{X(T)}$ of the solution of~\eqref{eq:SPDE} at arbitary times $T\in(0,\infty)$, without the requirement that Assumptions~\ref{ass:ergo} and~\ref{ass:gradient} are satisfied. However, the weak error estimates below require the functions $\varphi$ to be of class $\mathcal{C}^2$, equivalently $\rho_{X_N^\tau}$ approximates $\rho_{X(N\tau)}$ when $\tau\to 0$ only in the $d_2$ distance. Whether an approximation result in the total variation distance $d_{\rm TV}=d_0$ can be obtained is left open. Note that this would correspond to weaken the regularity of the function $\varphi$ in the weak error estimates.

\begin{theo}\label{theo:weak}
Let the nonlinearity $F$ satisfy Assumptions~\ref{ass:F},~\ref{ass:Fregul1} and~\ref{ass:Fregul2}. For all $T\in(0,\infty)$, $\delta\in(0,\frac12)$ and $\tau_0\in(0,1)$, there exists $C_{\delta}(T)\in(0,\infty)$ such that for all $\tau=\frac{T}{N}\in(0,\tau_0)$ with $N\in\N$ and all $x_0\in H^{\frac14-\frac{\delta}{8}}$, one has
\begin{equation}\label{eq:theo-weak}
d_{2}(\rho_{X_N^\tau},\rho_{X(T)})\le C_\delta(T)\tau^{\frac12-\delta}\bigl(1+|x_0|_{\frac14-\frac{\delta}{8}}^2\bigr).
\end{equation}
More precisely, for all functions $\varphi:H\to\R$ of class $\mathcal{C}^2$ with bounded first and second order derivatives, all $\tau\in(0,\tau_0)$ and all $N\in\N$, such that $T=N\tau$, and all $x_0\in H^{\frac14-\frac{\delta}{8}}$, one has
\begin{equation}\label{eq:theo-weak-weakerror}
\big|\E[\varphi(X_N^\tau)]-\E[\varphi(X(T))]\big|\le C_{\delta}(T)\tau^{\frac12-\delta}\bigl(\vvvert\varphi\vvvert_1+\vvvert\varphi\vvvert_2\bigr)\bigl(1+|x_0|_{\frac14-\frac{\delta}{8}}^2\bigr).
\end{equation}
\end{theo}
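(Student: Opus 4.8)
The plan is to prove the weak error bound~\eqref{eq:theo-weak-weakerror}; the $d_2$-estimate~\eqref{eq:theo-weak} then follows directly from the definition of $d_2$. Throughout, all computations are performed rigorously within the spectral Galerkin approximation of Section~\ref{sec:Galerkin}, with bounds uniform in the Galerkin dimension, and one passes to the limit at the end. The starting point is the third formulation of the scheme: by Section~\ref{sec:scheme-3rd}, $X_N^\tau=\IX_{\tau,N}$ in distribution, where $\bigl(\IX_{\tau,n}\bigr)_{n\ge 0}$ is the accelerated exponential Euler scheme~\eqref{eq:scheme-IX} applied to the modified stochastic evolution equation~\eqref{eq:modifiedSPDE}. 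Accordingly, the error is split as
\[
\E[\varphi(X_N^\tau)]-\E[\varphi(X(T))]=\bigl(\E[\varphi(\IX_\tau(T))]-\E[\varphi(X(T))]\bigr)+\bigl(\E[\varphi(\IX_{\tau,N})]-\E[\varphi(\IX_\tau(T))]\bigr),
\]
the first term being a \emph{modeling error} (comparison of the two SEEs~\eqref{eq:SPDE} and~\eqref{eq:modifiedSPDE}) and the second a \emph{time discretization error} for the exponential Euler scheme applied to~\eqref{eq:modifiedSPDE}.

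For the modeling error, let $u(t,x)=\E_x[\varphi(X(t))]$ solve the Kolmogorov equation $\partial_t u=\mathcal{L}u$, $u(0,\cdot)=\varphi$, associated with~\eqref{eq:SPDE}, whose regularity properties are recalled in Section~\ref{sec:auxiliary-kolmogorov-original} (using Assumptions~\ref{ass:Fregul1} and~\ref{ass:Fregul2}). Since $\IX_\tau(0)=x_0$, one has $\E[\varphi(\IX_\tau(T))]-\E[\varphi(X(T))]=\E[u(0,\IX_\tau(T))-u(T,\IX_\tau(0))]$, and applying It\^o's formula to $s\mapsto u(T-s,\IX_\tau(s))$ along~\eqref{eq:modifiedSPDE-mild}, the terms containing $\partial_t u=\mathcal{L}u$ partly cancel, leaving
\begin{align*}
\E[\varphi(\IX_\tau(T))]-\E[\varphi(X(T))]=\E\!\int_0^T\!\Bigl(&\bigl\langle Du(T-s,\IX_\tau(s)),(\IL-\IL_\tau)\IX_\tau(s)\bigr\rangle\\
&+\bigl\langle Du(T-s,\IX_\tau(s)),(Q_\tau-I)F(\IX_\tau(s))\bigr\rangle\\
&+\tfrac12\mathrm{Tr}\bigl((Q_\tau-I)D^2u(T-s,\IX_\tau(s))\bigr)\Bigr)ds.
\end{align*}
The last two terms are controlled by the operator bounds $\|\IL^{-\beta}(Q_\tau-I)\|_{\mathcal{L}(H)}\le C\tau^{\beta}$ for $\beta\in[0,1]$ (since $0\le 1-q_{\tau,j}\le(\tau\lambda_j)^{\beta}$, see Lemma~\ref{lem:Q_tauIL_tau-error}), together with the Kolmogorov regularity estimates for $Du$ and $D^2u$ and the $\tau$-uniform moment bounds $\sup_{\tau}\E[|\IX_\tau(s)|_\alpha^2]<\infty$ for $\alpha<\tfrac14$ (Proposition~\ref{propo:modified-wellposed-bound}, with the $|x_0|_{\frac14-\frac{\delta}{8}}^2$-dependent refinement needed to absorb the singularity as $s\to 0$); near $s=T$ one uses the non-singular estimates $|Du(t,x).h|\le C\vvvert\varphi\vvvert_1|h|$, $|D^2u(t,x).(h_1,h_2)|\le C\vvvert\varphi\vvvert_2|h_1||h_2|$ available because $\varphi\in\mathcal{C}^2$, and away from $s=T$ the smoothing estimates produce integrable singularities $(T-s)^{-\frac12+\delta}$. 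The first term is the delicate one and is treated by inserting the mild formulation of $\IX_\tau(s)$ and applying the Malliavin integration by parts formula~\eqref{eq:MalliavinIBP} to move one derivative onto $u$, so as to exploit the smoothing of $\bigl(e^{-r\IL_\tau}\bigr)_{r\ge0}$ together with the bound $|\lambda_j-\lambda_{\tau,j}|\le C\tau^{\theta}\lambda_j^{1+\theta}$ ($\theta\in[0,1]$, Lemma~\ref{lem:Q_tauIL_tau-error}) and the spatial regularity of $\IX_\tau(s)$, the exponents being balanced to yield order $\tfrac12-\delta$.

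For the time discretization error, one introduces the continuous-time interpolation $\tilde\IX_\tau$ with $\tilde\IX_\tau(t_n)=\IX_{\tau,n}$ obtained by freezing the drift at $F(\IX_{\tau,n})$ on each $[t_n,t_{n+1}]$ while integrating the noise exactly, i.e. $d\tilde\IX_\tau(t)=-\IL_\tau\tilde\IX_\tau(t)dt+Q_\tau F(\IX_{\tau,\lfloor t/\tau\rfloor})dt+Q_\tau^{\frac12}dW(t)$; the identity $\int_{t_n}^{t_{n+1}}e^{-(t_{n+1}-s)\IL_\tau}ds=\IL_\tau^{-1}(I-e^{-\tau\IL_\tau})$ shows that $\tilde\IX_\tau(t_{n+1})=\IX_{\tau,n+1}$ exactly. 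Let $u_\tau(t,x)=\E_x[\varphi(\IX_\tau(t))]$ solve $\partial_t u_\tau=\mathcal{L}_\tau u_\tau$ for~\eqref{eq:modifiedSPDE} (see Section~\ref{sec:auxiliary-kolmogorov-modified}). A telescoping-sum argument, It\^o's formula on each subinterval and the identity $\partial_t u_\tau=\mathcal{L}_\tau u_\tau$ give
\[
\E[\varphi(\IX_{\tau,N})]-\E[\varphi(\IX_\tau(T))]=\sum_{n=0}^{N-1}\E\!\int_{t_n}^{t_{n+1}}\!\bigl\langle Q_\tau\bigl(F(\IX_{\tau,n})-F(\tilde\IX_\tau(s))\bigr),Du_\tau(T-s,\tilde\IX_\tau(s))\bigr\rangle ds.
\]
This is bounded using Assumption~\ref{ass:Fregul1}, a finite-time variant of the $Du_\tau$-estimate of Proposition~\ref{propo:utau-regularity} (with integrable singularity $(T-s)^{-\frac12+\delta}$ away from $s=T$, and the non-singular $\mathcal{C}^1$-estimate near $s=T$), a Malliavin integration by parts on the martingale part of $\IX_{\tau,n}-\tilde\IX_\tau(s)$ to gain the extra half order, strong-type estimates for $|\IL^{-\frac14+\delta}(\IX_{\tau,n}-\tilde\IX_\tau(s))|$, and $\tau$-uniform moment bounds for $\IX_{\tau,n}$ and $\tilde\IX_\tau(s)$; this contributes $C_\delta(T)\tau^{\frac12-\delta}(1+|x_0|_{\frac14-\frac{\delta}{8}}^2)$. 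Collecting the two contributions yields~\eqref{eq:theo-weak-weakerror}.

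The main obstacle is the linear-part term $\langle Du(T-s,\IX_\tau(s)),(\IL-\IL_\tau)\IX_\tau(s)\rangle$ in the modeling error: because $\IL_\tau$ grows only logarithmically while $\IL$ grows quadratically, $\IL-\IL_\tau$ is \emph{not} a small perturbation — on high frequencies it is comparable to $\IL$ itself — so a crude operator-norm bound fails, and one genuinely has to exploit the smoothing of $\bigl(e^{-r\IL_\tau}\bigr)_{r\ge0}$ inside the mild formula for $\IX_\tau(s)$ together with Malliavin calculus, carefully balancing the at most $\IL^{-1+}$ regularization afforded by $Du$, the spatial regularity (strictly below $\tfrac14$) of $\IX_\tau(s)$, and the time singularities, so as to reach exactly the order $\tfrac12-\delta$ uniformly in $\tau\in(0,\tau_0)$. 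All the remaining estimates are routine Debussche-type computations.
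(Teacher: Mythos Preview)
Your proposal is correct and follows the same two-term decomposition (modeling error via $u$ along $\IX_\tau$, discretization error via $u_\tau$ along the interpolant $\tilde\IX_\tau$) as the paper, and you correctly locate the only genuinely delicate step in the modeling error, namely the term $\langle Du(T-s,\IX_\tau(s)),(\IL-\IL_\tau)\IX_\tau(s)\rangle$, which the paper handles exactly as you describe: insert the mild formula for $\IX_\tau(s)$, treat the initial-value and drift pieces by Lemma~\ref{lem:Q_tauIL_tau-error} plus the smoothing inequalities~\eqref{eq:IL_tau-smoothing}--\eqref{eq:IL_tau-smoothing-bis}, and use the Malliavin integration by parts~\eqref{eq:MalliavinIBP} on the stochastic-convolution piece together with the $D^2u$-estimate~\eqref{eq:lem-u-2}.

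The one place you diverge is the time-discretization error, where you invoke ``a Malliavin integration by parts on the martingale part of $\IX_{\tau,n}-\tilde\IX_\tau(s)$ to gain the extra half order''. The paper does \emph{not} use Malliavin calculus there, and this is precisely the payoff of the accelerated-exponential-Euler interpretation: the stochastic convolution is reproduced exactly by the scheme, so the local error is purely the drift mismatch $Q_\tau\bigl(F(\IX_{\tau,n})-F(\tilde\IX_\tau(s))\bigr)$. The order $\tfrac12-\delta$ then follows directly from Assumption~\ref{ass:Fregul1} (which turns $|\IL^{-\frac12+\frac{\delta}{4}}(F(x_2)-F(x_1))|$ into $|\IL^{-\frac14+\delta}(x_2-x_1)|$), the $\mathcal{C}^1$ estimate~\eqref{eq:lem-utau1-alpha} of Lemma~\ref{lem:utau-1} (not Proposition~\ref{propo:utau-regularity}), and the increment bound~\eqref{eq:tildeIX-increment} of Lemma~\ref{lem:tildeIX} with $\alpha$ close to $\tfrac14$. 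Your Malliavin route would also work in principle, but it requires $\tau$-uniform estimates on $D^2u_\tau$ which the paper neither states nor needs; the paper emphasizes (see the discussion preceding Lemma~\ref{lem:utau-1}) that only first-order derivatives of $u_\tau$ ever enter the argument.
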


When Assumption~\ref{ass:ergo} is satisfied, the stochastic evolution equation~\eqref{eq:SPDE} admits a unique invariant distribution $\mu_\infty$ (see Proposition~\ref{propo:invar}). The next result provides an error estimate for $d_2(\mu_\infty^\tau,\mu_\infty)$, where $\mu_\infty^\tau$ is the unique invariant distribution of the modified Euler scheme~\eqref{eq:scheme} (see Proposition~\ref{propo:invar-scheme}).

\begin{theo}\label{theo:weak-ergo}
Let the nonlinearity $F$ satisfy Assumptions~\ref{ass:ergo},~\ref{ass:Fregul1} and~\ref{ass:Fregul2}. For all $\delta\in(0,\frac12)$ and $\tau_0\in(0,1)$, there exists $C_{\delta}\in(0,\infty)$ such that for all $\tau\in(0,\tau_0)$ one has
\begin{equation}\label{eq:theo-weak-ergo}
d_{2}(\mu_\infty^\tau,\mu_\infty)\le C_\delta\tau^{\frac12-\delta}.
\end{equation}
More precisely, for all $\delta\in(0,\frac12)$ and $0\le \kappa<\min(\frac{1}{1+\tau_0\lambda_1},\frac{\log(1+\tau_0\lambda_1)}{\tau_0\lambda_1})(\lambda_1-\Lf)\in(0,\lambda_1-\Lf)$, there exists $C_{\delta,\kappa}\in(0,\infty)$ such that for all functions $\varphi:H\to\R$ of class $\mathcal{C}^2$ with bounded first and second order derivatives, all $x_0\in H^{\frac14-\frac{\delta}{8}}$, all $\tau\in(0,\tau_0)$ and all $N\in\N$, such that $N\tau\ge 1$, one has
\begin{equation}\label{eq:theo-weak-ergo-weakerror}
\big|\E[\varphi(X_N^\tau)]-\int\varphi d\mu_\star\big|\le C_{\delta,\kappa}\bigl(\vvvert\varphi\vvvert_1+\vvvert\varphi\vvvert_2\bigr)\Bigl(\tau^{\frac12-\delta}(1+|x_0|_{\frac14-\frac{\delta}{8}}^2)+e^{-\kappa N\tau}(1+|x_0|)\Bigr).
\end{equation}
\end{theo}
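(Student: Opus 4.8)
The plan is to combine three ingredients: the exponential convergence of the modified Euler scheme to its invariant distribution (Proposition~\ref{propo:invar-scheme}), the exponential convergence of the stochastic evolution equation~\eqref{eq:SPDE} to $\mu_\infty$ (Proposition~\ref{propo:invar}), and a version of the finite-time weak error estimate of Theorem~\ref{theo:weak} in which the constant is \emph{uniform with respect to the final time} $T=N\tau$. For $N\in\N$ with $N\tau\ge 1$ write
\begin{equation*}
\E_{x_0}[\varphi(X_N^\tau)]-\int\varphi\,d\mu_\infty=\bigl(\E_{x_0}[\varphi(X_N^\tau)]-\E_{x_0}[\varphi(X(N\tau))]\bigr)+\bigl(\E_{x_0}[\varphi(X(N\tau))]-\int\varphi\,d\mu_\infty\bigr).
\end{equation*}
The second term is bounded by $C\vvvert\varphi\vvvert_1 e^{-(\lambda_1-\Lf)N\tau}(1+|x_0|)$ by Proposition~\ref{propo:invar}, while the first term is a finite-time weak error which I claim satisfies $C_\delta\tau^{\frac12-\delta}(\vvvert\varphi\vvvert_1+\vvvert\varphi\vvvert_2)(1+|x_0|_{\frac14-\frac{\delta}{8}}^2)$ with a constant independent of $N$. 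Granting this, estimate~\eqref{eq:theo-weak-ergo-weakerror} follows after choosing $\kappa$ below both relaxation rates ($\frac{\lambda_1-\Lf}{1+\tau_0\lambda_1}$ from Proposition~\ref{propo:invar-scheme} and the rate appearing in Proposition~\ref{propo:utau-regularity}) so that $e^{-\kappa N\tau}$ dominates all exponential contributions. The bound~\eqref{eq:theo-weak-ergo} is then obtained by running the scheme from its own invariant law, which has $\int|x|_\alpha^2\,d\mu_\infty^\tau<\infty$ uniformly in $\tau$ by Proposition~\ref{propo:invar-scheme}: then $\E_{\mu_\infty^\tau}[\varphi(X_N^\tau)]=\int\varphi\,d\mu_\infty^\tau$ for all $N$, the $T$-uniform weak error bound survives the limit $N\to\infty$, and $\E_{\mu_\infty^\tau}[\varphi(X(N\tau))]\to\int\varphi\,d\mu_\infty$ by Proposition~\ref{propo:invar} and dominated convergence.

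The substance is thus the $T$-uniform finite-time weak estimate, obtained by revisiting the proof of Theorem~\ref{theo:weak}. As there, one uses the third formulation of the scheme and the splitting
\begin{equation*}
\E_{x_0}[\varphi(X_N^\tau)]-\E_{x_0}[\varphi(X(N\tau))]=\bigl(\E_{x_0}[\varphi(\IX_{\tau,N})]-\E_{x_0}[\varphi(\IX_\tau(N\tau))]\bigr)+\bigl(\E_{x_0}[\varphi(\IX_\tau(N\tau))]-\E_{x_0}[\varphi(X(N\tau))]\bigr),
\end{equation*}
using $\IX_{\tau,n}=X_n^\tau$ in distribution. The first term, the discretization error of the accelerated exponential Euler scheme applied to~\eqref{eq:modifiedSPDE}, is treated by a telescoping sum over $n=0,\dots,N-1$ and It\^o's formula against $u_\tau(t,\cdot)=\E_\cdot[\varphi(\IX_\tau(t))]$: Proposition~\ref{propo:utau-regularity} provides derivative bounds carrying the decay factor $e^{-\kappa(N\tau-t_n)}$ together with integrable singularities, so the sum converges uniformly in $N$, and Assumption~\ref{ass:Fregul1} yields the exponent $\frac12-\delta$. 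The second term, the modeling error between~\eqref{eq:modifiedSPDE} and~\eqref{eq:SPDE}, is treated with the Kolmogorov equation $v(t,\cdot)=\E_\cdot[\varphi(X(t))]$ of~\eqref{eq:SPDE} and the operator error estimates of Lemma~\ref{lem:Q_tauIL_tau-error} for $\IL_\tau-\IL$ and $Q_\tau-I$; here Assumption~\ref{ass:ergo}, unused in Theorem~\ref{theo:weak}, enters to provide exponentially decaying bounds on $Dv$ and $D^2v$ (a strong-Feller-type estimate with decay), which makes the corresponding time integral converge uniformly in $T$. The $\mathcal{C}^2$ requirement on $\varphi$, and the dependence on $\vvvert\varphi\vvvert_1+\vvvert\varphi\vvvert_2$, enter through this modeling-error term, via the second-order It\^o corrections needed to reach order $\frac12$ and the Malliavin integration by parts~\eqref{eq:MalliavinIBP}.

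I expect the main obstacle to be precisely this modeling-error term: one must re-prove the weak comparison of~\eqref{eq:modifiedSPDE} with~\eqref{eq:SPDE} while tracking exponential-in-time decay of the first and second derivatives of the associated Kolmogorov semigroup, uniformly over $\tau\in(0,\tau_0)$, and verify that the resulting bound is integrable on $[0,\infty)$ with the correct power of $\tau$. The moment estimates needed throughout—uniform over $\tau$ and over $T$—are supplied by Lemma~\ref{lem:scheme-bound} for the scheme and by Proposition~\ref{propo:modified-wellposed-bound} for the modified equation, and the dependence on $|x_0|_{\frac14-\frac{\delta}{8}}^2$ in the statement results from propagating the spatial regularity of $x_0$ through those estimates exactly as in Theorem~\ref{theo:weak}.
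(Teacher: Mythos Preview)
Your proposal is correct and follows essentially the same route as the paper: the paper's proof (Section~\ref{sec:proofs-weak-ergo}) simply revisits the decomposition and arguments of Theorem~\ref{theo:weak}, replacing the finite-time derivative bounds from Lemmas~\ref{lem:utau-1} and~\ref{lem:u-12} by their exponentially decaying counterparts in Lemmas~\ref{lem:utau-ergo} and~\ref{lem:u-ergo}, so that the constants $C_\delta(T)$ become $T$-independent. Your identification of the modeling-error term $\E[\varphi(\IX_\tau(T))]-\E[\varphi(X(T))]$ as the place requiring the new ergodic second-derivative estimate (Lemma~\ref{lem:u-ergo}) is exactly right, and your treatment of the discretization-error term via Proposition~\ref{propo:utau-regularity} is consistent with the paper (though since $\varphi\in\mathcal{C}^2$ here, the simpler combination of Lemmas~\ref{lem:utau-1} and~\ref{lem:utau-ergo} already suffices for that term).
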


Like for Theorem~\ref{theo:weak}, whether an approximation result in the total variation distance, namely an error estimate for $d_{\rm TV}(\mu_\infty^\tau,\mu_\infty)$, can be obtained is left open. Note that Theorem~\ref{theo:weakinv} gives a positive answer when Assumption~\ref{ass:gradient} holds.

Let us now describe the approach to prove Theorem~\ref{theo:weak}, more precisely to prove the weak error estimate~\eqref{eq:theo-weak-weakerror}. Similarly to~\eqref{eq:decomperror}, it is convenient to decompose the error as
\begin{equation}\label{eq:decomperror-gen}
\E[\varphi(X_N^\tau)]-\E[\varphi(X(T))]=\E[\varphi(\IX_\tau(T))]-\E[\varphi(X(T))]+\E[\varphi(\IX_{\tau,N})]-\E[\varphi(\IX_\tau(T))],
\end{equation}
where $\bigl(\IX_\tau(t)\bigr)_{t\ge 0}$ denotes the solution of the modified stochastic evolution equation~\eqref{eq:modifiedSPDE}, and using the identity $\IX_{n,\tau}=X_n^\tau$ for all $n\in\N_0$, see Section~\ref{sec:scheme-3rd}. Like for the proof of Theorem~\ref{theo:weakinv}, the second term in the right-hand side of~\eqref{eq:decomperror-gen} is written as
\[
\E[\varphi(\IX_{\tau,N})]-\E[\varphi(\IX_\tau(T))]=\E[u_\tau(0,\IX_{\tau,N})]-\E[u_\tau(T,\IX_{\tau,0})],
\]
where $u_\tau(t,x)=\E_{x}[\varphi(\IX_\tau(t))]$ (see Proposition~\ref{propo:utau-regularity}). The first term in the right-hand side of~\eqref{eq:decomperror-gen} may be written as
\[
\E[\varphi(\IX_\tau(T))]-\E[\varphi(X(T))]=\E[u_\tau(T,X(0))]-\E[u_\tau(0,X(T))].
\]
Applying It\^o's formula, one would need to establish regularity estimates for the first and the second order derivatives $Du_\tau(t,x)$ and $D^2u_\tau(t,x)$. In order to obtain bounds with constants which do not depend on the time-step size $\tau$, technical arguments are needed, see the statement of Proposition~\ref{propo:utau-regularity} for the first order derivative. To avoid using such technical statements and analysis on the second order derivative, it is more convenient to employ the alternative expression
\[
\E[\varphi(\IX_\tau(T))]-\E[\varphi(X(T))]=\E[u(0,\IX_\tau(T))]-\E[u(T,\IX_\tau(0))],
\]
where $u(t,x)=\E_x[\varphi(X(t))]$, and to use appropriate regularity estimates for the first and second order derivatives $Du(t,x)$ and $D^2u(t,x)$ to obtain an error estimate when $\tau\to 0$, see Section~\ref{sec:auxiliary-kolmogorov-original}. Note that the requirement to assume that $\varphi$ is of class $\mathcal{C}^2$ ($\vvvert\varphi\vvvert_2$ appears in the weak error estimates) is due to the analysis of the error term $\E[u(0,\IX_\tau(T))]-\E[u(T,\IX_\tau(0))]$, whereas the treatment of the other error term can be performed using a version of Proposition~\ref{propo:utau-regularity} and assuming $\varphi$ to be bounded would be sufficient. It is not known whether the regularity condition on $\varphi$ may be relaxed.

Remark that when Assumptions~\ref{ass:ergo} and~\ref{ass:gradient} are satisfied, one has
\[
\underset{T\to\infty}\lim~\bigl(\E[\varphi(\IX_\tau(T))]-\E[\varphi(X(T))]\bigr)=0,
\]
since $\mu_\star$ is the invariant distribution for both stochastic evolution equations~\eqref{eq:SPDE} and~\eqref{eq:modifiedSPDE} with that restrictive condition on the nonlinearity $F$ (see Propositions~\ref{propo:invar} and~\ref{propo:mu_star_modified}). This observation explains why the regularity condition on $\varphi$ can be relaxed in Theorem~\ref{theo:weakinv}.

In order to prove Theorem~\ref{theo:weak-ergo}, it suffices to apply the arguments above with appropriate regularity bounds for the derivatives of the functions $u_\tau(t,\cdot)$ and $u(t,\cdot)$, to obtain upper bounds depending on $\exp(-\kappa t)$. The required bounds will be provided below, however the details of the proof of Theorem~\ref{theo:weak-ergo} will be omitted.

\begin{rem}\label{rem:approaches}
The approach presented above to prove Theorems~\ref{theo:weak} and~\ref{theo:weak-ergo} exploits the interpretation of the modified Euler scheme in terms of the modified stochastic evolution equation~\eqref{eq:modifiedSPDE} explained in Section~\ref{sec:scheme-3rd}. It is possible to prove those results without using that interpretation and using instead the second interpretation~\eqref{eq:scheme2} of the modified Euler scheme (see Section~\ref{sec:scheme-2nd}), in the spirit of~\cite{Debussche:11} and~\cite{B:2014} respectively. Note that these two references do not require Assumption~\ref{ass:Fregul1}. Very similar arguments would be needed to estimates the relevant error terms in the two approaches, however the approach we follow in this work may be simpler to present, and it is worth giving the details of the approach using a modified equation, since it is has not been treated in the literature so far. Our approach may also be used to prove the weak error estimates from~\cite{Debussche:11} and~\cite{B:2014} for the standard Euler scheme, which can also be interpreted as the accelerated exponential Euler scheme applied to an appropriate modified stochastic evolution equation, see Section~\ref{sec:results_standard} below, the details are omitted.
\end{rem}

\subsection{Comparison with the standard Euler scheme}\label{sec:results_standard}

In this section, we compare the results stated above concerning the modified Euler scheme~\eqref{eq:res-modifiedscheme}, with the results obtained for the standard Euler scheme~\eqref{eq:res-standardscheme}.

Let us first study the qualitative behavior of the schemes. As already mentioned and illustrated in Section~\ref{sec:results_quali}, the standard Euler scheme does not preserve the spatial regularity of the solution of the stochastic evolution equation: Theorems~\ref{theo:equivalence} and~\ref{theo:regularity} do not hold for the standard Euler scheme. Precisely, for any fixed $\tau\in(0,\tau_0)$ and any $N\in\N$, $X_N^{\tau,\s}$ takes values in $D(\IL^\alpha)$ for all $\alpha\in(0,\frac12)$, whereas $X(N\tau)$ takes values in $D(\IL^{\alpha})$ if and only if $\alpha\in(0,\frac14)$. In the Gaussian case ($F=0$), the distributions $\rho_{X_N^{\tau,\s}}$ and $\rho_{X(N\tau)}$ of the $H$-valued random variables $X_N^{\tau,\s}$ and $X(N\tau)$ are thus singular. The proposed scheme~\eqref{eq:res-modifiedscheme} thus overcomes the qualitative limitations of the standard scheme~\eqref{eq:res-standardscheme}.

In the ergodic situation, when Assumption~\ref{ass:ergo} is satisfied, an appropriate version of Proposition~\ref{propo:invar-scheme} is satisfied for the standard Euler scheme. On the one hand, the scheme admits a unique invariant distribution denoted by $\mu_\infty^{\tau,\s}$ and variants of the inequalities~\eqref{eq:invar-scheme-bound} and~\eqref{eq:invar-scheme-ergo} hold: see for instance~\cite{B:2014}. On the other hand, when $F=0$, $\mu_\infty^{\tau,\s}$ is a Gaussian distribution $\nu^{\tau}$ (see Equation~\eqref{eq:nu}), which is not equal to the Gaussian distribution $\nu$. Precisely, for all $\tau\in(0,\tau_0)$, $\nu^\tau$ is the centered Gaussian distribution given by
\begin{equation}\label{eq:nutau}
\nu^\tau=\mathcal{N}\bigl(0,\frac12\IL^{-1}(I+\frac{\tau\IL}{2})^{-1}\bigr),
\end{equation}
which is the distribution of the $H$-valued Gaussian random variable $Z^\tau=\sum_{j\in\N}\frac{2\gamma_j}{\sqrt{2\lambda_j(2+\lambda_j\tau)}}e_j$ where $\bigl(\gamma_j\bigr)_{j\in\N}$ is a sequence of independent standard real-valued Gaussian random variables. Consistently with the discussion above concerning the non-validity of Theorem~\ref{theo:equivalence} for the standard Euler scheme, the Gaussian distributions $\nu^\tau$ and $\nu$ are singular for all $\tau\in(0,\tau_0)$. In addition, for all $\tau\neq\tau'\in(0,\tau_0)$, the Gaussian distributions $\nu^{\tau}$ and $\nu^{\tau'}$ are also singular.

Let us now discuss convergence in distribution of $\rho_{X_N^{\tau,\s}}$ to $\rho_{X(N\tau)}$, when $\tau\to 0$ and $N\tau=T$, and of $\mu_\infty^{\tau,\s}$ to $\mu_\infty$ in the ergodic situation. This question has been studied in the literature: the weak order of convergence of the standard Euler scheme is equal to $1/2$. We refer to~\cite{Debussche:11} for a version of Theorem~\ref{theo:weak} and to~\cite{B:2014} for a version of Theorem~\ref{theo:weak-ergo}, for the standard Euler scheme, under weaker assumptions on the nonlinearity $F$ (Assumption~\ref{ass:Fregul1} is not required in those references). We provide neither precise statements nor elements of proofs of these two results. As already mentioned in Remark~\ref{rem:approaches} (Section~\ref{sec:results_weak}), the weak error analysis in~\cite{Debussche:11} and in~\cite{B:2014} exploits different decompositions of the errors, and a proof using an interpretation of the standard Euler scheme in terms of a modified equation may also be employed.

The remaining question is whether a result of the type of Theorem~\ref{theo:weakinv} holds for the standard Euler scheme, when Assumption~\ref{ass:gradient} is satisfied for the nonlinearity $F$. Owing to the discussion above, this is not possible in the case Gaussian case ($F=0$), since the Gaussian distributions $\nu^\tau$ and $\nu$ are singular for all $\tau\in(0,\tau_0)$. To state weak error estimates with order of convergence $1/2$, it is necessary to assume that the function $\varphi$ is of class $\mathcal{C}^2$, see~\cite{B:2020}. Therefore, the qualitative properties of the scheme have an impact on its quantitative analysis. The proposed modified Euler scheme improves both the qualitative and quantitative properties compared with the standard Euler scheme.

To go further in the analysis of the standard Euler scheme, we state the following variant of Theorem~\ref{theo:weakinv}, where the Gibbs distribution $\mu_\star$ needs to be replaced by the Gibbs distribution $\mu_\star^\tau$ defined by
\begin{equation}\label{eq:mu_startau}
d\mu_\star^\tau(x)=(\mathcal{Z}^\tau)^{-1}e^{-2V(x)}d\nu^\tau(x)
\end{equation}
with normalization constant $\mathcal{Z}^\tau=\int e^{-2V(x)}d\nu^\tau(x)\in(0,\infty)$, where the reference Gaussian distribution is $\nu^\tau$ defined by~\eqref{eq:nutau} (instead of $\nu$). The only but crucial difference in the definitions~\eqref{eq:mu_star} and~\eqref{eq:mu_startau} is the choice of the reference Gaussian distribution, equal to $\nu$ or $\nu^\tau$ respectively. One has the following result.

\begin{theo}\label{theo:weakinv-standard}
Let the nonlinearity $F$ satisfy Assumptions~\ref{ass:ergo},~\ref{ass:gradient},~\ref{ass:Fregul1} and~\ref{ass:Fregul2}. Assume also that for all $\delta\in(0,\frac12)$, there exists $C_\delta\in(0,\infty)$ such that for all $x\in H^{2\delta}$, one has
\begin{equation}\label{eq:Fregul3}
|\IL^\delta F(x)|\le C_\delta|\IL^{2\delta}x|.
\end{equation}
For all $\delta\in(0,\frac12)$ and $\tau_0\in(0,1)$, there exists $C_{\delta}\in(0,\infty)$ such that for all $\tau\in(0,\tau_0)$ one has
\begin{equation}\label{eq:theo-weakinv_dTVinvar-standard}
d_{\rm TV}(\mu_\infty^{\tau,\s},\mu_\star^\tau)\le C_\delta\tau^{\frac12-\delta},
\end{equation}
where $\mu_\infty^{\tau,\s}$ is the unique invariant distribution of $\bigl(X_n^{\tau,\s})_{n\ge 0}$ and $\mu_\star^\tau$ is given by~\eqref{eq:mu_star}.

Moreover, for all $\delta\in(0,\frac12)$ and $\tau_0\in(0,1)$, there exists $C_{\delta}\in(0,\infty)$ such that for all $x_0\in H^{\frac14-\frac{\delta}{8}}$, all $\tau\in(0,\tau_0)$ and $N\in\N$, with $N\tau\ge 2\tau_0$, and any bounded measurable function $\varphi\in\mathcal{B}_b(H)$, one has
\begin{equation}\label{eq:theo-weakinv_weakerror-standard}
\big|\E[\varphi(X_N^{\tau,\s})]-\int\varphi d\mu_\star^\tau\big|\le C_{\delta}\vvvert\varphi\vvvert\Bigl(\tau^{\frac12-\delta}(1+|x_0|_{\frac14-\frac{\delta}{8}}^2)+e^{-\kappa N\tau}(1+|x_0|)\Bigr),
\end{equation}
with $\kappa=\frac{\log(1+\tau_0\lambda_1)}{\tau_0\lambda_1}(\lambda_1-\Lf)\in(0,\lambda_1-\Lf)$.
\end{theo}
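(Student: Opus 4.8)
The plan is to adapt the proof of Theorem~\ref{theo:weakinv}, working with the modified stochastic evolution equation naturally attached to the standard Euler scheme~\eqref{eq:res-standardscheme} rather than with~\eqref{eq:modifiedSPDE}. Matching~\eqref{eq:res-standardscheme} with the accelerated exponential Euler scheme~\eqref{eq:scheme-IX} as in Section~\ref{sec:scheme-3rd}, the linear part is again $\IL_\tau$ (since $e^{-\tau\IL_\tau}=\IA_\tau$) and the treatment of the nonlinearity corresponds to the operator $Q_\tau$ in front of $F$, but the condition $\int_0^\tau e^{-s\IL_\tau}Q_\tau^{\s}e^{-s\IL_\tau}\,ds=\tau\IA_\tau^2$ now forces the noise covariance operator $Q_\tau^{\s}$ to have eigenvalues $q_{\tau,j}^{\s}=\tfrac{2\log(1+\tau\lambda_j)}{\tau\lambda_j(2+\tau\lambda_j)}$, and the crucial difference with the modified scheme is that $Q_\tau^{\s}\ne Q_\tau$. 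I would therefore introduce two auxiliary stochastic evolution equations with linear part $\IL_\tau$ and noise $(Q_\tau^{\s})^{1/2}$: the equation $\IX_\tau^{\s}$ with drift $Q_\tau F$, whose accelerated exponential Euler discretization is \emph{exactly} $(X_n^{\tau,\s})_{n\ge0}$ in distribution; and the equation $\tilde{\IX}_\tau^{\s}$ with drift $Q_\tau^{\s}F$, which is of the type~\eqref{eq:modifiedSPDE} and for which the analog of Proposition~\ref{propo:mu_star_modified} holds: under Assumptions~\ref{ass:ergo} and~\ref{ass:gradient}, a spectral Galerkin truncation shows that its unique invariant distribution is the Gibbs measure $\mu_\star^\tau$ from~\eqref{eq:mu_startau}, since $\mu_\star^\tau$ is the invariant measure of the overdamped Langevin dynamics with drift $-\IL_\tau x+Q_\tau^{\s}F(x)$ and diffusion $Q_\tau^{\s}$ (one uses that $Q_\tau^{\s}\IL(I+\tfrac\tau2\IL)=\IL_\tau$ and that $\nu^\tau$ has density $\propto\exp(-\langle(\IL+\tfrac\tau2\IL^2)x,x\rangle)$).

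Next, for bounded continuous $\varphi$ I would use the decomposition
\[
\E[\varphi(X_N^{\tau,\s})]-\int\varphi\,d\mu_\star^\tau
=\bigl(\E[\varphi(X_N^{\tau,\s})]-\E[\varphi(\IX_\tau^{\s}(N\tau))]\bigr)
+\bigl(\E[\varphi(\IX_\tau^{\s}(N\tau))]-\E[\varphi(\tilde{\IX}_\tau^{\s}(N\tau))]\bigr)
+\bigl(\E[\varphi(\tilde{\IX}_\tau^{\s}(N\tau))]-\int\varphi\,d\mu_\star^\tau\bigr).
\]
The last term decays like $e^{-\kappa N\tau}(1+|x_0|)$ by exponential ergodicity of $\tilde{\IX}_\tau^{\s}$, uniform in $\tau\in(0,\tau_0)$ once $\tau_0$ is small enough that $\|Q_\tau^{\s}\|_{\mathcal{L}(H)}\Lf<\lambda_{\tau,1}$; this also gives uniqueness of $\mu_\star^\tau$ as an invariant distribution and, combined with $\rho_{X_N^{\tau,\s}}\to\mu_\infty^{\tau,\s}$, will let us pass from the finite-$N$ weak estimate~\eqref{eq:theo-weakinv_weakerror-standard} to~\eqref{eq:theo-weakinv_dTVinvar-standard}. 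The first term is the discretization error of the accelerated exponential Euler scheme applied to $\IX_\tau^{\s}$ and is bounded by $C_\delta\tau^{\frac12-\delta}(1+|x_0|_{\frac14-\frac{\delta}{8}}^2)$ exactly as in the proofs of Theorems~\ref{theo:weak-expo} and~\ref{theo:weakinv}, using the strong-Feller estimate for $\IX_\tau^{\s}$ (the analog of Proposition~\ref{propo:utau-regularity}), the telescoping/It\^o argument of Section~\ref{sec:proofs}, and Assumption~\ref{ass:Fregul1}. The middle term is a weak error between two stochastic evolution equations which share the same linear part and the same noise and differ only through the operator in front of $F$; applying It\^o's formula to $s\mapsto v_\tau(N\tau-s,\IX_\tau^{\s}(s))$, where $v_\tau(\cdot,x)=\E_x[\varphi(\tilde{\IX}_\tau^{\s}(\cdot))]$, it equals $\int_0^{N\tau}\E[\langle Dv_\tau(N\tau-s,\IX_\tau^{\s}(s)),(Q_\tau-Q_\tau^{\s})F(\IX_\tau^{\s}(s))\rangle]\,ds$, and the strong-Feller estimate for $\tilde{\IX}_\tau^{\s}$ controls $Dv_\tau$ with a factor $e^{-\kappa(N\tau-s)}(1\wedge(N\tau-s))^{-1+\delta}$. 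It is here that the additional assumption~\eqref{eq:Fregul3} enters: transferring negative powers of $\IL$ onto $F$, it gives $|\IL^{-\frac12+\delta}(Q_\tau-Q_\tau^{\s})F(x)|=|\IL^{-\frac12}(Q_\tau-Q_\tau^{\s})\,\IL^{\delta}F(x)|\le\|\IL^{-\frac12}(Q_\tau-Q_\tau^{\s})\|_{\mathcal{L}(H)}C_\delta|\IL^{2\delta}x|\le C_\delta\tau^{\frac12}|\IL^{2\delta}x|$ (the operator bound follows from the elementary estimate $\sup_j\lambda_j^{-1/2}\tfrac{\log(1+\tau\lambda_j)}{2+\tau\lambda_j}\le C\tau^{1/2}$, $Q_\tau-Q_\tau^{\s}$ having eigenvalues $\tfrac{\log(1+\tau\lambda_j)}{2+\tau\lambda_j}$), which together with a uniform-in-$\tau$ moment bound $\sup_\tau\sup_{t\ge\tau_0}\E[|\IX_\tau^{\s}(t)|_{2\delta}^2]<\infty$ and the integrability of the strong-Feller singularity yields an $O(\tau^{\frac12-\delta})$ bound for the middle term. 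Summing the three bounds gives~\eqref{eq:theo-weakinv_weakerror-standard}, and the supremum over $\varphi$ with $\vvvert\varphi\vvvert\le1$ is the total-variation statement; the constraint $N\tau\ge2\tau_0$ (in place of $N\tau\ge1$) is the $t>2\tau$ margin inherited from the two strong-Feller estimates used in the first and middle terms.

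The main obstacle, as the paper announces with "a few more delicate arguments", is to establish — uniformly in $\tau\in(0,\tau_0)$ — the regularity, moment and strong-Feller estimates (the analogs of Proposition~\ref{propo:modified-wellposed-bound} and Proposition~\ref{propo:utau-regularity}) for the two auxiliary equations $\IX_\tau^{\s}$ and $\tilde{\IX}_\tau^{\s}$. This is harder than for~\eqref{eq:modifiedSPDE} for two reasons. First, the noise operator $(Q_\tau^{\s})^{1/2}$ is strictly smoother than $Q_\tau^{1/2}$ — its eigenvalues decay like $\lambda_j^{-1}\sqrt{\log(1+\tau\lambda_j)}$ rather than $\lambda_j^{-1/2}\sqrt{\log(1+\tau\lambda_j)}$ — and is genuinely $\tau$-dependent, so in the Bismut--Elworthy--Li type integration-by-parts formula underlying the strong-Feller estimate one must invert $(Q_\tau^{\s})^{1/2}$ and carefully ensure that the resulting constants, the ergodic rate $\kappa$, and the moment bounds stay bounded as $\tau\to0$ or $\tau\to\tau_0$; assumption~\eqref{eq:Fregul3} (not needed for the modified scheme) is precisely the extra regularity input on $F$ that keeps the drift terms $Q_\tau F$, $Q_\tau^{\s}F$ from destroying the spatial regularity the smoother noise produces, so that $\IX_\tau^{\s}(t)$, $\tilde{\IX}_\tau^{\s}(t)$ and $\mu_\star^\tau$ all belong to $H^{\alpha}$ for $\alpha<\tfrac34$ with $\tau$-uniform bounds. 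Second, $\IX_\tau^{\s}$ has a genuine mismatch between its drift operator $Q_\tau$ and its noise operator $(Q_\tau^{\s})^{1/2}$, so its invariant distribution is not a Gibbs measure and a little extra care is needed in its strong-Feller estimate. By contrast, the identification of $\mu_\star^\tau$ as the invariant distribution of $\tilde{\IX}_\tau^{\s}$ and the operator-norm bounds for $Q_\tau$, $Q_\tau^{\s}$, $Q_\tau-Q_\tau^{\s}$ on the eigenbasis $(e_j)$ are routine eigenvalue computations.
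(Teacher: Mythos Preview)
Your proposal is essentially the paper's own proof: your operator $Q_\tau^{\s}$ is exactly the paper's $R_\tau=Q_\tau(I+\tfrac{\tau\IL}{2})^{-1}$ (the eigenvalues match), your two auxiliary equations $\IX_\tau^{\s}$ and $\tilde{\IX}_\tau^{\s}$ are the paper's $\IX^{\tau,\s}$ and $\IX_\star^{\tau,\s}$ from~\eqref{eq:modified-standard}, the three-term decomposition is~\eqref{eq:decomperror-standard}, and the identification of $\mu_\star^\tau$ as the invariant distribution of the second equation is Proposition-\ref{propo:mu_star_modified}-type.

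One point deserves correction: you misattribute the role of assumption~\eqref{eq:Fregul3}. It is \emph{not} needed for the strong-Feller estimates (the analog of Proposition~\ref{propo:utau-regularity}, namely Lemma~\ref{lem:utausutausstar-regularity} in the paper) nor for moment bounds in $H^\alpha$; those go through using only the Lipschitz condition on $F$ and operator inequalities for $R_\tau$ (in particular $\|R_\tau^{-1/2}e^{-(t-s)\IL_\tau}Q_\tau\|\le C\sqrt{\tau/(t-s)}$, which handles the drift/noise mismatch in $\IX_\tau^{\s}$). Where~\eqref{eq:Fregul3} actually enters is only in the middle term, and specifically on the short interval $s\in[T-2\tau,T]$ where the strong-Feller bound for $Dv_\tau(T-s,\cdot)$ is unavailable: there one falls back on the crude Bismut--Elworthy bound $|Dv_\tau(t,x).h|\le t^{-1/2}\vvvert\varphi\vvvert_0|R_\tau^{-1/2}h|$, and since $R_\tau^{-1/2}(Q_\tau-R_\tau)$ is unbounded one needs $\|\IL^{-\delta}R_\tau^{-1/2}(Q_\tau-R_\tau)\|\le C\tau^\delta$ together with~\eqref{eq:Fregul3} to control $|\IL^\delta F(\IX_\tau^{\s}(s))|$. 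On the main interval $[0,T-2\tau]$ the bound $\|\IL^{-1/2+\delta}(Q_\tau-R_\tau)\|\le C\tau^{1/2-\delta}$ (your computation) plus Lipschitz $F$ already suffices. Your outline omits this near-singularity treatment, which is precisely the ``more delicate argument'' the paper alludes to.
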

To the best of our knowledge, Theorem~\ref{theo:weakinv-standard} is a new result. Even if this convergence result has little practical interest since one is interested in the approximation of $\mu_\star$, not of $\mu_\star^\tau$, this statement is another illustration of the limitations of the standard Euler scheme~\eqref{eq:res-standardscheme} and of the superiority of the modified Euler scheme~\eqref{eq:res-modifiedscheme}. In addition, the proof of Theorem~\ref{theo:weakinv-standard} is based on the ideas developed to prove Theorem~\ref{theo:weakinv}. Note that when $F=0$, one has $d_{\rm TV}(\mu_\infty^{\tau,\s},\mu_\star^\tau)=0$, by definition of $\nu^\tau$. However, $d_{\rm TV}(\mu_\star^\tau,\mu_\star)=2$, since the distributions $\nu^\tau$ and $\nu$ are singular.

The statement of Theorem~\ref{theo:weakinv-standard} requires the nonlinearity $F$ to satisfy the additional condition~\eqref{eq:Fregul3}. This is not restrictive in practice, since this condition is also satisfied in the framework of Section~\ref{sec:example}.

Let us explain the strategy of the proof of Theorem~\ref{theo:weakinv-standard}. As already mentioned above, the standard Euler scheme~\eqref{eq:res-standardscheme} can be interpreted as obtained from the application of the accelerated exponential Euler scheme to a modified stochastic evolution equation. Precisely, for all $\tau\in(0,\tau_0)$, introduce the process $\bigl(\IX^{\tau,\s}(t)\bigr)_{t\ge 0}$ which is solution of the modified stochastic evolution equation
\begin{equation}\label{eq:modifiedSPDEstandard}
d\IX^{\tau,\s}(t)=-\IL_\tau \IX^{\tau,\s}(t)dt+Q_{\tau}F(\IX^{\tau,\s}(t))dt+R_{\tau}^{\frac12}dW(t),
\end{equation}
with initial value $\IX^{\tau,\s}(0)=x_0$, where the linear operators $\IL_\tau$ and $Q_\tau$ are given by~\eqref{eq:modifiedILQ}, and the linear operator $R_\tau$ is given by
\begin{equation}\label{eq:Rtau}
R_\tau=Q_\tau(I+\frac{\tau\IL}{2})^{-1}.
\end{equation}
The application of the accelerated exponential Euler scheme to the modified stochastic evolution equation~\eqref{eq:modifiedSPDEstandard} gives
\[
\IX_{n+1}^{\tau,\s}=e^{-\tau\IL_\tau}\IX_n^{\tau,\s}+\IL_\tau^{-1}(I-e^{-\tau\IL_\tau})Q_\tau F(\IX_n^{\tau,\s})+\int_{t_n}^{t_{n+1}}e^{-(t_{n+1}-s)\IL_\tau}R_\tau^{\frac12}dW(s),
\]
with initial value $\IX_0^{\tau,\s}=x_0=X_0^{\tau,\s}$. It is straightforward to check that the identity
\[
\int_{t_n}^{t_{n+1}}e^{-2(t_{n+1}-s)\IL_\tau}dsR_\tau=\tau(I+\tau\IL)^{-2}=\tau\IA_\tau^2
\]
holds for all $\tau\in(0,\tau_0)$, using the definitions of $\IL_\tau$, $Q_\tau$ and $R_\tau$. Therefore $\bigl(X_n^{\tau,\s}\bigr)_{n\in\N_0}$ and $\bigl(\IX_n^{\tau,\s}\bigr)_{n\in\N_0}$ are equal in distribution. Note that the operators $\IL_\tau$ and $Q_\tau$ in the modified stochastic evolution equation~\eqref{eq:modifiedSPDEstandard} are the same as in the interpretation of the modified Euler scheme~\eqref{eq:res-modifiedscheme} in terms of the modified stochastic evolution equation~\eqref{eq:modifiedSPDE} developped in Section~\ref{sec:scheme-3rd}, however $R_\tau\neq Q_\tau$. The invariant distribution of the modified stochastic evolution equation~\eqref{eq:modifiedSPDEstandard} is not known, even if $F$ satisfies Assumption~\ref{ass:gradient} (except when $F=0$), contrary to the situation for the modified Euler scheme. To overcome this issue, let us introduce an additional auxiliary process $\bigl(\IX_\star^{\tau,\s}(t)\bigr)_{t\ge 0}$, which is solution of the modified stochastic evolution equation
\begin{equation}\label{eq:modifiedSPDEstandardstar}
d\IX_\star^{\tau,\s}(t)=-\IL_\tau \IX_\star^{\tau,\s}(t)dt+R_{\tau}F(\IX_\star^{\tau,\s}(t))dt+R_{\tau}^{\frac12}dW(t),
\end{equation}
with initial value $\IX_\star^{\tau,\s}(0)=x_0$. Compared with~\eqref{eq:modifiedSPDEstandard}, the drift $Q_\tau F(\cdot)$ is replaced by $R_\tau F(\cdot)$ in~\eqref{eq:modifiedSPDEstandardstar}. Thanks to this modification, it is straightforward to check that, when $F=-DV$ (Assumption~\ref{ass:gradient}), the invariant distribution of~\eqref{eq:modifiedSPDEstandardstar} is equal to the modified Gibbs distribution $\mu_\star^\tau$ defined by~\eqref{eq:mu_startau}, where the reference measure is the Gaussian distribution $\nu^\tau$ which is invariant when $F=0$.

The weak error in the left-hand side of~\eqref{eq:theo-weakinv_weakerror-standard} can be decomposed as follows (compare with~\eqref{eq:decomperror} for the analysis of the modified scheme): with the notation $t_N=N\tau$, one has
\begin{equation}\label{eq:decomperror-standard}
\begin{aligned}
\E[\varphi(X_N^{\tau,\s})]-\int\varphi d\mu_\star^\tau&=\E[\varphi(\IX_N^{\tau,\s})]-\int\varphi d\mu_\star^\tau\\
&=\E[\varphi(\IX_N^{\tau,\s})]-\E[\varphi(\IX^{\tau,\s}(t_N)]\\
&+\E[\varphi(\IX^{\tau,\s}(t_N))]-\E[\varphi(\IX_\star^{\tau,\s}(t_N)]\\
&+\E[\varphi(\IX_\star^{\tau,\s}(t_N)]-\int\varphi d\mu_\star^\tau.
\end{aligned}
\end{equation}
The next step is the introduction of the auxiliary functions  $u^{\tau,\s}$ and $u_\star^{\tau,\s}$ defined by
\[
u^{\tau,\s}(t,x)=\E_x[\varphi(\IX^{\tau,\s}(t))],\quad u_\star^{\tau,\s}(t,x)=\E_x[\varphi(\IX_\star^{\tau,\s}(t))],
\]
for all $t\ge 0$ and $x\in H$, which are solutions of the Kolmogorov equation associated with the modified stochastic evolution equations~\eqref{eq:modifiedSPDEstandard} and~\eqref{eq:modifiedSPDEstandardstar} respectively. The decomposition of the error above may be rewritten as
\begin{align*}
\E[\varphi(X_N^{\tau,\s})]-\int\varphi d\mu_\star^\tau&=\E[u^{\tau,\s}(0,\IX_N^{\tau,\s})]-\E[u^{\tau,\s}(t_N,\IX_0^{\tau,\s})]\\
&+\E[u_\star^{\tau,\s}(0,\IX^{\tau,\s}(t_N))]-\E[u_\star^{\tau,s}(t_N,\IX^{\tau,\s}(0))]\\
&+\E[u_\star^{\tau,s}(t_N,x_0)]-\int\varphi d\mu_\star^\tau.
\end{align*}
Finally, the three error terms in the right-hand side above are studied using regularity properties of the mappings $u^{\tau,\s}$ and $u_\star^{\tau,\s}$: versions of Proposition~\ref{propo:utau-regularity} hold, see Lemma~\ref{lem:utausutausstar-regularity} in Section~\ref{sec:standard-kolmogorov}. In particular, assuming that $\varphi$ is bounded and continuous is sufficient to prove the weak error estimate with order $1/2$.

The tools in the proof of Theorem~\ref{theo:weakinv-standard} are similar to those employed in the proof of Theorem~\ref{theo:weakinv}, with a few differences, due to the presence of the linear operator $R_\tau$ and $R_\tau^{\frac12}$ instead of $Q_\tau$ and $Q_\tau^{\frac12}$ in the auxiliary modified equations. We refer to Section~\ref{sec:standard} for the statements of the required auxiliary results and the proof of Theorem~\ref{theo:weakinv-standard}. Some proofs are omitted to avoid repeating the same arguments as in Section~\ref{sec:auxiliary}, however the main new and non trivial arguments are treated carefully.

The results of this section show that the modified Euler scheme~\eqref{eq:res-modifiedscheme} proposed in this article is a substantial improvement of the standard Euler scheme~\eqref{eq:res-standardscheme}, both qualitatively and quantitatively. In practice, the cost of each iteration of the modified Euler scheme is more expensive, but the computations are of the same type as for the standard Euler scheme, except for an additional step which requires a Cholesky decomposition. However, the gain provided by the proposed scheme is huge: the properties of the standard scheme cannot be improved by reducing the time-step size.

\subsection{Comparison with the exponential Euler scheme}\label{sec:results_exponential}

In this section, we state new weak error estimates for the (accelerated) exponential Euler scheme defined by~\eqref{eq:res-exponentialscheme} above. Compared with existing results in the literature, the error between the distributions $X_N^{\tau,\e}$ and $X(T)$ is considered in the total variation distance $d_{\rm TV}$, instead of the distance $d_2$. The order of convergence is equal to $1/2$. This means that it is not necessary to assume that the function $\varphi$ is of class $\mathcal{C}^2$ (or $\mathcal{C}^1$) to obtain a weak error estimate for $\E[\varphi(X_N^{\tau,\e})]-\E[\varphi(X(T))]$. Compared with the results stated above for the modified Euler scheme in Section~\ref{sec:results_invar} and~\ref{sec:results_weak}, the approximation in the total variation distance holds at any time $T\in(0,\infty)$, and for the invariant distribution without Assumption~\ref{ass:gradient}.

\begin{theo}\label{theo:weak-expo}
Let the nonlinearity $F$ satisfy Assumptions~\ref{ass:F},~\ref{ass:Fregul1} and~\ref{ass:Fregul2}. For all $T\in(0,\infty)$, $\delta\in(0,\frac12)$ and $\tau_0\in(0,1)$, there exists $C_{\delta}(T)\in(0,\infty)$ such that for all $\tau=\frac{T}{N}\in(0,\tau_0)$ with $N\in\N$ and all $x_0\in H^{\frac14-\frac{\delta}{2}}$, one has
\begin{equation}\label{eq:theo-weak-expo}
d_{\rm TV}(\rho_{X_N^{\tau,\e}},\rho_{X(T)})\le C_\delta(T)\tau^{\frac12-\delta}\bigl(1+|x_0|_{\frac14-\frac{\delta}{8}}^2\bigr).
\end{equation}
More precisely, for all bounded and measurable functions $\varphi:H\to\R$, all $\tau\in(0,\tau_0)$ and all $N\in\N$, such that $T=N\tau$, and all $x_0\in H^{\frac14-\frac{\delta}{8}}$, one has
\begin{equation}\label{eq:theo-weak-weakerror-expo}
\big|\E[\varphi(X_N^{\tau,\e})]-\E[\varphi(X(T))]\big|\le C_{\delta}(T)\tau^{\frac12-\delta}\vvvert\varphi\vvvert\bigl(1+|x_0|_{\frac14-\frac{\delta}{8}}^2\bigr)).
\end{equation}
\end{theo}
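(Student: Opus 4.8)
The plan is to prove the weak error estimate~\eqref{eq:theo-weak-weakerror-expo} for $\varphi\in\mathcal{C}^2(H)$ bounded with bounded first and second derivatives, and then to deduce both~\eqref{eq:theo-weak-expo} and the statement for general $\varphi\in\mathcal{B}_b(H)$ by a density argument: the right-hand side of~\eqref{eq:theo-weak-weakerror-expo} involves only $\vvvert\varphi\vvvert_0=\vvvert\varphi\vvvert$, so the estimate extends to $\varphi\in\mathcal{C}^0_b(H)$ by mollification and then to $\varphi\in\mathcal{B}_b(H)$ via the identity $d_{\rm TV}=d_0$ (see~\eqref{eq:distances} and the end of Section~\ref{sec:notation}), and taking the supremum over $\vvvert\varphi\vvvert\le1$ yields the total variation bound. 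As throughout the paper, all computations are carried out in the spectral Galerkin approximation of Section~\ref{sec:Galerkin}, with bounds uniform in the dimension; Assumption~\ref{ass:Fregul2} is used precisely here, to ensure that the transition semigroup $u(t,x)=\E_x[\varphi(X(t))]$ of~\eqref{eq:SPDE} is of class $\mathcal{C}^{1,2}$ and solves the Kolmogorov equation $\partial_t u=\mathcal{L}u$, $u(0,\cdot)=\varphi$ in the classical sense. The decisive simplification compared with the standard or modified Euler scheme is that the accelerated exponential Euler scheme~\eqref{eq:res-exponentialscheme} integrates the stochastic convolution exactly, so that $X_n^{\tau,\e}=X(t_n)$ when $F=0$; hence the error will be written using only the first order derivative $Du$, for which the regularity estimates of Section~\ref{sec:auxiliary-kolmogorov-original} provide, for every $\gamma\in[0,\frac12)$, a smoothing bound $|Du(t,x).h|\le C_\gamma\vvvert\varphi\vvvert_0(1\wedge t)^{-\frac12-\gamma}|\IL^{-\gamma}h|$ (a Bismut--Elworthy--Li / strong Feller estimate, valid for merely bounded $\varphi$). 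Second order derivatives of $u$ and Malliavin integration by parts, which forced the $\mathcal{C}^2$ requirement in Theorem~\ref{theo:weak}, are not needed.

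The error is decomposed by a telescoping sum and It\^o's formula. On each interval $[t_n,t_{n+1}]$ introduce the auxiliary process $\bigl(\tilde X(s)\bigr)_{s\in[t_n,t_{n+1}]}$ solving $d\tilde X(s)=-\IL\tilde X(s)ds+F(X_n^{\tau,\e})ds+dW(s)$ with $\tilde X(t_n)=X_n^{\tau,\e}$; by the variation of constants formula one has $\tilde X(t_{n+1})=X_{n+1}^{\tau,\e}$. Writing $\E[\varphi(X(T))]=\E[u(T,X_0^{\tau,\e})]$ and $\E[\varphi(X_N^{\tau,\e})]=\E[u(0,X_N^{\tau,\e})]$, telescoping over $n$, and applying It\^o's formula to $s\mapsto u(T-s,\tilde X(s))$ together with $\partial_t u=\mathcal{L}u$ and the relation $\mathcal{L}_n g=\mathcal{L}g+Dg.(F(X_n^{\tau,\e})-F(\cdot))$ between the generators, one obtains
\[
\E[\varphi(X_N^{\tau,\e})]-\E[\varphi(X(T))]=\sum_{n=0}^{N-1}\int_{t_n}^{t_{n+1}}\E\bigl[Du(T-s,\tilde X(s)).\bigl(F(\tilde X(t_n))-F(\tilde X(s))\bigr)\bigr]ds.
\]

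To bound the summand, split $\tilde X(s)-\tilde X(t_n)=D_n(s)+Z_n(s)$ into its deterministic part $D_n(s)=(e^{-(s-t_n)\IL}-I)\tilde X(t_n)+\IL^{-1}(I-e^{-(s-t_n)\IL})F(\tilde X(t_n))$ and the stochastic convolution increment $Z_n(s)=\int_{t_n}^{s}e^{-(s-r)\IL}dW(r)$. Taking $\gamma=\frac12-\frac\delta4$ in the smoothing bound for $Du$ and invoking Assumption~\ref{ass:Fregul1} (whose left-hand side carries exactly the exponent $-\frac12+\frac\delta4$), one gets
\[
\bigl|Du(T-s,\tilde X(s)).\bigl(F(\tilde X(t_n))-F(\tilde X(s))\bigr)\bigr|\le C_\delta\vvvert\varphi\vvvert_0(1\wedge(T-s))^{-1+\frac\delta4}\bigl(1+|\tilde X(t_n)|_{\frac{1-\delta}{4}}+|\tilde X(s)|_{\frac{1-\delta}{4}}\bigr)\bigl|\IL^{-\frac14+\delta}\bigl(D_n(s)+Z_n(s)\bigr)\bigr|.
\]
For the deterministic part one uses $\|\IL^{-\frac14+\delta}(e^{-t\IL}-I)\IL^{-\alpha}\|_{\mathcal{L}(H)}\le Ct^{\alpha+\frac14-\delta}$ and $\|\IL^{-\frac14+\delta}\IL^{-1}(I-e^{-t\IL})\|_{\mathcal{L}(H)}\le Ct$ (from~\eqref{eq:regularity}) with $\alpha<\frac14$ close to $\frac14$, so that $|\IL^{-\frac14+\delta}D_n(s)|\le C(s-t_n)^{\frac12-\delta}(1+|\tilde X(t_n)|_{\alpha})$ up to adjusting $\delta$; this requires the uniform-in-$\tau$ moment bound $\sup_{n}\E[|X_n^{\tau,\e}|_\alpha^2]<\infty$ for $\alpha<\frac14$, proved from the mild formulation and the smoothing property~\eqref{eq:regularity} when $x_0\in H^{\frac14-\frac\delta8}$. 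For the stochastic part, It\^o's isometry gives $\E[|\IL^{-\frac14+\delta}Z_n(s)|^2]=\sum_{j}\lambda_j^{-\frac12+2\delta}\frac{1-e^{-2(s-t_n)\lambda_j}}{2\lambda_j}\le C(s-t_n)^{1-2\delta}$, by splitting the series at $\lambda_j(s-t_n)\sim1$ and using $\lambda_j\sim c_{\IL}j^2$. Applying Cauchy--Schwarz to separate the moment factor (bounded by the uniform moment estimates) from $\IL^{-\frac14+\delta}Z_n(s)$, bounding $(s-t_n)^{\frac12-\delta}\le\tau^{\frac12-\delta}$, and summing, the residual integral $\sum_{n}\int_{t_n}^{t_{n+1}}(1\wedge(T-s))^{-1+\frac\delta4}ds=\int_0^T(1\wedge(T-s))^{-1+\frac\delta4}ds<\infty$, which yields $|\E[\varphi(X_N^{\tau,\e})]-\E[\varphi(X(T))]|\le C_\delta(T)\vvvert\varphi\vvvert_0\tau^{\frac12-\delta}(1+|x_0|_{\frac14-\frac\delta8}^2)$ after renaming $\delta$, i.e.~\eqref{eq:theo-weak-weakerror-expo}.

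The main obstacle is the well-known gap that $Z_n(s)$ is only of order $\tau^{1/4}$ in the norm of $H$, which alone would produce a weak rate $1/4$; the improvement to $1/2$ comes from measuring $Z_n(s)$ in the negative-order norm $|\IL^{-\frac14+\delta}\cdot|$, where it is of order $\tau^{1/2-\delta}$, and this is precisely the norm in which Assumption~\ref{ass:Fregul1} controls $F(\tilde X(t_n))-F(\tilde X(s))$ and in which the $\IL^{-\gamma}$ smoothing estimate for $Du$ absorbs it while keeping the time singularity $(T-s)^{-1+\delta/4}$ integrable. Making these three exponents fit together, establishing the $\IL^{-\gamma}$ smoothing of $Du$ uniformly in the Galerkin dimension, and propagating the spatial regularity of $x_0$ into uniform-in-$\tau$ moment bounds for $X_n^{\tau,\e}$ in $H^{\alpha}$, $\alpha<\frac14$, are the technical points; crucially, none of them requires Malliavin integration by parts or regularity of $\varphi$, which is exactly why the total variation bound (merely bounded measurable $\varphi$) is available for the accelerated exponential Euler scheme, in contrast to the standard and modified Euler schemes of Theorem~\ref{theo:weak}.
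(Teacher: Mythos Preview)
Your proposal is correct and follows essentially the same approach as the paper's proof: the telescoping/It\^o decomposition yielding the single error term $\int_{t_n}^{t_{n+1}}\E[Du(T-s,\tilde X(s)).(F(\tilde X(t_n))-F(\tilde X(s)))]ds$, the Bismut--Elworthy--Li smoothing estimate for $Du$ combined with the semigroup trick to reach negative-order norms (this is exactly Proposition~\ref{propo:u-regularity}), and the use of Assumption~\ref{ass:Fregul1} together with the increment bound in $|\IL^{-\frac14+\delta}\cdot|$ (your explicit deterministic/stochastic splitting is precisely the content of Lemma~\ref{lem:tildeXe}). The only cosmetic difference is that the paper works directly with bounded continuous $\varphi$ rather than first taking $\varphi\in\mathcal{C}^2$ and passing to the limit; since the $Du$ estimate already holds for bounded continuous $\varphi$ and the endpoint $T-t=0$ never requires differentiability of $\varphi$, the $\mathcal{C}^2$ detour is unnecessary.
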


When Assumption~\ref{ass:ergo} is satisfied, the exponential Euler scheme~\eqref{eq:res-exponentialscheme} admits a unique invariant distribution denoted by $\mu_\infty^{\tau,\e}$, and one obtains the following result for the approximation of the invariant distribution $\mu_\infty$.
\begin{theo}\label{theo:weak-ergo-expo}
Let the nonlinearity $F$ satisfy Assumptions~\ref{ass:ergo},~\ref{ass:Fregul1} and~\ref{ass:Fregul2}. For all $\delta\in(0,\frac12)$ and $\tau_0\in(0,1)$, there exists $C_{\delta}\in(0,\infty)$ such that for all $\tau\in(0,\tau_0)$ one has
\begin{equation}\label{eq:theo-weak-ergo-expo}
d_{0}(\mu_\infty^{\tau,\e},\mu_\infty)\le C_\delta\tau^{\frac12-\delta}.
\end{equation}
More precisely, for all bounded and measurable functions $\varphi:H\to\R$s, all $x_0\in H^{\frac14-\frac{\delta}{8}}$, all $\tau\in(0,\tau_0)$ and all $N\in\N$, such that $N\tau\ge 1$, one has
\begin{equation}\label{eq:theo-weak-ergo-weakerror-expo}
\big|\E[\varphi(X_N^{\tau,\e})]-\int\varphi d\mu_\star\big|\le C_{\delta}\vvvert\varphi\vvvert\Bigl(\tau^{\frac12-\delta}(1+|x_0|_{\frac14-\frac{\delta}{8}}^2)+e^{-\kappa N\tau}(1+|x_0|)\Bigr),
\end{equation}
with $\kappa=\Lf-\lambda_1$.
\end{theo}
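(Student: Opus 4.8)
The plan is to deduce~\eqref{eq:theo-weak-ergo-expo} from the pointwise weak error estimate~\eqref{eq:theo-weak-ergo-weakerror-expo}, and to prove the latter by combining a \emph{uniform-in-time} weak error estimate between the exponential scheme and the exact solution with the exponential ergodicity of the stochastic evolution equation. First, since $d_{\rm TV}=d_0$ by~\eqref{eq:distances}, it suffices to prove~\eqref{eq:theo-weak-ergo-weakerror-expo} for bounded continuous $\varphi$. Granting it, I would use that $\mu_\infty^{\tau,\e}$ is invariant for the scheme: if $X_0^{\tau,\e}\sim\mu_\infty^{\tau,\e}$ then $X_N^{\tau,\e}\sim\mu_\infty^{\tau,\e}$, so
\[
\int\varphi\,d\mu_\infty^{\tau,\e}-\int\varphi\,d\mu_\infty=\int\Bigl(\E_{x_0}[\varphi(X_N^{\tau,\e})]-\int\varphi\,d\mu_\infty\Bigr)\,d\mu_\infty^{\tau,\e}(x_0),
\]
and then insert~\eqref{eq:theo-weak-ergo-weakerror-expo} and let $N\to\infty$. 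This requires the uniform-in-$\tau$ moment bound $\sup_{\tau\in(0,\tau_0)}\int\bigl(|x_0|+|x_0|_{\frac14-\frac{\delta}{8}}^2\bigr)\,d\mu_\infty^{\tau,\e}(x_0)<\infty$, the analogue of~\eqref{eq:invar-scheme-bound}, which holds because the exponential Euler scheme is exact in the Ornstein--Uhlenbeck case, hence the moment estimates for $X_n^{\tau,\e}$ in $H^{\frac14-\frac{\delta}{8}}$ are uniform in $\tau$ and $n$.

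To prove~\eqref{eq:theo-weak-ergo-weakerror-expo}, with $T=N\tau\ge1$, decompose
\[
\E_{x_0}[\varphi(X_N^{\tau,\e})]-\int\varphi\,d\mu_\infty=\Bigl(\E_{x_0}[\varphi(X_N^{\tau,\e})]-\E_{x_0}[\varphi(X(T))]\Bigr)+\Bigl(\E_{x_0}[\varphi(X(T))]-\int\varphi\,d\mu_\infty\Bigr).
\]
For the second term I would use the strong Feller property of~\eqref{eq:SPDE} (the gradient estimates for $u(t,x)=\E_x[\varphi(X(t))]$ collected in Section~\ref{sec:auxiliary-kolmogorov-original}) to replace the bounded function $\varphi$ by the Lipschitz function $y\mapsto u(1,y)=\E_y[\varphi(X(1))]$, which satisfies $\vvvert u(1,\cdot)\vvvert_1\le C\vvvert\varphi\vvvert_0$; since $\int u(1,\cdot)\,d\mu_\infty=\int\varphi\,d\mu_\infty$ by invariance, applying the ergodicity estimate of Proposition~\ref{propo:invar} to $u(1,\cdot)$ yields
\[
\Bigl|\E_{x_0}[\varphi(X(T))]-\int\varphi\,d\mu_\infty\Bigr|\le C\,e^{-(\lambda_1-\Lf)T}\,\vvvert\varphi\vvvert_0\,(1+|x_0|)
\]
using $T\ge1$, which produces the $e^{-\kappa N\tau}(1+|x_0|)$ contribution.

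For the first term I would reuse the weak error analysis of Theorem~\ref{theo:weak-expo}, keeping track of the exponential decay in time, exactly as in the passage from Theorem~\ref{theo:weak} to Theorem~\ref{theo:weak-ergo}. Introduce the continuous-time process $\bigl(\tilde X_\tau(t)\bigr)_{t\ge0}$ interpolating the exponential scheme, defined on each interval $[t_n,t_{n+1}]$ as the mild solution of~\eqref{eq:SPDE} with the nonlinearity frozen at the left endpoint, so that $\tilde X_\tau(t_n)=X_n^{\tau,\e}$ and, crucially, $\tilde X_\tau$ has the same linear part $-\IL$ and the same (identity) diffusion coefficient as $X$. Applying It\^o's formula to $t\mapsto u(T-t,\tilde X_\tau(t))$, where $u$ solves the Kolmogorov equation associated with~\eqref{eq:SPDE}, the second order contributions cancel and only the drift discrepancy remains:
\[
\E_{x_0}[\varphi(X_N^{\tau,\e})]-\E_{x_0}[\varphi(X(T))]=\int_0^T\E_{x_0}\Bigl[Du(T-t,\tilde X_\tau(t)).\bigl(F(\tilde X_\tau(\lfloor t\rfloor_\tau))-F(\tilde X_\tau(t))\bigr)\Bigr]\,dt,
\]
with $\lfloor t\rfloor_\tau=\tau\lfloor t/\tau\rfloor$. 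Only first order derivatives of $u$ enter, so only the strong Feller (gradient) estimate for $u$, involving $\vvvert\varphi\vvvert_0$, is needed --- this is precisely why the total variation distance, rather than $d_2$, is reachable for the exponential scheme. Bounding the integrand by the $e^{-(\lambda_1-\Lf)(T-t)}$-decaying gradient estimate for $Du(T-t,\cdot)$ (with its integrable singularity as $T-t\downarrow0$), the $\IL^{-\frac12+\frac{\delta}{4}}$-regularity of $F$ (Assumption~\ref{ass:Fregul1}), and the temporal regularity of $\tilde X_\tau$ in the norm $|\IL^{-\frac14+\delta}\cdot|$ of order $\tau^{\frac12-\delta}$ (with moment bounds uniform in $\tau$, for $x_0\in H^{\frac14-\frac{\delta}{8}}$), the time integral $\int_0^T(1\wedge(T-t))^{-\alpha}e^{-(\lambda_1-\Lf)(T-t)}\,dt$ with $\alpha\in(0,1)$ is bounded uniformly in $T$, giving the $\tau^{\frac12-\delta}(1+|x_0|_{\frac14-\frac{\delta}{8}}^2)$ term uniformly in $N$.

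The main obstacle is establishing the gradient (strong Feller) estimates for $u(t,x)=\E_x[\varphi(X(t))]$ with the explicit exponential decay factor $e^{-(\lambda_1-\Lf)t}$ and the sharp, integrable singularity as $t\downarrow0$, valid for merely bounded measurable $\varphi$; these are exactly the estimates to be proved in Section~\ref{sec:auxiliary-kolmogorov-original}, and with them in hand the remainder reduces to the same bookkeeping of integrable time singularities and uniform moment bounds as in the proof of Theorem~\ref{theo:weak-ergo}. A secondary technical point is the uniform-in-$\tau$ moment bound for $\mu_\infty^{\tau,\e}$ used to pass to the limit $N\to\infty$, which is immediate here because the exponential scheme discretizes the linear part exactly.
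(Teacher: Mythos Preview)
Your proposal is correct and follows exactly the approach the paper intends. The paper does not give a detailed proof of this theorem, stating only that ``it only requires standard additional arguments compared with the proof of Theorem~\ref{theo:weak-expo}''; you have correctly identified those arguments: replace the finite-time gradient estimate~\eqref{eq:u-regularity} by its exponentially decaying version~\eqref{eq:u-regularity-ergo} from Proposition~\ref{propo:u-regularity}, and replace the finite-time moment bounds~\eqref{eq:tildeXe-bound}--\eqref{eq:tildeXe-increment} by their uniform-in-time counterparts~\eqref{eq:tildeXe-bound-ergo}--\eqref{eq:tildeXe-increment-ergo} from Lemma~\ref{lem:tildeXe}, then handle the ergodic term via the semigroup trick (writing $\varphi=u(0,\cdot)$ and shifting to $u(1,\cdot)$ to gain Lipschitz regularity before invoking Proposition~\ref{propo:invar}), and finally integrate against $\mu_\infty^{\tau,\e}$ and let $N\to\infty$ to pass from~\eqref{eq:theo-weak-ergo-weakerror-expo} to~\eqref{eq:theo-weak-ergo-expo}. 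One minor remark: what you flag as ``the main obstacle'' --- the decaying strong Feller estimate for $Du(t,\cdot)$ with integrable singularity at $t=0$ --- is already available as~\eqref{eq:u-regularity-ergo}, so no new ingredient is needed.
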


The proof of Theorem~\ref{theo:weak-expo} is given in Section~\ref{sec:expo}. The proof of Theorem~\ref{theo:weak-ergo-expo} is omitted, since it only requires standard additional arguments compared with the proof of Theorem~\ref{theo:weak-expo}.

Note that Theorem~\ref{theo:equivalence} obviously holds when using the accelerated exponential Euler scheme~\eqref{eq:res-exponentialscheme}. Indeed, if $F=0$, then $X_N^{\tau,\e}=W^{\IL}(N\tau)=X(N\tau)$ for all $\tau\in(0,\tau_0)$ and $N\in\N$. Theorem~\ref{theo:regularity} is also verified for that integrator, the details are omitted.

The modified Euler scheme~\eqref{eq:res-modifiedscheme} proposed in this article shares qualitative and quantitative properties with the accelerated exponential Euler scheme~\eqref{eq:res-exponentialscheme}, with a major practical difference: it is not needed to compute exponentials of the type $e^{-\tau\IL}$, instead one only needs to compute LU and Cholesky decompositions. This may be of interest in some situations where using exponential integrators may not be possible.

\section{Auxiliary results}\label{sec:auxiliary}

This section is devoted to state and prove the auxiliary results which are required to prove the main results of this article stated in Section~\ref{sec:results}. Additional results will be required in Sections~\ref{sec:expo} and~\ref{sec:standard}, they will be studied later since they are not required for the analysis of the modified Euler scheme.

Section~\ref{sec:auxiliary-operators} gives several properties of the linear operators $\IL_\tau$ and $Q_\tau$ appearing in the interpretation~\eqref{eq:scheme-IX} of the modified Euler scheme in terms of the modified stochastic evolution equation~\eqref{eq:modifiedSPDE}. Properties of the so-called modified stochastic convolution process are studied in Section~\ref{sec:auxiliary-convolution}. Then well-posedness, moment bounds and long-time behavior of the modified equation are studied in Section~\ref{sec:auxiliary-modified}. The most original and important results of this section deal with the regularity properties of the solutions of the Kolmogorov equations associated with the modified equation~\eqref{eq:modifiedSPDE}, with a careful analysis of the dependence in the bounds with respect to the time-step size $\tau$, see Section~\ref{sec:auxiliary-kolmogorov-modified}. Those results are combined to give a proof of Proposition~\ref{propo:utau-regularity} in Section~\ref{sec:proofs-weakinv}. Then, several results concerning the modified Euler scheme (moment bounds, invariant distribution) are given in Section~\ref{sec:auxiliary-scheme}. Finally, Section~\ref{sec:auxiliary-kolmogorov-original} is devoted to regularity properties of the solutions of the Kolmogorov equation associated with the original stochastic evolution equation~\eqref{eq:SPDE}.

\subsection{Properties of the auxiliary linear operators}\label{sec:auxiliary-operators}

Lemma~\ref{lem:Q_tauIL_tau-bound} below states several bounds on the linear operators $Q_\tau$, $\IL_\tau$ and $e^{-t\IL_\tau}$, which are uniform with respect to the time-step size $\tau\in(0,\tau_0)$, with arbitrary $\tau_0\in(0,1)$. Then Lemma~\ref{lem:Q_tauIL_tau-error} gives error estimates for $Q_\tau-Q$ and $\IL_\tau-\IL$ when $\tau\to 0$, in an appropriate sense.

Recall that $\IL_\tau$ and $Q_\tau$ are defined by~\eqref{eq:modifiedILQ}, and that$\IL_\tau e_j=\lambda_{\tau,j}e_j$ and $Q_\tau e_j=q_{\tau,j}e_j$, for all $j\in\N$, with the eigenvalues $\lambda_{\tau,j}$ and $q_{\tau,j}$ given by~\eqref{eq:modifiedILQ-eigen}, see Section~\ref{sec:scheme-3rd}.

The first result of this section is Lemma~\ref{lem:Q_tauIL_tau-bound}.
\begin{lemma}\label{lem:Q_tauIL_tau-bound}
Let $\tau_0\in(0,1)$. The linear operator $Q_\tau$ is bounded, and for all $\tau\in(0,\tau_0)$ one has
\begin{equation}\label{eq:Q_tau-bound}
\|Q_\tau\|_{\mathcal{L}(H)}=\frac{1+\log(\tau\lambda_1)}{\tau\lambda_1}\le 1.
\end{equation}
Moreover, one has the following spectral gap inequality for the self-adjoint unbounded linear operators $\IL_\tau$: for all $x\in D(A)$ and all $\tau\in(0,\tau_0)$, one has
\begin{equation}\label{eq:IL_tau-gap}
\langle \IL_\tau x,x\rangle\ge \frac{\log(1+\tau_0\lambda_1)}{\tau_0}|x|^2.
\end{equation}

For all $\alpha\in[0,1)$, the semigroup $\bigl(e^{-t\IL_\tau}\bigr)_{t\ge 0}$ satisfies the following properties:
\begin{align}
&\underset{\tau\in(0,\tau_0)}\sup~\underset{t\in(0,\infty)}\sup~\|e^{-t\IL_\tau}\|_{\mathcal{L}(H)}\le 1,\label{eq:IL_tau-bound}\\
&\underset{\tau\in(0,\tau_0)}\sup~\underset{t\in(\tau,\infty)}\sup~(t-\tau)^\alpha \|\IL^\alpha e^{-t\IL_\tau}\|_{\mathcal{L}(H)}<\infty,\label{eq:IL_tau-smoothing}\\
&\underset{\tau\in(0,\tau_0)}\sup~\underset{t\in(0,\tau)}\sup~\tau^\alpha\|\IL^\alpha e^{-t\IL_\tau}Q_\tau\|_{\mathcal{L}(H)}<\infty,\label{eq:IL_tau-smoothing-bis}\\
&\underset{\tau\in(0,\tau_0)}\sup~\|Q_\tau^{-\frac12}e^{-\tau\IL_\tau}\|_{\mathcal{L}(H)}<\infty\label{eq:IL_tau-IL_tauQ_tau1/2},\\
&\underset{\tau\in(0,\tau_0)}\sup~\underset{t\in(0,\infty)}\sup~t^{-\alpha}\|\IL^{-\alpha}(I-e^{-t\IL_\tau})\|_{\mathcal{L}(H)}<\infty.\label{eq:IL_tau-increment}
\end{align}
\end{lemma}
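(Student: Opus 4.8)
The plan is to reduce each of the claimed operator estimates to a scalar inequality for the eigenvalues, exploiting that $\IL$, $\IL_\tau$, $e^{-t\IL_\tau}$ and $Q_\tau$ are simultaneously diagonalized in the basis $(e_j)_{j\in\N}$. Throughout one uses a handful of elementary facts about $x\mapsto\log(1+x)$ on $[0,\infty)$: (a) $x\mapsto\frac{\log(1+x)}{x}$ is non-increasing with values in $(0,1]$; (b) the two-sided bound $\frac{x}{1+x}\le\log(1+x)\le x$; (c) for every $\beta\in(0,1]$ there is $C_\beta<\infty$ with $\log(1+x)\le C_\beta x^\beta$; and (d) for $\mu,s>0$, $\mu^\alpha e^{-s\mu}\le C_\alpha s^{-\alpha}$ for $\alpha\ge0$, while $1-e^{-s\mu}\le\min(1,s\mu)\le(s\mu)^\alpha$ for $\alpha\in[0,1]$. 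The one structural observation needed is the identity $q_{\tau,j}=\lambda_{\tau,j}/\lambda_j$, read off from~\eqref{eq:modifiedILQ-eigen}; combined with (b) it yields $\frac{1}{1+\tau\lambda_j}\le q_{\tau,j}\le1$ and $\frac{\lambda_j}{1+\tau\lambda_j}\le\lambda_{\tau,j}\le\lambda_j$.

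For the first three assertions the argument is immediate. Since $\|Q_\tau\|_{\mathcal{L}(H)}=\sup_j q_{\tau,j}$ and $j\mapsto\tau\lambda_j$ is non-decreasing, (a) shows $q_{\tau,j}$ is non-increasing in $j$, so the supremum equals $q_{\tau,1}\le1$, which is~\eqref{eq:Q_tau-bound}. For~\eqref{eq:IL_tau-gap}, $\langle\IL_\tau x,x\rangle=\sum_j\lambda_{\tau,j}\langle x,e_j\rangle^2\ge(\inf_j\lambda_{\tau,j})|x|^2=\lambda_{\tau,1}|x|^2$, and $\lambda_{\tau,1}=\lambda_1\frac{\log(1+\tau\lambda_1)}{\tau\lambda_1}$ is non-increasing in $\tau$ by (a), hence $\ge\lambda_{\tau_0,1}=\frac{\log(1+\tau_0\lambda_1)}{\tau_0}$ for $\tau\le\tau_0$. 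Finally~\eqref{eq:IL_tau-bound} follows from $\lambda_{\tau,j}\ge0$, giving $\|e^{-t\IL_\tau}\|_{\mathcal{L}(H)}=\sup_j e^{-t\lambda_{\tau,j}}\le1$.

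The smoothing estimate~\eqref{eq:IL_tau-smoothing} is the main obstacle: $\IL_\tau$ grows only like $\log j$ while $\IL$ grows like $j^2$, so $\IL^\alpha e^{-t\IL_\tau}$ is not a function of $\IL_\tau$ alone and the usual smoothing property~\eqref{eq:smoothing} cannot be quoted directly. The plan is to write $e^{-t\lambda_{\tau,j}}=(1+\tau\lambda_j)^{-t/\tau}$ and split the exponent:
\[
\lambda_j^\alpha e^{-t\lambda_{\tau,j}}=\bigl(\tfrac{\lambda_j}{1+\tau\lambda_j}\bigr)^\alpha(1+\tau\lambda_j)^{-(t/\tau-\alpha)}\le\lambda_{\tau,j}^\alpha\,e^{-(t-\alpha\tau)\lambda_{\tau,j}},
\]
using $\frac{\lambda_j}{1+\tau\lambda_j}\le\lambda_{\tau,j}$ from (b) and $(1+\tau\lambda_j)^{-(t/\tau-\alpha)}=e^{-(t-\alpha\tau)\lambda_{\tau,j}}$. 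Since $\alpha<1$ and $t>\tau$ one has $t-\alpha\tau\ge t-\tau>0$, so the right-hand side is $\le\lambda_{\tau,j}^\alpha e^{-(t-\tau)\lambda_{\tau,j}}\le C_\alpha(t-\tau)^{-\alpha}$ by (d); taking the supremum over $j$ gives~\eqref{eq:IL_tau-smoothing} with a constant depending only on $\alpha$, uniform in $\tau$.

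The last three estimates are again purely scalar. For~\eqref{eq:IL_tau-smoothing-bis}, bound $e^{-t\lambda_{\tau,j}}\le1$ and apply (c) with $\beta=1-\alpha\in(0,1]$ to get $q_{\tau,j}=\frac{\log(1+\tau\lambda_j)}{\tau\lambda_j}\le C_{1-\alpha}(\tau\lambda_j)^{-\alpha}$, hence $\tau^\alpha\lambda_j^\alpha e^{-t\lambda_{\tau,j}}q_{\tau,j}\le C_{1-\alpha}$. For~\eqref{eq:IL_tau-IL_tauQ_tau1/2}, use $e^{-\tau\lambda_{\tau,j}}=(1+\tau\lambda_j)^{-1}$ and $q_{\tau,j}^{-1/2}=\bigl(\tfrac{\tau\lambda_j}{\log(1+\tau\lambda_j)}\bigr)^{1/2}\le(1+\tau\lambda_j)^{1/2}$ by (b), so $q_{\tau,j}^{-1/2}e^{-\tau\lambda_{\tau,j}}\le(1+\tau\lambda_j)^{-1/2}\le1$. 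For~\eqref{eq:IL_tau-increment}, use $1-e^{-t\lambda_{\tau,j}}\le(t\lambda_{\tau,j})^\alpha$ from (d), whence $\lambda_j^{-\alpha}(1-e^{-t\lambda_{\tau,j}})\le t^\alpha(\lambda_{\tau,j}/\lambda_j)^\alpha=t^\alpha q_{\tau,j}^\alpha\le t^\alpha$. In each case the supremum over $j$ is finite uniformly in $\tau\in(0,\tau_0)$ (in fact in $\tau>0$), which completes the proof.
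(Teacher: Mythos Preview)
Your proof is correct and follows the same overall strategy as the paper's: reduce each operator bound to a scalar inequality in the common eigenbasis and invoke elementary properties of $z\mapsto\log(1+z)$. For~\eqref{eq:Q_tau-bound}, \eqref{eq:IL_tau-gap}, \eqref{eq:IL_tau-bound}, \eqref{eq:IL_tau-smoothing-bis}, \eqref{eq:IL_tau-IL_tauQ_tau1/2} and~\eqref{eq:IL_tau-increment} your arguments and the paper's are essentially identical (the paper packages~\eqref{eq:IL_tau-IL_tauQ_tau1/2} as $\sup_z z^{1/2}/\bigl((1+z)\sqrt{\log(1+z)}\bigr)<\infty$, but your bound $q_{\tau,j}^{-1/2}\le(1+\tau\lambda_j)^{1/2}$ via $\log(1+x)\ge x/(1+x)$ is the same content).

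The one genuine difference is in the smoothing estimate~\eqref{eq:IL_tau-smoothing}. The paper argues discretely: it observes $e^{-t_n\IL_\tau}=(I+\tau\IL)^{-n}$, bounds $\sup_{n,z}(nz)^\alpha/(1+z)^n\le1$, and then for general $t\in[t_n,t_{n+1})$ uses monotonicity to get $\|\IL^\alpha e^{-t\IL_\tau}\|\le t_n^{-\alpha}\le(t-\tau)^{-\alpha}$. Your continuous approach, writing $\lambda_j^\alpha e^{-t\lambda_{\tau,j}}=(\lambda_j/(1+\tau\lambda_j))^\alpha(1+\tau\lambda_j)^{-(t/\tau-\alpha)}\le\lambda_{\tau,j}^\alpha e^{-(t-\alpha\tau)\lambda_{\tau,j}}$ and then applying $\mu^\alpha e^{-s\mu}\le C_\alpha s^{-\alpha}$, avoids the discretization step entirely and in fact yields the slightly sharper bound $\|\IL^\alpha e^{-t\IL_\tau}\|\le C_\alpha(t-\alpha\tau)^{-\alpha}$, valid already for $t>\alpha\tau$. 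Both arguments are short; yours is a bit more direct.
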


The inequalities~\eqref{eq:IL_tau-smoothing} and~\eqref{eq:IL_tau-smoothing-bis} are a form of the smoothing inequality~\eqref{eq:smoothing}. To obtain bounds which are uniform with respect to the time-step size $\tau$, it is necessary to treat separately the cases $t\in(0,\tau_0)$ and $t\in(\tau,\infty)$: indeed, for all $0<\alpha<\alpha'\le 1$, $\IL^{\alpha'} e^{-\alpha \tau\IL_\tau}=\IL^{\alpha'}(I+\tau\IL)^{-\alpha}$ is not a bounded operator, for any value of $\tau\in(0,\tau_0)$, therefore the inequality~\eqref{eq:IL_tau-smoothing} requires the condition $t>\tau$. A naive and simpler form of the smoothing inequality holds: one has
\[
\underset{\tau\in(0,\tau_0)}\sup~\underset{t\in(0,\infty)}\sup~t^\alpha\|\IL_\tau^\alpha e^{-t\IL_\tau}\|_{\mathcal{L}(H)}<\infty.
\]
However the linear operators $\IL_\tau^\alpha$ and $\IL_\tau^\alpha$ define norms $|\IL_\tau^\alpha\cdot|$ and $|\IL^\alpha\cdot|$ which are not equivalent. This is why the smoothing inequality~\eqref{eq:IL_tau-smoothing} and~\eqref{eq:IL_tau-smoothing-bis} are needed.

The second result of this section is Lemma~\ref{lem:Q_tauIL_tau-error}.
\begin{lemma}\label{lem:Q_tauIL_tau-error}
Let $\tau_0\in(0,1)$. For all $\alpha\in[0,1]$, one has the error bounds
\begin{align}
&\underset{\tau\in(0,\tau_0)}\sup~\tau^{-\alpha}\|\IL^{-\alpha}(Q_\tau-I)\|_{\mathcal{L}(H)}<\infty,\label{eq:Q_tau-error}\\
&\underset{\tau\in(0,\tau_0)}\sup~\tau^{-\alpha}\|\IL^{-1-\alpha}(\IL_\tau-\IL)\|_{\mathcal{L}(H)}<\infty.\label{eq:IL_tau-error}
\end{align}
\end{lemma}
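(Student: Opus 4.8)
The plan is to reduce everything to elementary scalar estimates on the eigenvalues, since $Q_\tau$, $\IL_\tau$, and $\IL$ all commute and are diagonal in the orthonormal basis $(e_j)_{j\in\N}$. For a self-adjoint operator that is diagonal with eigenvalues $m_j$ in this basis, the operator norm is $\sup_j |m_j|$, so both bounds amount to finding a uniform bound on a family of real sequences. Specifically, \eqref{eq:Q_tau-error} is equivalent to
\[
\sup_{\tau\in(0,\tau_0)}\sup_{j\in\N}~\tau^{-\alpha}\lambda_j^{-\alpha}\,\bigl|q_{\tau,j}-1\bigr|<\infty,
\]
and, writing the eigenvalue of $\IL^{-1-\alpha}(\IL_\tau-\IL)$ as $\lambda_j^{-1-\alpha}(\lambda_{\tau,j}-\lambda_j)$, \eqref{eq:IL_tau-error} is equivalent to
\[
\sup_{\tau\in(0,\tau_0)}\sup_{j\in\N}~\tau^{-\alpha}\lambda_j^{-1-\alpha}\,\bigl|\lambda_{\tau,j}-\lambda_j\bigr|<\infty.
\]
Recall from \eqref{eq:modifiedILQ-eigen} that $\lambda_{\tau,j}=\tau^{-1}\log(1+\tau\lambda_j)$ and $q_{\tau,j}=(\tau\lambda_j)^{-1}\log(1+\tau\lambda_j)$, so in both cases the relevant quantity is a function of the single variable $u=\tau\lambda_j\in(0,\infty)$.

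For \eqref{eq:Q_tau-error}: with $u=\tau\lambda_j$ one has $q_{\tau,j}-1=\frac{\log(1+u)-u}{u}$, hence $\tau^{-\alpha}\lambda_j^{-\alpha}|q_{\tau,j}-1| = u^{-\alpha}\,\frac{|u-\log(1+u)|}{u}=\frac{|u-\log(1+u)|}{u^{1+\alpha}}$. I would then check that $g(u):=\frac{u-\log(1+u)}{u^{1+\alpha}}$ is bounded on $(0,\infty)$ for every $\alpha\in[0,1]$: as $u\to 0$, $u-\log(1+u)\sim u^2/2$, so $g(u)\sim u^{1-\alpha}/2\to 0$ (using $\alpha\le 1$); as $u\to\infty$, $u-\log(1+u)\sim u$, so $g(u)\sim u^{-\alpha}\to 0$ (or stays bounded when $\alpha=0$, since $0\le u-\log(1+u)\le u$); and $g$ is continuous on $(0,\infty)$, so it is bounded. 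For \eqref{eq:IL_tau-error}: $\lambda_{\tau,j}-\lambda_j=\tau^{-1}(\log(1+u)-u)$, so $\tau^{-\alpha}\lambda_j^{-1-\alpha}|\lambda_{\tau,j}-\lambda_j| = \tau^{-\alpha}\lambda_j^{-1-\alpha}\tau^{-1}|u-\log(1+u)| = \tau^{-1-\alpha}\lambda_j^{-1-\alpha}|u-\log(1+u)| = \frac{|u-\log(1+u)|}{u^{1+\alpha}}$, which is exactly the same function $g(u)$ already shown to be bounded. Thus both estimates follow from a single scalar lemma.

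The proof is almost entirely routine; there is no real obstacle beyond bookkeeping. The one point deserving a sentence of care is the justification that the operator norm of a self-adjoint diagonal operator equals the supremum of the moduli of its eigenvalues, and that $\IL^{-\alpha}(Q_\tau-I)$ and $\IL^{-1-\alpha}(\IL_\tau-\IL)$ are genuinely bounded operators (not merely densely defined) — this is immediate here because the corresponding eigenvalue sequences are bounded, as just verified, so the operators extend to all of $H$. In the spirit of the paper's convention (Section~\ref{sec:Galerkin}), one may equivalently carry out the estimate on the spectral Galerkin truncations, where all operators are bounded matrices, and obtain bounds independent of the truncation dimension; letting the dimension tend to infinity then gives the stated inequalities. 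Since everything reduces to the elementary fact that $u\mapsto\frac{u-\log(1+u)}{u^{1+\alpha}}$ is bounded on $(0,\infty)$ for $\alpha\in[0,1]$, I would present the short scalar computation and note that both \eqref{eq:Q_tau-error} and \eqref{eq:IL_tau-error} follow.
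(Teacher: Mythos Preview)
Your proposal is correct and follows essentially the same approach as the paper: both reduce to showing that the scalar function $u\mapsto u^{-\alpha}\bigl|\frac{\log(1+u)}{u}-1\bigr|=\frac{|u-\log(1+u)|}{u^{1+\alpha}}$ is bounded on $(0,\infty)$ for $\alpha\in[0,1]$, and then read off the operator norms eigenvalue by eigenvalue. The only cosmetic difference is that the paper deduces \eqref{eq:IL_tau-error} from \eqref{eq:Q_tau-error} via the identity $\IL_\tau-\IL=\IL(Q_\tau-I)$, whereas you compute the eigenvalues directly and observe that the same function $g(u)$ appears; these are equivalent.
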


\begin{proof}[Proof of Lemma~\ref{lem:Q_tauIL_tau-bound}]

It is straightforward to check that the mapping $\theta:z\in[0,\infty)\mapsto \frac{\log(1+z)}{z}$ (with $\theta(0)=1$) is non-increasing. Since $q_{\tau,j}=\theta(\tau\lambda_j)$ for all $j\in\N$, one obtains
\[
\|Q_\tau\|_{\mathcal{L}(H)}=\underset{j\in\N}\sup~q_{\tau,j}=\underset{j\in\N}\sup~\theta(\tau\lambda_j)=\theta(\tau\lambda_1)=\frac{\log(1+\tau\lambda_1)}{\tau\lambda_1}.
\]
In addition, $\theta(\tau\lambda_1)\le \theta(0)=1$. This gives~\eqref{eq:Q_tau-bound}. To prove~\eqref{eq:IL_tau-gap}, it suffices to check that
\[
\underset{\tau\in(0,\tau_0)}\inf~\underset{j\in\N}\inf~\lambda_{\tau,j}=\underset{\tau\in(0,\tau_0)}\inf~\lambda_{\tau,1}=\lambda_1\underset{\tau\in(0,\tau_0)}\inf~\theta(\tau\lambda_1)=\lambda_1\theta(\tau_0\lambda_1)=\lambda_{\tau_0,1}.
\]
Let us now prove the properties of the semigroup $\bigl(e^{-t\IL_\tau}\bigr)_{t\ge 0}$. The proof of~\eqref{eq:IL_tau-bound} is straightforward: for all $j\in\N$ and all $\tau\in(0,\tau_0)$, one has $\lambda_{\tau,j}\ge 0$. In order to prove the smoothing inequalities, recall the notation $t_n=n\tau$, and observe that
\[
e^{-t_n\IL_\tau}=\bigl(e^{-\tau\IL_\tau}\bigr)^{n}=(I+\tau\IL)^{-n}
\]
for all $n\in\N$. It is straightforward to check that for all $\tau\in(0,\tau_0)$ and all $\alpha\in[0,1]$, one has
\[
\underset{n\in\N}\sup~t_n^\alpha\|\IL^\alpha(I+\tau\IL)^{-n}\|_{\mathcal{L}(H)}=\underset{n\in\N}\sup~\underset{j\in\N}\sup~ \frac{t_n^{\alpha}\lambda_j^\alpha}{(1+\tau\lambda_j)^n}\le \underset{n\in\N}\sup~\underset{z\in(0,\infty)}\sup~\frac{(nz)^{\alpha}}{(1+z)^{n}}\le 1.
\]
The smoothing inequality~\eqref{eq:IL_tau-smoothing} is then obtained as follows: for all $t\in(\tau,\infty)$, let $n\ge 1$ be the unique integer such that $t\in [t_n,t_{n+1})$. Then one has
\[
\|\IL^\alpha e^{-t\IL_\tau}\|_{\mathcal{L}(H)}\le \|\IL^\alpha e^{-t_n\IL_\tau}\|_{\mathcal{L}(H)}\le t_n^{-\alpha}\le (t-\tau)^{-\alpha},
\]
using the inequality $t_n=t_{n+1}-\tau>t-\tau$.

To prove the second smoothing inequality~\eqref{eq:IL_tau-smoothing-bis}, it suffices to check that for all $\tau\in(0,\tau_0)$ and all $t\in(0,\tau)$, one has
\begin{align*}
\tau^\alpha \|\IL^\alpha e^{-t\IL_\tau}Q_\tau\|_{\mathcal{L}(H)}&=\underset{j\in\N}\sup~\tau^\alpha \lambda_{j}^\alpha e^{-t\lambda_{\tau,j}}q_{\tau,j}\\
&\le \underset{j\in\N}\sup~(\tau\lambda_j)^{\alpha}\frac{\log(1+\tau\lambda_j)}{\tau\lambda_j}\le \underset{z\in(0,\infty)}\sup~\frac{\log(1+z)}{z^{1-\alpha}}<\infty.
\end{align*}
The inequality~\eqref{eq:IL_tau-IL_tauQ_tau1/2} is proved as follows: for all $\tau\in(0,\tau_0)$, one has
\begin{align*}
\|Q_\tau^{-\frac12}e^{-\tau\IL_\tau}\|_{\mathcal{L}(H)}&=\underset{j\in\N}\sup~\frac{e^{-\tau\lambda_{\tau,j}}}{q_{\tau,j}^{\frac12}}\\
&=\underset{j\in\N}\sup~\frac{(\tau\lambda_j)^{\frac12}}{(1+\tau\lambda_j)\sqrt{\log(1+\tau\lambda_j)}}&\le \underset{z\in(0,\infty)}\sup~\frac{z^{\frac12}}{(1+z)\sqrt{\log(1+z)}}<\infty.
\end{align*}
Finally, the inequality~\eqref{eq:IL_tau-increment} is proved as follows: for all $\alpha\in[0,1]$, all $\tau\in(0,\tau_0)$ and all $t\in(0,\infty)$, one has
\[
t^{-\alpha}\|\IL^{-\alpha}(I-e^{-t\IL_\tau})\|_{\mathcal{L}(H)}=\underset{j\in\N}\sup~\frac{1-e^{-t\lambda_{\tau,j}}}{(t\lambda_j)^{\alpha}}\le\underset{j\in\N}\sup~\frac{\lambda_{\tau,j}^\alpha}{\lambda_j^\alpha}=\underset{j\in\N}\sup~\theta(\tau\lambda_j)^\alpha\le 1.
\]
This concludes the proof of Lemma~\ref{lem:Q_tauIL_tau-bound}.
\end{proof}

\begin{proof}[Proof of Lemma~\ref{lem:Q_tauIL_tau-error}]

Note that
\[
\underset{z\in(0,1)}\sup~\frac{|\log(1+z)-z|}{z^2}<\infty,
\]
therefore for all $\alpha\in[0,1]$, one has
\begin{equation}\label{eq:boundproofQILerror}
C_\alpha=\underset{z\in(0,\infty)}\sup~z^{-\alpha}|\frac{\log(1+z)}{z}-1|<\infty.
\end{equation}
As a consequence, for all $\tau\in(0,\tau_0)$, one has
\[
\tau^{-\alpha}\|\IL^{-\alpha}(Q_\tau-I)\|_{\mathcal{L}(H)}=\underset{j\in\N}\sup~(\tau\lambda_j)^{-\alpha}|\frac{\log(1+\tau\lambda_j)}{\tau\lambda_j}-1|\le C_\alpha.
\]
This gives~\eqref{eq:Q_tau-error}.

The inequality~\eqref{eq:IL_tau-error} is then a straightforward consequence of the equality $\IL_\tau-\IL=\IL(Q_\tau-I)$.

This proof of Lemma~\ref{lem:Q_tauIL_tau-error} is thus completed.
\end{proof}

\subsection{Properties of the modified stochastic convolution}\label{sec:auxiliary-convolution}

For all $\tau\in(0,\tau_0)$ and all $t\ge 0$, set
\begin{equation}\label{eq:modifiedStochasticConvolution}
\IW_\tau(t)=\int_0^t e^{-(t-s)\IL_\tau}Q_\tau^{\frac12}dW(s).
\end{equation}
In the sequel, the associated process $\bigl(\IW_\tau(t)\bigr)_{t\ge 0}$ is referred to as the modified stochastic convolution. This process plays a crucial role in the analysis. On the one hand, this is the solution of the modified stochastic evolution equation~\eqref{eq:modifiedSPDE} with initial value $\IX_\tau(0)=0$ when $F=0$. Calling the process $\bigl(\IW_\tau(t)\bigr)_{t\ge 0}$ is consistent with the usual terminology, such that the process $\bigl(W_\tau^{\IL}(t)\bigr)_{t\ge 0}$ defined by~\eqref{eq:StochasticConvolution} is referred to as the stochastic convolution and is solution of the original stochastic evolution equation~\eqref{eq:SPDE} with initial value $X(0)=0$ and $F=0$. On the other hand, one has the following equality: for all $n\in\N_0$, $\IW_\tau(t_n)=W_n^\tau$, where $W_n^\tau$ is defined by~\eqref{eq:WNtau} and $t_n=n\tau$. That equality (in distribution) is due to the third interpretation of the modified Euler scheme, as the accelerated exponential Euler method applied to the modified equation~\eqref{eq:modifiedSPDE}.

Using the properties from Lemma~\ref{lem:Q_tauIL_tau-bound} above, the following moment bounds are obtained for the modified stochastic convolution, with bounds independent on $\tau\in(0,\tau_0)$.
\begin{lemma}\label{lem:modifiedStochasticConvolution}
For all $\tau_0\in(0,1)$ and all $\alpha\in[0,\frac14)$, one has
\begin{equation}\label{eq:lem-modifiedStochasticConvolution-bound}
\underset{\tau\in(0,\tau_0)}\sup~\underset{t\ge 0}\sup~\E[|\IW_\tau(t)|_{\alpha}^2]<\infty.
\end{equation}
\end{lemma}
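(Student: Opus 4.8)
The plan is to compute the second moment of the modified stochastic convolution explicitly using the It\^o isometry, and then bound the resulting series uniformly in $\tau$ and $t$ by comparing the spectral symbol of $e^{-t\IL_\tau}Q_\tau^{\frac12}$ with that of the original stochastic convolution, for which the corresponding bound is classical under Assumption~\ref{ass:Lambda}. Concretely, since $\IL_\tau$ and $Q_\tau$ are simultaneously diagonalized in the orthonormal basis $\bigl(e_j\bigr)_{j\in\N}$, applying the It\^o isometry formula recalled in Section~\ref{sec:notation} gives, for all $\alpha\in[0,\frac14)$, $t\ge 0$ and $\tau\in(0,\tau_0)$,
\begin{equation*}
\E[|\IW_\tau(t)|_{\alpha}^2]=\int_0^t\|\IL^\alpha e^{-(t-s)\IL_\tau}Q_\tau^{\frac12}\|_{\mathcal{L}_2(H)}^2\,ds=\sum_{j\in\N}\lambda_j^{2\alpha}q_{\tau,j}\int_0^t e^{-2s\lambda_{\tau,j}}\,ds=\sum_{j\in\N}\lambda_j^{2\alpha}q_{\tau,j}\frac{1-e^{-2t\lambda_{\tau,j}}}{2\lambda_{\tau,j}}.
\end{equation*}
Using $0\le 1-e^{-2t\lambda_{\tau,j}}\le 1$ and the identity $q_{\tau,j}/\lambda_{\tau,j}=1/\lambda_j$ (which follows immediately from the definitions~\eqref{eq:modifiedILQ-eigen}: $q_{\tau,j}=\log(1+\tau\lambda_j)/(\lambda_j\tau)$ and $\lambda_{\tau,j}=\log(1+\tau\lambda_j)/\tau$), the sum is bounded above by $\frac12\sum_{j\in\N}\lambda_j^{2\alpha-1}$, uniformly in $t\ge 0$ and in $\tau\in(0,\tau_0)$.

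The final step is to observe that this series converges precisely when $2\alpha-1<-1/2$, i.e.\ when $\alpha<1/4$, thanks to the asymptotics $\lambda_j\sim c_{\IL}j^2$ from Assumption~\ref{ass:Lambda}: indeed $\lambda_j^{2\alpha-1}\sim c_{\IL}^{2\alpha-1}j^{2(2\alpha-1)}=c_{\IL}^{2\alpha-1}j^{4\alpha-2}$, and $\sum_j j^{4\alpha-2}<\infty$ iff $4\alpha-2<-1$ iff $\alpha<1/4$. This yields~\eqref{eq:lem-modifiedStochasticConvolution-bound}. I would remark that the crucial point making the bound uniform in $\tau$ is the exact cancellation $q_{\tau,j}/\lambda_{\tau,j}=\lambda_j^{-1}$, which reflects the fact that when $F=0$ the modified stochastic evolution equation~\eqref{eq:modifiedSPDE} has the same invariant Gaussian law $\nu=\mathcal{N}(0,\frac12\IL^{-1})$ as the original equation (consistently with Proposition~\ref{propo:invarGauss} and Proposition~\ref{propo:mu_star_modified}); in particular one even has the sharper statement that the covariance of $\IW_\tau(t)$ is $\frac12\IL^{-1}(I-e^{-2t\IL_\tau})\le\frac12\IL^{-1}$ in the order of self-adjoint operators.

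There is essentially no main obstacle here: the computation is a direct application of the It\^o isometry together with the algebraic identity $q_{\tau,j}=\lambda_j\tau\cdot\lambda_{\tau,j}\cdot(\lambda_j\tau)^{-1}/\lambda_j$, i.e.\ $q_{\tau,j}/\lambda_{\tau,j}=1/\lambda_j$, and the elementary bound $1-e^{-x}\le 1$ for $x\ge 0$. The only point requiring a little care is to make sure the isometry formula applies, which is guaranteed since $Q_\tau^{\frac12}$ is Hilbert--Schmidt (as noted after~\eqref{eq:modifiedILQ}, $\sum_j q_{\tau,j}<\infty$) and $s\mapsto \IL^\alpha e^{-(t-s)\IL_\tau}Q_\tau^{\frac12}$ is a continuous $\mathcal{L}_2(H)$-valued map on $[0,t]$ for $\alpha\in[0,\frac14)$; the detailed verification follows the convention of Section~\ref{sec:Galerkin}, i.e.\ one argues first with the finite-dimensional Galerkin truncation, for which all operators are bounded, and then passes to the limit $J\to\infty$ using monotone convergence on the (nonnegative) series, the bound $\frac12\sum_j\lambda_j^{2\alpha-1}$ being independent of $J$.
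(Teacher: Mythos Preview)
Your proof is correct and essentially identical to the paper's: both apply the It\^o isometry, diagonalize in the eigenbasis, use the identity $q_{\tau,j}/\lambda_{\tau,j}=1/\lambda_j$ to reduce to the $\tau$-independent series $\frac12\sum_j\lambda_j^{2\alpha-1}$, and conclude by Assumption~\ref{ass:Lambda}. Your additional remarks (the explicit covariance $\frac12\IL^{-1}(I-e^{-2t\IL_\tau})$ and the Galerkin justification) are correct embellishments but not needed for the argument.
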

Note that Lemma~\ref{lem:modifiedStochasticConvolution} is instrumental Sections~\ref{sec:auxiliary-modified} and~\ref{sec:auxiliary-scheme} below.

\begin{proof}[Proof of Lemma~\ref{lem:modifiedStochasticConvolution}]
Let $\alpha\in[0,\frac14)$. Using the It\^o isometry formula, for all $t\ge 0$ and $\tau\in(0,\tau_0)$, one has
\begin{align*}
\E[\IW_\tau(t)|_\alpha^2]&=\E[\big|\int_0^t \IL^\alpha e^{-(t-s)\IL_\tau}Q_\tau^{\frac12}dW(s)\big|^2]\\
&=\int_{0}^{t}\|\IL^\alpha e^{-(t-s)\IL_\tau}Q_\tau^{\frac12}\|_{\mathcal{L}_2(H)}^2ds\\
&=\int_0^t\sum_{j\in\N}\lambda_j^{2\alpha}e^{-2(t-s)\lambda_{\tau,j}}q_{\tau,j} ds\\
&\le \sum_{j\in\N}\frac{\lambda_j^{2\alpha}q_{\tau,j}}{2\lambda_{\tau,j}}\\
&=\sum_{j\in\N}\frac{1}{2\lambda_j^{1-2\alpha}},
\end{align*}
using the identity $q_{\tau,j}=\frac{\lambda_{\tau,j}}{\lambda_j}$, for all $j\in\N$, see~\eqref{eq:modifiedILQ-eigen}. Since $\sum_{j\in\N}\frac{1}{2\lambda_j^{1-2\alpha}}<\infty$ if and only if $\alpha\in[0,\frac14)$, this concludes the proof of the inequality~\eqref{eq:lem-modifiedStochasticConvolution-bound} and of Lemma~\ref{lem:modifiedStochasticConvolution}.
\end{proof}

\subsection{Analysis of the modified equation}\label{sec:auxiliary-modified}

Using Lemma~\ref{lem:Q_tauIL_tau-bound}, it is now straightforward to justify the well-posedness of the modified stochastic evolution equation~\eqref{eq:modifiedSPDE}, and to prove the existence and uniqueness of an invariant distribution $\mu_{\tau,\infty}$ when the ergodicity condition (Assumption~\ref{ass:ergo}) is satisfied.
\begin{propo}\label{propo:modified-wellposed-bound}
Let Assumptions~\ref{ass:Lambda} and~\ref{ass:F} be satisfied, and let the linear operators $\IL_\tau$ and $Q_\tau$ be defined by~\eqref{eq:modifiedILQ}, for all $\tau\in(0,\tau_0)$.

For any initial value $x_0\in H$, the modified stochastic evolution equation~\eqref{eq:modifiedSPDE} admits a unique mild solution $\bigl(\IX_\tau(t)\bigr)_{t\ge 0}$ satisfying~\eqref{eq:modifiedSPDE-mild}, with $\IX_\tau(0)=x_0$. In addition, for all $T\in(0,\infty)$ and all $\alpha\in[0,\frac14)$, one has
\begin{equation}\label{eq:modified-wellposed-bound}
\underset{x_0\in H^\alpha}\sup~\underset{\tau\in(0,\tau_0)}\sup~\underset{t\in[0,T]}\sup~\frac{\E[|\IX_\tau(t)|_\alpha^2]}{1+|x_0|_\alpha^2}<\infty.
\end{equation}

Moreover, if Assumption~\ref{ass:ergo} is satisfied, the modified stochastic evolution equation~\eqref{eq:modifiedSPDE} admits a unique invariant distribution $\mu_{\tau,\infty}$, which satisfies
\begin{equation}\label{eq:modified-wellposed-invar}
\underset{\tau\in(0,\tau_0)}\sup~\int |x|_\alpha^2 d\mu_{\tau,\infty}<\infty.
\end{equation}
for all $\alpha\in[0,\frac14)$. In addition, if Assumption~\ref{ass:ergo} is satisfied, the moment bound~\eqref{eq:modified-wellposed-bound} is uniform with respect to $T\in(0,\infty)$: one has
\begin{equation}\label{eq:modified-wellposed-bound-ergo}
\underset{x_0\in H^\alpha}\sup~\underset{\tau\in(0,\tau_0)}\sup~\underset{t\ge 0}\sup~\frac{\E[|\IX_\tau(t)|_\alpha^2]}{1+|x_0|_\alpha^2}<\infty.
\end{equation}

Finally, there exists $C\in(0,\infty)$, such that for all functions $\varphi:H\to\R$ of class $\mathcal{C}^1$, for all $x_0\in H$, for all $\tau\in(0,\tau_0)$ and for all $t\ge 0$, one has
\begin{equation}\label{eq:modified-wellposed-ergo}
\big|\E[\varphi(\IX_\tau(t))]-\int\varphi d\mu_{\tau,\infty}\big|\le C\vvvert\varphi\vvvert_1 e^{-\kappa t}(1+|x_0|),
\end{equation}
with $\kappa=\frac{\log(1+\tau_0\lambda_1)}{\tau_0\lambda_1}(\lambda_1-\Lf)\in (0,\lambda_1-\Lf)$.
\end{propo}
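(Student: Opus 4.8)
The plan is to establish the four assertions of Proposition~\ref{propo:modified-wellposed-bound} in the natural order: mild well-posedness, the finite-time moment bound~\eqref{eq:modified-wellposed-bound}, then the ergodicity statements (existence/uniqueness of $\mu_{\tau,\infty}$, the uniform moment bound~\eqref{eq:modified-wellposed-invar}, the time-uniform moment bound~\eqref{eq:modified-wellposed-bound-ergo}), and finally the exponential convergence estimate~\eqref{eq:modified-wellposed-ergo}. Throughout I would work at the finite-dimensional Galerkin level (Section~\ref{sec:Galerkin}) so that all operators are bounded and It\^o's formula applies classically, and pass to the limit only at the end, with all bounds uniform in the Galerkin parameter $J$ and, crucially, in $\tau\in(0,\tau_0)$.

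\emph{Well-posedness.} For fixed $\tau$, equation~\eqref{eq:modifiedSPDE} has a bounded generator $-\IL_\tau$ (since $\|e^{-t\IL_\tau}\|_{\mathcal{L}(H)}\le 1$ by~\eqref{eq:IL_tau-bound}), a globally Lipschitz drift $Q_\tau F(\cdot)$ (composition of the bounded operator $Q_\tau$ with the Lipschitz $F$, Assumption~\ref{ass:F}), and an additive noise $Q_\tau^{1/2}dW$ with $Q_\tau^{1/2}$ Hilbert--Schmidt (since $\sum_j q_{\tau,j}<\infty$, noted in Section~\ref{sec:scheme-3rd}), so that the modified stochastic convolution $\IW_\tau$ from~\eqref{eq:modifiedStochasticConvolution} is a well-defined continuous $H$-valued process. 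A standard Banach fixed-point argument on $C([0,T];L^2(\Omega,H))$ applied to the mild formulation~\eqref{eq:modifiedSPDE-mild}, using the contraction $\|e^{-t\IL_\tau}\|_{\mathcal{L}(H)}\le 1$ and $\|Q_\tau\|_{\mathcal{L}(H)}\le 1$ (from~\eqref{eq:Q_tau-bound}), yields the unique mild solution; this is exactly the content already asserted in Proposition~\ref{propo:modified-wellposed}, so I would only sketch it.

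\emph{Moment bounds.} For~\eqref{eq:modified-wellposed-bound}, write $\IX_\tau(t)=e^{-t\IL_\tau}x_0+\int_0^t e^{-(t-s)\IL_\tau}Q_\tau F(\IX_\tau(s))\,ds+\IW_\tau(t)$ and apply $\IL^\alpha$ to each term. The linear term is controlled by $|\IL^\alpha e^{-t\IL_\tau}x_0|\le|x_0|_\alpha$ using~\eqref{eq:IL_tau-bound}. The stochastic term is bounded, uniformly in $t\ge 0$ and $\tau$, by Lemma~\ref{lem:modifiedStochasticConvolution}. For the drift term, the delicate point is that the smoothing $\|\IL^\alpha e^{-r\IL_\tau}\|_{\mathcal{L}(H)}$ blows up like $(r-\tau)^{-\alpha}$ near $r=\tau$ (inequality~\eqref{eq:IL_tau-smoothing}), which is \emph{not} integrable all the way to $r=0$; this is where one must use~\eqref{eq:IL_tau-smoothing-bis}, i.e. the factor $Q_\tau$ provides the missing regularization so that $\|\IL^\alpha e^{-r\IL_\tau}Q_\tau\|_{\mathcal{L}(H)}\le C\tau^{-\alpha}$ for $r<\tau$ and $\le C(r-\tau)^{-\alpha}$ for $r>\tau$, and since $F$ is bounded on the trajectory (using $|F(y)|\le|F(0)|+\Lf|y|$ and the already-controlled $L^2$-moments of $\IX_\tau(s)$), a Gronwall/convolution estimate gives a bound depending on $T$. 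I expect this treatment of the drift convolution — reconciling the $\tau$-dependent smoothing singularity with $\tau$-uniform constants, splitting the time integral at the scale $\tau$ — to be the main technical obstacle, though it is routine given Lemma~\ref{lem:Q_tauIL_tau-bound}.

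\emph{Ergodicity and exponential convergence.} Under Assumption~\ref{ass:ergo} ($\Lf<\lambda_1$), I would first prove a synchronous-coupling contraction: if $\IX_\tau^{(1)},\IX_\tau^{(2)}$ solve~\eqref{eq:modifiedSPDE} with the same noise and initial data $x_0^{(1)},x_0^{(2)}$, then $D(t)=\IX_\tau^{(1)}(t)-\IX_\tau^{(2)}(t)$ solves the random PDE $\frac{d}{dt}D=-\IL_\tau D+Q_\tau\bigl(F(\IX_\tau^{(1)})-F(\IX_\tau^{(2)})\bigr)$, and testing against $D$ gives $\frac12\frac{d}{dt}|D|^2\le-\langle\IL_\tau D,D\rangle+\|Q_\tau\|_{\mathcal{L}(H)}\Lf|D|^2\le-\bigl(\frac{\log(1+\tau_0\lambda_1)}{\tau_0}-\Lf\bigr)|D|^2$ — wait, more carefully, since $\langle\IL_\tau D,D\rangle=\sum_j\lambda_{\tau,j}\langle D,e_j\rangle^2$ and $q_{\tau,j}=\lambda_{\tau,j}/\lambda_j$, one gets $\langle\IL_\tau D,D\rangle-\Lf\langle Q_\tau D,D\rangle=\sum_j\lambda_{\tau,j}(1-\Lf/\lambda_j)\langle D,e_j\rangle^2\ge(1-\Lf/\lambda_1)\langle\IL_\tau D,D\rangle\ge\frac{\log(1+\tau_0\lambda_1)}{\tau_0}(1-\Lf/\lambda_1)|D|^2$, hence $|D(t)|\le e^{-\kappa t}|D(0)|$ with $\kappa=\frac{\log(1+\tau_0\lambda_1)}{\tau_0\lambda_1}(\lambda_1-\Lf)$. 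This contraction gives: (i) existence and uniqueness of $\mu_{\tau,\infty}$ (the semigroup is a contraction in Wasserstein-$1$, and the time-$1$ map has a fixed point by completeness together with the moment bound providing tightness); (ii) the estimate~\eqref{eq:modified-wellposed-ergo} by writing $\E[\varphi(\IX_\tau(t))]-\int\varphi\,d\mu_{\tau,\infty}=\E\bigl[\varphi(\IX_\tau^{x_0}(t))-\varphi(\IX_\tau^{Z}(t))\bigr]$ with $Z\sim\mu_{\tau,\infty}$ independent, bounding $|\varphi(a)-\varphi(b)|\le\vvvert\varphi\vvvert_1|a-b|$, applying the coupling contraction and then $\E|x_0-Z|\le|x_0|+\int|x|\,d\mu_{\tau,\infty}\le C(1+|x_0|)$ from~\eqref{eq:modified-wellposed-invar}; (iii) the time-uniform bound~\eqref{eq:modified-wellposed-bound-ergo} and~\eqref{eq:modified-wellposed-invar} by iterating the finite-time estimate~\eqref{eq:modified-wellposed-bound} over successive unit intervals together with the contraction of the $|\cdot|$-part (the linear semigroup decay absorbing the accumulated drift contributions uniformly in $t$), then passing $t\to\infty$ to transfer the bound to $\mu_{\tau,\infty}$. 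All constants are $\tau$-independent because $\kappa$, $\|Q_\tau\|_{\mathcal{L}(H)}\le1$ and the smoothing constants in Lemma~\ref{lem:Q_tauIL_tau-bound} are. Finally I would remark that the Galerkin truncations satisfy all the above with $J$-uniform constants and pass to the limit $J\to\infty$, as per the convention in Section~\ref{sec:Galerkin}.
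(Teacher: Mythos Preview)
Your proposal is correct and follows essentially the same route as the paper: fixed-point well-posedness, mild-formulation moment bounds using the stochastic convolution estimate (Lemma~\ref{lem:modifiedStochasticConvolution}) together with the split smoothing inequalities~\eqref{eq:IL_tau-smoothing}--\eqref{eq:IL_tau-smoothing-bis} to handle the drift integral near and away from the scale $\tau$, and a synchronous-coupling contraction for the ergodic part.

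One small wobble worth flagging: in your ``wait, more carefully'' aside you replace the nonlinear term $\langle D, Q_\tau\bigl(F(\IX_\tau^{(1)})-F(\IX_\tau^{(2)})\bigr)\rangle$ by $\Lf\langle Q_\tau D, D\rangle$ and then expand spectrally. This substitution is not justified, since $F(\IX_\tau^{(1)})-F(\IX_\tau^{(2)})$ is not aligned with $D$ mode by mode. Your \emph{first} estimate, $\langle D, Q_\tau(F^{(1)}-F^{(2)})\rangle\le\|Q_\tau\|_{\mathcal{L}(H)}\Lf|D|^2$, is the correct one; combined with $\langle\IL_\tau D,D\rangle\ge\lambda_{\tau,1}|D|^2$ and the identity $\|Q_\tau\|_{\mathcal{L}(H)}=q_{\tau,1}=\lambda_{\tau,1}/\lambda_1$ it gives exactly the rate $\kappa=\frac{\log(1+\tau_0\lambda_1)}{\tau_0\lambda_1}(\lambda_1-\Lf)$, which is precisely how the paper argues. (Your spectral detour happens to land on the same $\kappa$, but by accident rather than by a valid bound.)
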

Note that Proposition~\ref{propo:modified-wellposed-bound} is a refinement of Proposition~\ref{propo:modified-wellposed}, with bounds which are uniform with respect to $\tau\in(0,\tau_0)$.

\begin{proof}[Proof of Proposition~\ref{propo:modified-wellposed-bound}]
First, the existence and uniqueness of the mild solution $\bigl(\IX_\tau(t)\bigr)_{t\ge 0}$ satisfying~\eqref{eq:modifiedSPDE-mild} follows from a straightforward fixed point argument, which is omitted (see the discussion before the statement of Proposition~\ref{propo:modified-wellposed}). Let us now establish the properties of the mild solution.

Assume first that $\alpha=0$. The mild formulation~\eqref{eq:modifiedSPDE-mild} can be written as
\[
\IX_\tau(t)=e^{-t\IL_\tau}x_0+\int_{0}^{t}e^{-(t-s)\IL_\tau}Q_\tau F(\IX_\tau(s))ds+\IW_\tau(t),
\]
where $\IW_\tau(t)$ is the modified stochastic convolution given by~\eqref{eq:modifiedStochasticConvolution}.

Owing to the global Lipschitz continuity of $F$ and to the inequalities~\eqref{eq:IL_tau-bound} and~\eqref{eq:Q_tau-bound}, the application of Minkowskii's inequality and of the It\^o isometry formula yields
\[
\bigl(\E[|\IX_\tau(t)|^2]\bigr)^{\frac12}\le |x_0|^2+C\int_0^t\bigl(\E[|\IX_\tau(s)|^2]\bigr)^{\frac12}ds+\bigl(\E[|\IW_\tau(t)|^2]\bigr)^{\frac12},
\]
where $C$ does not depend on $\tau\in(0,\tau_0)$. Using the moment bound~\eqref{eq:lem-modifiedStochasticConvolution-bound} from Lemma~\ref{lem:modifiedStochasticConvolution} and applying Gronwall's lemma then gives
\[
\underset{x_0\in H}\sup~\underset{\tau\in(0,\tau_0)}\sup~\underset{t\in[0,T]}\sup~\frac{\E[|\IX_\tau(t)|^2]}{1+|x_0|^2}<\infty.
\]
When $\alpha\in(0,\frac14)$, one obtains the inequality
\[
\bigl(\E[|\IX_\tau(t)|_\alpha^2]\bigr)^{\frac12}\le |x_0|_\alpha^2+C\int_0^t \|\IL^\alpha e^{-(t-s)\IL_\tau}Q_\tau\|_{\mathcal{L}(H)}\bigl(\E[|\IX_\tau(s)|^2]\bigr)^{\frac12}ds+\bigl(\E[|\IW_\tau(t)|_\alpha^2]\bigr)^{\frac12}.
\]
Using the smoothing inequalities~\eqref{eq:IL_tau-smoothing} and~\eqref{eq:IL_tau-smoothing-bis}, one obtains for all $t\in[0,T]$
\begin{align*}
\int_0^t \|\IL^\alpha e^{-s\IL_\tau}Q_\tau\|_{\mathcal{L}(H)}ds&\le \int_0^{\tau} \|\IL^\alpha e^{-s\IL_\tau}Q_\tau\|_{\mathcal{L}(H)}ds+\mathds{1}_{t>\tau}\int_{\tau}^t \|\IL^\alpha e^{-s\IL_\tau}Q_\tau\|_{\mathcal{L}(H)}ds\\
&\le C_\alpha\tau^{1-\alpha}+C_\alpha\int_0^{T}s^{-\alpha}ds\le C_\alpha(T),
\end{align*}
where $C_\alpha(T)\in(0,\infty)$ for all $\alpha\in[0,1)$. Using the moment bound~\eqref{eq:lem-modifiedStochasticConvolution-bound} with $\alpha\in(0,\frac14)$ and the moment bound for $\IX_\tau(t)$ when $\alpha=0$ above then concludes the proof of the inequality~\eqref{eq:modified-wellposed-bound}.

Let us now assume that Assumption~\ref{ass:ergo} is satisfied. Let $x_0^1\in H$ and $x_0^2\in H$ be two arbitrary initial values, and introduce the auxiliary processes defined by
\begin{align*}
\IX_\tau^i(t)&=e^{-t\IL_\tau}x_0^i+\int_{0}^{t}e^{-(t-s)\IL_\tau}Q_\tau F(\IX_\tau^i(s))ds+\IW_\tau(t)\\
\IY_{\tau}^i(t)&=\IX_{\tau}^i(t)-\IW_\tau(t),
\end{align*}
for $i=1,2$. Observe that $\IX_\tau^2(t)-\IX_\tau^1(t)=\IY_\tau^2(t)-\IY_\tau^1(t)$ (noise is additive and the two processes are driven by the same Wiener process $W$), and one has for all $t\ge 0$
\begin{align*}
\frac12\frac{d|\IY_{\tau}^2-\IY_\tau^1(t)|^2}{dt}&=\langle \IY_{\tau}^2(t)-\IY_\tau^1(t),\frac{d\IY_{\tau}^2(t)-\IY_\tau^1(t)}{dt}\rangle\\
&=\langle \IY_{\tau}^2(t)-\IY_\tau^1(t),-\IL_\tau (\IY_{\tau}^2(t)-\IY_\tau^1(t)\rangle\\
&+\langle \IY_{\tau}^2(t)-\IY_\tau^1(t),Q_{\tau}\bigl(F(\IY_\tau^2(t))-F(IY_\tau^1(t))\bigr)\rangle\\
&\le -\frac{\log(1+\tau\lambda_1)}{\tau}|\IY_\tau^2(t)-\IY_\tau^1(t)|^2+\Lf\|Q_\tau\|_{\mathcal{L}(H)}|\IY_\tau^2(t)-\IY_\tau^1(t)|\\
&\le -\frac{\log(1+\tau\lambda_1)}{\tau\lambda_1}(\lambda_1-\Lf)\|IY_\tau^2(t)-\IY_\tau^1(t)|^2,
\end{align*}
owing to the inequalities~\eqref{eq:Q_tau-bound} and~\eqref{eq:IL_tau-gap} from Lemma~\ref{lem:Q_tauIL_tau-bound}. In addition, one has
\[
\underset{\tau\in(0,\tau_0)}\inf~\frac{\log(1+\tau\lambda_1)}{\tau\lambda_1}=\frac{\log(1+\tau_0\lambda_1)}{\tau_0\lambda_1}.
\]
Applying Gronwall's lemma, one obtains the inequality
\[
|\IX_\tau^2(t)-\IX_\tau^1(t)|\le e^{-\kappa t}|x_0^2-x_0^1|
\]
with $\kappa=\frac{\log(1+\tau_0\lambda_1)}{\tau_0\lambda_1}(\lambda_1-\Lf)\in (0,\lambda_1-\Lf)$. A similar argument yields the uniform bound~\eqref{eq:modified-wellposed-bound-ergo}, the details are omitted.

Finally, the proof of the existence and uniqueness of the invariant distribution $\mu_{\tau,\infty}$, which satisfies the bound~\eqref{eq:modified-wellposed-invar}, and the proof of the inequality~\eqref{eq:modified-wellposed-ergo}, are standard and the details are omitted.

This concludes the proof of Proposition~\ref{propo:modified-wellposed-bound}.
\end{proof}

\subsection{Kolmogorov equation associated with the modified equation}\label{sec:auxiliary-kolmogorov-modified}

The objective of this section is to state and prove regularity results for the functions $u_\tau$ defined by
\begin{equation}\label{eq:utau}
u_\tau(t,x)=\E_x[\varphi(\IX_\tau(t))],
\end{equation}
for all $t\ge 0$, $x\in H$, and $\tau\in(0,\tau_0)$, where $\varphi$ is a bounded and continuous function from $H$ to $\R$. In the above definition, $\bigl(\IX_\tau(t)\bigr)_{t\ge 0}$ is the unique solution of the modified stochastic evolution equation~\eqref{eq:modifiedSPDE}, with initial value $\IX_\tau(0)=x$. To study the regularity properties of the function $u_\tau$, it is convenient to rely on the convention introduced in Section~\ref{sec:Galerkin}: recall that an auxiliary finite dimensional approximation is applied, in order to justify the regularity properties and the computations, and all the upper bounds do not depend on the auxiliary discretization parameter, which is omitted to simplify the notation.

It is convenient to introduce the family of linear operators $\bigl(\IP_{\tau,t}\bigr)_{t\ge 0}$, such that $u_\tau(t,\cdot)=\IP_{\tau,t}\varphi(\cdot)$ for all $t\ge 0$. The Markov property for the solutions of the modified stochastic evolution equation~\eqref{eq:modifiedSPDE} yields the semigroup property: for all $t,s\ge 0$ and all $\tau\in(0,\tau_0)$, one has
\begin{equation}\label{eq:Ptau}
\IP_{\tau,t+s}\varphi=\IP_{\tau,t}\bigl(\IP_{\tau,s}\varphi\bigr),
\end{equation}
for any $\varphi\in\mathcal{B}_b(H)$.

Under appropriate regularity conditions on the function $\varphi$, the function $(t,x)\in \R^+\times H\mapsto u_\tau(t,x)=\IP_{\tau,t}\varphi(x)$ is solution of the Kolmogorov equation
\begin{equation}\label{eq:IKolmogorov}
\partial_t u_\tau=\mathcal{L}_\tau u_\tau
\end{equation} 
with initial value $u_\tau(0,\cdot)=\varphi$, where the infinitesimal generator $\mathcal{L}_\tau$ of the modified stochastic evolution equation~\eqref{eq:modifiedSPDE} is defined by
\[
\mathcal{L}_\tau\phi(x)=D\phi(x).\bigl(-\IL_\tau x+Q_\tau F(x)\bigr)+\frac12\sum_{j\in\N}D^2\phi(x).(Q_\tau^{\frac12}e_j,Q_\tau^{\frac12}e_j).
\]
Giving a meaning to $\mathcal{L}_\tau\phi(x)$ above in a finite dimensional framework only requires to assume that $\phi$ is twice differentiable. However, since $\IL_\tau$ is an unbounded linear operator, obtaining a bound independent of the auxiliary finite dimensional approximation parameter requires an additional condition on $D\phi(x)$. Observe that $Q_\tau$ is an Hilbert--Schmidt linear operator for any positive $\tau$, so no condition on $D^2\phi(x)$ is needed. To obtain bounds which are also uniform with respect to the time-step size parameter $\tau\in(0,\tau_0)$, precise upper bounds on $Du_\tau(t,\cdot)$ and $Du_\tau^2(t,\cdot)$ are needed. It turns out that to prove Theorem~\ref{theo:weakinv} and Theorem~\ref{theo:weak} in Section~\ref{sec:proofs}, the second order derivative $Du_\tau^2(t,x)$ never appeats in the expressions of the weak error. Therefore we only need to state and prove suitable regularity results for the first order derivative $Du_\tau(t,x)$. Three lemmas are stated below, and combining the three results using the semigroup property~\eqref{eq:Ptau} provides additional results: see the proof of Proposition~\ref{propo:utau-regularity} in Section~\ref{sec:proofs-weakinv}.

Let us now state and prove the three lemmas concerning properties of the first order derivative $Du_\tau(t,x)$. It is challenging and crucial to obtain upper bounds which are uniform with respect to $\tau\in(0,\tau_0)$. This requires to use the properties stated in Lemma~\ref{lem:Q_tauIL_tau-bound} on the semigroup $\bigl(e^{-t\IL_\tau}\bigr)_{t\ge 0}$. The arguments used to prove the three lemmas are also employed in Section~\ref{sec:auxiliary-kolmogorov-original} below, with simpler formulations since the parameter $\tau$ does not appear there. 

Two expressions for $Du_\tau(t,x).h$ are employed below in the proofs. On the one hand, if $u_\tau(0,\cdot)=\varphi$ is of class $\mathcal{C}^1$ with bounded derivative, one has
\begin{equation}\label{eq:Dutau-1}
Du_\tau(t,x).h=\E_x[D\varphi(\IX_\tau(t)).\eta_\tau^h(t)],
\end{equation}
where $\bigl(\eta_\tau^h(t)\bigr)_{t\ge 0}$ is solution of
\begin{equation}\label{eq:etatau}
d\eta_\tau^h(t)=-\IL_\tau\eta_\tau^h(t)dt+Q_\tau DF(\IX_\tau^h(t)).\eta_\tau^h(t)dt,
\end{equation}
with initial value $\eta_\tau^h(0)=h$, see for instance~\cite[Chapter~4]{Cerrai}. On the other hand, if $u_\tau(0,\cdot)=\varphi$ is only assumed to be bounded and continuous, one has
\begin{equation}\label{eq:Dutau-0}
Du_\tau(t,x).h=\frac1t \E_x[\varphi(\IX_{\tau}(t))\int_{0}^{t}\langle Q_\tau^{-\frac12}\eta_\tau^h(s),dW(s)\rangle].
\end{equation}
The expression~\eqref{eq:Dutau-0} is given by a Bismut--Elworthy type formula, see for instance~\cite[Equation~(4.0.2)]{Cerrai} or~\cite[Lemma~7.1.3]{DPZergo}. The validity of the expressions~\ref{eq:Dutau-1} and~\eqref{eq:Dutau-0} is easily checked in the finite dimensional approximation setting (Section~\ref{sec:Galerkin}).

On the one hand, Lemmas~\ref{lem:utau-1} and~\ref{lem:utau-ergo} require $\varphi$ to be of class $\mathcal{C}^1$ and thus use the expression~\eqref{eq:Dutau-1}. On the other hand, Lemma~\ref{lem:utau-0} uses the expression~\eqref{eq:Dutau-0} since it gives an upper bound in terms of $\vvvert\varphi\vvvert_0$. In the proofs of the three auxiliary lemmas below, we only focus on proving upper bounds with constants independent of the time-step size $\tau\in(0,\tau_0)$.

\begin{lemma}\label{lem:utau-1}
Let Assumptions~\ref{ass:Lambda},~\ref{ass:F} and~\ref{ass:Fregul2} be satisfied. For all $t\ge 0$, all $\tau\in(0,\tau_0)$ and all $\varphi:H\to\R$ of class $\mathcal{C}^1$, $u_\tau(t,\cdot)$ is of class $\mathcal{C}^1$, with
\begin{equation}\label{eq:lem-utau1-0}
\vvvert u_\tau(t,\cdot)\vvvert_1\le e^{\Lf t}\vvvert\varphi\vvvert_1.
\end{equation}
Moreover, for all $\alpha\in(0,\frac12)$ and all $T\in(0,\infty)$, there exists $C_\alpha(T)\in(0,\infty)$ such that for all $\tau\in(0,\tau_0)$, all $\varphi:H\to\R$ of class $\mathcal{C}^1$, one has for all $t\in(\tau,T]$ and $x,h\in H$
\begin{equation}\label{eq:lem-utau1-alpha}
\big|Du_\tau(t,x).h\big|\le C_\alpha(T)\vvvert\varphi\vvvert_1\Bigl(\sqrt{\tau}|h|+(t-\tau)^{-\alpha}|\Lambda^{-\alpha}h|\Bigr).
\end{equation}
\end{lemma}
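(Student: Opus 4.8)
The plan is to work in the finite-dimensional Galerkin approximation (Section~\ref{sec:Galerkin}), so that all objects are smooth and all operators bounded, and to prove the two bounds with constants independent of the truncation parameter and of $\tau \in (0,\tau_0)$. For the first bound~\eqref{eq:lem-utau1-0}, I would start from the representation~\eqref{eq:Dutau-1}, $Du_\tau(t,x).h = \E_x[D\varphi(\IX_\tau(t)).\eta_\tau^h(t)]$, hence $|Du_\tau(t,x).h| \le \vvvert\varphi\vvvert_1\, \E_x[|\eta_\tau^h(t)|]$. It therefore suffices to control $|\eta_\tau^h(t)|$ pathwise. Differentiating $\frac12\frac{d}{dt}|\eta_\tau^h(t)|^2$ using~\eqref{eq:etatau} gives
\[
\frac12\frac{d}{dt}|\eta_\tau^h(t)|^2 = -\langle \IL_\tau \eta_\tau^h(t),\eta_\tau^h(t)\rangle + \langle Q_\tau DF(\IX_\tau^h(t)).\eta_\tau^h(t),\eta_\tau^h(t)\rangle \le \Lf |\eta_\tau^h(t)|^2,
\]
where I drop the nonnegative term $\langle\IL_\tau\eta,\eta\rangle\ge 0$, use $\|Q_\tau\|_{\mathcal{L}(H)}\le 1$ from~\eqref{eq:Q_tau-bound}, and use $|DF(x).h|\le \Lf|h|$ (noted after Assumption~\ref{ass:Fregul2}). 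Gr\"onwall then gives $|\eta_\tau^h(t)|\le e^{\Lf t}|h|$, which yields~\eqref{eq:lem-utau1-0}.

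For the second bound~\eqref{eq:lem-utau1-alpha}, the point is to exploit the smoothing of the semigroup $e^{-t\IL_\tau}$ to trade a negative power of $t-\tau$ for a weaker norm of $h$. Using the mild formulation of~\eqref{eq:etatau},
\[
\eta_\tau^h(t) = e^{-t\IL_\tau}h + \int_0^t e^{-(t-s)\IL_\tau}Q_\tau DF(\IX_\tau^h(s)).\eta_\tau^h(s)\,ds,
\]
I would estimate the two terms separately. For the first term, split according to whether $t\le\tau$ or $t>\tau$: on $(\tau,T]$ write $e^{-t\IL_\tau}h = \IL^\alpha e^{-t\IL_\tau}\cdot\IL^{-\alpha}h$ and apply the smoothing inequality~\eqref{eq:IL_tau-smoothing} to bound $\|\IL^\alpha e^{-t\IL_\tau}\|_{\mathcal{L}(H)}\le C_\alpha (t-\tau)^{-\alpha}$, giving $|e^{-t\IL_\tau}h|\le C_\alpha(t-\tau)^{-\alpha}|\IL^{-\alpha}h|$. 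For the integral term, I would bound $|e^{-(t-s)\IL_\tau}Q_\tau DF(\cdot).\eta_\tau^h(s)| \le \|Q_\tau\|_{\mathcal{L}(H)}\,\Lf\,|\eta_\tau^h(s)|$ (no smoothing needed here, since it contributes to the $|h|$-part), and use the already-established pathwise bound $|\eta_\tau^h(s)|\le e^{\Lf s}|h|\le e^{\Lf T}|h|$. Integrating over $s\in[0,t]\subset[0,T]$ gives a contribution bounded by $C(T)|h|$; since on $(\tau,T]$ one has $|h|\le (t-\tau)^{-\alpha}$ only up to the $\sqrt\tau|h|$ slack, I would simply keep this as a $|h|$-term and absorb it. Actually to get exactly the stated form $\sqrt{\tau}|h| + (t-\tau)^{-\alpha}|\IL^{-\alpha}h|$, I would treat the regime $t-\tau$ small versus large: when $t-\tau\ge 1$ the bound $|h|\le C_\alpha(t-\tau)^{-\alpha}|\IL^{-\alpha}h|\cdot(t-\tau)^\alpha\lambda_1^\alpha$... hmm, that is not quite it either; the cleanest route is to note $|h|\le C(\sqrt{\tau}|h| + (t-\tau)^{-\alpha}|\IL^{-\alpha}h|)$ fails for $t-\tau$ large. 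I would instead combine the two terms at the very end using $\E_x[|\eta_\tau^h(t)|]$ directly: apply~\eqref{eq:etatau} in mild form, take norms and expectations, and run a Gr\"onwall argument on the quantity $g(t):=\E_x[|\eta_\tau^h(t)|]$ against the kernel $\|\IL^0 e^{-(t-s)\IL_\tau}Q_\tau\|_{\mathcal{L}(H)}\le 1$, while the inhomogeneous term $|e^{-t\IL_\tau}h|$ is split as above, and for the case $t\in(\tau,2\tau]$ one uses $|e^{-t\IL_\tau}h|\le|h|\le\sqrt{2/\tau_0}\cdot\sqrt{\tau}\,|h|$... this last step needs $\tau$ bounded below, which is false. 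The correct handling: for $t\in(\tau,2\tau)$ use $(t-\tau)^{-\alpha}\ge\tau^{-\alpha}$ is large, so $(t-\tau)^{-\alpha}|\IL^{-\alpha}h|$ may be much larger than $|h|$ only if $|\IL^{-\alpha}h|$ is not too small — but $|\IL^{-\alpha}h|\le\lambda_1^{-\alpha}|h|$, so this term can be $\le \lambda_1^{-\alpha}\tau^{-\alpha}|h|$; to dominate $|h|$ by $\sqrt\tau|h|$ we'd need $\tau$ large. So the $\sqrt\tau|h|$ term is there precisely to cover $|h|$ when $t-\tau$ is \emph{large}, i.e. when $(t-\tau)^{-\alpha}$ is small; write $|h| \le \sqrt{\tau}|h|\cdot\tau^{-1/2}$... no. I will reconcile this by splitting $e^{-t\IL_\tau}h = e^{-\tau\IL_\tau}e^{-(t-\tau)\IL_\tau}h$ and noting $\|e^{-\tau\IL_\tau}\|\le 1$; the genuine mechanism is that on $(\tau,T]$, $|e^{-t\IL_\tau}h|\le C_\alpha(t-\tau)^{-\alpha}|\IL^{-\alpha}h|$ handles everything and the $\sqrt{\tau}|h|$ slack is only needed to absorb the Gr\"onwall/integral contribution $\int_0^t\mathbf{1}_{s\le\tau}(\cdot)ds\le\tau\cdot(\sup)$, which after using~\eqref{eq:IL_tau-smoothing-bis} and the moment bound on $\IX_\tau$ produces a term $O(\tau^{1-\alpha})|h|\le O(\sqrt\tau)|h|$ for $\alpha<1/2$.

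\textbf{Main obstacle.} The delicate point is exactly the bookkeeping near $t=\tau$: the smoothing estimate~\eqref{eq:IL_tau-smoothing} only holds for $t>\tau$ and degenerates as $t\downarrow\tau$, while~\eqref{eq:IL_tau-smoothing-bis} (with the extra $Q_\tau$) covers $t<\tau$; stitching these so that the final bound has the clean two-term form $\sqrt{\tau}|h| + (t-\tau)^{-\alpha}|\IL^{-\alpha}h|$, uniformly in $\tau\in(0,\tau_0)$ and in the Galerkin parameter, requires carefully separating the free-evolution term from the Duhamel term and tracking which half of the bound each piece lands in, together with the elementary fact $\tau^{1-\alpha}\le\tau_0^{1/2-\alpha}\sqrt{\tau}$ valid for $\alpha\in(0,1/2)$ and $\tau<\tau_0<1$. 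Once this is set up, a standard Gr\"onwall argument on $\E_x[|\eta_\tau^h(t)|]$ closes the estimate, and~\eqref{eq:lem-utau1-0} follows from the simple pathwise energy estimate above.
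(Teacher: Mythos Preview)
Your argument for~\eqref{eq:lem-utau1-0} is correct and matches the paper exactly.

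For~\eqref{eq:lem-utau1-alpha}, you have the right ingredients but the execution is incomplete. Your initial attempt --- bounding the Duhamel term $\int_0^t e^{-(t-s)\IL_\tau}Q_\tau DF(\cdot).\eta_\tau^h(s)\,ds$ crudely by $C(T)|h|$ --- indeed fails, and you correctly diagnose why: $|h|$ cannot be absorbed into $\sqrt{\tau}|h|+(t-\tau)^{-\alpha}|\IL^{-\alpha}h|$ uniformly in $\tau$ (take $h=e_j$ with $j\to\infty$ and $\tau\to 0$). Your pivot to a Gr\"onwall argument on $g(t)$ with inhomogeneous term $|e^{-t\IL_\tau}h|$ is the right idea, but you never finish it: after Gr\"onwall the bound involves $\int_0^t|e^{-s\IL_\tau}h|\,ds$, and it is this integral --- not just its $[0,\tau]$ portion --- that produces \emph{both} terms. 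You handle $[0,\tau]$ but never explain what happens on $[\tau,t]$.

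The paper's approach is the clean version of exactly this. Define $\hat{\eta}_\tau^h(t)=\eta_\tau^h(t)-e^{-t\IL_\tau}h$, which starts at zero and satisfies
\[
\frac{d\hat{\eta}_\tau^h}{dt}=-\IL_\tau\hat{\eta}_\tau^h+Q_\tau DF(\IX_\tau(t)).\hat{\eta}_\tau^h+Q_\tau DF(\IX_\tau(t)).e^{-t\IL_\tau}h.
\]
An energy estimate plus Young and Gr\"onwall gives $|\hat{\eta}_\tau^h(t)|^2\le C(T)\int_0^T|e^{-s\IL_\tau}h|^2\,ds$. Now split at $s=\tau$: on $[0,\tau]$ only $|e^{-s\IL_\tau}h|\le|h|$ is available, contributing $\tau|h|^2$ and hence the $\sqrt{\tau}|h|$ term; on $[\tau,T]$ the smoothing~\eqref{eq:IL_tau-smoothing} gives $|e^{-s\IL_\tau}h|\le C_\alpha(s-\tau)^{-\alpha}|\IL^{-\alpha}h|$, and $\int_\tau^T(s-\tau)^{-2\alpha}\,ds<\infty$ precisely because $\alpha<\tfrac12$, contributing $C_\alpha(T)|\IL^{-\alpha}h|^2$. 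Thus $|\hat{\eta}_\tau^h(t)|\le C_\alpha(T)\bigl(\sqrt{\tau}|h|+|\IL^{-\alpha}h|\bigr)$. Adding back $|e^{-t\IL_\tau}h|\le C_\alpha(t-\tau)^{-\alpha}|\IL^{-\alpha}h|$ and absorbing $|\IL^{-\alpha}h|\le T^\alpha(t-\tau)^{-\alpha}|\IL^{-\alpha}h|$ (since $t-\tau\le T$) gives~\eqref{eq:lem-utau1-alpha}.

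The missing step in your sketch is exactly this: the $[\tau,t]$ portion of the post-Gr\"onwall integral contributes the $|\IL^{-\alpha}h|$ piece (with no singularity), which is then absorbed into $(t-\tau)^{-\alpha}|\IL^{-\alpha}h|$ using $t-\tau\le T$. Your references to~\eqref{eq:IL_tau-smoothing-bis} and to moment bounds on $\IX_\tau$ are red herrings --- neither is needed here, since $DF$ is uniformly bounded and no $\IL^\alpha$-smoothing with $Q_\tau$ is required on the Duhamel term.
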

Note that the expression in the right-hand side of~\eqref{eq:lem-utau1-alpha} is directly linked to the smoothing inequalities~\eqref{eq:IL_tau-smoothing} and~\eqref{eq:IL_tau-smoothing-bis}, which as explained above require to treat the cases $t>\tau$ and $t\le \tau$ separately. Without loss of generality, it may be assumed that $T>\tau$. Assumption~\ref{ass:Fregul2} ensures the well-posedness of $\eta_\tau^h(t)$ and differentiability of $u_\tau(t,\cdot)$ for all $t\ge 0$.

\begin{proof}[Proof of Lemma~\ref{lem:utau-1}]
The proof of the inequality~\eqref{eq:lem-utau1-0} is straightforward: using the expression~\eqref{eq:etatau}, one has for all $t\ge 0$
\begin{align*}
\frac12\frac{d|\eta_\tau^h(t)|^2}{dt}&=\langle \eta_\tau^h(t),\frac{d\eta_\tau^h(t)}{dt}\rangle\\
&=-\langle \eta_\tau^h(t),\IL_\tau \eta_\tau^h(t)\rangle+\langle \eta_\tau^h(t),Q_\tau DF(\IX_\tau(t)).\eta_\tau^h(t)\rangle\\
&\le (\Lf-\lambda_1)\frac{\log(1+\tau\lambda_1)}{\tau\lambda_1}|\eta_\tau^h(t)|^2\\
&\le \Lf|\eta_\tau^h(t)|^2,
\end{align*}
owing to the inequalities~\eqref{eq:Q_tau-bound} and~\eqref{eq:IL_tau-gap} from Lemma~\ref{lem:Q_tauIL_tau-bound}. Applying Gronwall's lemma then gives
\[
|\eta_\tau^h(t)|\le e^{\Lf t}|h|
\]
and the inequality~\eqref{eq:lem-utau1-0} follows from the expression~\eqref{eq:Dutau-1}.

In order to prove the inequality~\eqref{eq:lem-utau1-alpha}, let us introduce an auxiliary process defined by
\begin{equation}\label{eq:etatauhat}
\hat{\eta}_\tau^h(t)=\eta_\tau^h(t)-e^{-t\IL_\tau}h
\end{equation}
for all $t\ge 0$. Using the expression~\eqref{eq:Dutau-1}, one obtains
\[
|Du_\tau(t,x).h|\le \vvvert\varphi\vvvert_1 |e^{-t\IL_\tau}h|+\vvvert\varphi\vvvert_1 \bigl(\E_x[|\hat{\eta}_\tau^h(t)|^2]\bigr)^{\frac12}.
\]
On the one hand, owing to the smoothing inequality~\eqref{eq:IL_tau-smoothing}, one has $|e^{-t\IL_\tau}h|\le C_\alpha (t-\tau)^{-\alpha}|\IL^{-\alpha}h|$ for all $t>\tau$. On the other hand, the auxiliary process $\bigl(\hat{\eta}_\tau^h(t)\bigr)_{t\ge 0}$ solves the evolution equation
\begin{equation}\label{eq:etatauhat-equation}
\frac{d\hat{\eta}_\tau^h(t)}{dt}=-\IL_\tau\hat{\eta}_\tau^h(t)+Q_\tau DF(\IX_\tau(t)).\hat{\eta}_\tau^h(t)+Q_\tau DF(\IX_\tau(t)).e^{-t\IL_\tau}h,
\end{equation}
with initial value $\hat{\eta}_\tau^h(0)=0$. Using the same arguments as above, one then obtains the inequality
\[
\frac12\frac{d|\hat{\eta}_\tau^h(t)|^2}{dt}\le \Lf|\hat{\eta}_\tau^h(t)|^2+\Lf |\hat{\eta}_\tau^h(t)| |e^{-t\IL_\tau}h|.
\]
Using Young's inequality and Gronwall's lemma, one thus obtains
\begin{align*}
\bigl(\E_x[|\hat{\eta}_\tau^h(t)|^2]\bigr)^{\frac12}&\le C(T)\int_0^T |e^{-t\IL_\tau}h|^2 dt=C(T)\int_0^\tau|e^{-t\IL_\tau}h|^2dt+C(T)\int_{\tau}^{T}|e^{-t\IL_\tau}h|^2dt\\
&\le C(T)\tau|h|^2+\int_\tau^T(t-\tau)^{-2\alpha}dt|\IL^{-\alpha}h|^2\\
&\le C_\alpha(T)\bigl(\sqrt{\tau}|h|+|\IL^{-\alpha}h|\bigr)^2.
\end{align*}
using the inequality~\eqref{eq:IL_tau-bound}, the smoothing inequality~\eqref{eq:IL_tau-smoothing}, and the condition $\alpha<\frac12$. Gathering the estimates then concludes the proof of the inequality~\eqref{eq:lem-utau1-alpha} and of Lemma~\ref{lem:utau-1}.
\end{proof}

\begin{lemma}\label{lem:utau-ergo}
Let Assumptions~\ref{ass:Lambda},~\ref{ass:F} and~\ref{ass:Fregul2} be satisfied. For all $\tau\in(0,\tau_0)$ and all $\varphi:H\to\R$ of class $\mathcal{C}^1$, one has, for all $t\ge 0$,
\begin{equation}\label{eq:lem-utau-ergo}
\vvvert u_\tau(t,\cdot)\vvvert_1\le e^{-\kappa t} \vvvert\varphi\vvvert_1
\end{equation}
with $\kappa=\frac{\log(1+\tau_0\lambda_1)}{\tau_0\lambda_1}(\lambda_1-\Lf)>0$.
\end{lemma}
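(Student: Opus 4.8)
The plan is to mimic the proof of the inequality~\eqref{eq:lem-utau1-0} from Lemma~\ref{lem:utau-1}, but keeping track of the contraction coming from the spectral gap of $\IL_\tau$ rather than discarding it. Starting from the expression~\eqref{eq:Dutau-1}, namely $Du_\tau(t,x).h=\E_x[D\varphi(\IX_\tau(t)).\eta_\tau^h(t)]$, and from $\vvvert\varphi\vvvert_1<\infty$, it suffices to bound $|\eta_\tau^h(t)|$ by $e^{-\kappa t}|h|$, where $\bigl(\eta_\tau^h(t)\bigr)_{t\ge 0}$ solves the linearized equation~\eqref{eq:etatau}.

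The key step is the differential inequality for $|\eta_\tau^h(t)|^2$. Differentiating and using~\eqref{eq:etatau} gives
\begin{align*}
\frac12\frac{d|\eta_\tau^h(t)|^2}{dt}&=-\langle \eta_\tau^h(t),\IL_\tau \eta_\tau^h(t)\rangle+\langle \eta_\tau^h(t),Q_\tau DF(\IX_\tau(t)).\eta_\tau^h(t)\rangle.
\end{align*}
For the first term I would apply the spectral gap inequality~\eqref{eq:IL_tau-gap}, which yields $-\langle \eta_\tau^h(t),\IL_\tau \eta_\tau^h(t)\rangle\le -\frac{\log(1+\tau_0\lambda_1)}{\tau_0}|\eta_\tau^h(t)|^2$; equivalently, since $\lambda_{\tau,j}=\lambda_j\theta(\tau\lambda_j)\ge \lambda_j\theta(\tau_0\lambda_1)=\lambda_j\frac{\log(1+\tau_0\lambda_1)}{\tau_0\lambda_1}$ for all $j$, one has $\langle\IL_\tau x,x\rangle\ge \frac{\log(1+\tau_0\lambda_1)}{\tau_0\lambda_1}\lambda_1|x|^2$. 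For the second term I would use Cauchy--Schwarz together with the bound $\|Q_\tau\|_{\mathcal{L}(H)}\le 1$ from~\eqref{eq:Q_tau-bound} and the Lipschitz bound $|DF(x).h|\le\Lf|h|$ (recorded after Assumption~\ref{ass:Fregul2}), getting $\langle \eta_\tau^h(t),Q_\tau DF(\IX_\tau(t)).\eta_\tau^h(t)\rangle\le\Lf|\eta_\tau^h(t)|^2$. Actually, to get the sharp constant $\kappa$ stated in the lemma I would instead combine the two terms more carefully, using $\langle\IL_\tau x,x\rangle\ge \frac{\log(1+\tau_0\lambda_1)}{\tau_0\lambda_1}\lambda_1|x|^2$ and $\langle \eta,Q_\tau DF.\eta\rangle\le \frac{\log(1+\tau\lambda_1)}{\tau\lambda_1}\Lf|\eta|^2\le\frac{\log(1+\tau_0\lambda_1)}{\tau_0\lambda_1}\Lf|\eta|^2$ (exploiting that $Q_\tau$ and $\IL_\tau$ share the same eigenbasis and that $\theta$ is non-increasing, so the relevant factor is at most $\theta(\tau_0\lambda_1)$), which gives
\[
\frac12\frac{d|\eta_\tau^h(t)|^2}{dt}\le -\frac{\log(1+\tau_0\lambda_1)}{\tau_0\lambda_1}(\lambda_1-\Lf)|\eta_\tau^h(t)|^2=-\kappa|\eta_\tau^h(t)|^2.
\]

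From here, Gronwall's lemma gives $|\eta_\tau^h(t)|\le e^{-\kappa t}|h|$, hence $|Du_\tau(t,x).h|\le\vvvert\varphi\vvvert_1 e^{-\kappa t}|h|$, and taking the supremum over $x$ and $h$ with $|h|=1$ yields~\eqref{eq:lem-utau-ergo}. As with the companion lemmas, the computation is to be understood in the finite dimensional Galerkin approximation of Section~\ref{sec:Galerkin}, where $\eta_\tau^h$ is genuinely differentiable and $\IL_\tau$ is bounded, with all constants independent of the approximation parameter, and then one passes to the limit. I do not expect a serious obstacle here: the only subtlety is bookkeeping the factor $\theta(\tau\lambda_j)$ carefully so as to extract exactly the constant $\kappa=\frac{\log(1+\tau_0\lambda_1)}{\tau_0\lambda_1}(\lambda_1-\Lf)$ uniformly in $\tau\in(0,\tau_0)$, which is precisely the point where Assumption~\ref{ass:ergo} ($\Lf<\lambda_1$) is used to ensure $\kappa>0$.
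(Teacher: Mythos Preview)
Your approach is the same as the paper's: use the expression~\eqref{eq:Dutau-1}, derive a differential inequality for $|\eta_\tau^h(t)|^2$, and apply Gronwall. However, there is a sign slip in your bookkeeping. You write
\[
\langle \eta,Q_\tau DF.\eta\rangle\le \frac{\log(1+\tau\lambda_1)}{\tau\lambda_1}\Lf|\eta|^2\le\frac{\log(1+\tau_0\lambda_1)}{\tau_0\lambda_1}\Lf|\eta|^2,
\]
but the second inequality is false: since $\theta(z)=\frac{\log(1+z)}{z}$ is non-increasing and $\tau\le\tau_0$, one has $\theta(\tau\lambda_1)\ge\theta(\tau_0\lambda_1)$, so the inequality goes the other way. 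Bounding the $\IL_\tau$ term and the $Q_\tau$ term separately by $\theta(\tau_0\lambda_1)$ cannot work, because the two bounds point in opposite directions.

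The fix, which is what the paper does, is to keep the common $\tau$-dependent factor on both terms and combine \emph{before} passing to $\tau_0$. From the computation in the proof of Lemma~\ref{lem:utau-1} one has directly
\[
\frac12\frac{d|\eta_\tau^h(t)|^2}{dt}\le (\Lf-\lambda_1)\frac{\log(1+\tau\lambda_1)}{\tau\lambda_1}|\eta_\tau^h(t)|^2,
\]
and since $\Lf-\lambda_1<0$, the monotonicity of $\theta$ now gives $(\Lf-\lambda_1)\theta(\tau\lambda_1)\le(\Lf-\lambda_1)\theta(\tau_0\lambda_1)=-\kappa$, as required. Everything else in your proposal (the use of~\eqref{eq:Dutau-1}, the Galerkin justification, Gronwall) is correct and matches the paper.
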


\begin{proof}[Proof of Lemma~\ref{lem:utau-ergo}]
Owing to the computations from the proof of Lemma~\ref{lem:utau-1}, and using the lower bound
\[
\underset{\tau\in(0,\tau_0)}\inf~\frac{\log(1+\tau\lambda_1)}{\tau\lambda_1}=\frac{\log(1+\tau_0\lambda_1)}{\tau_0\lambda_1},
\]
one obtains the inequality
\[
\frac12\frac{d|\eta_\tau^h(t)|^2}{dt}\le -\kappa|\eta_\tau^h(t)|^2,
\]
with the value of $\kappa$ given above. Therefore $|\eta_\tau^h(t)|\le e^{-\kappa t}|h|$ for all $t\ge 0$, and then the inequality~\eqref{eq:lem-utau-ergo} is a straightforward consequence of the expression~\eqref{eq:Dutau-1}. This concludes the proof of Lemma~\ref{lem:utau-ergo}.
\end{proof}

\begin{lemma}\label{lem:utau-0}
Let Assumptions~\ref{ass:Lambda},~\ref{ass:F},~\ref{ass:ergo} and~\ref{ass:Fregul2} be satisfied. For all $\tau_0\in(0,1)$, there exists $C\in(0,\infty)$ such that for all $\tau\in(0,\tau_0)$, all $t\in(\tau,\infty)$, and all $\varphi:H\to\R$ of class $\mathcal{C}^0$, $u_\tau(t,\cdot)$ is of class $\mathcal{C}^1$, and one has, for all $t\in(\tau,\infty)$ and all $x,h\in H$,
\begin{equation}\label{eq:lem-utau0}
\vvvert u_\tau(t,\cdot)\vvvert_1\le \frac{C}{(t-\tau)^{\frac12}}\vvvert\varphi\vvvert_0.
\end{equation}

Moreover, for all $t\in(0,\infty)$, all $x\in H$ and all $h\in D(Q_\tau^{-\frac12})$, one has
\begin{equation}\label{eq:utau0-bad}
|D\IP_{\tau,t}\varphi(x).h|=|Du_\tau(t,x).h|\le \frac{\vvvert\varphi\vvvert_0}{\sqrt{t}}|Q_\tau^{-\frac12}h|.
\end{equation}
\end{lemma}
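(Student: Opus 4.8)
The plan is to prove the two estimates in Lemma~\ref{lem:utau-0} by exploiting the Bismut--Elworthy type formula~\eqref{eq:Dutau-0}, the semigroup property~\eqref{eq:Ptau}, and the bounds on the operators $Q_\tau$, $\IL_\tau$ and $\eta_\tau^h$ already established. First I would prove the cruder bound~\eqref{eq:utau0-bad}: starting from~\eqref{eq:Dutau-0}, for $h\in D(Q_\tau^{-\frac12})$ one applies Cauchy--Schwarz and the It\^o isometry to get
\[
|Du_\tau(t,x).h|\le \frac{\vvvert\varphi\vvvert_0}{t}\Bigl(\E_x\bigl[\int_0^t|Q_\tau^{-\frac12}\eta_\tau^h(s)|^2ds\bigr]\Bigr)^{\frac12}.
\]
To control $Q_\tau^{-\frac12}\eta_\tau^h(s)$, I write $\eta_\tau^h(s)=e^{-s\IL_\tau}h+\hat\eta_\tau^h(s)$ with $\hat\eta_\tau^h$ as in~\eqref{eq:etatauhat}. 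For the leading term, $Q_\tau^{-\frac12}e^{-s\IL_\tau}h$; since $Q_\tau$ and $\IL_\tau$ commute and are diagonal in the same basis, one has eigenvalues $q_{\tau,j}^{-1/2}e^{-s\lambda_{\tau,j}}$, and since the map $z\mapsto \log(1+z)/z$ is decreasing while $e^{-s\lambda_{\tau,j}}\le 1$, a direct estimate shows $|Q_\tau^{-\frac12}e^{-s\IL_\tau}h|\le C|Q_\tau^{-\frac12}h|$ uniformly in $s\ge 0$ and $\tau\in(0,\tau_0)$ (this is essentially the reasoning behind~\eqref{eq:IL_tau-IL_tauQ_tau1/2}, specialized to the first coordinate). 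For the remainder $\hat\eta_\tau^h$, the equation~\eqref{eq:etatauhat-equation} shows that $\hat\eta_\tau^h(s)$ is smoother than $h$; one can apply $Q_\tau^{-\frac12}$ using~\eqref{eq:IL_tau-smoothing-bis} and the energy estimate from the proof of Lemma~\ref{lem:utau-1}, obtaining a bound in terms of $|Q_\tau^{-\frac12}h|$ as well. Combining and integrating over $s\in[0,t]$ yields $\bigl(\E_x[\int_0^t|Q_\tau^{-\frac12}\eta_\tau^h(s)|^2ds]\bigr)^{1/2}\le C\sqrt{t}\,|Q_\tau^{-\frac12}h|$, hence~\eqref{eq:utau0-bad}.

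Next I would derive~\eqref{eq:lem-utau0} from~\eqref{eq:utau0-bad} by a standard semigroup splitting: for $t>\tau$ write $u_\tau(t,\cdot)=\IP_{\tau,\tau}\bigl(\IP_{\tau,t-\tau}\varphi\bigr)$ using~\eqref{eq:Ptau}. Apply the chain rule for $D\IP_{\tau,\tau}$ via the expression~\eqref{eq:Dutau-1}: for any bounded continuous $\psi$,
\[
D\IP_{\tau,\tau}\psi(x).h=\E_x[D\psi(\IX_\tau(\tau)).\eta_\tau^h(\tau)]
\]
so that, applying the crude bound~\eqref{eq:utau0-bad} to $\psi=\IP_{\tau,t-\tau}\varphi$ at time $\tau$ of the inner semigroup (i.e., using $\vvvert D(\IP_{\tau,s}\varphi)\vvvert$ estimated at the point $\IX_\tau(\tau)$ in direction $\eta_\tau^h(\tau)$),
\[
|Du_\tau(t,x).h|\le \frac{\vvvert\varphi\vvvert_0}{\sqrt{t-\tau}}\,\E_x[|Q_\tau^{-\frac12}\eta_\tau^h(\tau)|].
\]
It remains to show $\E_x[|Q_\tau^{-\frac12}\eta_\tau^h(\tau)|]\le C|h|$ uniformly. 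This is where the operator bound~\eqref{eq:IL_tau-IL_tauQ_tau1/2} enters directly: again splitting $\eta_\tau^h(\tau)=e^{-\tau\IL_\tau}h+\hat\eta_\tau^h(\tau)$, the leading contribution is $Q_\tau^{-\frac12}e^{-\tau\IL_\tau}h$, bounded by $C|h|$ exactly by~\eqref{eq:IL_tau-IL_tauQ_tau1/2}; the remainder $Q_\tau^{-\frac12}\hat\eta_\tau^h(\tau)$ is handled with~\eqref{eq:IL_tau-smoothing-bis} applied to the mild formulation of~\eqref{eq:etatauhat-equation} together with the Gronwall estimate $|\hat\eta_\tau^h(s)|\le C\sqrt\tau|h|$ from the proof of Lemma~\ref{lem:utau-1}, and one checks the resulting time integral $\int_0^\tau (\tau-s)^{-1/2}\,ds\cdot\tau^{-1/2}$ is bounded. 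This gives~\eqref{eq:lem-utau0}.

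The differentiability of $u_\tau(t,\cdot)$ for bounded continuous $\varphi$ and $t>\tau$ follows from the validity of the Bismut--Elworthy formula~\eqref{eq:Dutau-0} in the finite dimensional approximation (Section~\ref{sec:Galerkin}) together with the uniform bounds just derived, which allow passing to the limit. The main obstacle I anticipate is the careful bookkeeping around the singular operator $Q_\tau^{-\frac12}$: unlike in the classical exponential Euler setting where $Q=I$, here $Q_\tau^{-\frac12}$ has eigenvalues $\sqrt{\lambda_j\tau/\log(1+\tau\lambda_j)}$ growing like $\sqrt{\lambda_j}$ for large $j$, so every appearance of $Q_\tau^{-\frac12}$ must be paired either with a factor $e^{-s\IL_\tau}$ (exploiting that $e^{-\tau\IL_\tau}=(I+\tau\IL)^{-1}$ kills exactly one power of $\IL$, which matches the one-half power needed) or with the extra smoothing from $Q_\tau$ in the drift of $\hat\eta_\tau^h$. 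Getting constants that are genuinely uniform in $\tau\in(0,\tau_0)$ requires using the sup-over-$z$ reformulations as in the proof of Lemma~\ref{lem:Q_tauIL_tau-bound}, rather than any naive estimate; once this is set up, the rest is the routine Gronwall/It\^o-isometry machinery already displayed in the proof of Lemma~\ref{lem:utau-1}.
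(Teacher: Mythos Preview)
Your overall strategy is correct and matches the paper: Bismut--Elworthy formula~\eqref{eq:Dutau-0} for the crude bound, then the semigroup splitting $u_\tau(t,\cdot)=\IP_{\tau,\tau}(\IP_{\tau,t-\tau}\varphi)$ combined with the decomposition $\eta_\tau^h(\tau)=e^{-\tau\IL_\tau}h+\hat\eta_\tau^h(\tau)$ and the operator bound~\eqref{eq:IL_tau-IL_tauQ_tau1/2} for the sharp bound. Where you differ from the paper is in how you control $|Q_\tau^{-1/2}\eta_\tau^h(s)|$: you propose to split into $e^{-s\IL_\tau}h$ and $\hat\eta_\tau^h(s)$ and treat each piece via the mild formulation and smoothing estimates. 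The paper instead exploits the algebraic identity $Q_\tau^{-1}\IL_\tau=\IL$ to run an energy estimate directly on $|Q_\tau^{-1/2}\eta_\tau^h(t)|^2$: one finds
\[
\frac12\frac{d}{dt}|Q_\tau^{-1/2}\eta_\tau^h(t)|^2=-\langle\IL\eta_\tau^h,\eta_\tau^h\rangle+\langle DF(\IX_\tau).\eta_\tau^h,\eta_\tau^h\rangle\le -(\lambda_1-\Lf)|\eta_\tau^h|^2\le 0
\]
under Assumption~\ref{ass:ergo}, so $|Q_\tau^{-1/2}\eta_\tau^h(t)|\le|Q_\tau^{-1/2}h|$ with constant $1$ for all $t\ge 0$, immediately yielding~\eqref{eq:utau0-bad}. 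The same identity is reused for $|Q_\tau^{-1/2}\hat\eta_\tau^h(t)|^2$ in the second step. Your route would also succeed (producing~\eqref{eq:utau0-bad} with a harmless constant $C$ in front), but the paper's energy argument is shorter, avoids the mild-formulation bookkeeping, and is precisely why Assumption~\ref{ass:ergo} appears in the hypotheses of this lemma. Incidentally, some of your stated intermediate bounds (e.g.\ $|\hat\eta_\tau^h(s)|\le C\sqrt\tau|h|$ and the integral $\int_0^\tau(\tau-s)^{-1/2}\,ds\cdot\tau^{-1/2}$) are not quite the right expressions, though the conclusion you draw from them is correct; the mild formulation gives $|Q_\tau^{-1/2}\hat\eta_\tau^h(\tau)|\le\Lf\int_0^\tau|\eta_\tau^h(r)|\,dr\le\Lf\tau|h|$ directly.
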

Note that Lemma~\ref{lem:utau-0} is a crucial ingredient to establish the main result of this work, Theorem~\ref{theo:weakinv}, where the error between the invariant distributions $\mu_\infty^\tau$ and $\mu_\infty=\mu_\star$ (when Assumption~\ref{ass:gradient} is satisfied) is considered using the total variation distance $d_{\rm TV}$, see Equation~\eqref{eq:theo-weakinv_dTVinvar}. Assumption~\ref{ass:ergo} may be removed, the constant $C$ would then depend on the final time $T\in(0,\infty)$ with a condition $t\in(\tau,T]$. The result used to prove Theorem~\ref{theo:weakinv} is Proposition~\ref{propo:utau-regularity}, which is proved below by a combination of Lemmas~\ref{lem:utau-1},~\ref{lem:utau-ergo} and~\ref{lem:utau-0}, which all serve different purposes: having estimates in terms of $|\IL^{-\alpha}h|$ with $\alpha\in(0,\frac12)$, in terms of $\exp(-\kappa t)$ with $\kappa>0$ and in terms of $\vvvert\varphi\vvvert_0$.

The inequality~\eqref{eq:utau0-bad} is a first step to prove~\eqref{eq:lem-utau0} and is also employed to deal with some of the error terms in the weak error analysis below. Note that $Q_\tau^{-\frac12}$ is an unbounded linear operator, therefore to obtain the inequality~\eqref{eq:lem-utau0} (which requires a bound with $|h|$ on the right-hand side) one needs an additional argument. Like the proof of Lemma~\ref{lem:utau-1}, one needs to be careful to obtain constants which are independent of $\tau\in(0,\tau_0)$, this explains why the quantity $(t-\tau)^{-1/2}$ appears in the right-hand side of~\eqref{eq:lem-utau0}.

\begin{proof}[Proof of Lemma~\ref{lem:utau-0}]
Recall the identity $Q_\tau^{-1}\IL_\tau=\IL$. Then, for all $t\ge 0$ and all $h\in H$, one obtains
\begin{align*}
\frac12\frac{d|Q_\tau^{-1/2}\eta_\tau^h(t)|^2}{dt}&=-\langle \IL \eta_\tau^h(t),\eta^h(t)\rangle+\langle DF(\IX_{\tau,t}).\eta_\tau^h(t),\eta_\tau^h(t)\rangle\\
&\le -\lambda_1|\eta_\tau^h(t)|^2+\Lf|\eta_\tau^h(t)|^2\\
&\le 0,
\end{align*}
using Assumption~\ref{ass:ergo}. Therefore, for all $\tau\in(0,\tau_0)$, $t\ge 0$ and $h\in H$, one has
\[
|Q_\tau^{-\frac12}\eta_\tau^h(t)|\le |Q_\tau^{-\frac12}h|.
\]
Using It\^o's isometry formula and the expression~\eqref{eq:Dutau-0} of $Du_\tau(t,x).h$, one then obtains the upper bound~\eqref{eq:utau0-bad}, for all $t\in(0,\infty)$. As explained above, an additional argument is required to prove~\eqref{eq:lem-utau0} and remove the unbounded operator $Q_\tau^{-\frac12}$ on the right-hand side.

Let now $t>\tau$. The semigroup property~\eqref{eq:Ptau} yields the identity
\[
u_\tau(t,\cdot)=\IP_{\tau,\tau}\bigl(\IP_{\tau,t-\tau}\varphi\bigr),
\]
and using the expression~\eqref{eq:Dutau-1} gives the equality
\[
Du_\tau(t,x).h=\E_x[D\IP_{\tau,t-\tau}\varphi(\IX_\tau(t)).\eta_\tau^h(\tau)].
\]
Applying the inequality~\eqref{eq:utau0-bad} then gives
\[
|Du_\tau(t,x).h|\le \frac{\vvvert\varphi\vvvert_0}{\sqrt{t-\tau}}\E_x[|Q_\tau^{-\frac12}\eta_\tau^h(\tau)|].
\]
It remains to check that there exists $C\in(0,\infty)$ such that $|Q_\tau^{-\frac12}\eta_\tau^h(\tau)|\le C|h|^2$ for all $\tau\in(0,\tau_0)$ and $h\in H$. Recall that $\eta_\tau^h(\tau)=e^{-\tau\IL_\tau}h+\hat{\eta}_\tau^h(\tau)$, see~\eqref{eq:etatauhat}. On the one hand, one has $|Q_\tau^{-\frac12}e^{-\tau\IL_\tau}h|\le C|h|$ for all $\tau\in(0,\tau_0)$, owing to the inequality~\eqref{eq:IL_tau-IL_tauQ_tau1/2} from Lemma~\ref{lem:Q_tauIL_tau-bound}. On the other hand, using~\eqref{eq:etatauhat-equation} and the identity $Q_\tau^{-1}\IL_\tau=\IL$, one obtains for all $t\ge 0$
\begin{align*}
\frac12\frac{d|Q_\tau^{-\frac12}\hat{\eta}_\tau^h(t)|^2}{dt}&=-\langle \IL\hat{\eta}_\tau^h(t),\hat{\eta}_\tau^h(t)\rangle+\langle DF(\IX_\tau(t)).\hat{\eta}_\tau^h(t),\hat{\eta}_\tau^h(t)\rangle+\langle DF(\IX_\tau(t)).e^{-t\IL_\tau}h,\hat{\eta}_{\tau}^h(t)\rangle\\
&\le -(\lambda_1-\Lf)|\hat{\eta}_\tau^h(t)|^2+\Lf|e^{-t\IL_\tau}h||\hat{\eta}_\tau^h(t)|\\
&\le C|h|^2,
\end{align*}
using the inequality~\eqref{eq:IL_tau-bound} and Young's inequality. Applying Gronwall's lemma then gives the upper bound $|\hat{\eta}_\tau^h(\tau)|\le C|h|$ for all $\tau\in(0,\tau_0)$.

Finally, one obtains the upper bound
\[
|Du_\tau(t,x).h|\le \frac{C\vvvert\varphi\vvvert_0}{\sqrt{t-\tau}}|h|,
\]
which concludes the proof of Lemma~\ref{lem:utau-0}.
\end{proof}

\subsection{Properties of the modified Euler scheme}\label{sec:auxiliary-scheme}

Recall that the modified Euler scheme can be interpreted in three different ways: see~\eqref{eq:scheme},~\eqref{eq:scheme2} and~\eqref{eq:scheme-IX} in Section~\ref{sec:scheme}. Let us first provide moment bounds, using the second formulation~\eqref{eq:scheme2} of the scheme.
\begin{lemma}\label{lem:scheme-bound}
Let Assumptions~\ref{ass:Lambda} and~\ref{ass:F} be satisfied. For all $\tau_0\in(0,1)$ and $T\in(0,\infty)$, one has
\begin{equation}\label{eq:lem-scheme-bound}
\underset{x_0\in H}\sup~\underset{\tau\in(0,\tau_0)}\sup~\underset{n\in\N;~n\tau\le T}\sup~\frac{\E[|X_n^\tau|^2]}{1+|x_0|^2}<\infty.
\end{equation}
Moreover, if Assumption~\ref{ass:ergo} is satisfied, then for all $\tau_0\in(0,1)$ and $T\in(0,\infty)$, one has
\begin{equation}\label{eq:lem-scheme-bound-ergo}
\underset{x_0\in H}\sup~\underset{\tau\in(0,\tau_0)}\sup~\underset{n\in\N}\sup~\frac{\E[|X_n^\tau|^2]}{1+|x_0|^2}<\infty.
\end{equation}
\end{lemma}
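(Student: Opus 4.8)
The plan is to unroll the second formulation~\eqref{eq:scheme2} of the scheme, isolate the stochastic contribution as a modified stochastic convolution whose moments are already controlled, and then close the estimate by a discrete Gronwall argument, being careful that every constant is independent of $\tau\in(0,\tau_0)$.

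First I would iterate~\eqref{eq:scheme2}: with $\IA_\tau$ and $\IB_\tau$ given by~\eqref{eq:IA} and~\eqref{eq:IB}, for all $n\in\N$,
\begin{equation*}
X_n^\tau=\IA_\tau^n x_0+\tau\sum_{k=0}^{n-1}\IA_\tau^{n-k}F(X_k^\tau)+W_n^\tau,
\end{equation*}
where $W_n^\tau$ is the random variable in~\eqref{eq:WNtau}. Using $\|\IA_\tau^m\|_{\mathcal{L}(H)}\le(1+\tau\lambda_1)^{-m}\le1$ and $|F(x)|\le|F(0)|+\Lf|x|$ (Assumption~\ref{ass:F}), taking $L^2(\Omega)$-norms and applying Minkowski's inequality would give, with $a_n:=(\E[|X_n^\tau|^2])^{1/2}$,
\begin{equation*}
a_n\le(1+\tau\lambda_1)^{-n}|x_0|+\tau\sum_{k=0}^{n-1}(1+\tau\lambda_1)^{-(n-k)}\bigl(|F(0)|+\Lf a_k\bigr)+\bigl(\E[|W_n^\tau|^2]\bigr)^{1/2}.
\end{equation*}
For the stochastic term I would use that $W_n^\tau$ has the law of the modified stochastic convolution $\IW_\tau(t_n)$, so Lemma~\ref{lem:modifiedStochasticConvolution} with $\alpha=0$ (or a direct covariance computation via~\eqref{eq:identity-AB}) yields $\sup_{\tau\in(0,\tau_0)}\sup_{n\in\N}\E[|W_n^\tau|^2]=:C_W<\infty$.

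Next, for~\eqref{eq:lem-scheme-bound} I would restrict to $n\tau\le T$, bound the weights by $(1+\tau\lambda_1)^{-(n-k)}\le1$ (so that $\tau\sum_{k=0}^{n-1}|F(0)|\le T|F(0)|$), and apply the discrete Gronwall lemma to $a_n\le|x_0|+T|F(0)|+C_W^{1/2}+\Lf\tau\sum_{k=0}^{n-1}a_k$, obtaining $a_n\le(|x_0|+T|F(0)|+C_W^{1/2})e^{\Lf T}$, that is, $\E[|X_n^\tau|^2]\le C(T)(1+|x_0|^2)$ uniformly in $\tau$ and in $n$ with $n\tau\le T$. For the time-uniform bound~\eqref{eq:lem-scheme-bound-ergo} under Assumption~\ref{ass:ergo} ($\Lf<\lambda_1$) I would instead keep the geometric decay: since $\tau\sum_{k=0}^{n-1}(1+\tau\lambda_1)^{-(n-k)}\le\tau\sum_{j\ge1}(1+\tau\lambda_1)^{-j}=\lambda_1^{-1}$ for every $\tau\in(0,\tau_0)$, setting $M_n:=\max_{0\le k\le n}a_k$ gives $a_m\le|x_0|+|F(0)|/\lambda_1+C_W^{1/2}+(\Lf/\lambda_1)M_{m-1}$ for all $m\le n$; taking the maximum over $0\le m\le n$ and using $a_0=|x_0|$ yields $M_n\le(1-\Lf/\lambda_1)^{-1}\bigl(|x_0|+|F(0)|/\lambda_1+C_W^{1/2}\bigr)$ for all $n$, uniformly in $\tau$, and squaring gives~\eqref{eq:lem-scheme-bound-ergo}.

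The main point requiring care — and the reason to set things up this way rather than to quote generic estimates — is to keep every constant independent of $\tau\in(0,\tau_0)$: this relies on the exact identity $\tau\sum_{m\ge1}(1+\tau\lambda_1)^{-m}=\lambda_1^{-1}$, on the contraction factor $\Lf/\lambda_1<1$ supplied by Assumption~\ref{ass:ergo}, and on the $\tau$-uniform moment bound for the modified stochastic convolution (Lemma~\ref{lem:modifiedStochasticConvolution}). Everything else is routine fixed-point bookkeeping, made rigorous through the finite-dimensional Galerkin convention of Section~\ref{sec:Galerkin}.
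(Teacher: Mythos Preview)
Your argument is correct, but it takes a somewhat different route from the paper's proof. The paper subtracts off the stochastic part first, setting $Y_n^\tau=X_n^\tau-W_n^\tau$, and works with the one-step recursion $Y_{n+1}^\tau=\IA_\tau Y_n^\tau+\tau\IA_\tau F(Y_n^\tau+W_n^\tau)$; writing $F(Y_n^\tau+W_n^\tau)=\bigl(F(Y_n^\tau+W_n^\tau)-F(W_n^\tau)\bigr)+F(W_n^\tau)$ and using $\|\IA_\tau\|_{\mathcal{L}(H)}\le(1+\tau\lambda_1)^{-1}$ gives the scalar inequality $(\E[|Y_{n+1}^\tau|^2])^{1/2}\le\frac{1+\tau\Lf}{1+\tau\lambda_1}(\E[|Y_n^\tau|^2])^{1/2}+C\tau$, from which both bounds follow immediately by iterating (a geometric series when $\Lf<\lambda_1$). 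You instead unroll the Duhamel formula completely and close on $a_n=(\E[|X_n^\tau|^2])^{1/2}$ via discrete Gronwall (finite time) or a $\max$-argument with the contraction factor $\Lf/\lambda_1$ (ergodic case). Both approaches rely on Lemma~\ref{lem:modifiedStochasticConvolution} for the $\tau$-uniform control of $W_n^\tau$ and both keep constants independent of $\tau$; the paper's one-step contraction gives a slightly sharper explicit rate $\frac{1+\tau\Lf}{1+\tau\lambda_1}$ in the ergodic case, whereas your unrolled formulation is a bit more direct and avoids introducing the auxiliary process $Y_n^\tau$.
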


\begin{proof}[Proof of Lemma~\ref{lem:scheme-bound}]
Introduce the auxiliary random variables
\[
Y_n^\tau=X_n^\tau-W_n^\tau,
\]
where $W_n^\tau$ is defined by~\eqref{eq:WNtau}, and recall that $W_n^\tau=\IW_\tau(t_n)$. Therefore, $X_n^\tau=Y_n^\tau+W_n^\tau$ for all $n\in\N_0$.

On the one hand, the moment bound~\eqref{eq:lem-modifiedStochasticConvolution-bound} from Lemma~\ref{lem:modifiedStochasticConvolution} gives
\[
\underset{\tau\in(0,\tau_0)}\sup~\underset{n\in\N_0}\sup~\E[|W_n^\tau|^2]<\infty.
\]
On the other hand, for all $n\in\N_0$, one has the identity
\[
Y_{n+1}^\tau=\IA_\tau Y_n^\tau+\tau \IA_\tau F(Y_n^\tau+W_n^\tau),
\]
Writing $F(Y_n^\tau+W_n^\tau)=F(Y_n^\tau+W_n^\tau)-F(W_n^\tau)+F(W_n^\tau)$, and using the inequality $\|\IA_\tau\|_{\mathcal{L}(H)}\le (1+\tau\lambda_1)^{-1}$ and the Lipschitz continuity of $F$ (Assumption~\ref{ass:F}), one obtains
\begin{align*}
\bigl(\E[|Y_{n+1}^\tau|]\bigr)^{\frac12}&\le \frac{1+\tau\Lf}{1+\tau\lambda_1}\bigl(\E[|Y_n^\tau|^2]\bigr)^{\frac12}+\frac{\tau}{1+\tau\lambda_1}\bigl(\E[|F(W_n^\tau)|^2]\bigr)^{\frac12}\\
&\le \frac{1+\tau\Lf}{1+\tau\lambda_1}\bigl(\E[|Y_n^\tau|^2]\bigr)^{\frac12}+C\tau.
\end{align*}
It is then straightforward to conclude the proof of the first inequality~\eqref{eq:lem-scheme-bound}. When Assumption~\ref{ass:ergo} is satisfied, one has $\Lf<\lambda_1$, and one obtains for all $n\in\N$ the upper bound
\[
\bigl(\E[|Y_n^\tau|^2]\bigr)^{\frac12}\le \bigl(\frac{1+\tau\Lf}{1+\tau\lambda_1}\bigr)^n|x_0|^2+C\tau\sum_{k=0}^{\infty}\bigl(\frac{1+\tau\Lf}{1+\tau\lambda_1}\bigr)^k\le |x_0|^2+C\frac{1+\tau\lambda_1}{\lambda_1-\Lf}.
\]
This gives the second inequality~\eqref{eq:lem-scheme-bound-ergo} and concludes the proof of Lemma~\ref{lem:scheme-bound}.
\end{proof}

In this sequel, to perform the weak error analysis, it is convenient to exploit the third interpretation of the scheme given in Section~\ref{sec:scheme}: the modified Euler scheme is obtained by the application of the (accelerated) exponential Euler scheme to the modified stochastic evolution equation~\eqref{eq:modifiedSPDE}, which gives $X_n^\tau=\IX_{\tau,n}$ for all $n\in\N_0$, with
\[
\IX_{\tau,n+1}=e^{-\tau\IL_\tau}\IX_{\tau,n}+\IL_\tau^{-1}(I-e^{-\tau\IL_\tau})Q_\tau F(\IX_{\tau,n})+\int_{t_n}^{t_{n+1}}e^{-(t_{n+1}-s)\IL_\tau}Q_\tau^{\frac12}dW(s).
\]

One of the main ingredients of the proof of Theorems~\ref{theo:weakinv} and~\ref{theo:weak} is the introduction of the auxiliary process $\bigl(\tilde{\IX}_\tau(t)\bigr)_{t\ge 0}$, defined as follows: for all $\tau\in(0,\tau_0)$ and all $t\ge 0$,
\begin{equation}\label{eq:tildeIX}
\tilde{\IX}_\tau(t)=e^{-t\IL_\tau}x_0+\int_{0}^{t}e^{-(t-s)\IL_\tau}Q_\tau F(\IX_{\tau,\ell(s)})ds+\IW_\tau(t),
\end{equation}
where $\IW_\tau(t)=\int_0^t e^{-(t-s)\IL_\tau}Q_{\tau}^{\frac12}dW(s)$ is given by~\eqref{eq:modifiedStochasticConvolution}, $\ell(s)=n$ if $t_n\le s<t_{n+1}$, with $t_n=n\tau$. For every $n\in\N_0$ and all $t\in[t_n,t_{n+1}]$, one has
\begin{equation}\label{eq:dtildeIX}
d\tilde{\IX}_\tau(t)=-\IL_\tau\tilde{\IX}_\tau(t)dt+Q_\tau F(\IX_{\tau,n})dt+Q_\tau^{\frac12}dW(t).
\end{equation}
Finally, note that by construction of the auxiliary process, one has the equality $\tilde{\IX}_\tau(t_n)=\IX_{\tau,n}$ for all $n\in\N_0$.

Lemma~\ref{lem:tildeIX} below gives the main properties of the auxiliary process $\bigl(\tilde{\IX}_\tau(t)\bigr)_{t\ge 0}$ which will be used in Section~\ref{sec:proofs} below.
\begin{lemma}\label{lem:tildeIX}
Let Assumptions~\ref{ass:Lambda} and~\ref{ass:F} be satisfied.

For all $T\in(0,\infty)$, $\alpha\in[0,\frac14)$ and $\tau_0\in(0,1)$, one has
\begin{equation}\label{eq:tildeIX-bound}
\underset{x_0\in H^\alpha}\sup~\underset{\tau\in(0,\tau_0)}\sup~\underset{t\in[0,T]}\sup~\frac{\E[|\tilde{\IX}_\tau(t)|_\alpha^2]}{1+|x_0|_\alpha^2}<\infty
\end{equation}
and
\begin{equation}\label{eq:tildeIX-increment}
\underset{x_0\in H^\alpha}\sup~\underset{\tau\in(0,\tau_0)}\sup~\underset{t\in[\tau,T]}\sup~\frac{\E[\big|\IL^{-\alpha}\bigl(\tilde{\IX}_\tau(t)-\tilde{\IX}_\tau(t_{\ell(t)})\bigr)\big|^2]}{\tau^{2\alpha}(1+|x_0|_\alpha^2)}<\infty.
\end{equation}

Moreover, if Assumption~\ref{ass:ergo} is satisfied, then for all $\tau_0\in(0,1)$ and $\alpha\in[0,\frac14)$, one has
\begin{equation}\label{eq:tildeIX-bound-ergo}
\underset{x_0\in H^\alpha}\sup~\underset{\tau\in(0,\tau_0)}\sup~\underset{t\ge 0}\sup~\frac{\E[|\tilde{\IX}_\tau(t)|_\alpha^2]}{1+|x_0|_\alpha^2}<\infty
\end{equation}
and
\begin{equation}\label{eq:tildeIX-increment-ergo}
\underset{x_0\in H^\alpha}\sup~\underset{\tau\in(0,\tau_0)}\sup~\underset{t\ge\tau}\sup~\frac{\E[\big|\IL^{-\alpha}\bigl(\tilde{\IX}_\tau(t)-\tilde{\IX}_\tau(t_{\ell(t)})\bigr)\big|^2]}{\tau^{2\alpha}(1+|x_0|_\alpha^2)}<\infty.
\end{equation}
\end{lemma}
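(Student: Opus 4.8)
The plan is to bound the four quantities by decomposing the auxiliary process $\tilde{\IX}_\tau$ from~\eqref{eq:tildeIX} into its deterministic part $e^{-t\IL_\tau}x_0$, its drift part $\int_0^t e^{-(t-s)\IL_\tau}Q_\tau F(\IX_{\tau,\ell(s)})\,ds$, and the modified stochastic convolution $\IW_\tau(t)$, estimating each separately. The argument runs exactly along the lines of the proof of Proposition~\ref{propo:modified-wellposed-bound}, with $F(\IX_\tau(s))$ replaced by $F(\IX_{\tau,\ell(s)})$. For~\eqref{eq:tildeIX-bound}, apply $\IL^\alpha$: since $\IL^\alpha$ commutes with $e^{-t\IL_\tau}$ and $\|e^{-t\IL_\tau}\|_{\mathcal{L}(H)}\le1$ by~\eqref{eq:IL_tau-bound}, the first term is bounded by $|x_0|_\alpha$; the modified stochastic convolution is controlled by~\eqref{eq:lem-modifiedStochasticConvolution-bound}; and for the drift, Minkowski's and It\^o's inequalities give
\[
\bigl(\E[|\IL^\alpha\textstyle\int_0^t e^{-(t-s)\IL_\tau}Q_\tau F(\IX_{\tau,\ell(s)})\,ds|^2]\bigr)^{1/2}\le\int_0^t\|\IL^\alpha e^{-s\IL_\tau}Q_\tau\|_{\mathcal{L}(H)}\bigl(\E[|F(\IX_{\tau,\ell(s)})|^2]\bigr)^{1/2}\,ds,
\]
where the linear growth of $F$ (Assumption~\ref{ass:F}) and~\eqref{eq:lem-scheme-bound} bound the last factor by $C(1+|x_0|)$, while $\int_0^t\|\IL^\alpha e^{-s\IL_\tau}Q_\tau\|_{\mathcal{L}(H)}\,ds\le C_\alpha(T)$ is obtained by splitting the integral at $s=\tau$ and using~\eqref{eq:IL_tau-smoothing-bis} on $(0,\tau)$ and~\eqref{eq:IL_tau-smoothing} on $(\tau,T)$, exactly as in Proposition~\ref{propo:modified-wellposed-bound} (using $\alpha<1$).

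For the increment estimate~\eqref{eq:tildeIX-increment}, fix $t\in[\tau,T]$, set $n=\ell(t)$ so that $0\le t-t_n<\tau$, and use~\eqref{eq:dtildeIX} to write
\[
\tilde{\IX}_\tau(t)-\tilde{\IX}_\tau(t_n)=(e^{-(t-t_n)\IL_\tau}-I)\tilde{\IX}_\tau(t_n)+\int_{t_n}^t e^{-(t-s)\IL_\tau}Q_\tau F(\IX_{\tau,n})\,ds+\int_{t_n}^t e^{-(t-s)\IL_\tau}Q_\tau^{1/2}\,dW(s).
\]
Apply $\IL^{-\alpha}$. The first term is bounded using $\|\IL^{-\alpha}(I-e^{-(t-t_n)\IL_\tau})\|_{\mathcal{L}(H)}\le C(t-t_n)^\alpha\le C\tau^\alpha$ from~\eqref{eq:IL_tau-increment} together with $\E[|\tilde{\IX}_\tau(t_n)|^2]=\E[|\IX_{\tau,n}|^2]\le C(1+|x_0|^2)$ (Lemma~\ref{lem:scheme-bound}); the drift term is bounded by $C(t-t_n)(1+|x_0|)\le C\tau^\alpha(1+|x_0|)$ since $\|\IL^{-\alpha}\|_{\mathcal{L}(H)}\le\lambda_1^{-\alpha}$, $\|Q_\tau\|_{\mathcal{L}(H)}\le1$ by~\eqref{eq:Q_tau-bound}, $\|e^{-(t-s)\IL_\tau}\|_{\mathcal{L}(H)}\le1$, $\tau<1$ and $\alpha\le1$; and the stochastic term is handled by It\^o's isometry: using $q_{\tau,j}=\lambda_{\tau,j}/\lambda_j$ from~\eqref{eq:modifiedILQ-eigen} one computes
\[
\E[|\IL^{-\alpha}\textstyle\int_{t_n}^t e^{-(t-s)\IL_\tau}Q_\tau^{1/2}\,dW(s)|^2]=\sum_{j\in\N}\frac{1-e^{-2(t-t_n)\lambda_{\tau,j}}}{2\lambda_j^{1+2\alpha}}\le C\tau^{2\alpha}\sum_{j\in\N}\frac{1}{\lambda_j},
\]
using $1-e^{-x}\le x^{2\alpha}$ for $x\ge0$ (valid since $2\alpha\in[0,1]$) and $\tau\lambda_{\tau,j}=\log(1+\tau\lambda_j)\le\tau\lambda_j$, the last sum being finite by Assumption~\ref{ass:Lambda}. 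Squaring and adding the three contributions yields~\eqref{eq:tildeIX-increment}.

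For the ergodic statements~\eqref{eq:tildeIX-bound-ergo} and~\eqref{eq:tildeIX-increment-ergo}, under Assumption~\ref{ass:ergo} the bounds~\eqref{eq:lem-modifiedStochasticConvolution-bound} and~\eqref{eq:lem-scheme-bound-ergo} are already uniform in $t\ge0$, so the only new point is that $\int_0^t\|\IL^\alpha e^{-s\IL_\tau}Q_\tau\|_{\mathcal{L}(H)}\,ds$ must be bounded uniformly in $t\ge0$; this follows by combining the smoothing inequalities~\eqref{eq:IL_tau-smoothing}--\eqref{eq:IL_tau-smoothing-bis} with the exponential decay $\|e^{-s\IL_\tau}\|_{\mathcal{L}(H)}\le e^{-c_1 s}$, $c_1=\frac{\log(1+\tau_0\lambda_1)}{\tau_0}$, implied by the spectral gap~\eqref{eq:IL_tau-gap} (and uniform in $\tau\in(0,\tau_0)$). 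With this in hand, the proofs of~\eqref{eq:tildeIX-bound-ergo} and~\eqref{eq:tildeIX-increment-ergo} repeat verbatim the arguments above.

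The main obstacle, as throughout Section~\ref{sec:auxiliary}, is to keep every constant independent of $\tau\in(0,\tau_0)$ while $\IL_\tau$ is unbounded and the norms $|\IL_\tau^\alpha\cdot|$ and $|\IL^\alpha\cdot|$ are not equivalent; this forces one to work with the two distinct smoothing estimates~\eqref{eq:IL_tau-smoothing} and~\eqref{eq:IL_tau-smoothing-bis} and to split time integrals at $s=\tau$, and, for the ergodic bounds, to additionally exploit the uniform spectral-gap decay from~\eqref{eq:IL_tau-gap}. Apart from this bookkeeping and the short eigenvalue computation for the stochastic increment term above, the remaining estimates are routine and entirely parallel to those already carried out for $\IX_\tau$ in Proposition~\ref{propo:modified-wellposed-bound}.
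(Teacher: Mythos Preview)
Your proposal is correct and follows essentially the same approach as the paper: decompose $\tilde{\IX}_\tau$ via its mild formulation, handle the three pieces using the smoothing inequalities~\eqref{eq:IL_tau-smoothing}--\eqref{eq:IL_tau-smoothing-bis}, the moment bounds from Lemmas~\ref{lem:modifiedStochasticConvolution} and~\ref{lem:scheme-bound}, and an eigenvalue computation for the stochastic increment. The only minor deviations are that for the first increment term you pair $\|\IL^{-\alpha}(I-e^{-(t-t_n)\IL_\tau})\|\le C\tau^\alpha$ with the $H$-moment of $X_n^\tau$, whereas the paper pairs $\|\IL^{-2\alpha}(I-e^{-(t-t_n)\IL_\tau})\|\le C\tau^{2\alpha}$ with the $H^\alpha$-moment (derived as a byproduct of~\eqref{eq:tildeIX-bound}); and your stochastic-term estimate via $1-e^{-x}\le x^{2\alpha}$ is slightly more direct than the paper's manipulation yielding $\tau^{4\alpha}\sum_j\lambda_j^{2\alpha-1}$---both are equally valid.
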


\begin{proof}[Proof of Lemma~\ref{lem:tildeIX}]
The mild formulation
\[
\tilde{\IX}_\tau(t)=e^{-t\IL_\tau}x_0+\int_0^t e^{-(t-s)\IL_\tau}Q_\tau F(X_{\tau,\ell(s)})ds+\IW_\tau(t)
\]
of the auxiliary process $\tilde{\IX}_\tau$ gives the inequality
\begin{align*}
\bigl(\E[|\tilde{\IX}_\tau(t)|_\alpha^2]\bigr)^{\frac12}&\le |x_0|_\alpha+C\int_0^t \|\IL^\alpha e^{-(t-s)\IL_\tau}Q_\tau\|_{\mathcal{L}(H)}\bigl(1+\bigl(\E[|X_{\ell(s)}|^2]\bigr)^{\frac12}\bigr)ds+C_\alpha
\end{align*}
using the Minkowskii inequality, the inequality~\eqref{eq:IL_tau-bound} and the moment bound~\eqref{eq:lem-modifiedStochasticConvolution-bound} from Lemma~\ref{lem:modifiedStochasticConvolution}. In addition, using the moment bound~\eqref{eq:lem-scheme-bound} and the smoothing inequalities~\eqref{eq:IL_tau-smoothing} and~\eqref{eq:IL_tau-smoothing-bis}, one obtains
\begin{align*}
\int_0^t \|\IL^\alpha e^{-(t-s)\IL_\tau}Q_\tau\|_{\mathcal{L}(H)}\bigl(1+\bigl(\E[|X_{\ell(s)}|^2]\bigr)^{\frac12}\bigr)ds&\le C(T)(1+|x_0|)\int_0^t \|\IL^\alpha e^{-(t-s)\IL_\tau}Q_\tau\|_{\mathcal{L}(H)}ds\\
&\le C(T)(1+|x_0|)\int_0^\tau \|\IL^\alpha e^{-s\IL_\tau}Q_\tau\|_{\mathcal{L}(H)}ds\\
&+C(T)(1+|x_0|)\mathds{1}_{t>\tau}\int_\tau^t \|\IL^\alpha e^{-s\IL_\tau}Q_\tau\|_{\mathcal{L}(H)}ds\\
&\le C_\alpha(T)(1+|x_0|).
\end{align*}
This concludes the proof of the inequality~\eqref{eq:tildeIX-bound}. Note that in particular one obtains for all $\alpha\in[0,\frac14)$ and all $T\in(0,\infty)$ the moment bound
\begin{equation}\label{eq:lem-scheme-bound-alpha}
\underset{x_0\in H^\alpha}\sup~\underset{\tau\in(0,\tau_0)}\sup~\underset{n\in\N;~n\tau\le T}\sup~\frac{\E[|X_n^\tau|_\alpha^2]}{1+|x_0|_\alpha^2}<\infty.
\end{equation}
In order to prove the inequality~\eqref{eq:tildeIX-increment}, observe that for all $n\in\N_0$ and $t\in[t_n,t_{n+1})$, one has
\[
\tilde{\IX}_{\tau}(t)-\tilde{\IX}_\tau(t_n)=\bigl(e^{-(t-t_n)\IL_\tau}-I\bigr)X_n^\tau+\int_{t_n}^{t}e^{-(t-s)\IL_\tau}Q_\tau F(X_n^\tau)ds+\int_{t_n}^{t}e^{-(t-s)\IL_\tau}Q_\tau^{\frac12}dW(s),
\]
therefore using the inequality above, the inequalities~\eqref{eq:Q_tau-bound} and \eqref{eq:IL_tau-bound}, and It\^o's isometry formula, one obtains for all $t\in[t_n,t_{n+1})$ with $t\le T$
\begin{align*}
\bigl(\E[\big|\IL^{-\alpha}\bigl(\tilde{\IX}_\tau(t)-\tilde{\IX}_\tau(t_{\ell(t)})\bigr)\big|^2]\bigr)^{\frac12}&\le C(1+|x_0|_\alpha)\|\IL^{-2\alpha}\bigl(e^{-(t-t_n)\IL_\tau}-I\bigr)\|_{\mathcal{L}(H)}+C\tau(1+|x_0|)\\
&+\bigl(\int_{t_n}^{t}\|\IL^{-\alpha}e^{-(t-s)\IL_\tau}Q_\tau^{\frac12}\|_{\mathcal{L}_2(H)}^2ds\bigr)^{\frac12}.
\end{align*}
Using the inequality~\eqref{eq:IL_tau-increment}, one has $\|\IL^{-2\alpha}\bigl(e^{-(t-t_n)\IL_\tau}-I\bigr)\|_{\mathcal{L}(H)}\le C\tau^{2\alpha}$. In addition, one has the upper bound
\begin{align*}
\int_{t_n}^{t}\|\IL^{-\alpha}e^{-(t-s)\IL_\tau}Q_\tau^{\frac12}\|_{\mathcal{L}_2(H)}^2ds&\le \int_{0}^{\tau}\|\IL^{-\alpha}e^{-s\IL_\tau}Q_\tau^{\frac12}\|_{\mathcal{L}_2(H)}^2ds\\
&\le \sum_{j\in\N}\lambda_j^{-2\alpha}\int_{0}^{\tau}e^{-2s\frac{\log(1+\tau\lambda_j)}{\tau}}ds\frac{\log(1+\tau\lambda_j)}{\lambda_j\tau}\\
&\le \sum_{j\in\N}\lambda_j^{-2\alpha-1}\bigl(1-e^{-2\tau\frac{\log(1+\tau\lambda_j)}{\tau}}\bigr)\\
&\le \sum_{j\in\N}\lambda_j^{-2\alpha-1}\bigl(1-\frac{1}{(1+\lambda_j\tau)^2}\bigr)\\
&\le \sum_{j\in\N}\tau\lambda_j^{-2\alpha}\frac{2+\lambda_j\tau}{(1+\lambda_j\tau)^2}\\
&\le \tau^{4\alpha}\sum_{j\in\N}\frac{(\tau\lambda_j)^{1-4\alpha}}{(1+\tau\lambda_j)^2}\lambda_j^{2\alpha-1}\\
&\le \tau^{4\alpha}\underset{z\in(0,\infty)}\sup~\frac{z^{1-4\alpha}(1+2z)}{(1+z)^2}\sum_{j\in\N}\lambda_j^{2\alpha-1}.
\end{align*}
Note that the inequality $\underset{z\in(0,\infty)}\sup~\frac{z^{1-4\alpha}(1+2z)}{(1+z)^2}\sum_{j\in\N}\lambda_j^{2\alpha-1}<\infty$ holds if and only if the condition $\alpha\in(0,\frac14)$ holds. Gathering the estimates concludes the proof of the inequality~\eqref{eq:tildeIX-increment}.

In order to prove the inequality~\eqref{eq:tildeIX-bound-ergo}, when Assumption~\ref{ass:ergo} is satisfied, it suffices to replace the use of~\eqref{eq:lem-scheme-bound} by~\eqref{eq:lem-scheme-bound-ergo}, and to use the bound
\[
\|e^{-t\IL_\tau}\|_{\mathcal{L}(H)}=e^{-t\lambda_{\tau,1}}\le e^{-t\lambda_1\frac{\log(1+\tau_0\lambda_1)}{\tau_0\lambda_1}},
\]
to obtain bounds which do not depend on the final time $T\in(0,\infty)$. The inequality~\eqref{eq:tildeIX-increment-ergo} is obtained using a similar argument. The details are omitted. This concludes the proof of Lemma~\ref{lem:tildeIX}.
\end{proof}

To conclude this section, it remains to provide the proof of Proposition~\ref{propo:invar-scheme}. Like for the proof of Proposition~\ref{propo:modified-wellposed-bound}, some standard details are omitted.
\begin{proof}[Proof of Proposition~\ref{propo:invar-scheme}]
The objective is to prove the existence and the uniqueness of the invariant distribution $\mu_{\tau,\infty}$ of the modified Euler scheme, when Assumption~\ref{ass:ergo} is satisfied. In this proof, we use the second interpretation~\eqref{eq:scheme2} of the scheme. Note that the inequality~\eqref{eq:lem-scheme-bound-alpha} obtained above ensures the existence of an invariant distribution by the Krylov--Bogoliubov criterion, and also gives the bound~\eqref{eq:invar-scheme-bound}. It thus suffices to check the uniqueness of the invariant distribution and the inequality~\eqref{eq:invar-scheme-ergo}.

Let $x_0^1\in H$ and $x_0^2\in H$ be two arbitrary initial values, and introduce the processes defined by
\[
X_{n+1}^{\tau,i}=\IA_\tau X_n^{\tau,i}+\tau\IA_\tau F(X_n^{\tau,i})+\sqrt{\tau}\IB_\tau \Gamma_n
\]
for $i=1,2$. For all $n\in\N$, one has and
\[
X_{n}^{\tau,2}-X_{n}^{\tau,1}=\IA_\tau\bigl(X_{n-1}^{\tau,2}-X_{n-1}^{\tau,1}\bigr)+\tau\IA_\tau\bigl(F(X_{n-1}^{\tau,2})-F(X_{n-1}^{\tau,1})\bigr).
\]
Using the inequality $\|\IA_\tau\|_{\mathcal{L}(H)}$ and the definition of $\Lf$ (see Assumption~\ref{ass:F}), one obtains for all $n\in\N$
\[
|X_{n}^{\tau,2}-X_n^{\tau,1}|\le \frac{1+\tau\Lf}{1+\tau\lambda_1}|X_{n-1}^{\tau,2}-X_{n-1}^{\tau,1}|\le \bigl(\frac{1+\tau\Lf}{1+\tau\lambda_1}\bigr)^n|x_0^2-x_0^1|.
\]
When Assumption~\ref{ass:ergo} is satisfied, $\frac{1+\tau\Lf}{1+\tau\lambda_1}<1$ for all $\tau\in(0,\tau_0)$, one obtains the uniqueness of the invariant distribution $\mu_{\infty}^{\tau}$. Finally, to obtain the inequality~\eqref{eq:invar-scheme-ergo}, it suffices to use the upper bound
\[
\frac{1+\tau\Lf}{1+\tau\lambda_1}=1-\frac{\tau(\lambda_1-\Lf)}{1+\tau\lambda_1}\le \exp\bigl(-\frac{\tau(\lambda_1-\Lf)}{1+\tau\lambda_1}\bigr)\le \exp\bigl(-\frac{\tau(\lambda_1-\Lf)}{1+\tau_0\lambda_1}\bigr).
\]
This concludes the proof of Proposition~\ref{propo:invar-scheme}.
\end{proof}

\subsection{Kolmogorov equation associated with the original equation}\label{sec:auxiliary-kolmogorov-original}

The objective of this section is to state and prove regularity results for the function $u$ defined by
\begin{equation}\label{eq:u}
u(t,x)=\E_x[\varphi(X(t))],
\end{equation}
for all $t\ge 0$, $x\in H$, where $\varphi$ is a bounded and continuous function from $H$ to $\R$. In the above definition, $\bigl(X(t)\bigr)_{t\ge 0}$ is the unique solution of the stochastic evolution equation~\eqref{eq:SPDE}, with initial value $X(0)=x$. Like in Section~\ref{sec:auxiliary-kolmogorov-modified} above, to study the regularity properties of the function $u$, it is convenient to rely on the convention introduced in Section~\ref{sec:Galerkin}. An auxiliary finite dimensional approximation is applied, in order to justify the regularity properties and the computations, and all the upper bounds do not depend on the auxiliary discretization parameter, which is omitted to simplify the notation.

Note that the results in this section are not required to prove Theorem~\ref{theo:weakinv}, but they are needed to prove Theorem~\ref{theo:weak}. The regularity results are proved using arguments similar to those used in Section~\ref{sec:auxiliary-kolmogorov-modified}, however they are sometimes simpler since the additional parameter $\tau$ is absent.

It is convenient to introduce the family of linear operators $\bigl(P_{t}\bigr)_{t\ge 0}$, such that $u(t,\cdot)=P_{t}\varphi(\cdot)$ for all $t\ge 0$. The Markov property for the solutions of the stochastic evolution equation~\eqref{eq:SPDE} yields the semigroup property: for all $t,s\ge 0$, one has
\begin{equation}\label{eq:P}
P_{t+s}\varphi=P_{t}\bigl(P_{s}\varphi\bigr),
\end{equation}
for any $\varphi\in\mathcal{B}_b(H)$.

Under appropriate regularity conditions on the function $\varphi$, the function $(t,x)\in \R^+\times H\mapsto u(t,x)=P_t\varphi(x)$ is solution of the Kolmogorov equation
\begin{equation}\label{eq:Kolmogorov}
\partial_t u=\mathcal{L}u
\end{equation} 
with initial value $u(0,\cdot)=\varphi$, where the infinitesimal generator $\mathcal{L}$ of the stochastic evolution equation~\eqref{eq:SPDE} is defined by
\[
\mathcal{L}\phi(x)=D\phi(x).\bigl(-\IL x+F(x)\bigr)+\frac12\sum_{j\in\N}D^2\phi(x).(e_j,e_j).
\]
Like for $\mathcal{L}_\tau\phi(x)$ in Section~\ref{sec:auxiliary-kolmogorov-modified}, giving a meaning to $\mathcal{L}\phi(x)$ in a finite dimensional context only requires to assume that $\phi$ is of class $\mathcal{C}^2$. However, to obtain bounds which are independent of the auxiliary spatial discretization parameter, appropriate estimates are needed to deal with the terms $\D\phi(x)\cdot \IL x$ (since $\IL$ is an unbounded operator) and also $\sum_{j\in\N}D^2\phi(x).(e_j,e_j)$.

Like for $Du_\tau(t,x).h$ in Section~\ref{sec:auxiliary-kolmogorov-modified} above, two expressions for $Du(t,x).h$ are employed below in the proofs. On the one hand, if $u(0,\cdot)=\varphi$ is of class $\mathcal{C}^1$ with bounded derivative, one has
\begin{equation}\label{eq:Du-1}
Du(t,x).h=\E_x[D\varphi(X(t)).\eta^h(t)],
\end{equation}
where $\bigl(\eta^h(t)\bigr)_{t\ge 0}$ is solution of
\begin{equation}\label{eq:eta}
d\eta^h(t)=-\IL\eta^h(t)dt+DF(X^h(t)).\eta^h(t)dt,
\end{equation}
with initial value $\eta^h(0)=h$, see for instance~\cite[Chapter~4]{Cerrai}. On the other hand, if $u(0,\cdot)=\varphi$ is only assumed to be bounded and continuous, one has
\begin{equation}\label{eq:Du-0}
Du(t,x).h=\frac1t \E_x[\varphi(X(t))\int_{0}^{t}\langle \eta^h(s),dW(s)\rangle].
\end{equation}
The expression~\eqref{eq:Du-0} is given by a Bismut--Elworthy type formula, see for instance~\cite[Equation~(4.0.2)]{Cerrai} or~\cite[Lemma~7.1.3]{DPZergo}. The validity of the expressions~\ref{eq:Dutau-1} and~\eqref{eq:Dutau-0} is easily checked in the finite dimensional approximation setting (Section~\ref{sec:Galerkin}).

In addition, if $u(0,\cdot)=\varphi$ is of class $\mathcal{C}^2$ with bounded first and second order derivatives, one has
\begin{equation}\label{eq:D2u}
D^2u(t,x).(h_1,h_2)=\E_x[D\varphi(X(t)).\zeta^{h_1,h_2}(t)]+\E_x[D^2\varphi(Xt)).(\eta^{h_1}(t),\eta^{h_2}(t))]
\end{equation}
where $\bigl(\zeta^{h_1,h_2}(t)\bigr)_{t\ge 0}$ is solution of
\begin{equation}\label{eq:zeta}
d\zeta^{h_1,h_2}(t)=-\IL\zeta^{h_1,h_2}(t)dt+DF(X(t)).\zeta^{h_1,h_2}(t)dt+D^2F(X(t)).(\eta^{h_1}(t),\eta^{h_2}(t))dt,
\end{equation}
with initial value $\zeta^{h_1,h_2}(0)=0$, see for instance~\cite[Chapter~4]{Cerrai}.

Lemma~\ref{lem:u-12} requires $\varphi$ to be of class $\mathcal{C}^2$ and thus uses the expressions~\eqref{eq:Du-1} and~\eqref{eq:D2u}. Similarly, Lemma~\ref{lem:u-ergo} requires $\varphi$ to be of class $\mathcal{C}^1$ and thus uses the expression~\eqref{eq:Du-1}. Finally, Lemma~\ref{lem:u-0} uses the expression~\eqref{eq:Du-0} since it gives an upper bound in terms of $\vvvert\varphi\vvvert_0$. The proofs are standard, details are provided for completeness and for comparison with the results in Section~\ref{sec:auxiliary-kolmogorov-modified}. Note also that the three results below may be combined using the semigroup property~\eqref{eq:P}, see Sections~\ref{sec:proofs} and~\ref{sec:expo} for details.

\begin{lemma}\label{lem:u-12}
Let Assumptions~\ref{ass:Lambda},~\ref{ass:F} and~\ref{ass:Fregul2} be satisfied. For all $T\in(0,\infty)$ and all $\alpha\in[0,1)$, there exists $C_\alpha(T)\in(0,\infty)$, such that for all $\varphi:H\to\R$ of class $\mathcal{C}^1$ with bounded derivative, for all $t\in(0,T]$ and all $x,h\in H$, one has
\begin{equation}\label{eq:lem-u-1}
|Du(t,x).h|\le C_\alpha(T)\vvvert\varphi\vvvert_1 t^{-\alpha}|\IL^{-\alpha}h|,
\end{equation}
Moreover, for all $T\in(0,\infty)$ and all $\alpha_1,\alpha_2\in[0,1)$ such that $\alpha_1+\alpha_2<1$, there exists $C_{\alpha_1,\alpha_2}(T)\in(0,\infty)$, such that for all $\varphi:H\to\R$ of class $\mathcal{C}^2$ with bounded first and second order derivatives, one has
\begin{equation}\label{eq:lem-u-2}
|D^2u(t,x).(h_1,h_2)|\le C_{\alpha_1,\alpha_2}(T)\bigl(\vvvert \varphi\vvvert_1+\vvvert \varphi\vvvert_2\bigr) t^{-\alpha_1-\alpha_2}|\IL^{-\alpha_1}h_1| |\IL^{-\alpha_2}h_2|.
\end{equation}
\end{lemma}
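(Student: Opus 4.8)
The plan is to mirror the argument used for the $\tau$-dependent version in the proof of Lemma~\ref{lem:utau-1}, which is simpler here since the operators $\IL_\tau,Q_\tau$ are absent. I work throughout in the spectral Galerkin finite-dimensional approximation of Section~\ref{sec:Galerkin}, so that $\eta^h$, $\zeta^{h_1,h_2}$, $u$ and their derivatives are classical; the only ingredients entering the bounds are the smoothing property~\eqref{eq:smoothing}, the Lipschitz bound $|DF(x).h|\le\Lf|h|$ (Assumption~\ref{ass:F}) and Assumption~\ref{ass:Fregul2}, so all constants are automatically independent of the auxiliary dimension.

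For the first-order estimate~\eqref{eq:lem-u-1} I start from the expression~\eqref{eq:Du-1}, $Du(t,x).h=\E_x[D\varphi(X(t)).\eta^h(t)]$, and estimate $\eta^h(t)$. Writing $\eta^h(t)=e^{-t\IL}h+\hat\eta^h(t)$, the remainder $\hat\eta^h$ solves~\eqref{eq:eta} with zero initial value and forcing $DF(X(t)).e^{-t\IL}h$, whence from its mild formulation, $\|e^{-s\IL}\|_{\mathcal{L}(H)}\le 1$ and $|DF|\le\Lf$ one gets $|\hat\eta^h(t)|\le\Lf\int_0^t\bigl(|e^{-s\IL}h|+|\hat\eta^h(s)|\bigr)ds$. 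Using~\eqref{eq:smoothing} in the form $|e^{-s\IL}h|\le C_\alpha s^{-\alpha}|\IL^{-\alpha}h|$ and the fact that $\int_0^t s^{-\alpha}ds\le T^{1-\alpha}/(1-\alpha)<\infty$ for $\alpha<1$, Gronwall's lemma yields $|\hat\eta^h(t)|\le C_\alpha(T)|\IL^{-\alpha}h|$; combining with $|e^{-t\IL}h|\le C_\alpha t^{-\alpha}|\IL^{-\alpha}h|$ and absorbing the constant term via $1\le T^\alpha t^{-\alpha}$ gives $|\eta^h(t)|\le C_\alpha(T)t^{-\alpha}|\IL^{-\alpha}h|$, hence~\eqref{eq:lem-u-1}. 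The energy estimate $\frac12\frac{d}{dt}|\eta^h(t)|^2\le(\Lf-\lambda_1)|\eta^h(t)|^2$ also records $|\eta^h(t)|\le e^{\Lf t}|h|$, which is needed below.

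For the second-order estimate~\eqref{eq:lem-u-2} I use~\eqref{eq:D2u}. The term $\E_x[D^2\varphi(X(t)).(\eta^{h_1}(t),\eta^{h_2}(t))]$ is bounded by $\vvvert\varphi\vvvert_2|\eta^{h_1}(t)||\eta^{h_2}(t)|\le C_{\alpha_1,\alpha_2}(T)\vvvert\varphi\vvvert_2 t^{-\alpha_1-\alpha_2}|\IL^{-\alpha_1}h_1||\IL^{-\alpha_2}h_2|$ by the first part. The term $\E_x[D\varphi(X(t)).\zeta^{h_1,h_2}(t)]$ is bounded by $\vvvert\varphi\vvvert_1|\zeta^{h_1,h_2}(t)|$, so it remains to estimate $\zeta^{h_1,h_2}$ from the mild form of~\eqref{eq:zeta}: the $DF$-contribution is $\le\Lf\int_0^t|\zeta^{h_1,h_2}(s)|ds$, while for the $D^2F$-contribution one writes $e^{-(t-s)\IL}D^2F(X(s)).(\eta^{h_1}(s),\eta^{h_2}(s))=\IL^{\alpha_F}e^{-(t-s)\IL}\cdot\IL^{-\alpha_F}D^2F(\cdots)$ and uses Assumption~\ref{ass:Fregul2}, $\|\IL^{\alpha_F}e^{-(t-s)\IL}\|_{\mathcal{L}(H)}\le C_{\alpha_F}(t-s)^{-\alpha_F}$ and the bounds on $\eta^{h_i}$ to obtain $\le C\int_0^t(t-s)^{-\alpha_F}s^{-\alpha_1-\alpha_2}ds\,|\IL^{-\alpha_1}h_1||\IL^{-\alpha_2}h_2|$; this Beta integral is finite since $\alpha_F<1$ and $\alpha_1+\alpha_2<1$, equals $C\,t^{1-\alpha_F-\alpha_1-\alpha_2}$, and is $\le C(T)t^{-\alpha_1-\alpha_2}|\IL^{-\alpha_1}h_1||\IL^{-\alpha_2}h_2|$ for $t\le T$. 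A Gronwall argument with this (integrable, singular at $s=0$) forcing term then gives $|\zeta^{h_1,h_2}(t)|\le C_{\alpha_1,\alpha_2}(T)t^{-\alpha_1-\alpha_2}|\IL^{-\alpha_1}h_1||\IL^{-\alpha_2}h_2|$, and summing the two contributions yields~\eqref{eq:lem-u-2}.

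The routine points are the exponent bookkeeping and the absorption of positive powers of $t$ using $t\le T$; the only genuine care is needed in the $\zeta$-estimate, where the forcing is singular at $s=0$, so one must invoke the version of Gronwall's lemma valid for $L^1$ (rather than bounded) data, and in both estimates one must use the decomposition $\eta^h=e^{-t\IL}h+\hat\eta^h$ so that the slow decay $t^{-\alpha}$ with $\alpha$ up to (but excluding) $1$ is captured instead of attempting to close the bound directly on $\eta^h$.
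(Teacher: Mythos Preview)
Your proof is correct and follows essentially the same approach as the paper: the decomposition $\eta^h=e^{-t\IL}h+\hat\eta^h$ with a mild-formulation Gronwall argument for~\eqref{eq:lem-u-1}, and the mild formulation of~\eqref{eq:zeta} together with Assumption~\ref{ass:Fregul2} and the smoothing property~\eqref{eq:smoothing} for~\eqref{eq:lem-u-2}. Your treatment of the Beta integral---retaining the explicit power $t^{1-\alpha_F-\alpha_1-\alpha_2}$ and then absorbing $t^{1-\alpha_F}\le T^{1-\alpha_F}$---is in fact slightly more careful than the paper's, which asserts $\sup_{t\in[0,T]}\int_0^t(t-s)^{-\alpha_F}s^{-\alpha_1-\alpha_2}\,ds<\infty$ (a claim that fails when $\alpha_F+\alpha_1+\alpha_2>1$, though the final bound~\eqref{eq:lem-u-2} is unaffected).
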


\begin{proof}[Proof of Lemma~\ref{lem:u-12}]
Let $\varphi$ be of class $\mathcal{C}^1$ with bounded derivative and $T\in(0,\infty)$. Then using the expression~\eqref{eq:Du-1} gives
\[
|Du(t,x).h|\le \vvvert\varphi\vvvert_1 \E_x[|\eta^h(t)|],
\]
for all $t\ge 0$ and $x,h\in H$. Owing to the definition~\eqref{eq:eta} of $\eta^h(t)$, one obtains
\begin{align*}
\frac12\frac{d|\eta^h(t)|^2}{dt}&=\langle \frac{d\eta^h(t)}{dt}\rangle\\
&=-\langle \IL\eta^h(t),\eta^h(t)\rangle+\langle DF(X(t)).\eta^h(t),\eta^h(t)\rangle\\
&\le (\Lf-\lambda_1)|\eta^h(t)|^2.
\end{align*}
Applying Gronwall's lemma gives the upper bound $|\eta^h(t)|\le e^{(\Lf-\lambda_1)t}|h|$ for all $t\ge 0$. As a consequence the inequality~\eqref{eq:lem-u-1} holds when $\alpha=0$. To deal with the case $\alpha\in(0,1)$, it is convenient to introduce the auxiliary process defined by
\begin{equation}\label{eq:etahat}
\hat{\eta}^h(t)=\eta^h(t)-e^{-t\IL}h,
\end{equation}
which is solution of the evolution equation
\begin{equation}\label{eq:etahat-equation}
\frac{d\hat{\eta}^h(t)}{dt}=-\IL\hat{\eta}^h(t)+DF(X(t)).\hat{\eta}^h(t)+DF(X(t)).e^{-t\IL}h,
\end{equation}
with initial value $\hat{\eta}^h(0)=0$. The mild formulation of the evolution equation~\eqref{eq:etahat-equation} gives for all $t\ge 0$
\begin{align*}
|\hat{\eta}^h(t)|&=\Big|\int_0^t e^{-(t-s)\IL}DF(X(s)).\hat{\eta}^h(s)ds+\int_0^t e^{-(t-s)\IL}DF(X(s)).e^{-s\IL}h ds\Big|\\
&\le \Lf\int_0^t|\hat{\eta}^h(s)|ds+\Lf\int_0^t|e^{-(t-s)\IL}h|ds\\
&\le \Lf\int_0^t|\hat{\eta}^h(s)|ds+\Lf \int_0^t s^{-\alpha}ds |\IL^{-\alpha}h|,
\end{align*}
owing to the smoothing inequality~\eqref{eq:smoothing}. Since $\int_0^Ts^{-\alpha}ds<\infty$ for all $T\in(0,\infty)$ and $\alpha\in(0,1)$, using Gronwall's lemma gives the upper bound
\[
|\hat{\eta}^h(t)|\le C_\alpha(T)|\IL^{-\alpha}h|
\]
for all $t\in[0,T]$. Using~\eqref{eq:Du-1} and the smoothing inequality~\eqref{eq:smoothing}, one then obtains
\[
|Du(t,x).h|\le \vvvert\varphi\vvvert_1 |\IL^{-\alpha}e^{-t\IL}h|+\vvvert\varphi\vvvert_1 \E_x[|\hat{\eta}^h(t)|]\le C_\alpha(T)t^{-\alpha}|\IL^{-\alpha}h|,
\]
which concludes the proof of the first inequality~\eqref{eq:lem-u-1}. It remains to prove the second inequality~\eqref{eq:lem-u-2}. Let $\varphi$ be of class $\mathcal{C}^2$ with bounded first and second order derivatives and $T\in(0,\infty)$. Then using the expression~\eqref{eq:D2u} gives
\[
|D^2u(t,x).(h_1,h_2)|\le \vvvert\varphi\vvvert_1\E_x[|\zeta^{h_1,h_2}(t)|]+\vvvert\varphi\vvvert_2\E_x[|\eta^{h_1}(t)| |\eta^{h_2}(t)|],
\]
for all $t\ge 0$ and $x,h_1,h_2\in H$. Using the inequalities above, one obtains, for all $t\in(0,T]$
\[
\E_x[|\eta^{h_1}(t)| |\eta^{h_2}(t)|]\le C_\alpha(T)t^{-\alpha_1-\alpha_2}|\IL^{-\alpha_1}h_1||\IL^{-\alpha_2}h_2|,
\]
for the second term in the right-hand side above. To deal with the first term, observe that using~\eqref{eq:zeta} one has for all $t\in[0,T]$
\begin{align*}
|\zeta^{h_1,h_2}(t)|&=\Big|\int_0^t e^{-(t-s)\IL}DF(X(s)).\zeta^{h_1,h_2}(s)ds+\int_0^t e^{-(t-s)\IL}D^2F(X(s)).(\eta^{h_1}(s),\eta^{h_2}(s))ds\Big|\\
&\le \Lf\int_0^t|\zeta^{h_1,h_2}(s)|ds+\int_{0}^{t}\|\IL^{\alpha_F}e^{-(t-s)\IL}\|_{\mathcal{L}(H)}|\IL^{-\alpha_F}D^2F(X(s)).(\eta^{h_1}(s),\eta^{h_2}(s))|ds\\
&\le \Lf\int_0^t|\zeta^{h_1,h_2}(s)|ds+C\int_{0}^{t}(t-s)^{-\alpha_F}s^{-\alpha_1-\alpha_2}ds |\IL^{-\alpha_1}h_1||\IL^{-\alpha_2}h_2|,
\end{align*}
using the regularity Assumption~\ref{ass:Fregul2}, the smoothing inequality~\ref{eq:smoothing}, and the inequalities above. Since the conditions $\alpha_F<1$ and $\alpha_1+\alpha_2<1$ give the upper bound
\[
\underset{t\in[0,T]}\sup~\int_{0}^{t}(t-s)^{-\alpha_F}s^{-\alpha_1-\alpha_2}ds<\infty
\]
for all $T\in(0,\infty)$, applying Gronwall's lemma gives
\[
|\zeta^{h_1,h_2}(t)|\le C_{\alpha_1,\alpha_2}(T)|\IL^{-\alpha_1}h_1||\IL^{-\alpha_2}h_2|.
\]
Gathering the estimates then concludes the proof of the second inequality~\eqref{eq:lem-u-2} and of Lemma~\ref{lem:u-12}.
\end{proof}

\begin{lemma}\label{lem:u-ergo}
Let Assumptions~\ref{ass:Lambda},~\ref{ass:F},~\ref{ass:ergo} and~\ref{ass:Fregul2} be satisfied. For all $\varphi$ of class $\mathcal{C}^1$ with bounded derivative and for all $t\ge 0$, one has
\begin{equation}
\vvvert P_t\varphi\vvvert_1=\vvvert u(t,\cdot)\vvvert_1\le e^{-(\lambda_1-\Lf)t}\vvvert\varphi\vvvert_1.
\end{equation}\label{eq:lem-u-ergo1}
Moreover, for all $\kappa\in(0,\lambda_1-\Lf)$, there exists $C_\kappa\in(0,\infty)$ such that for all $\varphi$ of class $\mathcal{C}^2$ with bounded first and second derivatives and for all $t\ge 0$, one has
\begin{equation}\label{eq:lem-u-ergo2}
\vvvert P_t\varphi\vvvert_2=\vvvert u(t,\cdot)\vvvert_2\le C_\kappa e^{-\kappa t}\bigl(\vvvert\varphi\vvvert_1+\vvvert\varphi\vvvert_2\bigr).
\end{equation}
\end{lemma}

\begin{proof}[Proof of Lemma~\ref{lem:u-ergo}]
The proof of the first inequality~\eqref{eq:lem-u-ergo1} is straightforward: for all $t\ge 0$ and all $x,h\in H$, the expression~\eqref{eq:Du-1} gives
\[
|Du(t,x).h|\le \vvvert\varphi\vvvert_1 \E_x[|\eta^h(t)|],
\]
with the inequality
\[
\frac12\frac{d|\eta^h(t)|^2}{dt}\le -(\lambda_1-\Lf)|\eta^h(t)|^2,
\]
see the proof of Lemma~\ref{lem:u-12} above. Applying Gronwall's lemma gives
\[
|\eta^h(t)|\le \exp(-(\lambda_1-\Lf)t)|h|
\]
for all $t\ge 0$ (almost surely), and one obtains~\eqref{eq:lem-u-ergo1}.

To prove the inequality~\eqref{eq:lem-u-ergo2}, note that the expression~\eqref{eq:D2u} gives
\[
|D^2u(t,x).(h_1,h_2)|\le \vvvert\varphi\vvvert_1\E[|\zeta^{h_1,h_2}(t)|]+\vvvert\varphi\vvvert_2 e^{-2(\lambda_1-\Lf)}|h_1||h_2|
\]
using the inequality above. In addition, using the definition~\eqref{eq:zeta} of the process $\bigl(\zeta^{h_1,h_2}(t)\bigr)_{t\ge 0}$ and Assumption~\ref{ass:Fregul2}, one has for all $t\ge 0$
\begin{align*}
|\zeta^{h_1,h_2}(t)|&=\Big|\int_0^t e^{-(t-s)\IL}DF(X(s)).\zeta^{h_1,h_2}(s)ds+\int_0^t e^{-(t-s)\IL}D^2F(X(s)).(\eta^{h_1}(s),\eta^{h_2}(s))ds\Big|\\
&\le \Lf\int_0^t e^{-\lambda_1(t-s)}|\zeta^{h_1,h_2}(s)|ds+C\int_0^t \|\IL^{\alpha_F}e^{-(t-s)\IL}\|_{\mathcal{L}(H)}|\eta^{h_1}(s)||\eta^{h_2}(s)|ds\\
&\le \Lf\int_0^t e^{-\lambda_1(t-s)}|\zeta^{h_1,h_2}(s)|ds+C\int_0^t \min\bigl(t-s,1)^{-\alpha_F} e^{-\lambda_1(t-s)}e^{-2s\lambda_1}ds |h_1||h_2|,
\end{align*}
using the smoothing property~\eqref{eq:smoothing} of the semigroup $\bigl(e^{-t\IL}\bigr)_{t\ge 0}$. For any $\kappa\in(0,\lambda_1-\Lf)$, one has $2\lambda_1\ge \Lf+\kappa$ and one obtains, for all $t\ge 0$,
\[
|\zeta^{h_1,h_2}(t)|\le \Lf\int_0^t e^{-\lambda_1(t-s)}|\zeta^{h_1,h_2}(s)|ds+Ce^{-(\Lf+\kappa)t}\int_0^\infty \min(s,1)^{-\alpha_F}e^{-(\lambda_1-\Lf-\kappa)s}ds|h_1||h_2|.
\]
Applying Gronwall's lemma then gives
\[
e^{(\Lf+\kappa)t}|\zeta^{h_1,h_2}(t)|\le C_\kappa\int_0^t e^{\Lf(t-s)}ds|h_1||h_2|,
\]
which gives $|\zeta^{h_1,h_2}(t)|\le C_\kappa\exp\bigl(-\kappa t\bigr)|h_1||h_2|$ for all $t\ge 0$. Gathering the estimates then gives the upper bound
\[
|D^2u(t,x).h|\le C_\kappa e^{-\kappa t}\vvvert\varphi\vvvert_1+e^{-2(\lambda_1-\Lf)t}\vvvert\varphi\vvvert_2,
\]
for all $t\ge 0$, which concludes the proof of the second inequality~\eqref{eq:lem-u-ergo2} and of Lemma~\ref{lem:u-ergo}.
\end{proof}

\begin{lemma}\label{lem:u-0}
Let Assumptions~\ref{ass:Lambda},~\ref{ass:F} and~\ref{ass:Fregul2} be satisfied. For all $T\in(0,\infty)$, there exists $C(T)\in(0,\infty)$, such that for all bounded and continuous functions $\varphi:H\to\R$, for all $t\in(0,T]$, $P_t\varphi=u(t,\cdot)$ is of class $\mathcal{C}^1$, with
\begin{equation}\label{eq:lem-u-0}
\vvvert P_t\varphi\vvvert_1=\vvvert u(t,\cdot)\vvvert_1\le C(T)t^{-\frac12}\vvvert\varphi\vvvert_0.
\end{equation}
\end{lemma}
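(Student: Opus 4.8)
The plan is to derive the estimate from the Bismut--Elworthy--Li representation~\eqref{eq:Du-0}, which is available precisely because $\varphi$ is only assumed to be bounded and continuous. Throughout I work in the finite dimensional approximation setting of Section~\ref{sec:Galerkin}, where $P_t\varphi$ is smooth (the projected noise being non-degenerate on $H^J$), so that all the expressions below make sense in a classical sense; the bounds obtained are uniform with respect to the approximation parameter and with respect to $x,h$, so they pass to the limit $J\to\infty$. First I would write, for all $t\in(0,T]$ and all $x,h\in H$,
\[
Du(t,x).h=\frac1t\E_x\Bigl[\varphi(X(t))\int_0^t\langle\eta^h(s),dW(s)\rangle\Bigr],
\]
and then bound the right-hand side by the Cauchy--Schwarz inequality in $\omega$ together with the It\^o isometry formula applied to the adapted integrand $\eta^h$:
\[
|Du(t,x).h|\le\frac1t\vvvert\varphi\vvvert_0\Bigl(\E_x\bigl[\big|\int_0^t\langle\eta^h(s),dW(s)\rangle\big|^2\bigr]\Bigr)^{\frac12}=\frac1t\vvvert\varphi\vvvert_0\Bigl(\int_0^t\E_x[|\eta^h(s)|^2]\,ds\Bigr)^{\frac12}.
\]

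The second step is to control $\E_x[|\eta^h(s)|^2]$. This has in fact already been done inside the proof of Lemma~\ref{lem:u-12}: using the evolution equation~\eqref{eq:eta}, the global Lipschitz continuity of $F$ (Assumption~\ref{ass:F}) and the spectral lower bound $\langle\IL z,z\rangle\ge\lambda_1|z|^2$, one gets $\frac12\frac{d}{dt}|\eta^h(t)|^2\le(\Lf-\lambda_1)|\eta^h(t)|^2$, and Gronwall's lemma yields $|\eta^h(t)|\le e^{(\Lf-\lambda_1)t}|h|\le e^{\Lf T}|h|$ for all $t\in[0,T]$ (distinguishing the cases $\Lf\gtrless\lambda_1$; note that Assumption~\ref{ass:ergo} is \emph{not} needed here, which is why a constant depending on $T$ appears). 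Plugging this into the previous display gives
\[
|Du(t,x).h|\le\frac1t\vvvert\varphi\vvvert_0\bigl(e^{2\Lf T}|h|^2\,t\bigr)^{\frac12}=e^{\Lf T}\,t^{-\frac12}\vvvert\varphi\vvvert_0\,|h|,
\]
which is exactly the announced bound~\eqref{eq:lem-u-0} with $C(T)=e^{\Lf T}$, and in particular shows that $u(t,\cdot)=P_t\varphi$ has a bounded Fr\'echet derivative, hence is of class $\mathcal{C}^1$.

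There is no genuine obstacle in this argument; it is the classical strong Feller estimate via the Bismut--Elworthy--Li formula. The only points requiring a little care are (i) justifying the representation~\eqref{eq:Du-0} and the $\mathcal{C}^1$ regularity of $u(t,\cdot)$ for merely bounded continuous $\varphi$, which is why one reduces to the finite dimensional Galerkin approximation where $P_t^J\varphi$ is smooth, and (ii) making sure the constant depends on $T$ only (through $e^{\Lf T}$) and not on $t\in(0,T]$, $x$, $h$ or the approximation level, which is transparent from the computation above. This mirrors the proof of Lemma~\ref{lem:utau-0} in the $\tau=0$ (unmodified) case, with the simplification that here $Q_\tau^{-1/2}=I$ so no additional argument is needed to remove an unbounded operator.
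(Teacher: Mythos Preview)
Your proof is correct and follows essentially the same route as the paper: apply the Bismut--Elworthy--Li formula~\eqref{eq:Du-0}, use Cauchy--Schwarz and It\^o's isometry, and plug in the bound $|\eta^h(s)|\le e^{(\Lf-\lambda_1)s}|h|\le C(T)|h|$ already obtained in the proof of Lemma~\ref{lem:u-12}. Your remarks on the Galerkin approximation and the comparison with Lemma~\ref{lem:utau-0} are also in line with the paper's presentation.
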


\begin{proof}[Proof of Lemma~\ref{lem:u-0}]
The proof of the upper bound~\eqref{eq:lem-u-0} is straightforward: using the expression~\eqref{eq:Du-0} and It\^o's isometry formula, one has for all $t\in(0,T]$ and all $x,h\in H$,
\begin{align*}
|Du(t,x).h|&\le \frac{\vvvert\varphi\vvvert_0}{t}\bigl(\int_0^t\E_x[|\eta^h(s)|^2]ds\bigr)^{\frac12}\\
&\le \frac{\vvvert\varphi\vvvert_0}{t}\bigl(\int_0^t e^{2(\Lf-\lambda_1)s}ds\bigr)^{\frac12}|h|\\
&\le \frac{\vvvert\varphi\vvvert_0}{\sqrt{t}}C(T)|h|
\end{align*}
owing to the inequality $|\eta^h(s)|\le e^{(\Lf-\lambda_1)s}|h|\le C(T)|h|$, see the proof of Lemma~\ref{lem:u-12} above. This concludes the proof of Lemma~\ref{lem:u-0}.
\end{proof}
\section{Proofs of the main results}\label{sec:proofs}

This section is organized as follows. The details of the proof of Theorem~\ref{theo:weakinv} (and of Proposition~\ref{propo:utau-regularity}) are given in Subsection~\ref{sec:proofs-weakinv}. This requires to consider the approximation of the invariant distribution $\mu_\infty=\mu_\star$ and to let Assumption~\ref{ass:gradient} to be satisfied, as a result the function $\varphi$ is not required to be of class $\mathcal{C}^2$. Subsection~\ref{sec:proofs-weak} is devoted to the proof of Theorem~\ref{theo:weak}, in a more general context, with functions $\varphi$ assumed to be of class $\mathcal{C}^2$. Subsection~\ref{sec:proofs-weak-ergo} finally gives a sketch of the proof of Theorem~\ref{theo:weak-ergo}.

\subsection{Proof of Theorem~\ref{theo:weakinv}}\label{sec:proofs-weakinv}

This section is devoted to prove the major result of this article. The important feature is to prove a weak error estimate where the right-hand side depends on $\vvvert\varphi\vvvert$ (instead of $\vvvert\varphi\vvvert_2$ in more standard approaches). For that purpose, it is first necessary to provide the proof of Proposition~\ref{propo:utau-regularity}, which gives the required upper bound for the first order derivative $Du_\tau(t,x).h$ of the solution $u_\tau$ of the Kolmogorov equation~\eqref{eq:IKolmogorov}.

\begin{proof}[Proof of Proposition~\ref{propo:utau-regularity}]
Let $\alpha=\frac12-\delta\in[0,\frac12)$, $\tau\in(0,\tau_0)$ and let $\varphi:H\to\R$ be a bounded and continuous function. The proof of the inequality~\eqref{eq:utau-regularity} requires to separate the cases $t\in(2\tau,2)$ and $t\in[2,\infty)$.

On the one hand, assume that $t\in(2\tau,2)$. Owing to the semigroup property~\eqref{eq:Ptau}, for all $x,h\in H$ one has
\begin{align*}
|Du_\tau(t,x).h|&=|D\IP_{\tau,t}\varphi(x).h|=|D\IP_{\tau,\frac{t}{2}}\bigl(\IP_{\tau,\frac{t}{2}}\varphi\bigr)(x).h|\\
&\le C_\alpha \vvvert\IP_{\tau,\frac{t}{2}}\varphi\vvvert_1\Bigl(\sqrt{\tau}|h|+(\frac{t}{2}-\tau)^{-\alpha}|\Lambda^{-\alpha}h|\Bigr)\\
&\le C_\alpha \vvvert\varphi\vvvert_0 (\frac{t}{2}-\tau)^{-\frac12}\Bigl(\sqrt{\tau}|h|+(\frac{t}{2}-\tau)^{-\alpha}|\Lambda^{-\alpha}h|\Bigr)
\end{align*}
where the first inequality is a consequence of Lemma~\ref{lem:utau-1} and the second inequality is a consequence of Lemma~\ref{lem:utau-0}. This yields the inequality~\eqref{eq:utau-regularity} in the first case $t\in(2\tau,2)$.

On the other hand, assume that $t\in[2,\infty)$.Owing to the semigroup property~\eqref{eq:Ptau}, for all $x,h\in H$ one has
\begin{align*}
|Du_\tau(t,x).h|&=|D\IP_{\tau,t}\varphi(x).h|=|D\IP_{\tau,1}\bigl(\IP_{\tau,t-1}\varphi\bigr)(x).h|\\
&\le C_\alpha \vvvert\IP_{\tau,t-1}\varphi\vvvert_1\Bigl(\sqrt{\tau}|h|+(1-\tau)^{-\alpha}|\Lambda^{-\alpha}h|\Bigr)\\
&\le C_\alpha e^{-\kappa(t-2)}\vvvert\IP_{\tau,1}\varphi\vvvert_1\Bigl(\sqrt{\tau}|h|+(1-\tau)^{-\alpha}|\Lambda^{-\alpha}h|\Bigr)\\
&\le C_\alpha e^{-\kappa t}\bigl(1\wedge (1-\tau)\bigr)^{-\frac12}\vvvert\varphi\vvvert_0\Bigl(\sqrt{\tau}|h|+(1-\tau)^{-\alpha}|\Lambda^{-\alpha}h|\Bigr),
\end{align*}
using succesively Lemma~\ref{lem:utau-1}, the equality $\IP_{\tau,t-1}=\IP_{\tau,t-2}\IP_{\tau,1}$ (from the semigroup property~\eqref{eq:Ptau}) and Lemma~\ref{lem:utau-ergo}, and Lemma~\ref{lem:utau-0}. Using the condition $1-\tau\ge 1-\tau_0>0$ for all $t\in(0,\tau_0)$, one obtains~\eqref{eq:utau-regularity} in the second case $t\in[2,\infty)$.

Note that the constant $C_\alpha$ does not depends on $\tau$. This concludes the proof of Proposition~\ref{propo:utau-regularity}.
\end{proof}

We are now in position to provide the details of the proof of Theorem~\ref{theo:weakinv}.
\begin{proof}[Proof of Theorem~\ref{theo:weakinv}]
The objective is to prove that the weak error estimate~\eqref{eq:theo-weakinv_weakerror} for bounded and continuous functions $\varphi\in\mathcal{C}^0(H,\R)$. Owing to the equality~\eqref{eq:distances}, this is sufficient to establish the error estimate~\eqref{eq:theo-weakinv_dTV-time} in the total variation distance, which is equivalent to~\eqref{eq:theo-weakinv_weakerror}. To obtain the inequality~\eqref{eq:theo-weakinv_dTVinvar}, it suffices to let $N\to\infty$, for instance using the initial value $x_0=0$, since $\mu_{\infty}^{\tau}$ is the unique invariant distribution of the modified Euler scheme, see Proposition~\ref{propo:invar-scheme}.

Let $\varphi:H\to\R$ be bounded and continuous, $\tau_0\in(0,1)$ and $\delta\in(0,\frac12)$. For any time-step size $\tau\in(0,\tau_0)$, any initial value $x_0\in H$, and all $N\in\N$, the error is decomposed as
\[
\E[\varphi(X_N^\tau)]-\int\varphi d\mu_\star=\E[\varphi(\IX_\tau(t_N))]-\int\varphi d\mu_\star+\E[\varphi(\IX_{\tau,N})]-\E[\varphi(\IX_\tau(t_N))],
\]
see Equation~\eqref{eq:decomperror}, where $u_\tau$ is defined by~\eqref{eq:utau}.

Recall that $\mu_\star$ is the unique invariant distribution for the modified Euler scheme~\eqref{eq:scheme}, see Proposition~\ref{propo:mu_star_modified}, since Assumption~\ref{ass:gradient} is satisfied. If $\IX_\star$ is a $H$-valued random variable with distribution $\rho_{\IX_\star}=\mu_\star$ and is independent of the Wiener process $\bigl(W(t)\bigr)_{t\ge 0}$, then for all $N\in\N$, such that $t_N=N\tau\ge 1$, one has
\begin{align*}
\big|\E[\varphi(\IX_\tau(t_N))]-\int\varphi d\mu_\star\big|&=\big|u_\tau(t_N,x_0)-\E[u_\tau(0,\IX_\star)]\big|\\
&=\big|u_\tau(t_N,x_0)-\E[u_\tau(t_N,\IX_\star)]\big|\\
&\le \vvvert u_\tau(t_N,\cdot)\vvvert_1 \E[|x_0-\IX_\star|]\\
&\le Ce^{-\kappa(t_N-1)}\vvvert \IP_{\tau,1}\varphi\vvvert_1(1+|x_0|)\\
&\le Ce^{-\kappa t_N}(1-\tau)^{-\frac12}\vvvert\varphi\vvvert_0 (1+|x_0|),
\end{align*}
using the equality $u_\tau(t_N,\cdot)=\IP_{\tau,t_N}\varphi=\IP_{\tau,t_N-1}\bigl(\IP_{\tau,1}\varphi\bigr)$ (semigroup property~\eqref{eq:Ptau}), Lemma~\ref{lem:utau-ergo} and the bound $\E[|\IX_\star|]=\int |x|d\mu_\star(x)<\infty$ (see Equation~\eqref{eq:mu_star-bound}) to obtain the first inequality, and Lemma~\ref{lem:utau-0} to obtain the second inequality.

It remains to deal with the other error term in the decomposition~\eqref{eq:decomperror} of the error. Recall that the auxiliary process $\bigl(\tilde{\IX}_\tau(t)\bigr)_{t\ge 0}$ is defined by~\eqref{eq:tildeIX} in Section~\ref{sec:auxiliary-scheme}. Using the definition~\eqref{eq:utau} of the function $u_\tau$ and a standard telescoping sum argument, one has
\begin{align*}
\E[\varphi(\IX_{\tau,N})]-\E[\varphi(\IX_\tau(t_N))]&=\E[u_\tau(0,\IX_{\tau,N})]-\E[u_\tau(N\tau,\IX_{\tau,0})]\\
&=\E[u_\tau(0,\tilde{\IX}_{\tau}(t_N))]-\E[u_\tau(t_N,\tilde{\IX}_{\tau}(0))]\\
&=\sum_{n=0}^{N-1}\Bigl(\E[u_\tau(t_N-t_{n+1},\tilde{\IX}_{\tau}(t_{n+1}))]-\E[u_\tau(t_N-t_{n},\tilde{\IX}_{\tau}(t_{n}))]\Bigr)\\
&=\sum_{n=0}^{N-1}e_n^\tau,
\end{align*}
with $e_n^\tau=\E[u_\tau(t_N-t_{n+1},\tilde{\IX}_{\tau}(t_{n+1}))]-\E[u_\tau(t_N-t_{n},\tilde{\IX}_{\tau}(t_{n}))]$ for all $n\in\{0,\ldots,N-1\}$. Applying It\^o's formula, using the expression~\eqref{eq:dtildeIX} for the evolution of the auxiliary process and the fact that $u_\tau$ solves the Kolmogorov equation~\eqref{eq:IKolmogorov}, one obtains, for all $n\in\{0,\ldots,N-1\}$, the expression
\[
e_n^\tau=\int_{t_n}^{t_{n+1}}\E\bigl[Du_\tau(t_N-t,\tilde{\IX}_\tau(t)).\bigl(Q_{\tau}F(\tilde{\IX}_\tau(t_n))-Q_{\tau}F(\tilde{\IX}_\tau(t))\bigr)\bigr]dt.
\]
The cases $n\in\{0,N-2,N-1\}$ and $n\in\{1,\ldots,N-3\}$ are treated separately. On the one hand, using the inequality~\eqref{eq:utau0-bad}, the bound~\eqref{eq:Q_tau-bound}, the Lipschitz continuity of $F$ and the moment bound~\eqref{eq:tildeIX-bound-ergo} (with $\alpha=0$), one obtains
\begin{align*}
|e_0^\tau|+|e_{N-2}^\tau|+|e_{N-1}^\tau|&\le C\vvvert\varphi\vvvert_0\int_{0}^{\tau}(t_N-t)^{-\frac12}\E[\big|Q_\tau^{\frac12}\bigl(F(\tilde{\IX}_\tau(t_n))-F(\tilde{\IX}_\tau(t))\bigr)\big|]dt\\
&+C\vvvert\varphi\vvvert_0\int_{t_{N-2}}^{t_N-1}(t_N-t)^{-\frac12}\E[\big|Q_\tau^{\frac12}\bigl(F(\tilde{\IX}_\tau(t_n))-F(\tilde{\IX}_\tau(t))\bigr)\big|]dt\\
&+C\vvvert\varphi\vvvert_0\int_{t_{N-1}}^{t_N}(t_N-t)^{-\frac12}\E[\big|Q_\tau^{\frac12}\bigl(F(\tilde{\IX}_\tau(t_n))-F(\tilde{\IX}_\tau(t))\bigr)\big|]dt\\
&\le C\tau^{\frac12}\vvvert\varphi\vvvert_0(1+|x_0|).
\end{align*}
On the other hand, using the inequality~\eqref{eq:utau-regularity} from Proposition~\ref{propo:utau-regularity}, with $\alpha=\frac12-\frac{\delta}{4}$, with $t_N-t-2\tau=t_{N-2}-\tau$, for all $n\in\{1,\ldots,N-3\}$, one obtains
\[
|e_n^\tau|\le {\bf e}_{n,1}^\tau+{\bf e}_{n,2}^\tau
\]
where the error terms on the right-hand side above are defined by
\begin{align*}
{\bf e}_{n,1}^\tau&=C\sqrt{\tau}\vvvert\varphi\vvvert_0\int_{t_n}^{t_{n+1}}\frac{e^{-\kappa(t_N-t)}}{(t_{N-2}-t)^{\frac12}}\E[|Q_\tau\bigl(F(\tilde{\IX}_\tau(t_n))-F(\tilde{\IX}_\tau(t))\bigr)|]dt\\
{\bf e}_{n,2}^\tau&=C_\delta\vvvert\varphi\vvvert_0\int_{t_n}^{t_{n+1}}\frac{e^{-\kappa(t_N-t)}}{(t_{N-2}-t)^{1-\frac{\delta}{4}}}\E[|\IL^{-\frac12+\frac{\delta}{4}}Q_\tau\bigl(F(\tilde{\IX}_\tau(t_n))-F(\tilde{\IX}_\tau(t))\bigr)|]dt.
\end{align*}
Using the bound~\eqref{eq:Q_tau-bound}, the Lipschitz continuity of $F$ and the moment bound~\eqref{eq:tildeIX-bound-ergo} (with $\alpha=0$), one obtains
\[
\sum_{n=1}^{N-3}{\bf e}_{n,1}^\tau\le C\sqrt{\tau}\vvvert\varphi\vvvert_0\int_{\tau}^{t_{N-2}}\frac{e^{-\kappa(t_{N-2}-t)}}{(t_{N-2}-t)^{\frac12}}dt(1+|x_0|)\le C\sqrt{\tau}\vvvert\varphi\vvvert_0\int_0^\infty\frac{e^{-\kappa t}}{t^{\frac12}}dt(1+|x_0|).
\]
The treatment of the error term ${\bf e}_{n,2}^\tau$ exploits Assumption~\ref{ass:Fregul1} on the regularity of the nonlinearity $F$: using the bound~\eqref{eq:Q_tau-bound} and the Cauchy--Schwarz inequality, one obtains
\begin{align*}
{\bf e}_{n,2}^\tau\le C_\delta\vvvert\varphi\vvvert_0\int_{t_n}^{t_{n+1}}\frac{e^{-\kappa(t_N-t)}}{(t_{N-2}-t)^{1-\frac{\delta}{4}}}&\bigl(\E[\bigl(1+|\tilde{\IX}_\tau(t)|_{\frac{1}{4}-\frac{\delta}{8}}^2+|\tilde{\IX}_\tau(t_n)|_{\frac{1}{4}-\frac{\delta}{8}}^2\bigr)]\bigr)^{\frac12}\\
&\bigl(\E[\big|\IL^{-\frac14+\frac{\delta}{8}}(\tilde{\IX}_\tau(t_n)-\tilde{\IX}_\tau(t))\big|]\bigr)^{\frac12} dt.
\end{align*}
Using the inequalities~\eqref{eq:tildeIX-bound-ergo} and~\eqref{eq:tildeIX-increment-ergo} from Lemma~\ref{lem:tildeIX} then yields the upper bound
\[
\sum_{n=1}^{N-3}{\bf e}_{n,2}^\tau\le C_\delta\tau^{\frac12-\delta}\vvvert\varphi\vvvert_0\int_0^\infty\frac{e^{-\kappa t}}{t^{1-\frac{\delta}{8}}}dt(1+|x_0|_{\frac14-\frac{\delta}{8}}^2).
\]
Gathering the estimates, one obtains
\[
\big|\E[\varphi(\IX_{\tau,N})]-\E[\varphi(\IX_\tau(t_N))]\big|\le C_\delta\tau^{\frac12-\delta}\vvvert\varphi\vvvert_0(1+|x_0|_{\frac14-\frac{\delta}{8}}^2).
\]

This concludes the proof of Theorem~\ref{theo:weakinv}.
\end{proof}

\subsection{Proof of Theorem~\ref{theo:weak}}\label{sec:proofs-weak}

The objective of this section is to prove Theorem~\ref{theo:weak}. More precisely, we prove that the weak error estimates~\eqref{eq:theo-weak-weakerror} holds, for any function $\varphi:H\to\R$ of class $\mathcal{C}^2$ with bounded first and second order derivatives. This assumption is the main difference with respect to the proof of Theorem~\ref{theo:weakinv} presented above. As already explained in Section~\ref{sec:results_weak} and as will be clear below, the motivation for the more restrictive condition on $\varphi$ is the treatment of an error term, which vanishes in the large time regime when the nonlinearity satisfies Assumption~\ref{ass:gradient}, and cannot be treated without assuming that $\varphi$ is of class $\mathcal{C}^2$ by the approach considered in this work. Whether the regularity conditions on $\varphi$ may be weakened in the framework of Theorem~\ref{theo:weak} is an open question.

In this section, without loss of generality it is assumed that the time-step size $\tau\in(0,\tau_0)$ satisfies the equality $T=N\tau$ with $N\in\N$, where $T\in(0,\infty)$ is fixed. Recall that the solution $X_N^\tau$ of the modified Euler scheme~\eqref{eq:scheme} is equal to $\IX_{\tau,N}$ defined by~\eqref{eq:scheme-IX}, see Section~\ref{sec:scheme-3rd}. As a consequence (see Equation~\eqref{eq:decomperror-gen}) the weak error can be decomposed as
\begin{equation}\label{eq:decomperrorproof}
\E[\varphi(X_N^\tau)]-\E[\varphi(X(T))]=E_{N,1}^\tau+E_{2}^\tau(T)
\end{equation}
with
\begin{equation}\label{eq:EN}
\begin{aligned}
E_{N,1}^\tau&=\E[\varphi(\IX_{\tau,N})]-\E[\varphi(\IX_\tau(T))]=\E[u_\tau(0,\IX_{\tau,N})]-\E[u_\tau(T,\IX_{\tau,0})]\\
E_{2}^\tau(T)&=\E[\varphi(\IX_\tau(T))]-\E[\varphi(X(T))]=\E[u(0,\IX_\tau(T))]-\E[u(T,\IX_\tau(0))],
\end{aligned}
\end{equation}
where the functions $u_\tau$ and $u$ are defined by~\eqref{eq:utau} and~\eqref{eq:u} respectively.

To simplify the notation, it is assumed that the function $\varphi$ satisfies the inequality $\vvvert\varphi\vvvert_1+\vvvert\varphi\vvvert_2\le 1$, and the general case follows by a straightforward argument.

\begin{proof}[Proof of Theorem~\ref{theo:weak}]
Owing to the decomposition~\eqref{eq:decomperrorproof} of the weak error, the weak error estimate~\eqref{eq:theo-weak-weakerror} is an immediate consequence of the two weak error estimates
\begin{subequations}
\begin{align}
|E_{N,1}^\tau|&\le C_{\delta}(T)\tau^{\frac12-\delta}(1+|x_0|_{\frac14-\frac{\delta}{8}}^2),\label{eq:weak1}\\
|E_{N,2}^\tau|&\le C_{\delta}(T)\tau^{\frac12-\delta}(1+|x_0|_{\delta}).\label{eq:weak2} 
\end{align}
\end{subequations}

Let us first establish the weak error estimate~\eqref{eq:weak1}. The arguments are similar to those used in the proof of Theorem~\ref{theo:weakinv} above, in particular it is not necessary to assume that $\varphi$ is of class $\mathcal{C}^2$ to prove~\eqref{eq:weak1}. Since this condition is necessary below to prove~\eqref{eq:weak2}, the inequality~\eqref{eq:weak1} is proved under this condition. Recall that the auxiliary process $\bigl(\tilde{\IX}_\tau(t)\bigr)_{t\ge 0}$ is defined by~\eqref{eq:tildeIX} in Section~\ref{sec:auxiliary-scheme}. Using the definition~\eqref{eq:utau} of the function $u_\tau$ and a standard telescoping sum argument, one has
\begin{align*}
E_{N,1}^\tau&=\E[u_\tau(0,\tilde{X}_{\tau}(T))]-\E[u_\tau(T,\tilde{X}_{\tau}(0))]\\
&=\sum_{n=0}^{N-1}\Bigl(\E[u_\tau(T-t_{n+1},\tilde{X}_{\tau}(t_{n+1}))]-\E[u_\tau(T-t_{n},\tilde{X}_{\tau}(t_{n}))]\Bigr)\\
&=\sum_{n=0}^{N-1}e_n^\tau,
\end{align*}
with $e_n^\tau=\E[u_\tau(T-t_{n+1},\tilde{X}_{\tau}(t_{n+1}))]-\E[u_\tau(T-t_{n},\tilde{X}_{\tau}(t_{n}))]$ for all $n\in\{0,\ldots,N-1\}$. Applying It\^o's formula, using the expression~\eqref{eq:dtildeIX} for the evolution of the auxiliary process and the fact that $u_\tau$ solves the Kolmogorov equation~\eqref{eq:IKolmogorov}, one obtains, for all $n\in\{0,\ldots,N-1\}$, the expression
\[
e_n^\tau=\int_{t_n}^{t_{n+1}}\E\bigl[Du(T-t,\tilde{\IX}_\tau(t)).\bigl(Q_{\tau}F(\tilde{\IX}_\tau(t_n))-Q_{\tau}F(\tilde{\IX}_\tau(t))\bigr)\bigr]dt.
\]
The cases $n=N-1$ and $n\in\{0,\ldots,N-2\}$ are treated separately. On the one hand, using the inequality~\eqref{eq:lem-utau1-0}, the bound~\eqref{eq:Q_tau-bound}, the Lipschitz continuity of $F$ and the moment bound~\eqref{eq:tildeIX-bound} (with $\alpha=0$), one obtains
\begin{align*}
|e_{N-1}^\tau|&\le C(T)\vvvert\varphi\vvvert_1\int_{t_{N-1}}^{t_N}\E[\big|Q_\tau\bigl(F(\tilde{\IX}_\tau(t_n))-F(\tilde{\IX}_\tau(t))\bigr)\big|]dt\\
&\le C(T)\tau^{\frac12}\vvvert\varphi\vvvert_1(1+|x_0|).
\end{align*}
On the other hand, using the inequality~\eqref{eq:lem-utau1-alpha} from Lemma~\ref{lem:utau-1}, with $\alpha=\frac12-\frac{\delta}{4}$, for all $n\in\{0,\ldots,N-2\}$, one obtains
\[
|e_n^\tau|\le {\bf e}_{n,1}^\tau+{\bf e}_{n,2}^\tau
\]
where the error terms on the right-hand side above are defined by
\begin{align*}
{\bf e}_{n,1}^\tau&=C(T)\sqrt{\tau}\int_{t_n}^{t_{n+1}}\E[|Q_\tau\bigl(F(\tilde{\IX}_\tau(t_n))-F(\tilde{\IX}_\tau(t))\bigr)|]dt\\
{\bf e}_{n,2}^\tau&=C(T)\int_{t_n}^{t_{n+1}}\frac{1}{(t_{N-1}-t)^{\frac12-\frac{\delta}{8}}}\E[|\IL^{-\frac12+\frac{\delta}{8}}Q_\tau\bigl(F(\tilde{\IX}_\tau(t_n))-F(\tilde{\IX}_\tau(t))\bigr)|]dt.
\end{align*}
Using the bound~\eqref{eq:Q_tau-bound}, the Lipschitz continuity of $F$ and the moment bound~\eqref{eq:tildeIX-bound} (with $\alpha=0$), one obtains
\[
\sum_{n=1}^{N-2}{\bf e}_{n,1}^\tau\le C(T)\sqrt{\tau}.
\]
The treatment of the error term ${\bf e}_{n,2}^\tau$ exploits Assumption~\ref{ass:Fregul1} on the regularity of the nonlinearity $F$: using the bound~\eqref{eq:Q_tau-bound} and the Cauchy--Schwarz inequality, one obtains
\begin{align*}
{\bf e}_{n,2}^\tau\le C(T)\int_{t_n}^{t_{n+1}}\frac{e^{-\kappa(t_N-t)}}{(t_{N-2}-t)^{\frac12-\frac{\delta}{8}}}&\bigl(\E[\bigl(1+|\tilde{\IX}_\tau(t)|_{\frac{1}{4}-\frac{\delta}{8}}^2+|\tilde{\IX}_\tau(t_n)|_{\frac{1}{4}-\frac{\delta}{8}}^2\bigr)]\bigr)^{\frac12}\\
&\bigl(\E[\big|\IL^{-\frac14+\frac{\delta}{2}}(\tilde{\IX}_\tau(t_n)-\tilde{\IX}_\tau(t))\big|]\bigr)^{\frac12} dt.
\end{align*}
Using the inequalities~\eqref{eq:tildeIX-bound} and~\eqref{eq:tildeIX-increment} from Lemma~\ref{lem:tildeIX} then yields the upper bound
\[
\sum_{n=0}^{N-2}{\bf e}_{n,2}^\tau\le C_\delta(T)\tau^{\frac12-\delta}(1+|x_0|_{\frac14-\frac{\delta}{8}}^2).
\]
Gathering the estimates, one obtains the error estimate~\eqref{eq:weak1}. 

It remains to prove the weak error estimate~\eqref{eq:weak2}. Using It\^o's formula associated with the modified stochastic evolution equation~\eqref{eq:modifiedSPDE}, one obtains
\begin{align*}
E_{2}^{\tau}(T)&=\E[u(0,\IX_\tau(N\tau))]-\E[u(N\tau,\IX_\tau(0))]\\
&=\int_0^T\E[-\partial_tu(T-t,\IX_\tau(t))]dt\\
&+\int_0^T\E[Du(T-t,\IX_\tau(t).\bigl(-\IL_\tau \IX_\tau(t)+Q_\tau F(\IX_\tau(t))\bigr)]dt\\
&+\frac12\sum_{j\in\N}\int_0^T \E[D^2u(T-t,\IX_\tau(t)).\bigl(Q_\tau^{\frac12}e_j,Q_\tau^{\frac12}e_j\bigr)]dt\\
&=E_{2,1}^\tau(T)+E_{2,2}^\tau(T)+E_{2,3}^\tau(T)
\end{align*}
where, owing to the identity $\partial_tu=\mathcal{L}u$ ($u$ is solution of the Kolmogorov equation~\eqref{eq:Kolmogorov}), the error terms $E_{2,1}(T),E_{2,2}(T),E_{2,3}(T)$ are defined by
\begin{align*}
E_{2,1}^\tau(T)&=\int_0^T\E[Du(T-t,\IX_\tau(t)).\bigl((\IL-\IL_\tau)\IX_\tau(t)\bigr)]dt\\
E_{2,2}^\tau(T)&=\int_0^T\E[Du(T-t,\IX_\tau(t)).\bigl((Q_\tau-I)F(\IX_\tau(t))\bigr)]dt\\
E_{2,3}^\tau(T)&=\frac12\sum_{j\in\N}\Bigl(q_{\tau,j}-1\Bigr)\E[D^2u(T-t,\IX_\tau(t)).(e_j,e_j)]dt.
\end{align*}
The weak error estimate~\eqref{eq:weak2} is then a straightforward consequence of the three auxiliary estimates
\begin{subequations}
\begin{align}
|E_{2,1}^\tau(T)|&\le C_{\delta}(T)\tau^{\frac12-\delta}(1+|x_0|_{\delta}),\label{eq:weak2-1}\\
|E_{2,2}^\tau(T)|&\le C_\delta(T)\tau^{1-\delta}(1+|x_0|),\label{eq:weak2-2}\\
|E_{2,3}^\tau(T)|&\le C_{\delta}(T)\tau^{\frac12-\delta}.\label{eq:weak2-3}
\end{align}
\end{subequations}
The inequalities~\eqref{eq:weak2-2} and~\eqref{eq:weak2-3} are proved using straightforward arguments, using the auxiliary results from Section~\ref{sec:auxiliary}. However, the proof of the inequality~\eqref{eq:weak2-1} requires additional arguments to obtain the weak order of convergence $1/2$.

$\bullet$ Proof of the inequality~\eqref{eq:weak2-1}. Using the mild formulation~\eqref{eq:modifiedSPDE-mild} associated with the modified stochastic evolution equation~\eqref{eq:modifiedSPDE}, the term $E_{2,1}^\tau(T)$ can be written as
\[
E_{2,1}^\tau(T)=E_{2,1,1}^\tau(T)+E_{2,1,2}^\tau(T)+E_{2,1,3}^\tau(T),
\]
with
\begin{align*}
E_{2,1,1}^\tau(T)&=\int_0^T\E[Du(T-t,\IX_\tau(t)).\bigl((\IL-\IL_\tau)e^{-t\IL_\tau}x_0\bigr)]dt\\
E_{2,1,2}^\tau(T)&=\int_0^T\E[Du(T-t,\IX_\tau(t)).\bigl((\IL-\IL_\tau)\int_0^t e^{-(t-s)\IL_\tau}Q_\tau F(\IX_\tau(s))ds\bigr)]dt\\
E_{2,1,3}^\tau(T)&=\int_0^T\E[Du(T-t,\IX_\tau(t)).\bigl((\IL-\IL_\tau)\int_0^t e^{-(t-s)\IL_\tau}Q_\tau^{\frac12}dW(s)\bigr)]dt.
\end{align*}
First, using the regularity estimate~\eqref{eq:lem-u-1} from Lemma~\ref{lem:u-12} with $\alpha=1-\frac{\delta}{2}$, one has
\begin{align*}
|E_{2,1,1}^\tau(T)|&\le C_\delta(T)\int_0^T(T-t)^{-1+\frac{\delta}{2}}|\IL^{-1+\frac{\delta}{2}}(\IL-\IL_\tau)e^{-t\IL_\tau}x_0|dt\\
&\le C\int_0^\tau (T-t)^{-1+\frac{\delta}{2}}|\IL^{-1+\frac{\delta}{2}}(\IL-\IL_\tau)e^{-t\IL_\tau}x_0|dt\\
&+C\int_\tau^T(T-t)^{-1+\frac{\delta}{2}}|\IL^{-1+\frac{\delta}{2}}(\IL-\IL_\tau)e^{-t\IL_\tau}x_0|dt.
\end{align*}
Using the inequalities~\eqref{eq:IL_tau-bound} from Lemma~\ref{lem:Q_tauIL_tau-bound} and~\eqref{eq:IL_tau-error} from Lemma~\ref{lem:Q_tauIL_tau-error} with $\alpha=0$, one has
\[
|\IL^{-1+\frac{\delta}{2}}(\IL-\IL_\tau)e^{-t\IL_\tau}x_0|\le C|x_0|_{\frac{\delta}{2}},
\]
for all $t\in(0,\tau)$. In addition, using the smoothing inequality~\eqref{eq:IL_tau-smoothing} from Lemma~\ref{lem:Q_tauIL_tau-bound} with $\alpha=1-\delta$, one has
\begin{align*}
\int_\tau^T(T-t)^{-1+\frac{\delta}{2}}|\IL^{-1+\frac{\delta}{2}}(\IL-\IL_\tau)e^{-t\IL_\tau}x_0|dt&\le C_\delta\int_\tau^T (T-t)^{-1+\frac{\delta}{2}}\tau^{1-\delta}|\IL^{1-\frac{\delta}{2}}e^{-t\IL_\tau}x_0|dt\\
&\le C_\delta\tau^{1-\delta}\int_\tau^T (T-t)^{-1+\frac{\delta}{2}}(t-\tau)^{-1+\frac{\delta}{2}}dt|x_0|\\
&\le C_\delta\tau^{1-\delta}\int_0^{T}(T-t)^{-1+\frac{\delta}{2}}t^{-1+\frac{\delta}{2}}ds|x_0|.
\end{align*}
Therefore one obtains
\[
|E_{2,1,1}^\tau(T)|\le C_\delta(T)\bigl(\tau|x_0|_\delta+\tau^{1-2\delta}|x_0|\bigr).
\]

Second, using similar arguments an upper bound for the error $|E_{2,1,2}^\tau(T)|$ is obtained. Using the regularity estimate~\eqref{eq:lem-u-1} from Lemma~\ref{lem:u-12} and the inequality~\eqref{eq:IL_tau-error} from Lemma~\ref{lem:Q_tauIL_tau-error}, applied with $\alpha=1-\frac{\delta}{2}$ and $\alpha=1-\delta$ respectively, one has
\begin{align*}
|E_{2,1,2}^\tau(T)|&\le C\int_0^T (T-t)^{-1+\frac{\delta}{2}}\big|\IL^{-1+\frac{\delta}{2}}(\IL-\IL_\tau)\int_0^te^{-(t-s)\IL_\tau}Q_\tau F(\IX_\tau(s))ds\big|dt\\
&\le C\tau^{1-\delta}\int_0^T(T-t)^{-1+\frac{\delta}{2}}\int_0^t|\IL^{1-\frac{\delta}{2}}e^{-(t-s)\IL_\tau}Q_\tau F(\IX_\tau(s))|dsdt.
\end{align*}
Using the bound~\eqref{eq:Q_tau-bound} from Lemma~\ref{lem:Q_tauIL_tau-bound}, the Lipschitz continuity of $F$ (Assumption~\ref{ass:F}) and the moment bound~\eqref{eq:modified-wellposed-bound} from Proposition~\ref{propo:modified-wellposed-bound}, one obtains for all $t\in[0,T]$
\begin{align*}
\int_0^t|\IL^{1-\frac{\delta}{2}}e^{-(t-s)\IL_\tau}Q_\tau F(\IX_\tau(s))|ds&\le C(T)(1+|x_0|)\int_0^t\|\IL^{1-\frac{\delta}{2}}e^{-s\IL_\tau}Q_\tau^{\frac12}\|_{\mathcal{L}(H)}ds\\
&\le C(T)(1+|x_0|)\int_0^\tau\|\IL^{1-\frac{\delta}{2}}e^{-s\IL_\tau}Q_\tau^{\frac12}\|_{\mathcal{L}(H)}ds\\
&+C(T)\mathds{1}_{t>\tau}(1+|x_0|)\int_\tau^t\|\IL^{1-\frac{\delta}{2}}e^{-s\IL_\tau}\|_{\mathcal{L}(H)}ds\\
&\le C(T)(1+|x_0|)\tau^{\frac{\delta}{2}}+C_\delta(T)(1+|x_0|)\int_\tau^t(s-\tau)^{-1+\frac{\delta}{2}}ds\\
&\le C_\delta(T)(1+|x_0|),
\end{align*}
using the smoothing inequalities~\eqref{eq:IL_tau-smoothing-bis} for $s\in(0,\tau)$ and~\eqref{eq:IL_tau-smoothing} for $s>\tau$ respectively, and the condition $\tau\le \tau_0$. Therefore one obtains
\[
|E_{2,1,2}^\tau(T)|\le C_\delta(T)\tau^{1-\delta}(1+|x_0|).
\]

Finally, it remains to prove an upper bound for the error $|E_{2,1,3}^\tau(T)|$. Combining the regularity estimate~\eqref{eq:lem-u-1}, the inequality~\eqref{eq:IL_tau-error} and the smoothing inequalities~\eqref{eq:IL_tau-smoothing} and~\eqref{eq:IL_tau-smoothing-bis} like for the treatment of the other terms is not sufficient: these techniques only provide a weak order of convergence equal to $1/4$. In order to obtain the weak order of convergence $1/2$, another approach is necessary, using Malliavin calculus techniques, following~\cite{Debussche:11}. We refer to~\cite[Section~2.2]{BrehierDebussche} for notation and useful results.

For all $s\ge 0$ and all $h\in H$, the Malliavin derivative $\bigl(\mathcal{D}_s^h\IX_\tau(t)\bigr)_{t\ge 0}$ is solution of the evolution equation 
\[
d\mathcal{D}_s^h\IX_\tau(t)=-\IL_\tau\mathcal{D}_s^h\IX_\tau(t)dt+Q_\tau DF(\IX_\tau(t)).\mathcal{D}_s^h\IX_\tau(t)dt,
\]
for $t\ge s$, with the initial value $\mathcal{D}_s^h\IX_\tau(t)=Q_\tau^{\frac12}h$, and $\mathcal{D}_s^h\IX_\tau(t)=0$ if $t<s$. Using arguments similar to the proof of Lemma~\ref{lem:utau-1} (the inequalities~\eqref{eq:Q_tau-bound} and~\eqref{eq:IL_tau-gap} and the Lipschitz continuity of $F$), one obtains for all $t\ge s$
\[
\frac12\frac{d|\mathcal{D}_s^h\IX_\tau(t)|^2}{dt}\le (\Lf-\lambda_1)\frac{\log(1+\tau\lambda_1)}{\tau\lambda_1}|\mathcal{D}_s^h\IX_\tau(t)|^2\le \Lf |\mathcal{D}_s^h\IX_\tau(t)|^2,
\]
and applying Gronwall's lemma yields the almost sure inequality $|\mathcal{D}_s^h\IX_\tau(t)|\le e^{\Lf(t-s)}|Q^{\frac12}h|\le e^{\Lf T}|Q^{\frac12}h|$ for all $0\le s\le t\le T$.

Owing to the Malliavin integration by parts formula~\eqref{eq:MalliavinIBP} and to the chain rule, the error term $E_{2,1,3}^\tau(T)$ is written as
\begin{align*}
E_{2,1,3}^\tau(T)&=\int_0^T\E[Du(T-t,\IX_\tau(t)).\bigl((\IL-\IL_\tau)\int_0^t e^{-(t-s)\IL_\tau}Q_\tau^{\frac12}dW(s)\bigr)]dt\\
&=\sum_{j\in\N}\int_0^T\E[Du(T-t,\IX_\tau(t)). e_j \langle (\IL-\IL_\tau)\int_0^t e^{-(t-s)\IL_\tau}Q_\tau^{\frac12}e_jd\beta_j(s),e_j\rangle]dt\\
&=\sum_{j\in\N}\int_0^T\int_{0}^{t}\E[D^2u(T-t,\IX_\tau(t)).(e_j,\mathcal{D}_s^{e_j}\IX_\tau(t))]\langle (\IL-\IL_\tau)e^{-(t-s)\IL_\tau}Q_\tau^{\frac12}e_j,e_j\rangle dsdt.
\end{align*}
Let $\delta\in(0,\frac12)$. Using the equalities $\IL^{-1+\frac{\delta}{2}}e_j=\lambda_j^{-1+\frac{\delta}{2}}e_j$ and $Q_\tau^{\frac12}e_j=q_{\tau,j}e_j$, and the regularity estimate~\eqref{eq:lem-u-2} from Lemma~\ref{lem:u-12} (with $\alpha_1=1-\frac{\delta}{2}$ and $\alpha_2=0$), one obtains 
\[
\big|D^2u(T-t,\IX_\tau(t)).(e_j,\mathcal{D}_s^{e_j}\IX_\tau(t))\big|\le C_\delta(T)(T-t)^{-1+\frac{\delta}{2}}\lambda_j^{-1+\frac{\delta}{2}}q_{\tau,j}^{\frac12}.
\]
As a consequence, one obtains the inequalities
\begin{align*}
|E_{2,1,3}^\tau(T)|&\le C_\delta(T)\int_0^T(T-t)^{-1+\frac{\delta}{2}}\sum_{j\in\N}\lambda_j^{-1+\frac{\delta}{2}}|\lambda_j-\lambda_{\tau,j}|q_{\tau,j}\int_{0}^{t}e^{-(t-s)\lambda_{\tau,j}}ds dt\\
&\le C_\delta(T)\int_0^T(T-t)^{-1+\frac{\delta}{2}} dt\sum_{j\in\N}\lambda_j^{\frac{\delta}{2}} \frac{q_{\tau,j}}{\lambda_{\tau,j}}\Big|1-\frac{\lambda_{\tau,j}}{\lambda_j}\Big|\\
&\le C_\delta(T)\sum_{j\in\N}\lambda_j^{-1+\frac{\delta}{2}}|1-\frac{\log(1+\tau\lambda_j)}{\lambda_j\tau}|,
\end{align*}
using the identities $\frac{q_{\tau,j}}{\lambda_{\tau,j}}=\frac{1}{\lambda_j}$ and $\frac{\lambda_{\tau,j}}{\lambda_j}=\frac{\log(1+\tau\lambda_j)}{\lambda_j\tau}$, see~\eqref{eq:modifiedILQ-eigen}. Using the inequality~\eqref{eq:boundproofQILerror} with $\alpha=\frac12-\delta$, one then obtains the upper bound
\[
|E_{2,1,3}^\tau(T)|\le C_\delta(T)\sum_{j\in\N}\lambda_j^{-1+\frac{\delta}{2}}|1-\frac{\log(1+\tau\lambda_j)}{\lambda_j\tau}|\le  C_\delta(T)\sum_{j\in\N}\lambda_j^{-\frac12-\frac{\delta}{2}}\tau^{\frac12-\delta}.
\]
Gathering the upper bounds on the error terms $E_{2,1,1}^\tau(T)$, $E_{2,1,2}^\tau(T)$ and $E_{2,1,3}^\tau(T)$,  the inequality~\eqref{eq:weak2-1} for $E_{2,1}^\tau(T)$ is thus proved.

$\bullet$ Proof of the inequality~\eqref{eq:weak2-2}. Let $\delta\in(0,1)$ be an arbitrarily small parameter. Owing to the inequality~\eqref{eq:lem-u-1} from Lemma~\ref{lem:u-12}, and to the inequality~\eqref{eq:Q_tau-error} from Lemma~\ref{lem:Q_tauIL_tau-error}, both applied with $\alpha=1-\delta$, one obtains 
\begin{align*}
|E_{N,2,2}|&\le C_\delta(T)\int_0^t(T-t)^{-1+\delta}\|\IL^{-1+\delta}(Q_\tau-I)\|_{\mathcal{L}(H)}\E[|F(\IX_\tau(t))|]dt\\
&\le C_\delta(T)\tau^{1-\delta}\int_0^t(T-t)^{-1+\delta}\E[|F(\IX_\tau(t))|]dt\\
&\le C_\delta(T)\tau^{1-\delta}(1+|x_0|),
\end{align*}
using the Lipschitz continuity of $F$ (Assumption~\ref{ass:F}) and the moment bound~\eqref{eq:modified-wellposed-bound} from Proposition~\ref{propo:modified-wellposed-bound} in the last inequality. The inequality~\eqref{eq:weak2-2} is thus proved.

$\bullet$ Proof of the inequality~\eqref{eq:weak2-2}. Let $\delta\in(0,\frac12)$ be an arbitrarily small parameter. Owing to the inequality~\eqref{eq:boundproofQILerror},applied with $\alpha=\frac12-\delta$ (see the proof of Lemma~\ref{lem:Q_tauIL_tau-error}) and to the regularity estimate~\eqref{eq:lem-u-1} from Lemma~\ref{lem:u-12},  applied with $\alpha_1=\alpha_2=\frac12-\frac{\delta}{4}$, one obtains
\begin{align*}
|E_{N,2,3}|&\le C_\delta(T)\sum_{j\in\N}(\lambda_j\tau)^{\frac12-\delta}\lambda_j^{-1+\frac{\delta}{2}} \int_0^T (T-t)^{-1+\frac{\delta}{2}}dt\\
&\le C_\delta(T)\tau^{\frac12-\delta}\sum_{j\in\N}\lambda_j^{-\frac12-\frac{\delta}{2}}\int_0^T(T-t)^{-1+\frac{\delta}{2}}dt,
\end{align*}
Since $\sum_{j\in\N}\lambda_j^{-\frac12-\frac{\delta}{2}}<\infty$ and $\int_0^T(T-t)^{-1+\frac{\delta}{2}}dt<\infty$, the inequality~\eqref{eq:weak2-3} is thus proved.

$\bullet$ Conclusion: the three inequalities~\eqref{eq:weak2-1},~\eqref{eq:weak2-2} and~\eqref{eq:weak2-3} hold, thus the inequality~\ref{eq:weak2} is proved.

Since the weak error estimate~\eqref{eq:theo-weak-weakerror} is a straightforward consequence of the inequalities~\eqref{eq:weak1} and~\eqref{eq:weak2}, this concludes the proof of Theorem~\ref{theo:weak}.
\end{proof}

\subsection{Proof of Theorem~\ref{theo:weak-ergo}}\label{sec:proofs-weak-ergo}

The details of the proof of Theorem~\ref{theo:weak-ergo} are standard and are omitted. The main changes compared with the proof of Theorem~\ref{theo:weak} written above is the need to exploit the results of Lemma~\ref{lem:utau-ergo} and Lemma~\ref{lem:u-ergo} to obtain factors of the type $\exp(-\kappa t)$ in the upper bounds for $Du_\tau(t,x).h$, $Du(t,x).h$ and $D^2u(t,x).(h_1,h_2)$. As a consequence, the constants $C_\delta(T)$ appearing in the proof of Theorem~\ref{theo:weak} above are replaced by constants $C_\delta$ which are independent of the final time. This approach is standard and is already used in the proof of Theorem~\ref{theo:weakinv} above. We also refer to~\cite{B:2014} for a similar analysis to prove error estimates for the approximation of the invariant distribution $\mu_\infty$ using the standard Euler scheme. The details of the proof are thus left to the interested reader.

\section{Results on the accelerated exponential Euler scheme}\label{sec:expo}

The section is organized as follows. Instrumental auxiliary results are stated and proved in Subsection~\ref{sec:expo-aux} and the details of the proof of Theorem~\ref{theo:weak-expo} are given in Subsection~\ref{sec:expo-proof}. The arguments are similar to those used in Section~\ref{sec:proofs-weakinv} to prove Theorem~\ref{theo:weakinv}, but the analysis of the accelerated exponential Euler scheme does not require Assumption~\ref{ass:gradient} to be satisfied or to consider only the approximation of the invariant distribution. Finally, in Subsection~\ref{sec:expo-strong} the strong rate of convergence of the accelerated exponential Euler scheme is studied: we check that in this case the strong order is $1/2$ and coincides with the weak order of convergence exhibited in Theorem~\ref{theo:weak-expo}.

Recall that the accelerated exponential Euler scheme
\begin{equation}\label{eq:exponentialscheme-expo}
X_{n+1}^{\tau,\e}=e^{-\tau\IL}X_n^{\tau,\e}+\IL^{-1}(I-e^{-\tau\IL})F(X_n^{\tau,\e})+\int_{t_n}^{t_{n+1}}e^{-(t_{n+1}-s)\IL}dW(s),\quad X_0^{\tau,\e}=x_0,
\end{equation}
see Equation~\eqref{eq:res-exponentialscheme} in Section~\ref{sec:results}.

\subsection{Auxiliary results}\label{sec:expo-aux}

The first auxiliary result is a variant of Proposition~\ref{propo:utau-regularity}, concerning the regularity properties of the function $u$ instead of the function $u_\tau$. Recall that $u(t,x)=\E_x[\varphi(X(t))]$, see Equation~\ref{eq:u} in Section~\ref{sec:auxiliary-kolmogorov-original}.
\begin{propo}\label{propo:u-regularity}
Let Assumption~\ref{ass:Fregul2} be satisfied and $\varphi:H\to \R$ be a bounded and continuous function. 

For all $t>0$, $u(t,\cdot)$ is differentiable and one has the following estimate: for all $T\in(0,\infty)$ and $\delta\in(0,\frac12)$, there exists $C_{\delta}(T)\in(0,\infty)$ such that for $t\in(0,T]$ and all $x,h\in H$, one has 
\begin{equation}\label{eq:u-regularity}
\big|Du(t,x).h\big|\le C_{\delta}(T) \vvvert\varphi\vvvert_0 t^{1-\delta}|\Lambda^{-\frac12+\delta}h|.
\end{equation}
In addition, if Assumption~\ref{ass:ergo} is satisfied, there exists $C_\delta\in(0,\infty)$ such that for all $t\in(0,\infty)$ and all $x,h\in H$, one has 
\begin{equation}\label{eq:u-regularity-ergo}
\big|Du(t,x).h\big|\le C_{\delta}(T)e^{-(\lambda_1-\Lf)t} \vvvert\varphi\vvvert_0 t^{-1+\delta}|\IL^{-\frac12+\delta}h|.
\end{equation}
\end{propo}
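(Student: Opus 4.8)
The plan is to deduce Proposition~\ref{propo:u-regularity} from the three regularity lemmas for the Kolmogorov semigroup of the original equation (Lemmas~\ref{lem:u-12}, \ref{lem:u-ergo} and~\ref{lem:u-0}) by exploiting the semigroup property~\eqref{eq:P}, in complete analogy with the way Proposition~\ref{propo:utau-regularity} was obtained from Lemmas~\ref{lem:utau-1}, \ref{lem:utau-ergo} and~\ref{lem:utau-0}. Here the argument is in fact simpler, because the time-step parameter $\tau$ is absent and there is no need to distinguish the ranges $t>\tau$ and $t\le\tau$; the powers of $t$ will come from genuine short-time singularities rather than from $t-\tau$. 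Throughout one works in the finite-dimensional Galerkin truncation of Section~\ref{sec:Galerkin}, so that $u(t,\cdot)=P_t\varphi$ is smooth, the representation formulas~\eqref{eq:Du-1} and~\eqref{eq:Du-0} are legitimate, and only bounds with constants independent of the truncation index are recorded.

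For~\eqref{eq:u-regularity}, fix $\delta\in(0,\tfrac12)$ and $T\in(0,\infty)$ and set $\alpha=\tfrac12-\delta$. For $t\in(0,T]$ write $u(t,\cdot)=P_{t/2}\bigl(P_{t/2}\varphi\bigr)$. By Lemma~\ref{lem:u-0}, the function $\psi:=P_{t/2}\varphi$ is of class $\mathcal{C}^1$ with $\vvvert\psi\vvvert_1\le C(T)(t/2)^{-1/2}\vvvert\varphi\vvvert_0$, so Lemma~\ref{lem:u-12} (inequality~\eqref{eq:lem-u-1}) applied to $\psi$ gives
\[
|Du(t,x).h|=\bigl|D\bigl(P_{t/2}\psi\bigr)(x).h\bigr|\le C_\alpha(T)\,\vvvert\psi\vvvert_1\,(t/2)^{-\alpha}\,|\IL^{-\alpha}h|.
\]
Multiplying the two estimates yields $|Du(t,x).h|\le C_\delta(T)\,t^{-1+\delta}\,\vvvert\varphi\vvvert_0\,|\IL^{-1/2+\delta}h|$, which is the content of~\eqref{eq:u-regularity} (the exponent of $t$ being negative, consistent with the short-time blow-up generated by the two smoothing steps). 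The constants depend only on $T$ since both lemmas are invoked over a time window of length $t/2$.

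For the ergodic estimate~\eqref{eq:u-regularity-ergo} one additionally invokes Lemma~\ref{lem:u-ergo}, and one splits according to $t\in(0,2]$ and $t\ge2$. On $(0,2]$ the estimate~\eqref{eq:u-regularity} with $T=2$ already provides the correct $t^{-1+\delta}$ behaviour, and since $e^{-(\lambda_1-\Lf)t}\ge e^{-2(\lambda_1-\Lf)}$ there, the exponential factor may be inserted at the cost of enlarging the constant. For $t\ge2$ one factorizes $u(t,\cdot)=P_{1}\bigl(P_{t-2}(P_1\varphi)\bigr)$: Lemma~\ref{lem:u-0} gives $\vvvert P_1\varphi\vvvert_1\le C\vvvert\varphi\vvvert_0$; Lemma~\ref{lem:u-ergo} gives $\vvvert P_{t-2}(P_1\varphi)\vvvert_1\le e^{-(\lambda_1-\Lf)(t-2)}\vvvert P_1\varphi\vvvert_1$; and Lemma~\ref{lem:u-12} applied with $\alpha=\tfrac12-\delta$ to the $\mathcal{C}^1$ function $P_{t-2}(P_1\varphi)$ over the unit window converts this into $|Du(t,x).h|\le C_\delta\,e^{-(\lambda_1-\Lf)(t-2)}\vvvert\varphi\vvvert_0\,|\IL^{-1/2+\delta}h|$. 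Absorbing $e^{2(\lambda_1-\Lf)}$ into the constant yields the claimed exponential decay; the polynomial prefactor $t^{-1+\delta}$ plays no role in this second regime and its contribution is controlled by the first regime, so the two cases together give~\eqref{eq:u-regularity-ergo}.

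I do not expect any serious obstacle: the proof is essentially a bookkeeping exercise once the pattern of Proposition~\ref{propo:utau-regularity} is in place. The only point requiring care is the choice of how to split the time interval, so that each ingredient is used where it is valid — the smoothing steps (negative powers of $t$ in Lemma~\ref{lem:u-12}, passage from $\vvvert\cdot\vvvert_0$ to $\vvvert\cdot\vvvert_1$ in Lemma~\ref{lem:u-0}) over fixed-length windows where their $T$-dependent constants do not blow up, and the contraction step (exponential decay of $\vvvert\cdot\vvvert_1$ in Lemma~\ref{lem:u-ergo}) over the long middle window. The other routine ingredient is the justification, in the Galerkin truncation, of the differentiability of $u(t,\cdot)$ and of the Bismut--Elworthy formula~\eqref{eq:Du-0} underlying Lemma~\ref{lem:u-0}, which is identical to the one used for the modified equation in Section~\ref{sec:auxiliary-kolmogorov-modified}.
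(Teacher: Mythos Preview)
Your proposal is correct and follows essentially the same approach as the paper: both arguments combine Lemmas~\ref{lem:u-12}, \ref{lem:u-ergo} and~\ref{lem:u-0} via the semigroup property~\eqref{eq:P}, splitting $t$ in half for~\eqref{eq:u-regularity} and using a three-factor decomposition over a bounded window, a long middle stretch, and another bounded window for~\eqref{eq:u-regularity-ergo}. The only differences are cosmetic (the paper splits at $t=1$ with half-unit windows rather than at $t=2$ with unit windows), and you correctly identify that the stated exponent $t^{1-\delta}$ in~\eqref{eq:u-regularity} should read $t^{-1+\delta}$.
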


The inequality~\eqref{eq:u-regularity-ergo} is required to prove Theorem~\ref{theo:weak-ergo-expo}.

\begin{proof}[Proof of Proposition~\ref{propo:u-regularity}]
The inequality~\eqref{eq:u-regularity} is a straightforward consequence of Lemma~\ref{lem:u-12} and Lemma~\ref{lem:u-0}, and of the semigroup property~\eqref{eq:P}: for all $t\in(0,T]$ and all $x,h\in H$,
\begin{align*}
|Du(t,x).h|&=|DP_t\varphi(x).h|=|DP_{\frac{t}{2}}\bigl(P_{\frac{t}{2}}\varphi\bigr)(x).h|\\
&\le C_\delta(T)t^{-\frac12+\delta}\vvvert P_{\frac{t}{2}}\varphi\vvvert_1 |\IL^{-\frac12+\delta}h|\\
&\le C_\delta(T)t^{-1+\delta}\vvvert\varphi\vvvert_0 |\IL^{-\frac12+\delta}h|,
\end{align*}
using successively the inequalities~\eqref{eq:lem-u-1} and~\eqref{eq:lem-u-0} from Lemma~\ref{lem:u-12} and Lemma~\ref{lem:u-0} respectively.

The inequality~\eqref{eq:u-regularity-ergo} is proved using a similar argument and Lemma~\ref{lem:u-ergo}: if $t\ge 1$, the semigroup property~\eqref{eq:P} yields
\begin{align*}
|Du(t,x).h|&=|DP_t\varphi(x).h|=|DP_{\frac12}\bigl(P_{t-\frac12}\varphi\bigr)(x).h|\\
&\le C_\delta \vvvert P_{t-\frac12}\varphi\vvvert_1 |\IL^{-\frac12+\delta}h|\\
&\le C_\delta e^{-(\lambda_1-\Lf)(t-1)} \vvvert P_{\frac12}\varphi\vvvert_1 |\IL^{-\frac12+\delta}h|\\
&\le C_\delta e^{-(\lambda_1-\Lf)t} \vvvert \varphi\vvvert_0 |\IL^{-\frac12+\delta}h|.
\end{align*}
using successively the inequalities~\eqref{eq:lem-u-1},~\eqref{eq:lem-u-ergo1} and~\eqref{eq:lem-u-0} from Lemma~\ref{lem:u-12}, Lemma~\ref{lem:u-ergo} and Lemma~\ref{lem:u-0} respectively. The case $t\in(0,1)$ is covered by the first inequality~\eqref{eq:u-regularity}.

This concludes the proof of Proposition~\ref{propo:u-regularity}.
\end{proof}

The other main ingredient in the proof of Theorem~\ref{theo:weak-expo} is the introduction of an auxiliary process $\bigl(\tilde{X}^{\tau,\e}(t)\bigr)_{t\ge 0}$, defined as follows: for all $t\ge 0$,
\begin{equation}\label{eq:tildeXe}
\tilde{X}^{\tau,\e}(t)=e^{-t\IL}x_0+\int_{0}^{t}e^{-(t-s)\IL}F(X_{\ell(s)}^{\tau,\e})ds+W^{\IL}(t),
\end{equation}
where $W^{\IL}(t)=\int_0^t e^{-(t-s)\IL}dW(s)$ is given by~\eqref{eq:StochasticConvolution}, $\ell(s)=n$ if $t_n\le s<t_{n+1}$, with $t_n=n\tau$. For every $n\in\N_0$ and all $t\in[t_n,t_{n+1}]$, one has
\begin{equation}\label{eq:dtildeXe}
d\tilde{X}^{\tau,\e}(t)=-\IL\tilde{X}^{\tau,\e}(t)dt+F(X_{n}^{\tau,\e})dt+dW(t).
\end{equation}
Finally, note that by construction of the auxiliary process, one has $\tilde{X}^{\tau,\e}(t_n)=X_{n}^{\tau,\e}$ for all $n\in\N_0$.

The following variant of Lemma~\ref{lem:tildeIX} is used in Section~\ref{sec:expo-proof} below.
\begin{lemma}\label{lem:tildeXe}
Let Assumptions~\ref{ass:Lambda} and~\ref{ass:F} be satisfied.

For all $T\in(0,\infty)$ and $\alpha\in[0,\frac14)$, one has 
\begin{equation}\label{eq:tildeXe-bound}
\underset{x_0\in H^\alpha}\sup~\underset{t\in[0,T]}\sup~\frac{\E[|\tilde{X}^{\tau,\e}(t)|_\alpha^2]}{1+|x_0|_\alpha^2}<\infty
\end{equation}
and
\begin{equation}\label{eq:tildeXe-increment}
\underset{x_0\in H^\alpha}\sup~\underset{t\in[0,T]}\sup~\frac{\E[\big|\IL^{-\alpha}\bigl(\tilde{X}^{\tau,\e}(t)-\tilde{X}^{\tau,\e}(t_{\ell(t)})\bigr)\big|^2]}{\tau^{2\alpha}(1+|x_0|_\alpha^2)}<\infty.
\end{equation}

Moreover, if Assumption~\ref{ass:ergo} is satisfied, then for all $\alpha\in[0,\frac14)$ and $x_0\in H^\alpha$, one has
\begin{equation}\label{eq:tildeXe-bound-ergo}
\underset{x_0\in H^\alpha}\sup~\underset{t\ge 0}\sup~\frac{\E[|\tilde{X}^{\tau,\e}(t)|_\alpha^2]}{1+|x_0|_\alpha^2}<\infty
\end{equation}
and
\begin{equation}\label{eq:tildeXe-increment-ergo}
\underset{x_0\in H^\alpha}\sup~\underset{t\ge 0}\sup~\frac{\E[\big|\IL^{-\alpha}\bigl(\tilde{X}^{\tau,\e}(t)-\tilde{X}^{\tau,\e}(t_{\ell(t)})\bigr)\big|^2]}{\tau^{2\alpha}(1+|x_0|_\alpha^2)}<\infty.
\end{equation}
\end{lemma}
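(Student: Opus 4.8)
The plan is to prove Lemma~\ref{lem:tildeXe} exactly as the analogous Lemma~\ref{lem:tildeIX} was proved, but with all the $\tau$-dependence removed since the operators $\IL$ and $e^{-t\IL}$ replace $\IL_\tau$ and $e^{-t\IL_\tau}$, and the noise operator is the identity rather than $Q_\tau^{\frac12}$. First I would recall the mild formulation~\eqref{eq:tildeXe} of the auxiliary process and decompose it into three parts: the deterministic evolution $e^{-t\IL}x_0$, the drift integral $\int_0^t e^{-(t-s)\IL}F(X_{\ell(s)}^{\tau,\e})ds$, and the stochastic convolution $W^{\IL}(t)$. For the bound~\eqref{eq:tildeXe-bound}, I apply $\IL^\alpha$, take $L^2(\Omega)$ norms, and use Minkowski's inequality. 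The term $|\IL^\alpha e^{-t\IL}x_0| \le C_\alpha t^{-\alpha}|\IL^\alpha x_0|$... actually more simply $|\IL^\alpha e^{-t\IL}x_0| \le |\IL^\alpha x_0| = |x_0|_\alpha$ since $\|\IL^\alpha e^{-t\IL}\|_{\mathcal{L}(H)}\le 1$ when applied after $\IL^\alpha$; the drift term is controlled by the smoothing property~\eqref{eq:smoothing}, $\int_0^t (t-s)^{-\alpha}\|\cdot\|ds \le C_\alpha(T)$, together with the Lipschitz bound on $F$ (Assumption~\ref{ass:F}) and the moment bound on $X_n^{\tau,\e}$; and the stochastic convolution term is bounded via It\^o's isometry by $\E[|W^{\IL}(t)|_\alpha^2] = \int_0^t\|\IL^\alpha e^{-s\IL}\|_{\mathcal{L}_2(H)}^2 ds = \frac12\sum_j \lambda_j^{2\alpha-1}(1-e^{-2t\lambda_j})$, which is finite precisely when $\alpha<\frac14$ by Assumption~\ref{ass:Lambda}. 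One preliminary step is to record the uniform moment bound $\sup_{n\tau\le T}\E[|X_n^{\tau,\e}|^2] \le C(T)(1+|x_0|^2)$ for the exponential Euler scheme, which follows from exactly the same Gronwall argument as Lemma~\ref{lem:scheme-bound} (or is standard in the literature), using that $X_n^{\tau,\e} = \tilde{X}^{\tau,\e}(t_n)$ and a fixed-point/telescoping estimate; I would state this as an intermediate claim.

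For the increment bound~\eqref{eq:tildeXe-increment}, I would write, for $t\in[t_n,t_{n+1})$,
\[
\tilde{X}^{\tau,\e}(t)-\tilde{X}^{\tau,\e}(t_n) = (e^{-(t-t_n)\IL}-I)X_n^{\tau,\e} + \int_{t_n}^t e^{-(t-s)\IL}F(X_n^{\tau,\e})ds + \int_{t_n}^t e^{-(t-s)\IL}dW(s),
\]
apply $\IL^{-\alpha}$ and take $L^2(\Omega)$ norms of the three terms. The first is handled by the regularity property~\eqref{eq:regularity}: $\|\IL^{-2\alpha}(e^{-(t-t_n)\IL}-I)\|_{\mathcal{L}(H)} \le C\tau^{2\alpha}$, combined with $\E[|X_n^{\tau,\e}|_\alpha^2]^{1/2}\le C(1+|x_0|_\alpha)$ (which comes from~\eqref{eq:tildeXe-bound} specialized to $t=t_n$). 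The second term is $O(\tau)$ since $\|\IL^{-\alpha}e^{-(t-s)\IL}\|_{\mathcal{L}(H)}\le 1$ and $F$ has linear growth. The third (stochastic) term is treated by It\^o's isometry: $\int_0^\tau \|\IL^{-\alpha}e^{-s\IL}\|_{\mathcal{L}_2(H)}^2 ds = \sum_j \lambda_j^{-2\alpha}\int_0^\tau e^{-2s\lambda_j}ds = \sum_j \frac{\lambda_j^{-2\alpha}}{2\lambda_j}(1-e^{-2\tau\lambda_j})$, and using $1-e^{-2\tau\lambda_j}\le (2\tau\lambda_j)^{1-4\alpha}$ (valid since the exponent is in $[0,1]$ when $\alpha\le\frac14$) this is bounded by $C\tau^{1-4\alpha}\sum_j\lambda_j^{-1-2\alpha}\lambda_j^{... }$ — more carefully, $\le \tau^{4\alpha}\sup_{z>0}\frac{z^{1-4\alpha}}{(1+z)^{...}}\sum_j\lambda_j^{2\alpha-1}$, finite iff $\alpha<\frac14$, giving the $\tau^{2\alpha}$ bound on the norm after taking square roots. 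This is the same computation carried out in the proof of Lemma~\ref{lem:tildeIX}, just without the operator $Q_\tau$.

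For the ergodic bounds~\eqref{eq:tildeXe-bound-ergo} and~\eqref{eq:tildeXe-increment-ergo}, under Assumption~\ref{ass:ergo} I would replace the uniform-in-$T$ moment bound by the uniform-in-$n$ bound $\sup_n\E[|X_n^{\tau,\e}|^2]\le C(1+|x_0|^2)$, which holds because $\frac{1+\tau\Lf}{...}$ — here, more directly, the contraction estimate for the exponential Euler scheme gives $|X_n^{\tau,\e,2}-X_n^{\tau,\e,1}|\le e^{-(\lambda_1-\Lf)t_n}|x_0^2-x_0^1|$ since $\|e^{-\tau\IL}\|_{\mathcal{L}(H)}+\tau\Lf\|\IL^{-1}(I-e^{-\tau\IL})\|_{\mathcal{L}(H)}\le e^{-\lambda_1\tau}+\Lf\lambda_1^{-1}(1-e^{-\lambda_1\tau})$, and $\lambda_1>\Lf$ makes this $\le e^{-(\lambda_1-\Lf)\tau}$ — together with the exponential decay $\|e^{-t\IL}\|_{\mathcal{L}(H)}\le e^{-\lambda_1 t}$ in the mild formulation, so that the convolution integral $\int_0^t\|\IL^\alpha e^{-(t-s)\IL}\|_{\mathcal{L}(H)}ds = \int_0^t\min(s,1)^{-\alpha}e^{-\lambda_1 s}\cdot(\text{const})\,ds$ converges uniformly in $t$; likewise $\E[|W^{\IL}(t)|_\alpha^2]\le \frac12\sum_j\lambda_j^{2\alpha-1}$ is already uniform in $t$. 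The increment estimate~\eqref{eq:tildeXe-increment-ergo} then follows by the same local computation as~\eqref{eq:tildeXe-increment}, which is already $t$-independent. I expect no genuine obstacle here — the main (mild) subtlety is simply being careful that all the constants are independent of $\tau\in(0,\tau_0)$ and, in the ergodic case, of $T$; since $\IL$ (unlike $\IL_\tau$) does not depend on $\tau$, this is actually easier than Lemma~\ref{lem:tildeIX}, and I would explicitly note that the proof is a verbatim simplification and omit the routine Gronwall and series-summation details, matching the style of the surrounding proofs.
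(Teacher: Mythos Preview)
Your proposal is correct and follows essentially the same approach as the paper: first establish the moment bound $\sup_{n\tau\le T}\E[|X_n^{\tau,\e}|^2]\le C(T)(1+|x_0|^2)$ (the paper does this by subtracting the stochastic convolution and running a discrete Gronwall recursion on $Y_n^{\tau,\e}=X_n^{\tau,\e}-W^{\IL}(t_n)$), then bound~\eqref{eq:tildeXe-bound} from the mild formulation via Minkowski, the smoothing inequality~\eqref{eq:smoothing}, and the Hilbert--Schmidt bound on the stochastic convolution; and for~\eqref{eq:tildeXe-increment}, use the same three-term decomposition of $\tilde{X}^{\tau,\e}(t)-\tilde{X}^{\tau,\e}(t_n)$ with~\eqref{eq:regularity} on the first term and the It\^o-isometry estimate $\int_0^\tau\|\IL^{-\alpha}e^{-s\IL}\|_{\mathcal{L}_2(H)}^2ds\le C_\alpha\tau^{4\alpha}\sum_j\lambda_j^{2\alpha-1}$ on the stochastic term. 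The paper likewise omits the details of the ergodic inequalities~\eqref{eq:tildeXe-bound-ergo} and~\eqref{eq:tildeXe-increment-ergo}, so your sketch there is more than sufficient.
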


The proofs of the inequalities~\eqref{eq:tildeXe-bound-ergo} and~\eqref{eq:tildeXe-increment-ergo} are omitted, these inequalities are required only to prove Theorem~\ref{theo:weak-ergo-expo}.

\begin{proof}[Proof of Lemma~\ref{lem:tildeXe}]
Introduce the auxiliary random variables
\begin{align*}
Y_n^{\tau,\e}&=X_n^{\tau,\e}-W^{\IL}(t_n)\\
\tilde{Y}^{\tau,\e}(t)&=\tilde{X}^{\tau,\e}(t)-W^{\IL}(t)
\end{align*}
for all $t\ge 0$ and $n\in\N_0$.

On the one hand, for all $\alpha\in[0,\frac14)$, one has
\[
\underset{t\ge 0}\sup~\E[|W^{\IL}(t)|_\alpha^2]=\int_0^\infty \|\IL^{\alpha}e^{-s\IL}\|_{\mathcal{L}_2(H)}^2ds=\sum_{j\in\N}\lambda_j^{2\alpha}\int_0^\infty e^{-2s\lambda_j}ds\le \sum_{j\in\N}\frac{1}{2\lambda_j^{1-2\alpha}}<\infty.
\]
On the other hand, for all $n\in\N_0$ one has
\[
Y_{n+1}^{\tau,\e}=e^{-\tau\IL}\bigl(Y_n^{\tau,\e}+\tau F(Y_n^{\tau,e}+W^{\IL}(t_n))\bigr),
\]
therefore, using the Lipschitz continuity of $F$ (Assumption~\ref{ass:F}), one obtains
\[
|Y_{n+1}^{\tau,\e}|\le e^{-\tau\lambda_1}\bigl(1+\Lf\tau|Y_n^\tau|\bigr)+\Lf\tau e^{-\tau\lambda_1}\bigl(1+|W^{\IL}(t_n)|\bigr).
\]
A straightforward argument then yields the moment bound
\[
\underset{n\in\N_0,n\tau\le T}\sup~\bigl(\E[|X_n^{\tau,\e}|^2]\bigr)^{\frac12}\le C(T)\bigl(1+|x_0|\bigr).
\]
Then, for all $t\in[0,T]$, one has
\begin{align*}
\bigl(\E[|\tilde{X}^{\tau,\e}(t)|_\alpha^2]\bigr)^{\frac12}&\le |e^{-t\IL} x_0|_\alpha+\int_0^t \|\IL^\alpha e^{-(t-s)\IL}\|_{\mathcal{L}(H)}\bigl(\E[|F(X_{\ell(s)}^{\tau,\e})|^2]\bigr)^{\frac12}ds\\
&+\bigl(\E[|W^{\IL}(t)|_\alpha^2]\bigr)^{\frac12}\\
&\le C(T)(1+|x_0|_\alpha),
\end{align*}
using the smoothing inequality~\eqref{eq:smoothing}, the Lipschitz continuity of $F$ and the moment bound above. Thus the inequality~\eqref{eq:tildeXe-bound} is proved.

It remains to prove the inequality~\eqref{eq:tildeXe-increment}. Observe that for all $n\in\N_0$ and $t\in[t_n,t_{n+1})$, one has
\[
\tilde{X}^{\tau,\e}(t)-\tilde{X}^{\tau,\e}(t_n)=\bigl(e^{-(t-t_n)\IL}-I\bigr)X_n^{\tau,\e}+\int_{t_n}^{t}e^{-(t-s)\IL} F(X_n^{\tau,\e})ds+\int_{t_n}^{t}e^{-(t-s)\IL}dW(s),
\]
and, using the moment bound~\eqref{eq:tildeXe-bound} proved above, and It\^o's isometry formula, one obtains
\begin{align*}
\bigl(\E[\big|\IL^{-\alpha}\bigl(\tilde{X}^{\tau,\e}(t)-\tilde{X}^{\tau,\e}(t_{\ell(t)})\bigr)\big|^2]\bigr)^{\frac12}&\le C(1+|x_0|_\alpha)\|\IL^{-2\alpha}\bigl(e^{-(t-t_n)\IL_\tau}-I\bigr)\|_{\mathcal{L}(H)}+C\tau(1+|x_0|)\\
&+\bigl(\int_{t_n}^{t}\|\IL^{-\alpha}e^{-(t-s)\IL}\|_{\mathcal{L}_2(H)}^2ds\bigr)^{\frac12}.
\end{align*}
Using the inequality~\eqref{eq:regularity}, one has $C(1+|x_0|_\alpha)\|\IL^{-2\alpha}\bigl(e^{-(t-t_n)\IL_\tau}-I\bigr)\|_{\mathcal{L}(H)}\le C_\alpha\tau^{2\alpha}(1+|x_0|_\alpha)$. In addition, one has
\begin{align*}
\int_{t_n}^{t}\|\IL^{-\alpha}e^{-(t-s)\IL}\|_{\mathcal{L}_2(H)}^2ds&\le \sum_{j\in\N}\lambda_j^{-2\alpha}\int_0^\tau e^{-2s\lambda_j}ds\\
&\le \sum_{j\in\N}\lambda_j^{-2\alpha}\frac{1-e^{-2\tau\lambda_j}}{2\lambda_j}\\
&\le C_\alpha\tau^{4\alpha}\sum_{j\in\N}\lambda_j^{-1+2\alpha},
\end{align*}
and one has $\sum_{j\in\N}\lambda_j^{-1+2\alpha}$ for all $\alpha\in[0,\frac14)$, using the inequality $\underset{z\in(0,\infty)}\sup~z^{-\alpha}|1-e^{-z}|<\infty$. Gathering the estimates gives the inequality~\eqref{eq:tildeXe-increment}.

This concludes the proof of Lemma~\ref{lem:tildeXe}.
\end{proof}

\subsection{Proof of Theorem~\ref{theo:weak-expo}}\label{sec:expo-proof}

Using the auxiliary results presented in Subsection~\ref{sec:expo-aux} above, we are now in position to prove Theorem~\ref{theo:weak-expo}.

\begin{proof}[Proof of Theorem~\ref{theo:weak-expo}]
Like in the proof of Theorem~\ref{theo:weakinv}, it suffices to establish the weak error estimate~\eqref{eq:theo-weak-weakerror-expo} when the function $\varphi$ is bounded and continuous. The weak error is written and decomposed as follows:
\begin{align*}
\E[\varphi(X_N^{\tau,\e})]-\E[\varphi(X(T))]&=E[u(0,X_N^{\tau,\e})]-\E[u(T,X_0^{\tau,\e})]\\
&=E[u(0,\tilde{X}^{\tau,\e}(t_N))]-\E[u(T,\tilde{X}^{\tau,\e}(0))]\\
&=\sum_{n=0}^{N-1}\Bigl(\E[u(t_N-t_{n+1},\tilde{X}^{\tau,\e}(t_{n+1}))]-\E[u(t_N-t_n,\tilde{X}^{\tau,\e}(t_n))]\Bigr)\\
&=\sum_{n=0}^{N-1}e_{n}^{\tau,\e},
\end{align*}
with $e_{n}^{\tau,\e}=\E[u(t_N-t_{n+1},\tilde{X}^{\tau,\e}(t_{n+1}))]-\E[u(t_N-t_n,\tilde{X}^{\tau,\e}(t_n))]$ for all $n\in\{0,\ldots,N-1\}$, using a standard telescoping sum argument, where $u$ is defined by~\eqref{eq:u}. Applying It\^o's formula, using the expression~\eqref{eq:dtildeXe} for the evolution of the auxiliary process and the fact that $u$ solves the Kolmogorov equation~\eqref{eq:Kolmogorov}, one obtains, for all $n\in\N_0$, the expression
\[
e_n^{\tau,\e}=\int_{t_n}^{t_{n+1}}\E\bigl[Du(t_N-t,\tilde{X}^{\tau,\e}(t)).\bigl(F(\tilde{X}^{\tau,\e}(t_n))-F(\tilde{X}^{\tau,\e}(t))\bigr)\bigr]dt.
\]
Using the regularity estimate~\eqref{eq:u-regularity} from Proposition~\ref{propo:u-regularity}, then Assumption~\ref{ass:Fregul1} and the Cauchy--Schwarz inequality, one obtains for all $n\in\N_0$
\begin{align*}
|e_n^{\tau,\e}|&\le C_\delta(T)\vvvert\varphi\vvvert_0\int_{t_n}^{t_{n+1}}\frac{1}{(t_N-t)^{1-\frac{\delta}{8}}}\E[\big|\IL^{-\frac12+\frac{\delta}{2}}\bigl(F(\tilde{X}^{\tau,\e}(t_n))-F(\tilde{X}^{\tau,\e}(t))\bigr)\big|]dt\\
&\le C_\delta(T)\vvvert\varphi\vvvert_0\int_{t_n}^{t_{n+1}}\frac{1}{(t_{N}-t)^{1-\frac{\delta}{8}}}\bigl(\E[\bigl(1+|\tilde{X}^{\tau,\e}(t)|_{\frac{1}{4}-\frac{\delta}{8}}^2+|\tilde{X}^{\tau,\e}(t_n)|_{\frac{1}{4}-\frac{\delta}{8}}^2\bigr)]\bigr)^{\frac12}\\
&\hspace{6cm}\bigl(\E[\big|\IL^{-\frac14+\frac{\delta}{2}}(\tilde{X}^{\tau,\e}(t_n)-\tilde{X}^{\tau,\e}(t))\big|]\bigr)^{\frac12} dt.
\end{align*}
Finally, using the inequalities~\eqref{eq:tildeXe-bound} and~\eqref{eq:tildeXe-increment-ergo}, one obtains
\[
\big|\E[\varphi(X_N^{\tau,\e})]-\E[\varphi(X(T))]\big|\le \sum_{n=0}^{N-1}|e_n^{\tau,\e}|\le C_\delta(T)\tau^{\frac12-\delta}\vvvert\varphi\vvvert_0\int_0^\infty\frac{1}{t^{1-\frac{\delta}{8}}}dt(1+|x_0|_{\frac14-\frac{\delta}{8}}^2).
\]
This concludes the proof of Theorem~\ref{theo:weak-expo}.
\end{proof}

\subsection{Strong convergence of the exponential Euler scheme}\label{sec:expo-strong}

Theorem~\ref{theo:weak-expo} states that the weak order of convergence of the (accelerated) exponential Euler scheme~\eqref{eq:exponentialscheme-expo} is equal to $1/2$. If the function $\varphi$ is assumed to be of class $\mathcal{C}^1$ with bounded derivative, instead of being bounded and measurable, a proof of the weak error estimate is obtained using Lemma~\ref{lem:u-12} directly instead of Proposition~\ref{propo:u-regularity} above. Note that contrary to the analysis of the standard Euler scheme or of the modifed Euler scheme (see Theorem~\ref{theo:weak}), it is not needed to assume that $\varphi$ is of class $\mathcal{C}^2$ with bounded first and second order derivatives. This difference is due to the construction of the accelerated exponential Euler scheme, which is exact when $F=0$, in other words the stochastic convolution is computed with no error.

In fact, when the conditions of Theorem~\ref{theo:weak-expo} are satisfied, the accelerated exponential Euler scheme~\eqref{eq:exponentialscheme-expo} has strong order of convergence equal to $1/2$.
\begin{propo}\label{propo:strong-expo}
Let the nonlinearity $F$ satisfy Assumptions~\ref{ass:F} and~\ref{ass:Fregul1}. For all $T\in(0,\infty)$, $\delta\in(0,\frac12)$ and $\tau_0\in(0,1)$, there exists $C_{\delta}(T)\in(0,\infty)$ such that for all $\tau=\frac{T}{N}\in(0,\tau_0)$ with $N\in\N$, and all $x_0\in H^{\frac14-\frac{\delta}{8}}$ one has
\begin{equation}\label{eq:strong-expo}
\underset{n\in\N;n\tau\le T}\sup~\E[|X(n\tau)-X_n^{\tau,\e}|]\le C_\delta(T)\tau^{\frac12-\delta}\bigl(1+|x_0|_{\frac14-\frac{\delta}{8}}\bigr).
\end{equation}
\end{propo}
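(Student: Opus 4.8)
The plan is to estimate the strong error using the mild formulations of the exact solution~\eqref{eq:mild} and of the accelerated exponential Euler scheme~\eqref{eq:exponentialscheme-expo}, exploiting crucially that the stochastic convolution is integrated exactly by this scheme, so the noise contributes no error at all. First I would introduce the auxiliary process $\bigl(\tilde{X}^{\tau,\e}(t)\bigr)_{t\ge 0}$ defined by~\eqref{eq:tildeXe}, which satisfies $\tilde{X}^{\tau,\e}(t_n)=X_n^{\tau,\e}$ for all $n\in\N_0$, and whose mild formulation differs from that of $X(t)$ only through the replacement of $F(X(s))$ by $F(X_{\ell(s)}^{\tau,\e})$ in the drift integral:
\[
X(t)-\tilde{X}^{\tau,\e}(t)=\int_0^t e^{-(t-s)\IL}\bigl(F(X(s))-F(X_{\ell(s)}^{\tau,\e})\bigr)ds.
\]
Since the same stochastic convolution $W^{\IL}$ appears in both formulas, it cancels exactly; this is the analogue of the observation that drives the weak analysis in Section~\ref{sec:expo-proof}.

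The next step is to split the integrand as $F(X(s))-F(X_{\ell(s)}^{\tau,\e}) = \bigl(F(X(s))-F(\tilde{X}^{\tau,\e}(s))\bigr)+\bigl(F(\tilde{X}^{\tau,\e}(s))-F(\tilde{X}^{\tau,\e}(t_{\ell(s)}))\bigr)$. For the first piece I would use the global Lipschitz continuity of $F$ (Assumption~\ref{ass:F}) together with the contractivity estimate $\|e^{-(t-s)\IL}\|_{\mathcal{L}(H)}\le 1$, producing a term of the form $\Lf\int_0^t\E[|X(s)-\tilde{X}^{\tau,\e}(s)|]ds$, which will be absorbed at the end by Gronwall's lemma. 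For the second piece — the genuine local error — I would follow the same device as in the proof of Theorem~\ref{theo:weak-expo}: write $F(\tilde{X}^{\tau,\e}(s))-F(\tilde{X}^{\tau,\e}(t_{\ell(s)}))=(\tilde{X}^{\tau,\e}(s)-\tilde{X}^{\tau,\e}(t_{\ell(s)}))g(\tilde{X}^{\tau,\e}(s),\tilde{X}^{\tau,\e}(t_{\ell(s)}))$ and apply Assumption~\ref{ass:Fregul1}, which controls $|\IL^{-\frac12+\frac{\delta}{4}}(F(x_2)-F(x_1))|$ in terms of $|\IL^{-\frac14+\delta}(x_2-x_1)|$ and low-order Sobolev norms of $x_1,x_2$. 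Combined with the smoothing inequality~\eqref{eq:smoothing} for $\IL^{\frac12-\frac{\delta}{4}}e^{-(t-s)\IL}$ (which gives an integrable singularity $(t-s)^{-\frac12+\frac{\delta}{4}}$), the Cauchy--Schwarz inequality, and the moment bound~\eqref{eq:tildeXe-bound} together with the increment bound~\eqref{eq:tildeXe-increment} from Lemma~\ref{lem:tildeXe} (applied with $\alpha=\frac14-\frac{\delta}{2}$), this contributes a term of order $\tau^{\frac12-\delta}(1+|x_0|_{\frac14-\frac{\delta}{8}})$.

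Putting the two pieces together yields an inequality of the form
\[
\E[|X(t)-\tilde{X}^{\tau,\e}(t)|]\le C_\delta(T)\tau^{\frac12-\delta}(1+|x_0|_{\frac14-\frac{\delta}{8}})+\Lf\int_0^t\E[|X(s)-\tilde{X}^{\tau,\e}(s)|]ds,
\]
and Gronwall's lemma gives the uniform bound~\eqref{eq:strong-expo} after evaluating at $t=t_n=n\tau$ and using $\tilde{X}^{\tau,\e}(t_n)=X_n^{\tau,\e}$. I expect the main obstacle to be the careful bookkeeping of the fractional powers of $\IL$: one must split the drift integral near the singularity $s=t$ versus away from it, or alternatively handle the case $\ell(s)=\ell(t)$ separately, so that both the smoothing factor $(t-s)^{-\frac12+\frac{\delta}{4}}$ and the increment estimate $\E[|\IL^{-\frac14+\frac{\delta}{2}}(\tilde{X}^{\tau,\e}(s)-\tilde{X}^{\tau,\e}(t_{\ell(s)}))|]^{1/2}\le C\tau^{\frac14-\frac{\delta}{4}}(1+|x_0|_{\ldots})^{1/2}$ are simultaneously available and the exponents add up correctly to $\frac12-\delta$; the choice of which fractional exponent to assign to the smoothing operator versus to the increment bound (so that the time singularity stays integrable while the $\tau$-power is maximized) is the delicate point, exactly as in Section~\ref{sec:expo-proof}. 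Everything else is a routine application of Gronwall's lemma and the already-established moment and regularity estimates.
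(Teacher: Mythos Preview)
Your proposal is correct and follows essentially the same route as the paper: introduce the auxiliary process $\tilde{X}^{\tau,\e}$, cancel the stochastic convolution, split the drift error into a Lipschitz part (absorbed by Gronwall) and a local-increment part handled via Assumption~\ref{ass:Fregul1} combined with the smoothing inequality~\eqref{eq:smoothing} and Lemma~\ref{lem:tildeXe}. One small remark: the splitting ``near the singularity $s=t$ versus away from it'' that you anticipate is not actually needed here, since the smoothing exponent $(t-s)^{-\frac12+\frac{\delta}{4}}$ is already integrable on $[0,T]$ and the increment bound~\eqref{eq:tildeXe-increment} holds uniformly in $s$; the paper simply integrates directly.
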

The strong convergence estimate~\eqref{eq:strong-expo} is a variant of the results of~\cite{Jentzen}, under slightly different assumptions on the linearity $F$. A proof is given below for completeness. It is thus not surprising that it is sufficient to assume that $\varphi$ is Lipschitz continuous to obtain weak error estimates with order $1/2$ for this integrator, instead of assuming $\varphi$ is of class $\mathcal{C}^2$ in standard results. Theorem~\ref{theo:weak-expo} is a substantial improvement of this straightforward result, since $\varphi$ is only assumed to be bounded and measurable, in particular it is not Lipschitz continuous.

\begin{proof}[Proof of Proposition~\ref{propo:strong-expo}]
The strong error estimate~\eqref{eq:strong-expo} is a straightforward consequence of the more precise result
\[
\underset{0\le t\le t}\sup~\E[|X(T)-\tilde{X}^{\tau,\e}(t)|]\le C_\delta(T)\tau^{\frac12-\delta}\bigl(1+|x_0|_{\frac14-\frac{\delta}{8}}\bigr),
\]
where $\bigl(\tilde{X}^{\tau,\e}(t)\bigr)_{t\ge 0}$ is the auxiliary process defined by~\eqref{eq:tildeXe}. Indeed, $X_n^{\tau,\e}=\tilde{X}^{\tau,\e}(t_n)$ with $t_n=n\tau$, for all $n\in\N_0$ and $\tau\in(0,\tau_0)$.

The strong error estimate above is obtained using the following arguments. Using the mild formulations for the processes $X$ and $\tilde{X}^{\tau,\e}$, one obtains for all $t\in[0,T]$,
\begin{align*}
X(t)-\tilde{X}^{\tau,\e}(t)&=\int_0^t e^{-(t-s)\IL}F(X(s))ds-\int_0^t e^{-(t-s)\IL}F(\tilde{X}^{\tau,\e}(t_{\ell(s)}))ds\\
&=\int_0^t e^{-(t-s)\IL}\bigl(F(X(s))-F(\tilde{X}^{\tau,\e}(s)\bigr)ds\\
&+\int_0^t e^{-(t-s)\IL}\bigl(F(\tilde{X}^{\tau,\e}(s))-F(\tilde{X}^{\tau,\e}(t_{\ell(s)}))\bigr)ds.
\end{align*}
Therefore, for all $t\in[0,T]$, one obtains
\begin{align*}
\E[|X(t)-\tilde{X}^{\tau,\e}(t)|]&\le \Lf\int_0^t\E[|X(s)-\tilde{X}^{\tau,\e}(s)|]ds\\
&+\int_0^t \E[\big|e^{-(t-s)\IL}\bigl(F(\tilde{X}^{\tau,\e}(s))-F(\tilde{X}^{\tau,\e}(t_{\ell(s)}))\big|]ds\\
&\le \Lf\int_0^t\E[|X(s)-\tilde{X}^{\tau,\e}(s)|]ds\\
&+\int_0^t \frac{1}{(t-s)^{\frac12-\frac{\delta}{8}}}\E[|\IL^{-\frac12+\frac{\delta}{8}}\bigl(F(\tilde{X}^{\tau,\e}(s))-F(\tilde{X}^{\tau,\e}(t_{\ell(s)}))\big|]ds
\end{align*}
using the smoothing inequality~\eqref{eq:smoothing}. Finally, using Assumption~\ref{ass:Fregul1}, and then the auxiliary bounds~\eqref{eq:tildeXe-bound} and~\eqref{eq:tildeXe-increment}, like in the proof of Theorem~\ref{theo:weak-expo} above (see Subsection~\ref{sec:expo-proof}) then gives
\begin{align*}
\E[|X(t)-\tilde{X}^{\tau,\e}(t)|]&\le \Lf\int_0^t\E[|X(s)-\tilde{X}^{\tau,\e}(s)|]ds\\
&+C_\delta(T)\tau^{\frac12-\delta}\int_0^\infty\frac{1}{t^{1-\frac{\delta}{8}}}dt(1+|x_0|_{\frac14-\frac{\delta}{8}}^2),
\end{align*}
for all $t\in[0,T]$. Applying Gronwall's lemma then concludes the proof of Proposition~\ref{propo:strong-expo}.
\end{proof}

\section{Results on the standard Euler scheme}\label{sec:standard}

This section is devoted to the proof of Theorem~\ref{theo:weakinv-standard} and is organized as follows. The proof of the weak error estimate~\eqref{eq:theo-weakinv_weakerror-standard} is based on the decomposition~\eqref{eq:decomperror-standard} of the error, where the auxiliary processes $\bigl(\IX^{\tau,\s}(t)\bigr)_{t\ge 0}$ and $\bigl(\IX_\star^{\tau,\s}(t)\bigr)_{t\ge 0}$ are the solutions of the modified stochastic evolution equations~\eqref{eq:modifiedSPDEstandard} and~\eqref{eq:modifiedSPDEstandardstar} respectively, which we recall for convenience:
\begin{equation}\label{eq:modified-standard}
\begin{aligned}
d\IX^{\tau,\s}(t)&=-\IL_\tau \IX^{\tau,\s}(t)dt+Q_{\tau}F(\IX^{\tau,\s}(t))dt+R_{\tau}^{\frac12}dW(t),\\
d\IX_\star^{\tau,\s}(t)&=-\IL_\tau \IX_\star^{\tau,\s}(t)dt+R_{\tau}F(\IX_\star^{\tau,\s}(t))dt+R_{\tau}^{\frac12}dW(t).
\end{aligned}
\end{equation}
Subsection~\ref{sec:standard-aux} provides several auxiliary results, concerning the linear operator $R_\tau$ (defined by Equation~\eqref{eq:Rtau}), the two auxiliary processes introduced above, and an additional auxiliary process $\bigl(\tilde{\IX}^{\tau,\s}(t)\bigr)_{t\ge 0}$ defined below. Subsection~\ref{sec:standard-kolmogorov} is then devoted to the study of regularity properties of the solutions $u^{\tau,\s}$ and $u_\star^{\tau,\s}$ of the Kolmogorov equations associated with the stochastic evolution equations~\eqref{eq:modified-standard}: the statements and arguments are similar to those presented in Section~\eqref{sec:auxiliary-kolmogorov-modified}, however some new difficulties need to be dealt with. Finally, the details of the proof of Theorem~\ref{theo:weakinv-standard} are presented in Subsection~\ref{sec:standard-proof}.

\subsection{Auxiliary results}\label{sec:standard-aux}

Let us first state the three main properties of the linear operator $R_\tau$ which are used in the analysis. Recall that
\[
R_\tau=Q_\tau(I+\frac{\tau\IL}{2})^{-1}
\]
for all $\tau\in(0,\tau_0)$.
\begin{lemma}\label{lem:Rtau}
For all $\tau\in(0,\tau_0)$ and $x\in H$, one has
\begin{equation}\label{eq:lemRtau-bound}
|R_\tau x|\le |Q_\tau x|,\quad |R_\tau^{\frac12}x|\le |Q_\tau^{\frac12}x|.
\end{equation}
Moreover, one has
\begin{align}
&\underset{\tau\in(0,\tau_0)}\sup~\|R_\tau^{-\frac12}e^{-\tau\IL_\tau}\|_{\mathcal{L}(H)}<\infty,\label{eq:IL_tau-IL_tauR_tau1/2}\\
&\underset{\tau\in(0,\tau_0)}\sup~\underset{t\in(0,\infty)}\sup~\frac{\sqrt{t}}{\sqrt{\tau}}\|R_\tau^{-\frac12}e^{-t\IL_\tau}Q_\tau\|_{\mathcal{L}(H)}<\infty.\label{eq:IL_tau-IL_tauR_tau1/2Qtau}
\end{align}

Finally, for all $\delta\in(0,\frac12)$, one has
\begin{align}
&\underset{\tau\in(0,\tau_0)}\sup~\tau^{-\frac12+\delta}\|\IL^{-\frac12+\delta}(Q_\tau-R_\tau)\|_{\mathcal{L}(H)}<\infty,\label{eq:QtauRtau}\\
&\underset{\tau\in(0,\tau_0)}\sup~\tau^{-\delta}\|\IL^{-\delta}R_\tau^{-\frac12}(Q_\tau-R_\tau)\|_{\mathcal{L}(H)}<\infty.\label{eq:QtauRtauRtau1/2}
\end{align}
\end{lemma}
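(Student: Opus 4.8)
\textbf{Proof plan for Lemma~\ref{lem:Rtau}.}
The strategy throughout is the same as in the proofs of Lemma~\ref{lem:Q_tauIL_tau-bound} and Lemma~\ref{lem:Q_tauIL_tau-error}: since $\IL$, $\IL_\tau$, $Q_\tau$ and $R_\tau$ all commute and share the eigenbasis $(e_j)_{j\in\N}$, every assertion reduces to a scalar estimate over $j\in\N$, uniform in $\tau\in(0,\tau_0)$, which is then checked by replacing $\tau\lambda_j$ by a continuous variable $z\in(0,\infty)$ and invoking elementary calculus bounds on $\theta(z)=\frac{\log(1+z)}{z}$ and on $(1+z/2)^{-1}$. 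Recall that $R_\tau$ has eigenvalues $r_{\tau,j}=q_{\tau,j}(1+\tfrac{\tau\lambda_j}{2})^{-1}$, with $q_{\tau,j}=\frac{\log(1+\tau\lambda_j)}{\lambda_j\tau}$ and $\lambda_{\tau,j}=\frac{\log(1+\tau\lambda_j)}{\tau}$, so that $e^{-\tau\lambda_{\tau,j}}=(1+\tau\lambda_j)^{-1}$. First I would dispose of~\eqref{eq:lemRtau-bound}: since $0<(1+\tfrac{\tau\lambda_j}{2})^{-1}\le 1$ one has $r_{\tau,j}\le q_{\tau,j}$ and $\sqrt{r_{\tau,j}}\le\sqrt{q_{\tau,j}}$ for all $j$, which gives both inequalities directly in terms of the eigenvalue comparison.

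For~\eqref{eq:IL_tau-IL_tauR_tau1/2} I would estimate
\[
\|R_\tau^{-\frac12}e^{-\tau\IL_\tau}\|_{\mathcal{L}(H)}=\underset{j\in\N}\sup~\frac{e^{-\tau\lambda_{\tau,j}}}{r_{\tau,j}^{\frac12}}=\underset{j\in\N}\sup~\frac{(\tau\lambda_j)^{\frac12}(1+\tfrac{\tau\lambda_j}{2})^{\frac12}}{(1+\tau\lambda_j)\sqrt{\log(1+\tau\lambda_j)}},
\]
and bound this by $\underset{z\in(0,\infty)}\sup~\frac{z^{\frac12}(1+z/2)^{\frac12}}{(1+z)\sqrt{\log(1+z)}}<\infty$, the finiteness near $z=0$ coming from $\log(1+z)\sim z$ and near $z=\infty$ from the extra factor $(1+z)$ in the denominator; this is the exact analogue of~\eqref{eq:IL_tau-IL_tauQ_tau1/2}. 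For~\eqref{eq:IL_tau-IL_tauR_tau1/2Qtau} one writes
\[
\frac{\sqrt t}{\sqrt\tau}\|R_\tau^{-\frac12}e^{-t\IL_\tau}Q_\tau\|_{\mathcal{L}(H)}=\underset{j\in\N}\sup~\frac{\sqrt t}{\sqrt\tau}\,\frac{q_{\tau,j}e^{-t\lambda_{\tau,j}}}{r_{\tau,j}^{\frac12}}=\underset{j\in\N}\sup~\Bigl(\frac{t\lambda_{\tau,j}}{\tau\lambda_{\tau,j}}\Bigr)^{\frac12}q_{\tau,j}^{\frac12}(1+\tfrac{\tau\lambda_j}{2})^{\frac12}e^{-t\lambda_{\tau,j}},
\]
then uses $\sup_{u>0}\sqrt{u}\,e^{-u}<\infty$ with $u=t\lambda_{\tau,j}$ together with $\tau\lambda_{\tau,j}=\log(1+\tau\lambda_j)$, so the bound becomes $\underset{j\in\N}\sup~q_{\tau,j}^{\frac12}(1+\tfrac{\tau\lambda_j}{2})^{\frac12}/\sqrt{\log(1+\tau\lambda_j)}$, and since $q_{\tau,j}(1+\tfrac{\tau\lambda_j}{2})=r_{\tau,j}^{-1}\cdot q_{\tau,j}^2(1+\tfrac{\tau\lambda_j}{2})$... more cleanly, $q_{\tau,j}(1+\tfrac{\tau\lambda_j}{2})/\log(1+\tau\lambda_j)=\frac{1+\tau\lambda_j/2}{\lambda_j\tau}$, which reduces to $\underset{z\in(0,\infty)}\sup~\frac{1+z/2}{z}$; this last quantity is \emph{not} bounded near $z=0$, so the clean way is instead to keep one factor $e^{-t\lambda_{\tau,j}}$ and observe $\frac{q_{\tau,j}}{r_{\tau,j}^{1/2}}=q_{\tau,j}^{1/2}(1+\tfrac{\tau\lambda_j}{2})^{1/2}$ and bound $\sqrt{t/\tau}\,e^{-t\lambda_{\tau,j}}\le C(\tau\lambda_{\tau,j})^{-1/2}=C(\log(1+\tau\lambda_j))^{-1/2}$ using $\sup_u\sqrt u e^{-u}<\infty$ with $u=t\lambda_{\tau,j}$ and $t/\tau = u/(\tau\lambda_{\tau,j})$, giving finally $\underset{z\in(0,\infty)}\sup~\frac{(1+z/2)^{1/2}}{z^{1/2}}\cdot\frac{(\log(1+z))^{1/2}}{(\log(1+z))^{1/2}}$ — the point being that $q_{\tau,j}^{1/2}(\log(1+\tau\lambda_j))^{-1/2}=(\lambda_j\tau)^{-1/2}$ combines with $\sqrt t/\sqrt\tau\,e^{-t\lambda_{\tau,j}}$ to a bounded expression; I will organize this computation carefully so only $\sup_{z}\frac{\sqrt{1+z/2}}{z}\cdot(\text{bounded})$-type terms appear and verify integrability of the resulting singularity at $z=0$ is absorbed by the $\sqrt{t}$ prefactor.

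For the two error bounds I would use the telescoping $Q_\tau-R_\tau=Q_\tau\bigl(I-(I+\tfrac{\tau\IL}{2})^{-1}\bigr)=Q_\tau(I+\tfrac{\tau\IL}{2})^{-1}\tfrac{\tau\IL}{2}=\tfrac{\tau}{2}R_\tau\IL$. For~\eqref{eq:QtauRtau} this gives $\IL^{-1/2+\delta}(Q_\tau-R_\tau)=\tfrac\tau2\IL^{1/2+\delta}R_\tau$, with scalar $\tfrac\tau2\lambda_j^{1/2+\delta}r_{\tau,j}=\tfrac\tau2\lambda_j^{1/2+\delta}q_{\tau,j}(1+\tfrac{\tau\lambda_j}{2})^{-1}=\tfrac12\lambda_j^{-1/2+\delta}\log(1+\tau\lambda_j)(1+\tfrac{\tau\lambda_j}{2})^{-1}$; multiplying by $\tau^{-1/2+\delta}$ produces $\tfrac12(\tau\lambda_j)^{-1/2+\delta}\log(1+\tau\lambda_j)(1+\tfrac{\tau\lambda_j}{2})^{-1}$, bounded by $\underset{z\in(0,\infty)}\sup~\frac{\log(1+z)}{z^{1/2-\delta}(1+z/2)}<\infty$ since $\delta<\tfrac12$ handles $z\to0$ and the denominator handles $z\to\infty$. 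For~\eqref{eq:QtauRtauRtau1/2}, $\IL^{-\delta}R_\tau^{-1/2}(Q_\tau-R_\tau)=\tfrac\tau2\IL^{-\delta}R_\tau^{1/2}\IL=\tfrac\tau2\IL^{1-\delta}R_\tau^{1/2}$, with scalar $\tfrac\tau2\lambda_j^{1-\delta}r_{\tau,j}^{1/2}=\tfrac\tau2\lambda_j^{1-\delta}q_{\tau,j}^{1/2}(1+\tfrac{\tau\lambda_j}{2})^{-1/2}=\tfrac\tau2\lambda_j^{1-\delta}(\tau\lambda_j)^{-1/2}(\log(1+\tau\lambda_j))^{1/2}(1+\tfrac{\tau\lambda_j}{2})^{-1/2}=\tfrac12\tau^{1/2}\lambda_j^{1/2-\delta}(\log(1+\tau\lambda_j))^{1/2}(1+\tfrac{\tau\lambda_j}{2})^{-1/2}$; multiplying by $\tau^{-\delta}$ and writing $\lambda_j^{1/2-\delta}=(\tau\lambda_j)^{1/2-\delta}\tau^{-1/2+\delta}$ gives $\tfrac12(\tau\lambda_j)^{1/2-\delta}(\log(1+\tau\lambda_j))^{1/2}(1+\tfrac{\tau\lambda_j}{2})^{-1/2}$, bounded by $\underset{z\in(0,\infty)}\sup~\frac{z^{1/2-\delta}(\log(1+z))^{1/2}}{(1+z/2)^{1/2}}<\infty$ since $\delta>0$ ensures the power of $z$ in the numerator exceeds that of the $(\log(1+z))^{1/2}$-dominated growth — wait, near $z\to\infty$ the numerator grows like $z^{1/2-\delta}(\log z)^{1/2}$ while the denominator grows like $z^{1/2}$, so the ratio decays; near $z=0$ it vanishes. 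This closes all six estimates.

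The main obstacle is purely bookkeeping in~\eqref{eq:IL_tau-IL_tauR_tau1/2Qtau}: one must combine the three small-$\tau$/large-$\tau$ competing factors $\sqrt{t/\tau}$, $e^{-t\lambda_{\tau,j}}$, $q_{\tau,j}/r_{\tau,j}^{1/2}$ in the right order so that the final scalar expression is genuinely $\tau$-uniform and has no hidden singularity; the cleanest route is to absorb $\sqrt t\,e^{-t\lambda_{\tau,j}}\le C\lambda_{\tau,j}^{-1/2}$ first (valid for all $t>0$ by $\sup_u\sqrt u e^{-u}<\infty$), reducing the claim to $\underset{\tau\in(0,\tau_0)}\sup~\underset{j\in\N}\sup~\tau^{-1/2}\lambda_{\tau,j}^{-1/2}q_{\tau,j}r_{\tau,j}^{-1/2}<\infty$, and then $\lambda_{\tau,j}=\lambda_j q_{\tau,j}$ gives $\tau^{-1/2}\lambda_{\tau,j}^{-1/2}q_{\tau,j}r_{\tau,j}^{-1/2}=\tau^{-1/2}\lambda_j^{-1/2}q_{\tau,j}^{1/2}r_{\tau,j}^{-1/2}=\tau^{-1/2}\lambda_j^{-1/2}(1+\tfrac{\tau\lambda_j}{2})^{1/2}=(\tau\lambda_j)^{-1/2}(1+\tfrac{\tau\lambda_j}{2})^{1/2}=\underset{z}{\phantom{.}}\bigl(\tfrac{1+z/2}{z}\bigr)^{1/2}$, which is again singular at $z=0$ — so the $\sqrt t$ prefactor genuinely cannot be given away for free, and one must instead keep $t$ coupled to $\lambda_{\tau,j}$ and use that for $t\le\tau$ the operator $e^{-t\IL_\tau}Q_\tau$ is controlled by~\eqref{eq:IL_tau-smoothing-bis}-type arguments while for $t>\tau$ one has $e^{-t\lambda_{\tau,j}}\le e^{-\tau\lambda_{\tau,j}}=(1+\tau\lambda_j)^{-1}$ which supplies the missing decay. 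I will therefore split the supremum over $t$ into $t\le\tau$ and $t>\tau$ and treat the former with the bound $\sqrt{t/\tau}\le 1$ combined with $\|R_\tau^{-1/2}Q_\tau\|_{\mathcal{L}(H)}$-type scalar estimates and the latter with the exponential decay, exactly mirroring how Lemma~\ref{lem:Q_tauIL_tau-bound} handles~\eqref{eq:IL_tau-smoothing} versus~\eqref{eq:IL_tau-smoothing-bis}.
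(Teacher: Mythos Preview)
Your treatment of \eqref{eq:lemRtau-bound}, \eqref{eq:IL_tau-IL_tauR_tau1/2}, \eqref{eq:QtauRtau} and \eqref{eq:QtauRtauRtau1/2} matches the paper's proof exactly: reduce to scalars in the common eigenbasis, use the identity $Q_\tau-R_\tau=\tfrac{\tau}{2}R_\tau\IL$, and verify that the resulting functions of $z=\tau\lambda_j$ are bounded on $(0,\infty)$. Your scalar computations are the same as the paper's and are correct.

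For \eqref{eq:IL_tau-IL_tauR_tau1/2Qtau} your suspicion is well founded. After applying $\sup_{u>0}\sqrt u\,e^{-u}<\infty$ with $u=t\lambda_{\tau,j}$, what remains is $\sup_j(1+\tau\lambda_j/2)^{1/2}/(\tau\lambda_j)^{1/2}$; the paper simply bounds this by $\sup_{z>0}(1+z/2)^{1/2}/z^{1/2}$ --- which is infinite near $z=0$. In fact the inequality \eqref{eq:IL_tau-IL_tauR_tau1/2Qtau} as stated is false: take $j=1$, $t=1$ and let $\tau\to 0$; then $r_{\tau,1}^{-1/2}e^{-\lambda_{\tau,1}}q_{\tau,1}\to e^{-\lambda_1}>0$ while $\sqrt{t/\tau}\to\infty$. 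Your proposed split into $t\le\tau$ versus $t>\tau$ cannot save it either: on $t\le\tau$ you would need $R_\tau^{-1/2}Q_\tau=Q_\tau^{1/2}(I+\tfrac{\tau\IL}{2})^{1/2}$ bounded (it is not, its $j$-th scalar grows like $\sqrt{\log z}$), and on $t>\tau$ the same $j=1$, $t=1$, $\tau\to 0$ counterexample survives any use of $e^{-t\lambda_{\tau,j}}\le(1+z)^{-1}$. What \emph{is} true, and what the paper actually invokes downstream in the proof of Lemma~\ref{lem:utausutausstar-regularity}, is the weaker bound
\[
\|R_\tau^{-\frac12}e^{-t\IL_\tau}Q_\tau\|_{\mathcal{L}(H)}\le C\Bigl(1+\sqrt{\tau/t}\,\Bigr),
\]
obtained by writing $(1+z/2)^{1/2}\le 1+(z/2)^{1/2}$ and observing that $(z/2)^{1/2}q_{\tau,j}^{1/2}=(\tau\lambda_{\tau,j}/2)^{1/2}$, so that $\sup_{u>0}\sqrt u\,e^{-u}<\infty$ with $u=t\lambda_{\tau,j}$ handles the second term cleanly while the first is bounded by $q_{\tau,j}^{1/2}\le 1$. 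Your diagnosis of the obstruction is correct; the resolution is to prove and use this corrected inequality rather than the one stated.
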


\begin{proof}[Proof of Lemma~\ref{lem:Rtau}]
The inequality~\eqref{eq:lemRtau-bound} is a straightforward consequence of the definition of $R_\tau$ and of the inequality $\|(I+\frac{\tau\IL}{2})^{-1}\|_{\mathcal{L}(H)}\le 1$.

The inequality~\eqref{eq:IL_tau-IL_tauR_tau1/2} is proved as follows (see the proof of inequality~~\eqref{eq:IL_tau-IL_tauQ_tau1/2} from Lemma~\ref{lem:Q_tauIL_tau-bound} for similar arguments): for all $\tau\in(0,\tau_0)$, one has
\begin{align*}
\|R_\tau^{-\frac12}e^{-\tau\IL_\tau}\|_{\mathcal{L}(H)}&=\underset{j\in\N}\sup~\frac{(1+\frac{\tau\lambda_j}{2})^{\frac12}e^{-\tau\lambda_{\tau,j}}}{q_{\tau,j}^{\frac12}}\\
&=\underset{j\in\N}\sup~\frac{(1+\frac{\tau\lambda_j}{2})^{\frac12}(\tau\lambda_j)^{\frac12}}{(1+\tau\lambda_j)\sqrt{\log(1+\tau\lambda_j)}}\\
&\le \underset{z\in(0,\infty)}\sup~\frac{\bigl((1+\frac{z}{2})z\bigr)^{\frac12}}{(1+z)\sqrt{\log(1+z)}}<\infty.
\end{align*}

Let us now prove the inequality~\eqref{eq:IL_tau-IL_tauR_tau1/2Qtau}: for all $\tau\in(0,\tau_0)$ and $t\in(0,\infty)$, one has
\begin{align*}
\|R_\tau^{-\frac12}e^{-t\IL_\tau}Q_\tau\|_{\mathcal{L}(H)}&=\underset{j\in\N}\sup~\frac{(1+\frac{\tau\lambda_j}{2})^{\frac12}}{q_{\tau,j}^{\frac12}}e^{-t\lambda_{\tau,j}}q_{\tau,j}\\
&=\underset{j\in\N}\sup~(1+\frac{\tau\lambda_j}{2})^{\frac12}\frac{\sqrt{\log(1+\tau\lambda_j)}}{(\tau\lambda_j)^{\frac12}}e^{-t\frac{\log(1+\tau\lambda_j)}{\tau}}\\
&\le \frac{\tau^{\frac12}}{t^{\frac12}}\underset{z\in(0,\infty)}\sup~ z^{\frac12}e^{-z}\underset{j\in\N}\sup~\frac{(1+\frac{\tau\lambda_j}{2})^{\frac12}}{(\tau\lambda_j)^{\frac12}}\\
&\le \frac{\tau^{\frac12}}{t^{\frac12}}\underset{z\in(0,\infty)}\sup~ z^{\frac12}e^{-z}\underset{z\in(0,\infty)}\sup~\frac{(1+\frac{z}{2})^{\frac12}}{z^{\frac12}}.
\end{align*}

It remains to prove the inequalities~\eqref{eq:QtauRtau} and~\eqref{eq:QtauRtauRtau1/2}. Note that for all $\tau\in(0,\tau_0)$, one has
\[
Q_\tau-R_\tau=Q_\tau(I-(I+\frac{\tau\IL}{2})^{-1})=Q_\tau(I+\frac{\tau\IL}{2})^{-1}\frac{\tau\IL}{2}.
\]
One the one hand, one has
\begin{align*}
\underset{\tau\in(0,\tau_0)}\sup~\tau^{-\frac12+\delta}\|\IL^{-\frac12+\delta}(Q_\tau-R_\tau)\|_{\mathcal{L}(H)}&=\underset{\tau\in(0,\tau_0)}\sup~\underset{j\in\N}\sup~\frac{q_{\tau,j}(\tau\lambda_j)^{\frac12+\delta}}{2+\tau\lambda_j}\\
&\le \underset{z\in(0,\infty)}\sup~\frac{z^{\frac12+\delta}}{2+z}<\infty,
\end{align*}
using the inequality~\eqref{eq:Q_tau-bound} from Lemma~\ref{lem:Q_tauIL_tau-bound} to have $q_{\tau,j}\le 1$.

On the other hand, the linear operator
\[
R_\tau^{-\frac12}(Q_\tau-R_\tau)=Q_\tau^{\frac12}(I+\frac{\tau\IL}{2})^{-\frac12}\frac{\tau\IL}{2}=\frac12\log(1+\tau\IL)^{\frac12}(I+\frac{\tau\IL}{2})^{-\frac12}(\tau\IL)^{\frac12},
\]
is unbounded for all $\tau\in(0,\tau_0)$. However, for all $\delta\in(0,\frac12)$, one has
\begin{align*}
\underset{\tau\in(0,\tau_0)}\sup~\tau^{-\delta}\|\IL^{-\delta}R_\tau^{-\frac12}(Q_\tau-R_\tau)\|_{\mathcal{L}(H)}&=\underset{\tau\in(0,\tau_0)}\sup~\underset{j\in\N}\sup~\frac{\sqrt{\log(1+\tau\lambda_j)}}{2(\tau\lambda_j)^{\delta}}\frac{(\tau\lambda_j)^{\frac12}}{(1+\frac{\tau\lambda_j}{2})^{\frac12}}\\
&\le \underset{z\in(0,\infty)}\sup~\frac{\sqrt{\log(1+z)}}{2z^{\delta}}\frac{z^{\frac12}}{(1+\frac{z}{2})^{\frac12}}<\infty.
\end{align*}

The proof of the inequalities~\eqref{eq:QtauRtau} and~\eqref{eq:QtauRtauRtau1/2} is thus completed.

This concludes the proof of Lemma~\ref{lem:Rtau}.
\end{proof}

Note that combining the inequalities~\eqref{eq:lemRtau-bound} and~\eqref{eq:IL_tau-IL_tauR_tau1/2Qtau} also gives the inequality
\begin{equation}\label{eq:IL_tau-IL_tauR_tau1/2+}
\underset{\tau\in(0,\tau_0)}\sup~\underset{t\in(0,\infty)}\sup~\frac{\sqrt{t}}{\sqrt{\tau}}\|e^{-t\IL_\tau}R_\tau\|_{\mathcal{L}(H)}<\infty.
\end{equation}

The next result states moment estimates for the solutions $\bigl(\IX^{\tau,\s}(t)\bigr)_{t\ge 0}$ and $\bigl(\IX_\star^{\tau,\s}(t)\bigr)_{t\ge 0}$ of the modified stochastic evolution equations~\eqref{eq:modified-standard}. These moment bounds are uniform with respect to $t\in(0,\infty)$ and $\tau\in(0,\tau_0)$. We refer to Proposition~\ref{propo:modified-wellposed-bound} from Section~\ref{sec:auxiliary-modified} for a similar result.

\begin{lemma}\label{lem:modifieds}
Let Assumptions~\ref{ass:Lambda},~\ref{ass:F} and~\ref{ass:ergo} be satisfied, and let the linear operators $\IL_\tau$, $Q_\tau$ and $R_\tau$ be defined by~\eqref{eq:modifiedILQ} and~\eqref{eq:Rtau}, for all $\tau\in(0,\tau_0)$.

For any initial value $x_0\in H$, the modified stochastic evolution equations~\eqref{eq:modified-standard} admit unique mild solutions $\bigl(\IX^{\tau,\s}(t)\bigr)_{t\ge 0}$ and $\bigl(\IX_\star^{\tau,\s}(t)\bigr)_{t\ge 0}$, with initial values $\IX^{\tau,\s}(0)=\IX_\star^{\tau,\s}(0)=x_0$. In addition, for all $\alpha\in[0,\frac14)$, one has
\begin{equation}\label{eq:modified-standard-bound}
\underset{x_0\in H^\alpha}\sup~\underset{\tau\in(0,\tau_0)}\sup~\underset{t\ge 0}\sup~\frac{\E[|\IX^{\tau,\s}(t)|_\alpha^2]+\E[|\IX_\star^{\tau,\s}(t)|_\alpha^2]}{1+|x_0|_\alpha^2}<\infty.
\end{equation}
\end{lemma}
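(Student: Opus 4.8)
The plan is to follow the proof of Proposition~\ref{propo:modified-wellposed-bound} almost verbatim, treating the two processes in~\eqref{eq:modified-standard} in parallel. First I would dispatch the well-posedness claim: since $|R_\tau^{\frac12}x|\le|Q_\tau^{\frac12}x|$ by~\eqref{eq:lemRtau-bound} and $Q_\tau^{\frac12}$ is a Hilbert--Schmidt operator, the noise term in both equations is a well-defined $H$-valued Gaussian process, and a standard Banach fixed point argument on $\mathcal{C}([0,T];L^2(\Omega;H))$ together with the global Lipschitz continuity of $F$ (Assumption~\ref{ass:F}) yields the unique mild solutions $\bigl(\IX^{\tau,\s}(t)\bigr)_{t\ge 0}$ and $\bigl(\IX_\star^{\tau,\s}(t)\bigr)_{t\ge 0}$; I would merely state this and point to the discussion preceding Proposition~\ref{propo:modified-wellposed}. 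Next, introduce the common noise contribution $\IW_\tau^{R}(t)=\int_0^t e^{-(t-s)\IL_\tau}R_\tau^{\frac12}dW(s)$: by It\^o's isometry and the eigenvalue inequality $r_{\tau,j}\le q_{\tau,j}$ (equivalently, $|R_\tau^{\frac12}x|\le|Q_\tau^{\frac12}x|$ from~\eqref{eq:lemRtau-bound}) one has $\E[|\IW_\tau^{R}(t)|_\alpha^2]\le\E[|\IW_\tau(t)|_\alpha^2]$, so Lemma~\ref{lem:modifiedStochasticConvolution} gives $\sup_{\tau\in(0,\tau_0)}\sup_{t\ge 0}\E[|\IW_\tau^{R}(t)|_\alpha^2]<\infty$ for all $\alpha\in[0,\frac14)$.

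For the case $\alpha=0$, write $\IX^{\tau,\s}(t)=\IY^{\tau,\s}(t)+\IW_\tau^{R}(t)$, compute $\tfrac12\tfrac{d}{dt}|\IY^{\tau,\s}(t)|^2$, bound $\langle\IL_\tau\IY^{\tau,\s},\IY^{\tau,\s}\rangle$ from below using the spectral gap~\eqref{eq:IL_tau-gap}, split $F(\IX^{\tau,\s})=\bigl(F(\IX^{\tau,\s})-F(\IW_\tau^{R})\bigr)+F(\IW_\tau^{R})$, control the first piece by $\|Q_\tau\|_{\mathcal{L}(H)}\Lf|\IY^{\tau,\s}|^2$ using the \emph{exact} value of $\|Q_\tau\|_{\mathcal{L}(H)}$ in~\eqref{eq:Q_tau-bound}, and the second piece by Young's inequality together with $|F(\IW_\tau^{R})|\le|F(0)|+\Lf|\IW_\tau^{R}|$. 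Under Assumption~\ref{ass:ergo} the resulting linear drift coefficient is negative and bounded away from $0$ uniformly in $\tau\in(0,\tau_0)$, because $\inf_{\tau\in(0,\tau_0)}\tfrac{\log(1+\tau\lambda_1)}{\tau\lambda_1}=\tfrac{\log(1+\tau_0\lambda_1)}{\tau_0\lambda_1}>0$; Gronwall's lemma and the uniform bound on $\E[|\IW_\tau^{R}(t)|^2]$ then give $\sup_{\tau,t}\E[|\IX^{\tau,\s}(t)|^2]/(1+|x_0|^2)<\infty$, and the identical computation with $R_\tau$ in place of $Q_\tau$ (using $\|R_\tau\|_{\mathcal{L}(H)}\le\|Q_\tau\|_{\mathcal{L}(H)}$ from~\eqref{eq:lemRtau-bound}) handles $\IX_\star^{\tau,\s}$. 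For $\alpha\in(0,\frac14)$, I would apply $\IL^\alpha$ to the mild formulation and Minkowski's inequality; using $\|e^{-t\IL_\tau}\|_{\mathcal{L}(H)}\le 1$ for the initial datum, the $\alpha=0$ moment bound and the Lipschitz continuity of $F$ for the drift, and the previous step for $\IW_\tau^{R}$, everything reduces to bounding $\int_0^t\|\IL^\alpha e^{-(t-s)\IL_\tau}Q_\tau\|_{\mathcal{L}(H)}\,ds$ uniformly in $t\ge 0$ and $\tau\in(0,\tau_0)$ (and the same quantity with $R_\tau$, which is dominated since $r_{\tau,j}\le q_{\tau,j}$).

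The main obstacle is precisely this uniform-in-$t$ integral bound: the smoothing inequalities~\eqref{eq:IL_tau-smoothing}--\eqref{eq:IL_tau-smoothing-bis} provide an integrable singularity near $s=0$ but only an $O(s^{-\alpha})$ tail, so time-uniformity must be extracted from the spectral gap. I would split the integral: on $(0,\tau)$ use~\eqref{eq:IL_tau-smoothing-bis} to get $O(\tau^{1-\alpha})$, on $(\tau,2\tau)$ use~\eqref{eq:IL_tau-smoothing} to get another $O(\tau^{1-\alpha})$ contribution, and for $s>2\tau$ write $e^{-s\IL_\tau}=e^{-\frac{s}{2}\IL_\tau}e^{-\frac{s}{2}\IL_\tau}$, bounding the first factor by $\|\IL^\alpha e^{-\frac{s}{2}\IL_\tau}\|_{\mathcal{L}(H)}\le C_\alpha(\tfrac{s}{2}-\tau)^{-\alpha}$ via~\eqref{eq:IL_tau-smoothing} and the second by $\|e^{-\frac{s}{2}\IL_\tau}\|_{\mathcal{L}(H)}=e^{-\frac{s}{2}\lambda_{\tau,1}}\le e^{-\frac{s}{2}\kappa}$ with $\kappa=\tfrac{\log(1+\tau_0\lambda_1)}{\tau_0}>0$; the change of variables $u=\tfrac{s}{2}-\tau$ then turns the tail into $2e^{-\kappa\tau}\int_0^\infty u^{-\alpha}e^{-\kappa u}\,du<\infty$, bounded uniformly since $\alpha<1$. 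Combining the three pieces gives $\int_0^t\|\IL^\alpha e^{-(t-s)\IL_\tau}Q_\tau\|_{\mathcal{L}(H)}\,ds\le C_\alpha$ uniformly in $t$ and $\tau$, which closes the estimate~\eqref{eq:modified-standard-bound}. This is the only genuinely new computation compared with Proposition~\ref{propo:modified-wellposed-bound}; the rest follows the same pattern and the routine details would be abbreviated.
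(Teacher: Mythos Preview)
Your proposal is correct and follows essentially the same approach as the paper: the paper's proof is a one-line reference to Proposition~\ref{propo:modified-wellposed-bound} together with the inequality~\eqref{eq:lemRtau-bound} from Lemma~\ref{lem:Rtau}, and you have reconstructed exactly that argument. The one place where you add genuine content is the explicit uniform-in-$t$ bound on $\int_0^t\|\IL^\alpha e^{-s\IL_\tau}Q_\tau\|_{\mathcal{L}(H)}\,ds$ via the three-piece splitting with exponential decay from the spectral gap; the paper skips this computation entirely (both here and in the proof of~\eqref{eq:modified-wellposed-bound-ergo}), so your version is in fact more detailed than the original.
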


\begin{proof}[Proof of Lemma~\ref{lem:modifieds}]
The proof follows the same steps as the proof of the moment bound~\eqref{eq:modified-wellposed-bound-ergo} from Proposition~\ref{propo:modified-wellposed-bound}, using the inequality~\eqref{eq:lemRtau-bound} from Lemma~\ref{lem:Rtau}. The details are omitted.
\end{proof}

The final tool studied in this section is the auxiliary process $\bigl(\tilde{\IX}^{\tau,\s}(t)\bigr)_{t\ge 0}$ defined as follows: for all $\tau\in(0,\tau_0$ and for all $t\ge 0$,
\begin{equation}\label{eq:tildeXs}
\tilde{X}^{\tau,\s}(t)=e^{-t\IL_\tau}x_0+\int_{0}^{t}e^{-(t-s)\IL_\tau}Q_\tau F(X_{\ell(s)}^{\tau,\s})ds+\int_0^te^{-(t-s)\IL_\tau}R_{\tau}^{\frac12}dW(s),
\end{equation}
where $\ell(s)=n$ if $t_n\le s<t_{n+1}$, with $t_n=n\tau$. For every $n\in\N_0$ and all $t\in[t_n,t_{n+1}]$, one has
\begin{equation}\label{eq:dtildeXs}
d\tilde{X}^{\tau,\s}(t)=-\IL\tilde{X}^{\tau,\s}(t)dt+Q_\tau F(X_{n}^{\tau,\s})dt+R_\tau^{\frac12}dW(t).
\end{equation}
Finally, note that by construction of the auxiliary process, one has $\tilde{X}^{\tau,\s}(t_n)=X_{n}^{\tau,\s}$ for all $n\in\N_0$.

The following variant of Lemma~\ref{lem:tildeIX} and Lemma~\ref{lem:tildeXe} holds.
\begin{lemma}\label{lem:tildeIXs}
Let Assumptions~\ref{ass:Lambda},~\ref{ass:F} and~\ref{ass:ergo} be satisfied. For all $\tau_0\in(0,1)$, $\alpha\in[0,\frac14)$, one has
\begin{equation}\label{eq:tildeIXs-bound-ergo}
\underset{x_0\in H^\alpha}\sup~\underset{\tau\in(0,\tau_0)}\sup~\underset{t\ge 0}\sup~\frac{\E[|\tilde{\IX}^{\tau,\s}(t)|_\alpha^2]}{1+|x_0|_\alpha^2}<\infty
\end{equation}
and
\begin{equation}\label{eq:tildeIXs-increment-ergo}
\underset{x_0\in H^\alpha}\sup~\underset{\tau\in(0,\tau_0)}\sup~\underset{t\ge\tau}\sup~\frac{\E[\big|\IL^{-\alpha}\bigl(\tilde{\IX}^{\tau,\s}(t)-\tilde{\IX}^{\tau,\s}(t_{\ell(t)})\bigr)\big|^2]}{\tau^{2\alpha}(1+|x_0|_\alpha^2)}<\infty.
\end{equation}
\end{lemma}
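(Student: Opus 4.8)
The plan is to mirror the proofs of Lemma~\ref{lem:tildeIX} and Lemma~\ref{lem:tildeXe}, exploiting the interpretation of the standard Euler scheme as the accelerated exponential Euler scheme applied to the modified equation~\eqref{eq:modifiedSPDEstandard}, so that $\tilde{\IX}^{\tau,\s}(t_n)=X_n^{\tau,\s}$ for all $n\in\N_0$. The only genuinely new feature compared with Section~\ref{sec:auxiliary-scheme} is that the operator $R_\tau^{\frac12}$ appears in front of the Wiener process instead of $Q_\tau^{\frac12}$, and this turns out to be harmless: the inequality $|R_\tau^{\frac12}x|\le|Q_\tau^{\frac12}x|$ from~\eqref{eq:lemRtau-bound} reduces every stochastic term to a quantity already estimated in Section~\ref{sec:auxiliary}. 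As a preliminary step I would establish, following the decomposition used in the proof of Lemma~\ref{lem:scheme-bound} (writing $X_n^{\tau,\s}=Y_n^{\tau,\s}+W_n^{\tau,\s}$ with $W_n^{\tau,\s}=\sum_{k=0}^{n-1}\IA_\tau^{n-k}\sqrt{\tau}\Gamma_k$, whose covariance is $\tfrac12\IL^{-1}(I+\tfrac{\tau\IL}{2})^{-1}(I-\IA_\tau^{2n})$ and hence trace class uniformly in $\tau$, and using $\|\IA_\tau\|_{\mathcal{L}(H)}\le(1+\tau\lambda_1)^{-1}$, the Lipschitz continuity of $F$ and Assumption~\ref{ass:ergo}), the uniform moment bound $\sup_{\tau\in(0,\tau_0)}\sup_{n\in\N_0}\E[|X_n^{\tau,\s}|^2]/(1+|x_0|^2)<\infty$, i.e.\ the analogue of~\eqref{eq:lem-scheme-bound-ergo} (this is also contained in~\cite{B:2014}).

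For~\eqref{eq:tildeIXs-bound-ergo} I would start from the mild formulation~\eqref{eq:tildeXs}, apply Minkowski's inequality and split into three contributions. The term $e^{-t\IL_\tau}x_0$ satisfies $|e^{-t\IL_\tau}x_0|_\alpha\le\|e^{-t\IL_\tau}\|_{\mathcal{L}(H)}|x_0|_\alpha\le|x_0|_\alpha$ since $\IL_\tau$ and $\IL$ commute, with exponential decay in $t$ coming from the spectral gap~\eqref{eq:IL_tau-gap}. For the drift term I would use $\int_0^t\|\IL^\alpha e^{-(t-s)\IL_\tau}Q_\tau\|_{\mathcal{L}(H)}\bigl(1+(\E[|X_{\ell(s)}^{\tau,\s}|^2])^{\frac12}\bigr)ds$, bound the bracket by the preliminary moment estimate, and control the integral by splitting $s\in(0,\tau)$ and $s>\tau$ and invoking the smoothing inequalities~\eqref{eq:IL_tau-smoothing} and~\eqref{eq:IL_tau-smoothing-bis} together with~\eqref{eq:Q_tau-bound}; combining with the exponential factor from~\eqref{eq:IL_tau-gap} makes the bound uniform in $t$. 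For the stochastic term, It\^o's isometry and~\eqref{eq:lemRtau-bound} give $\E[|\int_0^t\IL^\alpha e^{-(t-s)\IL_\tau}R_\tau^{\frac12}dW(s)|^2]\le\E[|\IW_\tau(t)|_\alpha^2]$, which is controlled by Lemma~\ref{lem:modifiedStochasticConvolution} for $\alpha\in[0,\tfrac14)$. Collecting these estimates yields~\eqref{eq:tildeIXs-bound-ergo}, and in particular an $H^\alpha$-moment bound for $X_n^{\tau,\s}$, uniform in $\tau$ and $n$.

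For~\eqref{eq:tildeIXs-increment-ergo}, for $t\in[t_n,t_{n+1})$ I would write, using~\eqref{eq:dtildeXs},
\[
\tilde{\IX}^{\tau,\s}(t)-\tilde{\IX}^{\tau,\s}(t_n)=\bigl(e^{-(t-t_n)\IL_\tau}-I\bigr)X_n^{\tau,\s}+\int_{t_n}^t e^{-(t-s)\IL_\tau}Q_\tau F(X_n^{\tau,\s})ds+\int_{t_n}^t e^{-(t-s)\IL_\tau}R_\tau^{\frac12}dW(s),
\]
and estimate $\IL^{-\alpha}$ applied to each piece. The first piece contributes $\tau^{2\alpha}$ by the error estimate~\eqref{eq:IL_tau-increment} and the $H^\alpha$-moment bound just obtained; the second piece is $O(\tau)\le O(\tau^{2\alpha})$ using $\|Q_\tau\|_{\mathcal{L}(H)}\le1$, $\|e^{-(t-s)\IL_\tau}\|_{\mathcal{L}(H)}\le1$ and the $L^2$ moment bound on $X_n^{\tau,\s}$ (recall $2\alpha<\tfrac12<1$); the third piece, by It\^o's isometry and~\eqref{eq:lemRtau-bound}, is dominated by $\int_0^\tau\|\IL^{-\alpha}e^{-s\IL_\tau}Q_\tau^{\frac12}\|_{\mathcal{L}_2(H)}^2ds$, which is exactly the sum computed in the proof of~\eqref{eq:tildeIX-increment} and equals $O(\tau^{4\alpha})$ precisely when $\alpha\in[0,\tfrac14)$. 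Assembling the three bounds gives~\eqref{eq:tildeIXs-increment-ergo}.

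I do not expect any single estimate to be the real difficulty, since each is a variant of something proved in Section~\ref{sec:auxiliary}. The hard part will be the bookkeeping needed to keep every constant independent of $\tau\in(0,\tau_0)$ and of $t\ge0$ in the ergodic regime: this forces systematic use of the $\tau$-uniform spectral-gap and smoothing estimates of Lemma~\ref{lem:Q_tauIL_tau-bound} and Lemma~\ref{lem:Rtau}, and a careful splitting of the time integrals near $s=0$, because $\IL^\alpha e^{-s\IL_\tau}$ is unbounded for $s\le\tau$ whereas $\IL^\alpha e^{-s\IL_\tau}Q_\tau$ is not.
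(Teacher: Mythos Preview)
Your proposal is correct and follows essentially the same route as the paper. The paper's proof is even terser: it observes the factorization $\int_0^t e^{-(t-s)\IL_\tau}R_\tau^{\frac12}dW(s)=(I+\tfrac{\tau\IL}{2})^{-\frac12}\IW_\tau(t)$ (which is equivalent to your use of~\eqref{eq:lemRtau-bound}, since $R_\tau=(I+\tfrac{\tau\IL}{2})^{-1}Q_\tau$ and everything commutes), notes that $(I+\tfrac{\tau\IL}{2})^{-\frac12}$ is bounded uniformly in $\tau$, and then simply says the inequalities follow from the same arguments as in Lemma~\ref{lem:tildeIX}, omitting the details you have written out.
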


\begin{proof}[Proof of Lemma~\ref{lem:tildeIXs}]
Observe that for all $t\ge 0$, one has
\[
\int_0^te^{-(t-s)\IL_\tau}R_{\tau}^{\frac12}dW(s)=(I+\frac{\tau\IL}{2})^{-\frac12}W^{\IL}(t),
\]
where $\IW_\tau(t)=\int_0^t e^{-(t-s)\IL_\tau}Q_{\tau}^{\frac12}dW(s)$ is given by~\eqref{eq:modifiedStochasticConvolution}. Since $\|(I+\frac{\tau\IL}{2})^{-\frac12}\|_{\mathcal{L}(H)}<\infty$, the inequalities~\eqref{eq:tildeIXs-bound-ergo} and~\eqref{eq:tildeIXs-increment-ergo} are obtained using the same arguments as the inequalities~\eqref{eq:tildeIX-bound-ergo} and~\eqref{eq:tildeIX-increment-ergo} from Lemma~\ref{lem:tildeIX}. The details are omitted.
\end{proof}

\subsection{Kolmogorov equation associated with the modified equations}\label{sec:standard-kolmogorov}

The objective of this section is to state and prove regularity results for the functions $u^{\tau,\s}$ and $u_\star^{\tau,\s}$ defined by
\begin{equation}\label{eq:utaus}
u^{\tau,\s}(t,x)=\E_x[\varphi(\IX^{\tau,\s}(t))],
\end{equation}
and
\begin{equation}\label{eq:utausstar}
u_\star^{\tau,\s}(t,x)=\E_x[\varphi(\IX_\star^{\tau,\s}(t))],
\end{equation}
for all $t\ge 0$, $x\in H$, and $\tau\in(0,\tau_0)$, where $\varphi$ is a bounded and measurable function from $H$ to $\R$. In the above definitions, $\bigl(\IX^{\tau,\s}(t)\bigr)_{t\ge 0}$ and $\bigl(\IX_\star^{\tau,\s}(t)\bigr)_{t\ge 0}$ are the unique solutions of the modified stochastic evolution equation~\eqref{eq:modified-standard}, with initial values $\IX^{\tau,\s}(0)=\IX_\star^{\tau,\s}(0)=x$. Like in Section~\ref{sec:auxiliary-kolmogorov-modified}, in order to study the regularity properties of the functions $u^{\tau,\s}$ and $u_\star^{\tau,\s}$, it is convenient to rely on the convention introduced in Section~\ref{sec:Galerkin}. An auxiliary finite dimensional approximation is applied, in order to justify the regularity properties and the computations, and all the upper bounds do not depend on the auxiliary discretization parameter, which is omitted to simplify the notation.

It is convenient to introduce the families of linear operators $\bigl(\IP_t^{\tau,\s}\bigr)_{t\ge 0}$ and $\bigl(\IP_{\star,t}^{\tau,\s}\bigr)_{t\ge 0}$, such that $u^{\tau,s}(t,\cdot)=\IP_t^{\tau,\s}\varphi(\cdot)$ and $u_\star^{\tau,s}(t,\cdot)=\IP_{\star,t}^{\tau,\s}\varphi(\cdot)$ for all $t\ge 0$. The Markov property for the solutions of the modified stochastic evolution equations~\eqref{eq:modified-standard} yields the semigroup property: for all $t,s\ge 0$ and all $\tau\in(0,\tau_0)$, one has
\begin{equation}\label{eq:Ptausstar}
\IP_{t+s}^{\tau,\s}\varphi=\IP_t^{\tau,\s}\bigl(\IP_s^{\tau,\s}\varphi\bigr),\quad \IP_{\star,t+s}^{\tau,\s}\varphi=\IP_{\star,t}^{\tau,\s}\bigl(\IP_{\star,s}^{\tau,\s}\varphi\bigr)
\end{equation}
for any $\varphi\in\mathcal{B}_b(H)$.

Under appropriate regularity conditions on the function $\varphi$, the function $(t,x)\in \R^+\times H\mapsto u^{\tau,\s}(t,x)=\IP_t^{\tau,\s}\varphi(x)$ is solution of the Kolmogorov equation
\begin{equation}\label{eq:IKolmogorovs}
\partial_t u^{\tau,\s}=\mathcal{L}^{\tau,\s} u^{\tau,\s}
\end{equation} 
with initial value $u^{\tau,\s}(0,\cdot)=\varphi$, where the infinitesimal generator $\mathcal{L}^{\tau,\s}$ is defined by
\[
\mathcal{L}^{\tau,\s}\phi(x)=D\phi(x).\bigl(-\IL_\tau x+Q_\tau F(x)\bigr)+\frac12\sum_{j\in\N}D^2\phi(x).(R_\tau^{\frac12}e_j,R_\tau^{\frac12}e_j).
\]
Similarly, under appropriate regularity conditions on the function $\varphi$, the function $(t,x)\in \R^+\times H\mapsto u_\star^{\tau,\s}(t,x)=\IP_{\star,t}^{\tau,\s}\varphi(x)$ is solution of the Kolmogorov equation
\begin{equation}\label{eq:IKolmogorovsstar}
\partial_t u_\star^{\tau,\s}=\mathcal{L}_\star^{\tau,\s} u^{\tau,\s}
\end{equation} 
with initial value $u_\star^{\tau,\s}(0,\cdot)=\varphi$, where the infinitesimal generator $\mathcal{L}_\star^{\tau,\s}$ is defined by
\[
\mathcal{L}_\star^{\tau,\s}\phi(x)=D\phi(x).\bigl(-\IL_\tau x+R_\tau F(x)\bigr)+\frac12\sum_{j\in\N}D^2\phi(x).(R_\tau^{\frac12}e_j,R_\tau^{\frac12}e_j).
\]

The main result of this section is the following version of Proposition~\ref{propo:utau-regularity} stated in Section~\ref{sec:results_invar}: the functions $u^{\tau,\s}$ and $u_\star^{\tau,\s}$ satisfy the same regularity estimates as the function $u_\tau$. However, the proof of Lemma~\ref{lem:utausutausstar-regularity} is more technical than the proof of Proposition~\ref{propo:utau-regularity}, therefore detailed proofs are given.
\begin{lemma}\label{lem:utausutausstar-regularity}
Let Assumptions~\ref{ass:ergo} and~\ref{ass:Fregul2} be satisfied and $\varphi:H\to \R$ be a bounded and continuous function. For all $t>0$ and $\tau\in(0,\tau_0)$, $u^{\tau,\s}(t,\cdot)$ and $u_\star^{\tau,\s}$ are differentiable and one has the following estimates: for all $\delta\in(0,\frac12]$, there exists $C_{\delta}\in(0,\infty)$ such that for all $\tau\in(0,\tau_0)$ and for all $t\in(2\tau,\infty)$, one has 
\begin{equation}\label{eq:utausustausstar-regularity}
\begin{aligned}
\big|&Du^{\tau,\s}(t,x).h\big|+\big|Du_\star^{\tau,\s}(t,x).h\big|\\
&\le C_{\delta} e^{-\kappa t}\vvvert\varphi\vvvert_0 \bigl(1\wedge (t-2\tau)\bigr)^{-\frac12}\Bigl(\sqrt{\tau}|h|+\bigl(1\wedge (t-2\tau)\bigr)^{-\frac12+\delta}|\IL^{-\frac12+\delta}h|\Bigr)
\end{aligned}
\end{equation}
for all $x,h\in H$, with $\kappa=\frac{\log(1+\tau_0\lambda_1)}{\tau\lambda_1}(\lambda_1-\Lf)>0$.
\end{lemma}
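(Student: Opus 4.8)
The plan is to mimic the proof of Proposition~\ref{propo:utau-regularity} given above, replacing the three auxiliary lemmas (Lemma~\ref{lem:utau-1}, Lemma~\ref{lem:utau-ergo}, Lemma~\ref{lem:utau-0}) by their analogues for the processes $\IX^{\tau,\s}$ and $\IX_\star^{\tau,\s}$. Concretely, for each of the two semigroups $\IP_t^{\tau,\s}$ and $\IP_{\star,t}^{\tau,\s}$ I would first establish: (i) a $\mathcal{C}^1$-contraction estimate of the type $\vvvert \IP_t^{\tau,\s}\varphi\vvvert_1\le e^{-\kappa t}\vvvert\varphi\vvvert_1$ with $\kappa=\frac{\log(1+\tau_0\lambda_1)}{\tau_0\lambda_1}(\lambda_1-\Lf)$; (ii) a ``$\IL^{-\alpha}$'' gain estimate $|D\IP_t^{\tau,\s}\varphi(x).h|\le C_\alpha(\sqrt{\tau}|h|+(t-\tau)^{-\alpha}|\IL^{-\alpha}h|)$ for $\alpha\in(0,\tfrac12)$; and (iii) a smoothing estimate $\vvvert \IP_t^{\tau,\s}\varphi\vvvert_1\le C(t-\tau)^{-1/2}\vvvert\varphi\vvvert_0$. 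Then, exactly as in the proof of Proposition~\ref{propo:utau-regularity}, I would combine (i)--(iii) via the semigroup property~\eqref{eq:Ptausstar}, splitting the cases $t\in(2\tau,2)$ and $t\in[2,\infty)$: in the first case write $\IP_t^{\tau,\s}=\IP_{\tau,t/2}^{\tau,\s}\IP_{\tau,t/2}^{\tau,\s}$ and apply (ii) then (iii); in the second case write $\IP_t^{\tau,\s}=\IP_1^{\tau,\s}\IP_{t-2}^{\tau,\s}\IP_1^{\tau,\s}$ and apply (ii), (i), (iii) successively, using $1-\tau\ge 1-\tau_0>0$.

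For estimate (i) one differentiates $\IX^{\tau,\s}$ (resp.\ $\IX_\star^{\tau,\s}$) in the direction $h$: the derivative process $\eta^{h}$ solves $\tfrac{d\eta^h}{dt}=-\IL_\tau\eta^h+Q_\tau DF(\IX^{\tau,\s}).\eta^h$ (resp.\ with $R_\tau$ in place of $Q_\tau$). Using the spectral gap~\eqref{eq:IL_tau-gap}, the operator bounds $\|Q_\tau\|_{\mathcal{L}(H)}\le 1$ from~\eqref{eq:Q_tau-bound} and $\|R_\tau\|_{\mathcal{L}(H)}\le\|Q_\tau\|_{\mathcal{L}(H)}\le 1$ from~\eqref{eq:lemRtau-bound}, together with $|DF(x).v|\le\Lf|v|$, one gets $\tfrac12\tfrac{d}{dt}|\eta^h(t)|^2\le -\kappa|\eta^h(t)|^2$ as in the proof of Lemma~\ref{lem:utau-ergo}, hence $|\eta^h(t)|\le e^{-\kappa t}|h|$ and (i) via the representation $D\IP_t^{\tau,\s}\varphi(x).h=\E_x[D\varphi(\IX^{\tau,\s}(t)).\eta^h(t)]$. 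For (ii) I would introduce $\hat\eta^h(t)=\eta^h(t)-e^{-t\IL_\tau}h$, which solves an equation with the extra forcing term $Q_\tau DF(\IX^{\tau,\s}(t)).e^{-t\IL_\tau}h$ (resp.\ with $R_\tau$); an energy estimate plus Young's inequality and Gronwall give $\bigl(\E[|\hat\eta^h(t)|^2]\bigr)^{1/2}\le C_\alpha(\sqrt\tau|h|+|\IL^{-\alpha}h|)$, using $\int_0^\tau|e^{-s\IL_\tau}h|^2ds\le\tau|h|^2$ and the smoothing bound~\eqref{eq:IL_tau-smoothing} with $\alpha<\tfrac12$; combined with $|e^{-t\IL_\tau}h|\le C_\alpha(t-\tau)^{-\alpha}|\IL^{-\alpha}h|$ this yields (ii). For (iii) I would use the Bismut--Elworthy formula: here noise has covariance $R_\tau$, so $D\IP_t^{\tau,\s}\varphi(x).h=\tfrac1t\E_x[\varphi(\IX^{\tau,\s}(t))\int_0^t\langle R_\tau^{-1/2}\eta^h(s),dW(s)\rangle]$, giving $|D\IP_t^{\tau,\s}\varphi(x).h|\le\tfrac{\vvvert\varphi\vvvert_0}{\sqrt t}|R_\tau^{-1/2}h|$. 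The key identity $R_\tau^{-1}\IL_\tau=\IL(I+\tfrac{\tau\IL}{2})$, which is not a bounded operator, forces an argument like in Lemma~\ref{lem:utau-0}: first prove $|R_\tau^{-1/2}\eta^h(t)|\le|R_\tau^{-1/2}h|$ (for the $\star$-process this is immediate from $R_\tau^{-1}\IL_\tau=\IL(I+\tfrac{\tau\IL}{2})\ge\lambda_1 I$ and $\Lf<\lambda_1$; for the non-$\star$ process one compares with $Q_\tau$-weighted energy and uses $\|R_\tau^{-1/2}Q_\tau\cdot\|$-type bounds), then exploit the semigroup property at time $\tau$ together with~\eqref{eq:IL_tau-IL_tauR_tau1/2} (which bounds $\|R_\tau^{-1/2}e^{-\tau\IL_\tau}\|_{\mathcal{L}(H)}$) and the bound on $\hat\eta^h(\tau)$ to remove the unbounded operator $R_\tau^{-1/2}$ and land at $\vvvert\IP_t^{\tau,\s}\varphi\vvvert_1\le C(t-\tau)^{-1/2}\vvvert\varphi\vvvert_0$.

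The main obstacle I anticipate is the $\star$ versus non-$\star$ asymmetry in step (iii), i.e.\ controlling $R_\tau^{-1/2}\eta^h(t)$ when the drift coefficient $Q_\tau$ in front of $DF$ does not match the noise covariance $R_\tau$. For $\IX_\star^{\tau,\s}$ the drift and diffusion are ``compatible'' ($R_\tau F$ and $R_\tau^{1/2}dW$), so the computation $\tfrac12\tfrac{d}{dt}|R_\tau^{-1/2}\eta^h(t)|^2=-\langle\IL_\tau\eta^h,R_\tau^{-1}\eta^h\rangle+\langle R_\tau DF(\IX_\star^{\tau,\s}).\eta^h,R_\tau^{-1}\eta^h\rangle=-\langle\IL(I+\tfrac{\tau\IL}{2})\eta^h,\eta^h\rangle+\langle DF.\eta^h,\eta^h\rangle\le(\Lf-\lambda_1)|\eta^h|^2\le 0$ goes through cleanly. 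For $\IX^{\tau,\s}$ one instead gets an extra factor from $R_\tau^{-1}Q_\tau=(I+\tfrac{\tau\IL}{2})^{1/2}\cdot$-type operators in the cross term; I would absorb this either by working with the $Q_\tau^{-1/2}$-weighted norm first (as in Lemma~\ref{lem:utau-0}, where $Q_\tau^{-1}\IL_\tau=\IL$) and then passing to $R_\tau^{-1/2}$ using~\eqref{eq:lemRtau-bound} and $\|R_\tau^{-1/2}Q_\tau^{1/2}\|_{\mathcal{L}(H)}=\|(I+\tfrac{\tau\IL}{2})^{1/2}\|_{\mathcal{L}(H)}=\infty$ — so that route needs care — or, more robustly, by applying the Bismut--Elworthy formula only on the last subinterval of length $\tau$ and controlling $|R_\tau^{-1/2}e^{-\tau\IL_\tau}\eta^h(t-\tau)|$ via~\eqref{eq:IL_tau-IL_tauR_tau1/2} together with the already-established bound $|\eta^h(t-\tau)|\le e^{-\kappa(t-\tau)}|h|$ and the bound on $\hat\eta^h$. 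The remaining estimates are routine adaptations of Section~\ref{sec:auxiliary-kolmogorov-modified} once the operator inequalities from Lemma~\ref{lem:Rtau} are in hand, so I would present the $\star$-case in full and indicate only the modifications for the non-$\star$ case, exactly as the excerpt announces.
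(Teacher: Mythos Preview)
Your overall architecture — three auxiliary estimates (i) $\mathcal{C}^1$-contraction, (ii) $\IL^{-\alpha}$-gain, (iii) $\mathcal{C}^0\to\mathcal{C}^1$ smoothing, combined via the semigroup property~\eqref{eq:Ptausstar} exactly as in the proof of Proposition~\ref{propo:utau-regularity} — is precisely the paper's route, and your treatment of (i), (ii), and of (iii) for $u_\star^{\tau,\s}$ matches the paper's argument essentially verbatim.

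The genuine gap is in step (iii) for $u^{\tau,\s}$, which you correctly isolate as the main obstacle but do not actually resolve. Neither of your two suggested routes closes it. The first (work in the $Q_\tau^{-1/2}$-norm and convert) you already see fails since $R_\tau^{-1/2}Q_\tau^{1/2}=(I+\tfrac{\tau\IL}{2})^{1/2}$ is unbounded. The second (apply Bismut--Elworthy only on the last subinterval of length $\tau$ and invoke~\eqref{eq:IL_tau-IL_tauR_tau1/2}) does not yield the right factor: Bismut--Elworthy on an interval of length $\tau$ produces $\tau^{-1/2}$ in front, not $(t-\tau)^{-1/2}$, and~\eqref{eq:IL_tau-IL_tauR_tau1/2} only controls $R_\tau^{-1/2}e^{-r\IL_\tau}$ at the single value $r=\tau$, so the It\^o-isometry integral $\int_{t-\tau}^t|R_\tau^{-1/2}\xi_\tau^h(s)|^2\,ds$ is not under control. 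Your bound on $\hat\eta^h$ from step (ii) is only in the $H$-norm and does not help here either.

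The paper's device is different and does not appear in your proposal: it abandons energy estimates for $\xi_\tau^h$ in the $R_\tau^{-1/2}$-norm and instead applies $R_\tau^{-1/2}$ to the \emph{mild formulation}
\[
\xi_\tau^h(t)=e^{-t\IL_\tau}h+\int_0^t e^{-(t-s)\IL_\tau}Q_\tau DF(\IX^{\tau,\s}(s)).\xi_\tau^h(s)\,ds,
\]
and uses the operator bound~\eqref{eq:IL_tau-IL_tauR_tau1/2Qtau} from Lemma~\ref{lem:Rtau}, namely $\|R_\tau^{-1/2}e^{-r\IL_\tau}Q_\tau\|_{\mathcal{L}(H)}\le C\sqrt{\tau}/\sqrt{r}$. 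This is exactly the inequality that absorbs the $Q_\tau$/$R_\tau$ mismatch: applied to the integral term it gives $\int_0^t\tfrac{\sqrt{\tau}}{\sqrt{t-s}}\,ds=2\sqrt{\tau t}\le C$ for $t\le 1$, hence $|R_\tau^{-1/2}\xi_\tau^h(t)|\le C|R_\tau^{-1/2}h|$ uniformly in $\tau$. The ``bad'' Bismut--Elworthy estimate $|Du^{\tau,\s}(t,x).h|\le \vvvert\varphi\vvvert_0\, t^{-1/2}|R_\tau^{-1/2}h|$ then follows for all $t\in(0,1]$, and the subsequent removal of $R_\tau^{-1/2}$ at time $\tau$ (which you do have in your plan) is carried out with the same mild-formulation argument plus~\eqref{eq:IL_tau-IL_tauR_tau1/2} and~\eqref{eq:IL_tau-IL_tauR_tau1/2Qtau} to obtain $|R_\tau^{-1/2}\xi_\tau^h(\tau)|\le C|h|$. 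The missing ingredient in your write-up is therefore the explicit combination of the mild formulation of $\xi_\tau^h$ with~\eqref{eq:IL_tau-IL_tauR_tau1/2Qtau}.
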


\begin{proof}[Proof of Lemma~\ref{lem:utausutausstar-regularity}]
Like the proof of Proposition~\ref{propo:utau-regularity}, the inequality~\eqref{eq:utausustausstar-regularity} for $u^{\tau,\s}$ and $u_\star^{\tau,\s}$ is obtained by the combination of three estimates using the semigroup property~\eqref{eq:Ptausstar}. For the function $u^{\tau,\s}$, the three estimates are of the type
\begin{align}
\big|Du^{\tau,\s}(t,x).h\big|&\le C_\alpha\vvvert\varphi\vvvert_1\Bigl(\sqrt{\tau}|h|+(t-\tau)^{-\alpha}|\Lambda^{-\alpha}h|\Bigr),\quad t\in(0,1], x,h\in H,\label{eq:utaus-alpha}\\
\vvvert u^{\tau,\s}(t,\cdot)\vvvert_1&\le e^{-\kappa t} \vvvert\varphi\vvvert_1\label{eq:utaus-ergo},\quad t\ge 0,\\
\vvvert u^{\tau,\s}(t,\cdot)\vvvert_1&\le \frac{C}{(t-\tau)^{\frac12}}\vvvert\varphi\vvvert_0,\quad t\in(\tau,1],\label{eq:utaus-0}
\end{align}
where $\alpha\in[0,\frac12)$, and the function $\varphi$ is assumed to be of class $\mathcal{C}^1$ with bounded derivative in~\eqref{eq:utaus-alpha} and~\eqref{eq:utaus-ergo}, and bounded and continuous in~\eqref{eq:utaus-0}. Similarly, for the function $u_\star^{\tau,\s}$, the three estimates are of the type
\begin{align}
\big|Du_\star^{\tau,\s}(t,x).h\big|&\le C_\alpha\vvvert\varphi\vvvert_1\Bigl(\sqrt{\tau}|h|+(t-\tau)^{-\alpha}|\Lambda^{-\alpha}h|\Bigr),\quad t\in(0,1], x,h\in H,\label{eq:utausstar-alpha}\\
\vvvert u_\star^{\tau,\s}(t,\cdot)\vvvert_1&\le e^{-\kappa t} \vvvert\varphi\vvvert_1\label{eq:utausstar-ergo},\quad t\ge 0\\
\vvvert u_\star^{\tau,\s}(t,\cdot)\vvvert_1&\le \frac{C}{(t-\tau)^{\frac12}}\vvvert\varphi\vvvert_0,\quad t\in(\tau,1],\label{eq:utausstar-0}
\end{align}
with same conditions as above.

To prove the inequalities~\eqref{eq:utaus-alpha} and~\eqref{eq:utaus-ergo}, the following expression for the first order derivative $Du^{\tau,\s}(t,x).h$ is used, when $\varphi$ is of class $\mathcal{C}^1$ with bounded derivative: one has
\[
Du^{\tau,\s}(t,x).h=\E_x[D\varphi(\IX^{\tau,\s}(t)).\xi_{\tau}^h(t)],
\]
where $\bigl(\xi_\tau^h(t)\bigr)_{t\ge 0}$ is solution of
\[
d\xi_\tau^h(t)=-\IL_\tau\xi_\tau^h(t)dt+Q_\tau DF(\IX^{\tau,\s}(t)).\xi_\tau^h(t),
\]
with initial value $\xi_\tau^h(0)=h$. Similarly, to prove the inequalities~\eqref{eq:utausstar-alpha} and~\eqref{eq:utausstar-ergo}, the following expression for the first order derivative $Du_\star^{\tau,\s}(t,x).h$ is used, when $\varphi$ is of class $\mathcal{C}^1$ with bounded derivative:
\[
Du_\star^{\tau,\s}(t,x).h=\E_x[D\varphi(\IX_\star^{\tau,\s}(t)).\xi_{\tau,\star}^h(t)],
\]
where $\bigl(\xi_{\tau,\star}^h(t)\bigr)_{t\ge 0}$ is solution of
\[
d\xi_{\tau,\star}^h(t)=-\IL_\tau\xi_{\tau,\star}^h(t)dt+R_\tau DF(\IX_\star^{\tau,\s}(t)).\xi_{\tau,\star}^h(t),
\]
with initial value $\xi_{\tau,\star}^h(0)=h$. The proofs of the inequalities~\eqref{eq:utaus-alpha} and~\eqref{eq:utaus-ergo} are identical to the proofs of~\eqref{eq:lem-utau1-alpha} from Lemma~\ref{lem:utau-1} and~\eqref{eq:lem-utau-ergo} from Lemma~\ref{lem:utau-ergo} respectively. Using the inequality~\eqref{eq:lemRtau-bound}, the proofs of the inequalities~\eqref{eq:utausstar-alpha} and~\eqref{eq:utausstar-ergo} follow from the same arguments. The details are omitted.

It remains to prove the inequalities~\eqref{eq:utaus-0} and~\eqref{eq:utausstar-0}. The proofs are not straightforward modifications of the proof of~\eqref{eq:lem-utau0} from Lemma~\ref{lem:utau-0}, therefore it is worth giving the details. The reason for the additional difficulties is the behavior of the unbounded linear operator $R_\tau^{-\frac12}=Q_\tau^{-\frac12}(I+\frac{\tau\IL}{2})^{\frac12}$ which differs from the behavior of the operator $Q_\tau^{-\frac12}$ appearing in the proof of Lemma~\ref{lem:utau-0}.

To prove the inequalities~\eqref{eq:utaus-0} and~\eqref{eq:utausstar-0}, the following expressions for the first order derivatives $Du^{\tau,\s}(t,x).h$ and $Du_\star^{\tau,\s}(t,x).h$ are used, when $\varphi$ is assumed to be only bounded and continuous: for all $t\in(0,\infty)$ and $x,h\in H$, one has
\begin{align*}
Du^{\tau,\s}(t,x).h&=\frac1t \E_x[\varphi(\IX^{\tau,\s}(t))\int_{0}^{t}\langle R_\tau^{-\frac12}\xi_\tau^h(s),dW(s)\rangle],\\
Du_\star^{\tau,\s}(t,x).h&=\frac1t \E_x[\varphi(\IX_\star^{\tau,\s}(t))\int_{0}^{t}\langle R_\tau^{-\frac12}\xi_{\tau,\star}^h(s),dW(s)\rangle].\\
\end{align*}

The proof of the inequality~\eqref{eq:utausstar-0} employs simpler arguments than the proof of the inequality~\eqref{eq:utaus-0}. Like the proof of Lemma~\ref{lem:utau-0}, two steps are required. First, observe that
\begin{align*}
\frac12\frac{d|R_\tau^{-\frac12}\xi_{\tau,\star}^h(t)|^2}{dt}&=-\langle \IL_\tau\xi_{\tau,\star}^h(t),R_\tau\xi_{\tau,\star}^h(t)\rangle+\langle DF(\IX^{\tau,\s}(t))\xi_{\tau,\star}^h(t),\xi_{\tau,\star}^h(t)\rangle\\
&=-\langle \IL(I+\frac{\tau\IL}{2})\xi_{\tau,\star}^h(t),\xi_{\tau,\star}^h(t)\rangle+\Lf|\xi_{\tau,\star}^h(t)|^2\\
&\le -(\lambda_1-\Lf)|\xi_\tau^h(t)|^2\le 0,
\end{align*}
using Assumption~\ref{ass:ergo}. Therefore, for all $\tau\in(0,\tau_0)$, $t\ge 0$ and $h\in H$, one has
\[
|R_\tau^{-\frac12}\xi_{\tau,\star}^h(t)|\le |R_\tau^{-\frac12}h|.
\]
Applying It\^o's formula and using the expression above for $Du_\star^{\tau,\s}(t,x).h$, one obtains the following inequality
\begin{equation}\label{eq:utausstar0-bad}
|D\IP_{\star,t}^{\tau,\s}\varphi(x).h|=|Du_\star^{\tau,\s}(t,x).h|\le \frac{\vvvert\varphi\vvvert_0}{\sqrt{t}}|R_\tau^{-\frac12}h|.
\end{equation}
Second, let $t\in(\tau,1]$. The semigroup property~\eqref{eq:Ptausstar} yields the identity
\[
u_\star^{\tau,\s}(t,\cdot)=\IP_{\tau,\star}^{\tau,\s}\bigl(\IP_{t-\tau,\star}^{\tau,\s}\varphi\bigr),
\]
which gives the equality
\[
Du_\star^{\tau,\s}(t,x).h=\E_x[D\IP_{t-\tau,\star}^{\tau,\s}\varphi(\IX_\star^{\tau,\s}(t)).\xi_{\tau,\star}^h(\tau)].
\]
Applying the inequality~\eqref{eq:utausstar0-bad} then gives
\[
|Du_\tau(t,x).h|\le \frac{\vvvert\varphi\vvvert_0}{\sqrt{t-\tau}}\E_x[|R_\tau^{-\frac12}\xi_{\tau,\star}^h(\tau)|].
\]
Using the mild formulation
\[
\xi_{\tau,\star}^h(\tau)=e^{-\tau\IL_\tau}h+\int_{0}^{\tau}e^{-(\tau-s)\IL_\tau}R_\tau DF(\IX_\star^{\tau,\s}(s))\xi_{\tau,\star}^h(s)ds,
\]
one obtains the inequality for all $\tau\in(0,\tau_0)$
\begin{align*}
|R_\tau^{-\frac12}\xi_{\tau,\star}^h(\tau)|&\le |R_\tau^{-\frac12}e^{-\tau\IL_\tau}h|+\Lf\int_{0}^{t}\|e^{-(\tau-s)\IL_\tau}R_\tau^{\frac12}\|_{\mathcal{L}(H)}|\xi_{\tau,\star}^h(s)|ds\\
&\le |R_\tau^{-\frac12}e^{-\tau\IL_\tau}h|+C|h|\\
&\le C|h|,
\end{align*}
using the upper bound $\|e^{-(\tau-s)\IL_\tau}R_\tau^{\frac12}\|_{\mathcal{L}(H)}\le \|R_\tau^{\frac12}\|_{\mathcal{L}(H)}\le 1$ (owing to~\eqref{eq:lemRtau-bound} and to~\eqref{eq:Q_tau-bound} from Lemma~\ref{lem:Q_tauIL_tau-bound}), the inequality~\eqref{eq:IL_tau-IL_tauR_tau1/2} and the inequality $|\xi_{\tau,\star}^h(s)|\le e^{-\kappa s}|h|$ (which is used in the proof of the inequality~\eqref{eq:utausstar-ergo}). As a consequence, one obtains 
\[
|Du_\star^{\tau,\s}(t,x).h|\le \frac{\vvvert\varphi\vvvert_0}{\sqrt{t-\tau}}|h|,
\]
for all $t\in(\tau,1]$ and $x,h\in H$. This concludes the proof of the inequality~\eqref{eq:utausstar-0}.

It remains to prove the inequality~\eqref{eq:utaus-0}. Like in the proof of the inequality~\eqref{eq:utausstar-0} given above, two steps are required but different arguments need to be used since $R_\tau\neq Q_\tau$. First, it is straightforward to prove that $|\xi_\tau^h(t)|\le e^{-\kappa t}|h|$ for all $t\ge 0$ and $\tau\in(0,\tau_0)$ (this inequality is used in the proof of the inequality~\eqref{eq:utaus-ergo}). Using the mild formulation
\[
\xi_\tau^h(t)=e^{-t\IL_\tau}h+\int_{0}^{t}e^{-(t-s)\IL_\tau}R_\tau DF(\IX^{\tau,\s})\xi_\tau^h(s)ds,
\]
one obtains, for all $t\in(0,1]$,
\begin{align*}
|R_\tau^{-\frac12}\xi_\tau^h(t)|&\le|R_\tau^{-\frac12}e^{-t\IL_\tau}h|+\Lf\int_{0}^{t}\big\|R_\tau^{-\frac12}e^{-(t-s)\IL_\tau}Q_\tau DF(\IX^{\tau,\s}(s))\|_{\mathcal{L}(H)}.|\xi_\tau^h(s)|ds\\
&\le |R_\tau^{-\frac12}h|+C\int_{0}^{t}(1+\frac{\sqrt{\tau}}{\sqrt{t-s}})ds|h|\\
&\le C|R_\tau^{-\frac12}h|+C|h|\\
&\le C|R_\tau^{-\frac12}h|,
\end{align*}
using the inequality $\|R_\tau^{\frac12}\|_{\mathcal{L}(H)}\le 1$ (as a consequence of the inequalities~\eqref{eq:lemRtau-bound} from Lemma~\ref{lem:Rtau} and~\eqref{eq:Q_tau-bound} from Lemma~\ref{lem:Q_tauIL_tau-bound}) in the last step. Applying It\^o's formula and using the expression above for $Du^{\tau,\s}(t,x).h$, one obtains the following inequality
\begin{equation}\label{eq:utaus0-bad}
|D\IP_{t}^{\tau,\s}\varphi(x).h|=|Du^{\tau,\s}(t,x).h|\le \frac{\vvvert\varphi\vvvert_0}{\sqrt{t}}|R_\tau^{-\frac12}h|.
\end{equation}
Second, let $t\in(\tau,1]$. The semigroup property~\eqref{eq:Ptausstar} yields the identity
\[
u^{\tau,\s}(t,\cdot)=\IP_{\tau}^{\tau,\s}\bigl(\IP_{t-\tau}^{\tau,\s}\varphi\bigr),
\]
which gives the equality
\[
Du^{\tau,s}(t,x).h=\E_x[D\IP_{t-\tau}^{\tau,\s}\varphi(\IX^{\tau,\s}(t)).\xi_{\tau}^h(\tau)].
\]
Using again the mild formulation and the same arguments as above, one obtains for all $\tau\in(0,\tau_0)$
\begin{align*}
|R_\tau^{-\frac12}\xi_\tau^h(\tau)|&\le |R_\tau^{-\frac12}e^{-\tau\IL_\tau}h|+C|h|\\
&\le C|h|,
\end{align*}
owing to the inequality~\eqref{eq:IL_tau-IL_tauR_tau1/2}. As a consequence, one obtains 
\[
|Du^{\tau,\s}(t,x).h|\le \frac{\vvvert\varphi\vvvert_0}{\sqrt{t-\tau}}|h|,
\]
for all $t\in(\tau,1]$ and $x,h\in H$. This concludes the proof of the inequality~\eqref{eq:utaus-0}.

Like in the proof of Proposition~\ref{propo:utau-regularity} (see Section~\ref{sec:proofs-weakinv}), the inequalities in~\eqref{eq:utausustausstar-regularity} are obtained by combining the three estimates~\eqref{eq:utaus-alpha},~\eqref{eq:utaus-ergo} and~\eqref{eq:utaus-0}, and the three estimates~\eqref{eq:utausstar-alpha},~\eqref{eq:utausstar-ergo} and~\eqref{eq:utausstar-0}, using the semigroup property~\eqref{eq:Ptausstar}. The details are omitted.

The proof of Lemma~\ref{lem:utausutausstar-regularity} is thus completed.
\end{proof}

\subsection{Proof of Theorem~\ref{theo:weakinv-standard}}\label{sec:standard-proof}

We are now in position to provide the proof of Theorem~\ref{theo:weakinv-standard}, using the auxiliary results presented in Subsections~\ref{sec:standard-aux} and~\ref{sec:standard-kolmogorov} above.

\begin{proof}[Proof of Theorem~\ref{theo:weakinv-standard}]
Like for the proof of Theorem~\ref{theo:weakinv} (see Section~\ref{sec:proofs-weakinv}), it suffices to establish the weak error estimate~\eqref{eq:theo-weakinv_weakerror-standard} for all functions $\varphi:H\to\R$ which are bounded and continuous. Indeed, this property then yields
\[
d_{\rm TV}(\rho_{X_N^{\tau,\s}},\mu_\star^\infty)=d_0(\rho_{X_N^{\tau,\s}},\mu_\star^\infty)\le C_{\delta}\Bigl(\tau^{\frac12-\delta}(1+|x_0|_{\frac14-\frac{\delta}{2}}^2)+e^{-\kappa N\tau}(1+|x_0|)\Bigr).
\]
As a consequence, the inequality~\eqref{eq:theo-weakinv_weakerror-standard} also holds for functions $\varphi$ which are bounded and measurable. Moreover, it suffices to let $N\to\infty$ to obtain the inequality~\eqref{eq:theo-weakinv_dTVinvar-standard}.

Let $\varphi$ be bounded and continuous, then the weak error can be decomposed as
\begin{align*}
\E[\varphi(X_N^{\tau,\s})]-\int\varphi d\mu_\star^\tau
&=\E[\varphi(\IX_N^{\tau,\s})]-\E[\varphi(\IX^{\tau,\s}(t_N)]\\
&+\E[\varphi(\IX^{\tau,\s}(t_N))]-\E[\varphi(\IX_\star^{\tau,\s}(t_N)]\\
&+\E[\varphi(\IX_\star^{\tau,\s}(t_N)]-\int\varphi d\mu_\star^\tau\\
&=\E[u^{\tau,\s}(0,\IX_N^{\tau,\s})]-\E[u^{\tau,\s}(t_N,\IX_0^{\tau,\s})]\\
&+\E[u_\star^{\tau,\s}(0,\IX^{\tau,\s}(t_N))]-\E[u_\star^{\tau,s}(t_N,\IX^{\tau,\s}(0))]\\
&+\E[u_\star^{\tau,s}(t_N,x_0)]-\int\varphi d\mu_\star^\tau,
\end{align*}
see Equation~\eqref{eq:decomperror-standard}, where the functions $u^{\tau,\s}$ and $u_\star^{\tau,\s}$ are defined by~\eqref{eq:utaus} and~\eqref{eq:utausstar} respectively. It thus suffices to prove the three following weak error estimates:
\begin{align}
\big|\E[u^{\tau,\s}(0,\IX_N^{\tau,\s})]-\E[u^{\tau,\s}(t_N,\IX_0^{\tau,\s})]\big|&\le C_\delta\tau^{\frac12-\delta}\vvvert\varphi\vvvert_0(1+|x_0|_{\frac14-\frac{\delta}{8}}^2),\label{eq:error-standard1}\\
\big|\E[u_\star^{\tau,\s}(0,\IX^{\tau,\s}(t_N))]-\E[u_\star^{\tau,s}(t_N,\IX^{\tau,\s}(0))]\big|&\le C_\delta\vvvert\varphi\vvvert_0\tau^{\frac12-\delta}(1+|x_0|_{2\delta}),\label{eq:error-standard2}\\
\big|\E[u_\star^{\tau,s}(t_N,x_0)]-\int\varphi d\mu_\star^\tau\big|&\le Ce^{-\kappa t_N}\vvvert\varphi\vvvert_0 (1+|x_0|)\label{eq:error-standard3}.
\end{align}
Above and in the sequel, $\delta\in(0,\frac12)$ is an arbitrarily small positive real number and $C_\delta\in(0,\infty)$ is independent of $\tau$, $N$ and $x_0$.

The proofs of the inequalities~\eqref{eq:error-standard1} and~\eqref{eq:error-standard3} employs the same argument as in the proof of Theorem~\ref{theo:weakinv}, however one needs additional arguments to prove the inequality~\eqref{eq:error-standard2}.

$\bullet$ Proof of the inequality~\eqref{eq:error-standard1}.

Recall that the auxiliary process $\bigl(\tilde{\IX}^{\tau,\s}(t)\bigr)_{t\ge 0}$ is defined by Equation~\eqref{eq:tildeXs}, and satisfies $\tilde{\IX}^{\tau,\s}(t_n)=\IX_n^{\tau,\s}$ for all $n\in\N_0$. Therefore, using a standard telescoping sum argument, one has
\begin{align*}
\E[u^{\tau,\s}(0,\IX_{N}^{\tau,\s})]&-\E[u^{\tau,\s}(N\tau,\IX_0^{\tau,\s})]\\
&=\E[u^{\tau,\s}(0,\tilde{\IX}^{\tau,\s}(t_N))]-\E[u^{\tau,\s}(N\tau,\tilde{\IX}^{\tau,\s}(0))]\\
&=\sum_{n=0}^{N-1}\Bigl(\E[u^{\tau,\s}(t_N-t_{n+1},\tilde{\IX}^{\tau,\s}(t_{n+1}))]-\E[u^{\tau,\s}(t_N-t_{n},\tilde{\IX}_{\tau,\s}(t_{n}))]\Bigr)\\
&=\sum_{n=0}^{N-1}e_n^{\tau,\s},
\end{align*}
with $e_n^{\tau,\s}=\E[u^{\tau,s}(t_N-t_{n+1},\tilde{\IX}^{\tau,\s}(t_{n+1}))]-\E[u^{\tau,\s}(t_N-t_{n},\tilde{\IX}^{\tau,\s}(t_{n}))]$ for all $n\in\{0,\ldots,N-1\}$. Applying It\^o's formula, using the expression~\eqref{eq:dtildeXs} for the evolution of the auxiliary process and the fact that $u^{\tau,\s}$ solves the Kolmogorov equation~\eqref{eq:IKolmogorovs}, one obtains, for all $n\in\N_0$, the expression
\[
e_n^{\tau,\s}=\int_{t_n}^{t_{n+1}}\E\bigl[Du^{\tau,\s}(t_N-t,\tilde{\IX}^{\tau,\s}(t)).\bigl(Q_{\tau}F(\tilde{\IX}^{\tau,\s}(t_n))-Q_{\tau}F(\tilde{\IX}^{\tau,\s}(t))\bigr)\bigr]dt.
\]
The cases $n\in\{0,N-2,N-1\}$ and $n\in\{1,\ldots,N-3\}$ are treated separately. On the one hand, using the inequality~\eqref{eq:utaus0-bad}, the bound~\eqref{eq:Q_tau-bound}, the Lipschitz continuity of $F$ and the moment bound~\eqref{eq:tildeIXs-bound-ergo} (with $\alpha=0$), one obtains
\begin{align*}
|e_0^{\tau,\s}|+|e_{N-2}^{\tau,\s}|+|e_{N-1}^{\tau,s}|&\le C\vvvert\varphi\vvvert_0\int_{0}^{\tau}(t_N-t)^{-\frac12}\E[\big|Q_\tau^{\frac12}\bigl(F(\tilde{\IX}^{\tau,\s}(t_n))-F(\tilde{\IX}^{\tau,\s}(t))\bigr)\big|]dt\\
&+C\vvvert\varphi\vvvert_0\int_{t_{N-2}}^{t_N-1}(t_N-t)^{-\frac12}\E[\big|Q_\tau^{\frac12}\bigl(F(\tilde{\IX}^{\tau,\s}(t_n))-F(\tilde{\IX}^{\tau,\s}(t))\bigr)\big|]dt\\
&+C\vvvert\varphi\vvvert_0\int_{t_{N-1}}^{t_N}(t_N-t)^{-\frac12}\E[\big|Q_\tau^{\frac12}\bigl(F(\tilde{\IX}^{\tau,\s}(t_n))-F(\tilde{\IX}^{\tau,\s}(t))\bigr)\big|]dt\\
&\le C\tau^{\frac12}\vvvert\varphi\vvvert_0(1+|x_0|).
\end{align*}
On the other hand, using the inequality~\eqref{eq:utausustausstar-regularity} from Lemma~\ref{lem:utausutausstar-regularity}, with $\alpha=\frac12-\frac{\delta}{4}$, for all $n\in\{1,\ldots,N-3\}$, one obtains
\[
|e_n^{\tau,\s}|\le {\bf e}_{n,1}^{\tau,\s}+{\bf e}_{n,2}^{\tau,\s}
\]
where the error terms on the right-hand side above are defined by
\begin{align*}
{\bf e}_{n,1}^{\tau,\s}&=C\sqrt{\tau}\vvvert\varphi\vvvert_0\int_{t_n}^{t_{n+1}}\frac{e^{-\kappa(t_N-t)}}{(t_{N-2}-t)^{\frac12}}\E[|Q_\tau\bigl(F(\tilde{\IX}^{\tau,\s}(t_n))-F(\tilde{\IX}^{\tau,\s}(t))\bigr)|]dt\\
{\bf e}_{n,2}^{\tau,\s}&=C_\delta\vvvert\varphi\vvvert_0\int_{t_n}^{t_{n+1}}\frac{e^{-\kappa(t_N-t)}}{(t_{N-2}-t)^{1-\frac{\delta}{4}}}\E[|\IL^{-\frac12+\frac{\delta}{4}}Q_\tau\bigl(F(\tilde{\IX}^{\tau,\s}(t_n))-F(\tilde{\IX}^{\tau,\s}(t))\bigr)|]dt.
\end{align*}
Using the bound~\eqref{eq:Q_tau-bound}, the Lipschitz continuity of $F$ and the moment bound~\eqref{eq:tildeIXs-bound-ergo} (with $\alpha=0$), one obtains
\[
\sum_{n=1}^{N-3}{\bf e}_{n,1}^{\tau,\s}\le C\sqrt{\tau}\vvvert\varphi\vvvert_0\int_{\tau}^{t_{N-2}}\frac{e^{-\kappa(t_{N-2}-t)}}{(t_{N-2}-t)^{\frac12}}dt(1+|x_0|)\le C\sqrt{\tau}\vvvert\varphi\vvvert_0\int_0^\infty\frac{e^{-\kappa t}}{t^{\frac12}}dt(1+|x_0|).
\]
The treatment of the error term ${\bf e}_{n,2}^{\tau,\s}$ exploits Assumption~\ref{ass:Fregul1} on the regularity of the nonlinearity $F$: using the bound~\eqref{eq:Q_tau-bound} and the Cauchy--Schwarz inequality, one obtains
\begin{align*}
{\bf e}_{n,2}^{\tau,\s}&\le C_\delta\vvvert\varphi\vvvert_0\int_{t_n}^{t_{n+1}}\frac{e^{-\kappa(t_N-t)}}{(t_{N-2}-t)^{1-\frac{\delta}{8}}}\bigl(\E[\bigl(1+|\tilde{\IX}^{\tau,\s}(t)|_{\frac{1}{4}-\frac{\delta}{8}}^2+|\tilde{\IX}^{\tau,\s}(t_n)|_{\frac{1}{4}-\frac{\delta}{8}}^2\bigr)]\bigr)^{\frac12}\\
&\hspace{6cm}\bigl(\E[\big|\IL^{-\frac14+\frac{\delta}{2}}(\tilde{\IX}^{\tau,\s}(t_n)-\tilde{\IX}^{\tau,\s}(t))\big|]\bigr)^{\frac12} dt.
\end{align*}
Using the inequalities~\eqref{eq:tildeIXs-bound-ergo} and~\eqref{eq:tildeIXs-increment-ergo} from Lemma~\ref{lem:tildeIXs} then yields the upper bound
\[
\sum_{n=1}^{N-3}{\bf e}_{n,2}^{\tau,\s}\le C_\delta\tau^{\frac12-\delta}\vvvert\varphi\vvvert_0\int_0^\infty\frac{e^{-\kappa t}}{t^{1-\frac{\delta}{8}}}dt(1+|x_0|_{\frac14-\frac{\delta}{8}}^2).
\]
Gathering the estimates, one obtains the inequality~\eqref{eq:error-standard1}.

$\bullet$ Proof of the inequality~\eqref{eq:error-standard2}.

Applying It\^o's formula and using the fact that the function $u_\star^{\tau,\s}$ is solution of the Kolmogorov equation~\eqref{eq:IKolmogorovsstar}, for all $T\in(2\tau_0,\infty)$, one has
\begin{align*}
\E[u_\star^{\tau,\s}(0,\IX^{\tau,\s}(T))]&-\E[u_\star^{\tau,s}(T,\IX^{\tau,\s}(0))]\\
&=\int_0^T\E[\langle  Du_\star^{\tau,s}(T-t,\IX^{\tau,\s}(t)),(Q_\tau-R_\tau)F(\IX^{\tau,\s}(t))\rangle]dt\\
&=\int_{T-2\tau}^T\E[\langle  Du_\star^{\tau,s}(T-t,\IX^{\tau,\s}(t)),(Q_\tau-R_\tau)F(\IX^{\tau,\s}(t))\rangle]dt\\
&+\int_0^{T-2\tau}\E[\langle  Du_\star^{\tau,s}(T-t,\IX^{\tau,\s}(t)),(Q_\tau-R_\tau)F(\IX^{\tau,\s}(t))\rangle]dt.
\end{align*}

On the one hand, using the inequality~\eqref{eq:utausstar0-bad} (see the proof of Lemma~\ref{lem:utausutausstar-regularity} in Subsection~\ref{sec:standard-kolmogorov}), the inequality~\eqref{eq:QtauRtauRtau1/2} from Lemma~\ref{lem:Rtau}, one has
\begin{align*}
\Big|\int_{T-2\tau}^T&\E[\langle  Du_\star^{\tau,s}(T-t,\IX^{\tau,\s}(t)),(Q_\tau-R_\tau)F(\IX^{\tau,\s}(t))\rangle]dt\Big|\\
&\le \int_{T-2\tau}^{T}\frac{\vvvert\varphi\vvvert_0}{\sqrt{T-t}}\E[|R_\tau^{-\frac12}(Q_\tau-R_\tau)F(\IX^{\tau,\s}(t))|]dt\\
&\le C_\delta\tau^\delta\int_{T-2\tau}^{T}\frac{\vvvert\varphi\vvvert_0}{\sqrt{T-t}}\E[|\IL^\delta F(\IX^{\tau,\s}(t))|]dt.
\end{align*}
Using then the additional regularity condition~\eqref{eq:Fregul3} on the nonlinearity $F$, the moment bound~\eqref{eq:modified-standard-bound} from Lemma~\ref{lem:modifieds}, and the inequality
\[
\int_{T-2\tau}^{T}(T-t)^{-\frac12}dt\le \int_0^{2\tau}t^{-\frac12}dt\le 2\sqrt{2\tau},
\]
one obtains the upper bound
\[
\Big|\int_{T-2\tau}^T\E[\langle  Du_\star^{\tau,s}(T-t,\IX^{\tau,\s}(t)),(Q_\tau-R_\tau)F(\IX^{\tau,\s}(t))\rangle]dt\Big|\le C_\delta\tau^{\frac12+\delta}\vvvert\varphi\vvvert_0(1+|x_0|_{2\delta}).
\]

On the other hand, using the inequality~\eqref{eq:utausustausstar-regularity} from Lemma~\ref{lem:utausutausstar-regularity} (with $\alpha=\frac12-\delta$), one obtains
\begin{align*}
\Big|\int_0^{T-2\tau}&\E[\langle  Du_\star^{\tau,s}(T-t,\IX^{\tau,\s}(t)),(Q_\tau-R_\tau)F(\IX^{\tau,\s}(t))\rangle]dt\Big|\\
&\le \sqrt{\tau}\int_0^{T-2\tau}\frac{C_\delta e^{-\kappa (T-t)}}{(T-t-2\tau)^{\frac12}}\E[|(Q_\tau-R_\tau)F(\IX^{\tau,\s}(t))|]dt\\
&+\int_0^{T-2\tau}\frac{C_\delta e^{-\kappa (T-t)}}{(T-t-2\tau)^{1-\delta}}\E[|\IL^{-\frac12+\delta}(Q_\tau-R_\tau)F(\IX^{\tau,\s}(t))|]dt\\
&\le C\vvvert\varphi\vvvert_0\sqrt{\tau}\int_{0}^{\infty}\frac{e^{-\kappa t}}{t^{\frac12}}dt(1+\underset{s\ge 0}\sup~\E[|\IX^{\tau,\s}(s)|])\\
&+C_\delta\vvvert\varphi\vvvert_0\tau^{\frac12-\delta}\int_{0}^{\infty}\frac{e^{-\kappa t}}{t^{1-\delta}}dt(1+\underset{s\ge 0}\sup~\E[|\IX^{\tau,\s}(s)|])
\end{align*}
using the inequality~\eqref{eq:QtauRtau} and the Lipschitz continuity of $F$ in the last step. Using the moment bound~\eqref{eq:modified-standard-bound} from Lemma~\ref{lem:modifieds}, one finally obtains
\[
\Big|\int_0^{T-2\tau}\E[\langle  Du_\star^{\tau,s}(T-t,\IX^{\tau,\s}(t)),(Q_\tau-R_\tau)F(\IX^{\tau,\s}(t))\rangle]dt\Big|\le C_\delta\vvvert\varphi\vvvert_0\tau^{\frac12-\delta}(1+|x_0|).
\]

Gathering the two estimates gives
\[
\big|\E[u_\star^{\tau,\s}(0,\IX^{\tau,\s}(T))]-\E[u_\star^{\tau,s}(T,\IX^{\tau,\s}(0))]\big|\le C_\delta\vvvert\varphi\vvvert_0\tau^{\frac12-\delta}(1+|x_0|_{2\delta}).
\]
and concludes the proof of the inequality~\eqref{eq:error-standard2}.

$\bullet$ Proof of the inequality~\eqref{eq:error-standard3}.

Recall that $\mu_\star^\tau$ is defined by Equation~\eqref{eq:mu_startau} in Section~\ref{sec:results_standard}, and is the invariant distribution of the modified stochastic evolution equation~\eqref{eq:modifiedSPDEstandardstar}, when the nonlinearity $F$ satisfies Assumption~\ref{ass:gradient}, i.\,e. $F=-DV$. Let $\IX_\star^{\tau}$ be a $H$-valued random variable with distribution $\rho_{\IX_\star^\tau}=\mu_\star^\tau$ and assume that it is independent of the Wiener process $\bigl(W(t)\bigr)_{t\ge 0}$. Note that one has
\[
\underset{\tau\in(0,\tau_0)}\sup~\E[|\IX_\star^\tau|]=\underset{\tau\in(0,\tau_0)}\sup~\int |x|d\mu_\star^\tau(x)<\infty,
\]
using the condition $\Lf<\lambda_1\le \lambda_1(1+\frac{\tau\lambda_1}{2})$ from Assumption~\ref{ass:ergo}. Since $\mu\star^\tau$ is the unique invariant distribution of the process $\bigl(\IX_\star^{\tau,\s}(t)\bigr)_{t\ge 0}$, one has
\[
\E[u_\star^{\tau,\s}(t_N,\IX_\star^\tau)]=\E[u_\star^{\tau,\s}(0,\IX_\star^\tau)]=\int\varphi d\mu_\star^\tau
\]
for all $N\in\N$ and $\tau\in(0,\tau_0)$. As a consequence, when $t_N=N\tau\ge 2\tau_0$, one has
\begin{align*}
\big|\E[\varphi(\IX_\star^{\tau,\s}(t_N))]-\int\varphi d\mu_\star^\tau\big|&=\big|u_\star^{\tau,\s}(t_N,x_0)-\E[u_\star^{\tau,\s}(t_N,\IX_\star)]\big|\\
&\le \vvvert u_\star^{\tau,\s}(t_N,\cdot)\vvvert_1 \E[|x_0-\IX_\star|]\\
&\le Ce^{-\kappa t_N}\vvvert\varphi\vvvert_0 (1+|x_0|),
\end{align*}
using the inequality~\eqref{eq:utausustausstar-regularity} from Lemma~\ref{lem:utausutausstar-regularity} with $\delta=\frac12$. This concludes the proof of the inequality~\eqref{eq:error-standard3}.

$\bullet$ Combining the three inequalities~\eqref{eq:error-standard1},~\eqref{eq:error-standard2} and~\eqref{eq:error-standard3}, one obtains the weak error estimate~\eqref{eq:theo-weakinv_weakerror-standard} for all functions $\varphi:H\to\R$ which are bounded and continuous. Owing to the arguments given above, the proof of Theorem~\ref{theo:weakinv-standard} is thus completed.
\end{proof}

\section{Applications and extensions of the modified Euler scheme}\label{sec:extensions}

This section is devoted to the presentation of two applications of the proposed modified Euler scheme~\eqref{eq:scheme-intro}, which aim to illustrate again the superiority of that integrator compared with the standard Euler scheme~\eqref{eq:scheme-standard-intro}. In Subsection~\ref{sec:AP}, an asymptotic preserving scheme is provided to approximate the slow component of a multiscale stochastic evolution system, where the application of the modified Euler scheme is able to capture the so-called averaged coefficient which governs the behavior of the limiting evolution equation (averaging principle). In Subsection~\ref{sec:MCMC}, the modified Euler scheme is employed as a proposal transition kernel for a Markov Chain Monte Carlo method to approximate the Gibbs distribution~\eqref{eq:mu_star}, which is shown to be well defined and to have a spectral gap in infinite dimension. Using the accelerated exponential Euler scheme would give similar results, however the standard Euler scheme fails in both situations. Finally, the range of application of the modified Euler scheme is extended in Subsection~\ref{sec:generalizations}, to encompass stochastic evolution equations with non-globally Lipschitz drift, with multiplicative noise or with colored noise, possibly in higher dimension. The validity of the main results in those situations is discussed, however precise statements and proofs are omitted.

Note that the two applications presented in subsections~\ref{sec:AP} and~\ref{sec:MCMC} are direct consequences of the preservation of the Gaussian invariant distribution $\nu$ in the Ornstein--Uhlenbeck case ($F=0$) when using the modified Euler scheme, see Proposition~\ref{propo:invarGauss}.

\subsection{Asymptotic preserving scheme}\label{sec:AP}

We refer to~\cite{B} for a more detailed analysis of the problem discussed in this section.

The modified Euler scheme can be applied to design an efficient integrator for the slow-fast SPDE system
\begin{equation}\label{eq:SPDE-slowfast}
\left\lbrace
\begin{aligned}
d\XX^\epsilon(t)&=-\IL\XX^\epsilon(t)dt+G\bigl(\XX^\epsilon(t),\YY^\epsilon(t)\bigr)dt\\
d\YY^\epsilon(t)&=-\frac{1}{\epsilon}\IL\YY^\epsilon(t)dt+\frac{\sigma(\XX^\epsilon(t))}{\sqrt{\epsilon}}dW(t),
\end{aligned}
\right.
\end{equation}
where $\epsilon\in(0,\epsilon_0)$ is a small parameter, and where $G:H\times H\to H$ and $\sigma:H\to \R$ are bounded and globally Lipschitz continuous mappings. The linear operator $\IL$ and the cylindrical Wiener process $\bigl(W(t)\bigr)_{t\ge 0}$ satisfy the conditions presented in Section~\ref{sec:setting}. Moreover, the initial values $x_0=\XX^\epsilon(0)$ and $y_0=\YY^\epsilon(0)$ are assumed to be deterministic, and independent of the parameter $\epsilon$.

If the mapping $\sigma$ is constant, the fast component $\YY^\epsilon$ is a $H$-valued Ornstein--Uhlenbeck process, which does not depend on the slow component $\XX^\epsilon$. In that case one has the equality in distribution
\[
\bigl(\YY^\epsilon(t)\bigr)_{t\ge 0}=\bigl(\YY(\frac{t}{\epsilon})\bigr)_{t\ge 0}
\]
for all $\epsilon\in(0,\epsilon_0)$, where $\YY$ is the solution of the stochastic evolution equation
\[
d\YY(t)=-\IL\YY(t)dt+dW(t).
\]
The $H$-valued Ornstein--Uhlenbeck process $\YY$ is ergodic, and its unique invariant distribution is the Gaussian distribution $\nu$ (see~\eqref{eq:nu} from Section~\ref{sec:invariant}). When $\sigma$ is not constant, one needs to consider the invariant distribution $\nu_x=\mathcal{N}(0,\frac{\sigma(x)^2}{2}\IL^{-1})$ of the stochastic evolution equation
\[
d\YY_x(t)=-\IL\YY_x(t)dt+\sigma(x)dW(t)
\]
with frozen slow component $x\in H$.

In this section, we study the behavior of the system~\eqref{eq:SPDE-slowfast} and of numerical schemes in the regime $\epsilon\to 0$. On the one hand, to approximate $\bigl(\XX^\epsilon(T),\YY^\epsilon(T)\bigr)$, in the strong or weak sense, applying an integrator with time-step size $\tau=\frac{T}{N}$, the presence of the small parameter $\epsilon$ in the fast evolution equation requires to choose $\tau={\rm o}(\epsilon)$. On the other hand, the slow component $\XX^\epsilon$ satisfies the averaging principle: $\XX^\epsilon$ converges to $\overline{\XX}$ when $\epsilon\to 0$, where $\overline{\XX}$ is solution of the averaged equation
\begin{equation}\label{eq:SPDEaveraged}
d\overline{\XX}(t)=-\IL\overline{\XX}(t)dt+\overline{G}(\overline{\XX}(t)),
\end{equation}
with initial value $\overline{\XX}(0)=x_0$, where for all $x\in H$
\begin{equation}\label{eq:averagedG}
\overline{G}(x)=\int G(x,y)d\nu_x(y)=\E_{\YY\sim\nu_x}[G(x,\YY_x)]=\E_{\YY\sim\nu}[G(x,\sigma(x)\YY)].
\end{equation}
The convergence above holds in both the strong and weak senses for the system considered in this section. See for instance~\cite{MR2480788} for convergence results and~\cite{B:2012} for strong and weak rates of convergence. As a result of the averaging principle, if the objective is to approximate only the slow component $\XX^\epsilon$, the time-step size restriction $\tau={\rm o}(\epsilon)$ may not be necessary.

The observations above lead to the following question: is it possible to design numerical schemes which are efficient in both regimes $\epsilon\in(0,\epsilon_0)$ and $\epsilon\to 0$, and which do not impose time-step size restrictions? An answer is provided by asymptotic preserving schemes. Rougly, a numerical scheme $\bigl(\XX_n^{\epsilon,\tau},\YY_n^{\epsilon,\tau})_{n\ge 0}$ is asymptotic preserving scheme when the following conditions are satisfied.
\begin{itemize}
\item For any $\epsilon\in(0,\epsilon_0)$, the scheme is consistent with the system~\eqref{eq:SPDE-slowfast}, namely one has $\XX_N^{\epsilon,\tau}\underset{\tau\to 0}\to\XX^\epsilon(T)$ and $\YY_N^{\epsilon,\tau}\underset{\tau\to 0}\to\YY^\epsilon(T)$ (with the condition $T=N\tau$).
\item For any $\tau\in(0,\tau_0)$, there exists a limiting scheme, namely $\XX_n^{\epsilon,\tau}\underset{\epsilon\to 0}\to \XX_n^{0,\tau}$, where the sequence $\bigl(\XX_{n}^{0,\tau}\bigr)_{n\ge 0}$ is a Markov chain.
\item The limiting scheme is consistent with the averaged equation~\eqref{eq:SPDEaveraged}, meaning that one has $\XX_N^{0,\tau}\underset{\tau\to 0}\to \overline{\XX}(T)$ (with the condition $T=N\tau$).
\end{itemize}
The convergence results above may hold either in weak and strong senses. The asymptotic preserving property can be described by the following commutative diagram
\[
\begin{CD}
\XX_N^{\epsilon,\tau}     @>{N \to \infty}>> \XX^\epsilon(T) \\
@VV{\epsilon\to 0}V        @VV{\epsilon\to 0}V\\
\XX_N^{0,\tau}     @>{N \to \infty}>> \overline{\XX}(T)
\end{CD}
\]
We refer to~\cite{BR} for the introduction of asymptotic preserving schemes for (finite dimensional) SDEs. For a more complete overview of asymptotic preserving schemes for deterministic and stochastic systems, we refer to~\cite{B}.

The main result of this section is to check that the following numerical scheme is asymptotic preserving, where the fast Ornstein--Uhlenbeck component is discretized using the modified Euler scheme.
\begin{theo}\label{theo:AP}
Let $\tau\in(0,\tau_0)$ be an arbitrary time-step size. For all $n\in\N_0$ and all $\epsilon\in(0,\epsilon_0)$, set
\begin{equation}\label{eq:APscheme}
\left\lbrace
\begin{aligned}
\XX_{n+1}^{\epsilon,\tau}&=\IA_\tau\bigl(\XX_n^{\epsilon,\tau}+\tau G(\XX_n^{\epsilon,\tau},\YY_{n+1}^{\epsilon,\tau})\bigr)\\
\YY_{n+1}^{\epsilon,\tau}&=\IA_{\frac{\tau}{\epsilon}}\YY_n^{\epsilon,\tau}+\sigma(\XX_n^{\epsilon,\tau})\sqrt{\frac{\tau}{\epsilon}}\IB_{\frac{\tau}{\epsilon},1}\Gamma_{n,1}+\sigma(\XX_n^{\epsilon,\tau})\sqrt{\frac{\tau}{\epsilon}}\IB_{\frac{\tau}{\epsilon},2}\Gamma_{n,2},
\end{aligned}
\right.
\end{equation}
where the linear operators $\IA_\tau$, $\IA_{\frac{\tau}{\epsilon}}$, $\IB_{\frac{\tau}{\epsilon},1}$ and $\IB_{\frac{\tau}{\epsilon},2}$ are defined by~\eqref{eq:operators}, and $\bigl(\Gamma_{n,1}\bigr)_{n\ge 0}$ and $\bigl(\Gamma_{n,2}\bigr)_{n\ge 0}$ are two independent sequences of independent cylindrical Gaussian random variables.

The scheme~\eqref{eq:APscheme} is asymptotic preserving, more precisely the following results hold.
\begin{enumerate}
\item[$(i)$] The limiting scheme is given by
\begin{equation}\label{eq:limitingscheme}
\XX_{n+1}^{0,\tau}=\IA_\tau\Bigl(\XX_n^{0,\tau}+\tau G\bigl(\XX_n^{0,\tau},\sigma(\XX_n^{0,\tau})Q^{\frac12}\Gamma_n\bigr)\Bigr),
\end{equation}
with initial value $\XX_0^{0,\tau}=x_0$, where $Q=\frac12\IL^{-1}$ is the covariance operator of the Gaussian distribution $\nu$ and where $\bigl(\Gamma_n\bigr)_{n\ge 0}$ is a sequence of independent cylindrical Gaussian random variables. Precisely, one has
\begin{equation}\label{eq:cvlimitingscheme}
\underset{\epsilon\to 0}\lim~\E[\varphi(\XX_{n}^{\epsilon,\tau})]=\E[\varphi(\XX_{n}^{0,\tau})]
\end{equation}
for any Lipschitz continuous function $\varphi:H\to \R$ and all $n\in\{0,\ldots,N\}$.
\item[$(ii)$] The limiting scheme is consistent with the averaged equation, in the sense of convergence in distribution, with weak order $1$: for all $\delta\in(0,1)$ and all $T\in(0,\infty)$, there exists $C_\delta(T,x_0)\in(0,\infty)$ such that
\begin{equation}\label{eq:consistencylimitingscheme}
\big|\E[\varphi(\XX_N^{0,\tau})]-\varphi(\overline{\XX}(T))\big|\le C_\delta(T,x_0)\bigl(\vvvert\varphi\vvvert_1+\vvvert\varphi\vvvert_2\bigr)\tau^{1-\delta}.
\end{equation}
\end{enumerate}
\end{theo}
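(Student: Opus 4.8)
The plan is to prove the two parts of Theorem~\ref{theo:AP} separately, both relying crucially on Proposition~\ref{propo:invarGauss}, which guarantees that the modified Euler scheme preserves the Gaussian distribution $\nu$ in the Ornstein--Uhlenbeck case.

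\textbf{Part $(i)$: identification of the limiting scheme.} First I would observe that, for fixed time-step size $\tau$, the fast component in~\eqref{eq:APscheme} evolves according to the modified Euler scheme with effective time-step size $\frac{\tau}{\epsilon}\to\infty$ as $\epsilon\to 0$. The key point is that, for \emph{frozen} slow value $x$, Proposition~\ref{propo:invarGauss} (applied to the scheme with noise scaled by $\sigma(x)$, equivalently to the operators $\IA_{\tau/\epsilon}$, $\IB_{\tau/\epsilon,1}$, $\IB_{\tau/\epsilon,2}$ scaled appropriately) shows that the unique invariant distribution of the fast chain is $\nu_x = \mathcal{N}(0,\frac{\sigma(x)^2}{2}\IL^{-1})$, and that this distribution is \emph{exactly} preserved for any number of iterations, not merely attained in the limit. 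Combined with the ergodicity estimate~\eqref{eq:invar-scheme-ergo} from Proposition~\ref{propo:invar-scheme} (with uniform constants over the relevant time-step sizes, using that $\frac{\tau}{\epsilon}\ge\tau$), one obtains that $\YY_{n+1}^{\epsilon,\tau}$ converges in distribution, conditionally on the past, to $\sigma(\XX_n^{0,\tau})Q^{\frac12}\Gamma_n$ where $Q=\frac12\IL^{-1}$. I would then propagate this through the (Lipschitz, $\tau$-fixed) slow update by an induction on $n$: assuming $\XX_n^{\epsilon,\tau}\to\XX_n^{0,\tau}$ in distribution (in fact jointly with a suitable coupling), the continuous mapping theorem and the Lipschitz continuity of $G$, $\sigma$, and of $z\mapsto \IA_\tau(x+\tau G(x,z))$ give~\eqref{eq:cvlimitingscheme}. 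Some care is needed because $\YY_{n+1}^{\epsilon,\tau}$ depends on $\XX_n^{\epsilon,\tau}$ through $\sigma(\XX_n^{\epsilon,\tau})$, so one must argue that the randomness of the fast averaging decouples in the limit; this follows from the fact that, after one macro-step of length $\frac{\tau}{\epsilon}$, the fast chain has essentially lost memory of its initial condition and its law depends on the past only through $\sigma(\XX_n^{\epsilon,\tau})$, up to an error that vanishes uniformly as $\epsilon\to 0$.

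\textbf{Part $(ii)$: consistency of the limiting scheme with the averaged equation.} Here the limiting scheme~\eqref{eq:limitingscheme} is precisely the modified Euler scheme~\eqref{eq:scheme} applied to the averaged SPDE~\eqref{eq:SPDEaveraged}, provided one checks that $\overline{G}(x)=\E[G(x,\sigma(x)Q^{\frac12}\Gamma)]$ matches the definition~\eqref{eq:averagedG}: this is immediate since $Q^{\frac12}\Gamma = \frac{1}{\sqrt 2}\IL^{-1/2}\Gamma$ has distribution $\nu$. However, $\overline{G}$ is defined through an expectation, so $\XX_n^{0,\tau}$ is not literally $X_n^\tau$ for a deterministic nonlinearity; rather, it is the modified Euler scheme applied to~\eqref{eq:SPDEaveraged} with a \emph{random} realization of the nonlinearity having the right conditional mean. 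I would therefore not invoke Theorem~\ref{theo:weak} verbatim but instead adapt its proof: introduce $\overline{\IX}_\tau$, the accelerated exponential Euler scheme applied to the modified SPDE associated with $\overline{G}$, use the third-formulation interpretation of Section~\ref{sec:scheme-3rd}, and decompose the error as in~\eqref{eq:decomperror-gen}. The genuinely new term compared with Theorem~\ref{theo:weak} is the fluctuation of $G(\XX_n^{0,\tau},\sigma(\XX_n^{0,\tau})Q^{\frac12}\Gamma_n)$ around its conditional mean $\overline{G}(\XX_n^{0,\tau})$; since the $\Gamma_n$ are independent across $n$, these fluctuations form a martingale-difference sequence, and after applying It\^o's formula and the telescoping/Kolmogorov argument one gets a contribution controlled by $\tau\,\vvvert u_\tau\vvvert_2$ per step — which, using the second-order regularity estimate~\eqref{eq:lem-u-2} from Lemma~\ref{lem:u-12} and the fact that $G$, $\sigma$ are bounded, sums to $O(\tau)$ (hence $O(\tau^{1-\delta})$). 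This is why the test function must be of class $\mathcal{C}^2$ and why one obtains weak order $1$ rather than $1/2$: the fluctuation term contributes at order $\tau$, and the deterministic discretization error of the modified Euler scheme applied to the smooth averaged equation~\eqref{eq:SPDEaveraged} is also $O(\tau^{1-\delta})$ by the arguments of Theorem~\ref{theo:weak} (the solution of~\eqref{eq:SPDEaveraged} is smoother, so Assumption~\ref{ass:Fregul1} is not the binding constraint).

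\textbf{Main obstacle.} I expect the hardest step to be making Part $(i)$ rigorous: one must establish convergence in distribution of the coupled chain $(\XX_n^{\epsilon,\tau},\YY_n^{\epsilon,\tau})$ as $\epsilon\to 0$ with a quantitative handle on how fast the fast sub-chain equilibrates over a macro-step of length $\frac{\tau}{\epsilon}$, \emph{uniformly} in the slow state and uniformly in $n\le N$. The clean way is to exploit the \emph{exact} preservation of $\nu_x$ (Proposition~\ref{propo:invarGauss}) together with the contraction estimate of Proposition~\ref{propo:invar-scheme}: starting the fast chain from $\YY_n^{\epsilon,\tau}$ and comparing with a stationary copy gives an error bounded by $e^{-\kappa \tau/\epsilon}(1+\E|\YY_n^{\epsilon,\tau}|)$, which tends to $0$. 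The remaining subtlety is that $\sigma(\XX_n^{\epsilon,\tau})$ is itself random and correlated with $\YY_n^{\epsilon,\tau}$; one resolves this by conditioning on $\mathcal{F}_{t_n}$ (the sigma-field generated by the chain up to macro-step $n$), noting that conditionally the fast update is a modified Euler step with frozen coefficient $\sigma(\XX_n^{\epsilon,\tau})$, and then passing to the limit inside the conditional expectation using dominated convergence and the uniform moment bounds from Lemma~\ref{lem:scheme-bound}. For the boundedness of $G$ and $\sigma$ this is routine; without boundedness one would need additional moment control, which is why the hypotheses of this section assume $G$ and $\sigma$ bounded.
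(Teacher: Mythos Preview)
Your proposal is broadly correct in spirit, but the paper's proof takes a considerably more elementary route for both parts.

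\textbf{Part $(i)$.} You argue via ergodicity and conditional convergence in distribution, invoking Proposition~\ref{propo:invar-scheme}. Two remarks. First, that proposition is stated for $\tau\in(0,\tau_0)$, whereas here the effective time-step $\tau/\epsilon\to\infty$; you would need to redo the contraction estimate in this regime (trivial in the Ornstein--Uhlenbeck case, but not literally covered). Second, and more importantly, the paper avoids ergodicity entirely. It passes to the second formulation~\eqref{eq:scheme2} so that the fast update reads $\hat{\YY}_{k+1}^{\epsilon,\tau}=\IA_{\tau/\epsilon}\hat{\YY}_k^{\epsilon,\tau}+\sigma(\hat{\XX}_k^{\epsilon,\tau})\sqrt{\tau/\epsilon}\,\IB_{\tau/\epsilon}\Gamma_k$, couples the limiting scheme~\eqref{eq:limitingscheme} to the \emph{same} sequence $(\Gamma_k)$, and proves the strong statement $\E[|\hat{\XX}_n^{\epsilon,\tau}-\XX_n^{0,\tau}|]\to 0$ by a direct recursion. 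The two ingredients are $\E[|\IA_{\tau/\epsilon}\hat{\YY}_k^{\epsilon,\tau}|^2]\to 0$ (trivial since $\|\IA_{\tau/\epsilon}\|_{\mathcal{L}(H)}\to 0$) and an explicit Hilbert--Schmidt computation showing $\|\sqrt{\tau/\epsilon}\,\IB_{\tau/\epsilon}-Q^{1/2}\|_{\mathcal{L}_2(H)}\to 0$. No conditioning on $\mathcal{F}_{t_n}$, no invariant-measure argument, and the correlation between $\sigma(\hat{\XX}_k^{\epsilon,\tau})$ and $\hat{\YY}_k^{\epsilon,\tau}$ that you flag as the main obstacle simply does not arise.

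\textbf{Part $(ii)$.} Your martingale-difference cancellation is exactly the right idea, but you wrap it in the continuous-time modified-equation and Kolmogorov machinery of Section~\ref{sec:scheme-3rd}, citing Lemma~\ref{lem:u-12} (which is for the \emph{stochastic} equation~\eqref{eq:SPDE}). Since the averaged equation~\eqref{eq:SPDEaveraged} is deterministic, the paper instead introduces the discrete auxiliary functions $\overline{u}_n^\tau(x)=\varphi(\overline{\XX}_n^\tau(x))$, where $\overline{\XX}_n^\tau$ is the standard Euler scheme applied to~\eqref{eq:SPDEaveraged}, and telescopes at the discrete level. The key identity $\E\bigl[D\overline{u}_{N-n-1}^\tau(\IA_\tau\XX_n^{0,\tau}).\bigl(G(\XX_n^{0,\tau},\sigma(\XX_n^{0,\tau})Q^{1/2}\Gamma_n)-\overline{G}(\XX_n^{0,\tau})\bigr)\bigr]=0$ is precisely your observation; a second-order Taylor remainder then gives $\tau^2$ per step, summing to $O(\tau)$. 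The remaining piece $|\varphi(\overline{\XX}_N^\tau)-\varphi(\overline{\XX}(T))|$ is a purely deterministic $O(\tau^{1-\delta})$ error. Your approach would work, but the paper's is lighter and does not need any of the auxiliary operators $\IL_\tau$, $Q_\tau$ or the continuous-time $u_\tau$.
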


Note that the requirement that the scheme~\eqref{eq:APscheme} is consistent with the system~\eqref{eq:SPDE-slowfast} for any $\epsilon\in(0,\epsilon_0)$ is not stated in Theorem~\ref{theo:AP}, in order to focus on the most relevant properties of the scheme. For completeness, let us state the associated weak error estimate: for all $\delta\in(0,\frac12)$, $\epsilon\in(0,\epsilon_0)$ and $T\in(0,\infty)$, there exists $C_{\delta}(\epsilon,T,x_0,y_0)\in(0,\infty)$ such that for any function $\varphi:H\times H\to\R$ of class $\mathcal{C}^2$, one has
\[
\big|\E[\varphi(\XX_N^{\epsilon,\tau},\YY_N^{\epsilon,\tau})]-\E[\varphi(\XX^{\epsilon}(T)),\YY^{\epsilon}(T))]|\le C_{\delta}(\epsilon,T,x_0,y_0)\vvvert\varphi\vvvert_2\tau^{\frac12-\delta}.
\]
The proof of the weak error estimate above is omitted, it would use the same techniques as the proof of Theorem~\ref{theo:weak}. Due to the discretization of the fast component, one has $C_{\delta}(\epsilon,T,x_0,y_0)\underset{\epsilon\to 0}\to\infty$, therefore the weak error estimate above is not uniform with respect to the parameter $\epsilon$. For details, see~\cite[Proposition~3.6]{B}.

Let us provide the proof of Theorem~\ref{theo:AP}.
\begin{proof}[Proof of Theorem~\ref{theo:AP}]
$\bullet$ Proof of $(i)$.

It is convenient to employ the second interpretation of the modified Euler scheme, see Section~\ref{sec:scheme-2nd}: recall (see~\eqref{eq:distribution_increments}) that one has the equality in distribution 
\[
\IB_{\frac{\tau}{\epsilon},1}\Gamma_{n,1}+\IB_{\frac{\tau}{\epsilon},2}\Gamma_{n,2}=\IB_{\frac{\tau}{\epsilon}}\Gamma_{n}
\]
where the linear operator $\IB_{\frac{\tau}{\epsilon}}$ is defined by~\eqref{eq:IB}. As a consequence, one has the equality in distribution
\[
\bigl(\XX_n^{\epsilon,\tau},\YY_n^{\epsilon,\tau}\bigr)_{n\ge 0}=\bigl(\hat{\XX}_n^{\epsilon,\tau},\hat{\YY}_n^{\epsilon,\tau}\bigr)_{n\ge 0}
\]
where the scheme $\bigl(\hat{\XX}_n^{\epsilon,\tau},\hat{\YY}_n^{\epsilon,\tau}\bigr)_{n\ge 0}$ is defined by
\begin{equation}\label{eq:APscheme2nd}
\left\lbrace
\begin{aligned}
\hat{\XX}_{n+1}^{\epsilon,\tau}&=\IA_\tau\bigl(\hat{\XX}_n^{\epsilon,\tau}+\tau G(\hat{\XX}_n^{\epsilon,\tau},\hat{\YY}_{n+1}^{\epsilon,\tau})\bigr)\\
\hat{\YY}_{n+1}^{\epsilon,\tau}&=\IA_{\frac{\tau}{\epsilon}}\hat{\YY}_n^{\epsilon,\tau}+\sigma(\hat{\XX}_{n}^{\epsilon,\tau})\sqrt{\frac{\tau}{\epsilon}}\IB_{\frac{\tau}{\epsilon}}\Gamma_{n},
\end{aligned}
\right.
\end{equation}
with initial values $\hat{\XX}_0^{\epsilon,\tau}=x_0$ and $\hat{\YY}_0^{\epsilon,\tau}=y_0$. To prove the claim~\eqref{eq:cvlimitingscheme}, it suffices to prove that
\[
\E[|\hat{\XX}_n^{\epsilon,\tau}-\XX_n^{0,\tau}|]\underset{\epsilon\to 0}\to 0
\]
for all $n\in\N$.

Note that for all $n\in\N$, one has
\begin{align*}
\hat{\XX}_n^{\epsilon,\tau}&=\IA_\tau^n x_0+\tau\sum_{k=0}^{n-1}\IA_\tau^{n-k}G(\hat{\XX}_k^{\epsilon,\tau},\hat{\YY}_{k+1}^{\epsilon,\tau}),\\
{\XX}_n^{0,\tau}&=\IA_\tau^n x_0+\tau\sum_{k=0}^{n-1}\IA_\tau^{n-k}G({\XX}_k^{0,\tau},\sigma({\XX}_n^{0,\tau})Q^{\frac12}\Gamma_k),
\end{align*}
therefore for all $n\in\N$, one has
\begin{align*}
\E[|\hat{\XX}_n^{\epsilon,\tau}-\XX_n^{0,\tau}|]&\le \tau\sum_{k=0}^{n-1}\E[|G(\hat{\XX}_k^{\epsilon,\tau},\hat{\YY}_{k+1}^{\epsilon,\tau})-G({\XX}_k^{0,\tau},\sigma({\XX}_k^{0,\tau})Q^{\frac12}\Gamma_k)|]\\
&\le \tau\sum_{k=0}^{n-1}\E[|G(\hat{\XX}_k^{\epsilon,\tau},\hat{\YY}_{k+1}^{\epsilon,\tau})-G({\XX}_k^{0,\tau},\hat{\YY}_{k+1}^{\epsilon,\tau})|]\\
&+\tau\sum_{k=0}^{n-1}\E[|G({\XX}_k^{0,\tau},\hat{\YY}_{k+1}^{\epsilon,\tau})-G({\XX}_k^{0,\tau},\sigma(\hat{\XX}_k^{\epsilon,\tau})Q^{\frac12}\Gamma_k)|]\\
&+\tau\sum_{k=0}^{n-1}\E[|G({\XX}_k^{0,\tau},\sigma(\hat{\XX}_k^{\epsilon,\tau})Q^{\frac12}\Gamma_k)-G({\XX}_k^{0,\tau},\sigma({\XX}_k^{0,\tau})Q^{\frac12}\Gamma_k)|]\\
&\le C\tau\sum_{k=0}^{n-1}\E[|\hat{\XX}_k^{\epsilon,\tau}-\XX_k^{0,\tau}|]\\
&+C\tau\sum_{k=0}^{n-1}\E[|\hat{\YY}_{k+1}^{\epsilon,\tau}-\sigma(\hat{\XX}_k^{\epsilon,\tau})Q^{\frac12}\Gamma_k|],
\end{align*}
since $G$ and $\sigma$ are globally Lipschitz continuous and $\E[|Q^{\frac12}\Gamma_n|]<\infty$. It finally suffices to check that
\[
\E[|\hat{\YY}_{k+1}^{\epsilon,\tau}-\sigma(\hat{\XX}_k^{\epsilon,\tau})Q^{\frac12}\Gamma_k|^2]\underset{\epsilon\to 0}\to 0
\]
for all $k\in\{0,\ldots,N-1\}$. Note that
\[
\hat{\YY}_{k+1}^{\epsilon,\tau}-\sigma({\XX}_k^{0,\tau})Q^{\frac12}\Gamma_k=\IA_{\frac{\tau}{\epsilon}}\hat{\YY}_k^{\epsilon,\tau}+\sigma(\hat{\XX}_k^{\epsilon,\tau})\bigl(\sqrt{\frac{\tau}{\epsilon}}\IB_{\frac{\tau}{\epsilon}}-Q^{\frac12}\bigr)\Gamma_k.
\]
On the one hand, one has
\[
\E[|\IA_{\frac{\tau}{\epsilon}}\hat{\YY}_k^{\epsilon,\tau}|^2]\le \frac{1}{(1+\lambda_1\frac{\tau}{\epsilon})^2}\underset{\epsilon\in(0,\epsilon_0)}\sup~\underset{\ell\ge 0}\sup~\E[|\hat{\YY}_\ell^{\epsilon,\tau}|^2]\le \frac{C}{(1+\lambda_1\frac{\tau}{\epsilon})^2}\underset{\epsilon\to 0}\to 0,
\]
using the moment bound $\underset{\epsilon\in(0,\epsilon_0)}\sup~\underset{\ell\ge 0}\sup~\E[|\hat{\YY}_\ell^{\epsilon,\tau}|^2]<\infty$, which is a variant of Lemma~\ref{lem:modifiedStochasticConvolution}, using the boundedness of $\sigma$.

On the other hand, one has
\begin{align*}
\E[|\bigl(\sqrt{\frac{\tau}{\epsilon}}\IB_{\frac{\tau}{\epsilon}}-Q^{\frac12}\bigr)\Gamma_k|^2]&=
\|\sqrt{\frac{\tau}{\epsilon}}\IB_{\frac{\tau}{\epsilon}}-Q^{\frac12}\|_{\mathcal{L}_2(H)}^2\\
&=\sum_{j\in\N}\bigl(\sqrt{\frac{\tau}{\epsilon}}\frac{\sqrt{2+\lambda_j\frac{\tau}{\epsilon}}}{\sqrt{2}(1+\lambda_j\frac{\tau}{\epsilon})}-\frac{1}{\sqrt{2\lambda_j}}\bigr)^2\\
&=\sum_{j\in\N}\frac{1}{\bigl(\sqrt{\frac{\tau}{\epsilon}}\frac{\sqrt{2+\lambda_j\frac{\tau}{\epsilon}}}{\sqrt{2}(1+\lambda_j\frac{\tau}{\epsilon})}+\frac{1}{\sqrt{2\lambda_j}}\bigr)^2}\bigl(\frac{\frac{\tau}{\epsilon}(2+\lambda_j\frac{\tau}{\epsilon})}{2(1+\lambda_j\frac{\tau}{\epsilon})^2}-\frac{1}{2\lambda_j}\bigr)^2\\
&\le \sum_{j\in\N}2\lambda_j\frac{1}{\bigl(2\lambda_j(1+\lambda_j\frac{\tau}{\epsilon})^2\bigr)^2}\\
&\le \sum_{j\in\N}\frac{1}{2\lambda_j(1+\lambda_j\frac{\tau}{\epsilon})^4}\\
&\underset{\epsilon\to 0}\to 0.
\end{align*}

As a consequence, since $\sigma$ is assumed to be bounded, one obtains
\[
\underset{\epsilon\to 0}\limsup~\E[|\hat{\XX}_n^{\epsilon,\tau}-\XX_n^{0,\tau}|]\le C\tau\sum_{k=0}^{n-1}\underset{\epsilon\to 0}\limsup~\E[|\hat{\XX}_k^{\epsilon,\tau}-\XX_k^{0,\tau}|].
\]
Since $\hat{\XX}_0^{\epsilon,\tau}-\XX_0^{0,\tau}=0$, it is straightforward to obtain
\[
\underset{\epsilon\to 0}\limsup~\E[|\hat{\XX}_n^{\epsilon,\tau}-\XX_n^{0,\tau}|]=0
\]
for all $n\in\{0,\ldots,N\}$. This concludes the proof of $(i)$.

$\bullet$ Proof of $(ii)$.

Let us introduce the auxiliary scheme $\bigl(\overline{\XX}_n^{\tau}\bigr)_{n\ge 0}$, obtained by the application of the standard Euler scheme to the averaged equation~\eqref{eq:SPDEaveraged}: for all $n\ge 0$, set
\[
\overline{\XX}_{n+1}^\tau=\IA_\tau\bigl(\overline{\XX}_n^\tau+\tau \overline{G}(\overline{\XX}_n^\tau)\bigr),
\]
with initial value $\overline{\XX}_0^{\tau}=x_0$.

Let $\varphi:H\to \R$ be a mapping of class $\mathcal{C}^2$, with bounded first and second order derivatives. Then the error can be decomposed as follows:
\[
\big|\E[\varphi(\XX_N^{0,\tau})]-\varphi(\overline{\XX}(T))\big|\le \big|\varphi(\overline{\XX}_N^{0,\tau})-\varphi(\overline{\XX}(T))\big|+\big|\E[\varphi(\XX_N^{0,\tau})]-\varphi(\overline{\XX}_N^{0,\tau})\big|.
\]
On the one hand, the upper bound for the first error term comes from a (deterministic) standard error estimate
\begin{align*}
\big|\varphi(\overline{\XX}_N^{0,\tau})-\varphi(\overline{\XX}(T))\big|&\le \vvvert\varphi\vvvert_1|\XX_N^{0,\tau}-\overline{\XX}(T)|\\
&\le C_\delta(T,x_0)\vvvert\varphi\vvvert_1\tau^{1-\delta}.
\end{align*}
The details are omitted.

On the other hand, the treatment of the second error term is based on the following argument. For all $n\in\N$ and all $x\in H$, set
\[
\overline{u}_n^{\tau}(x)=\varphi(\overline{\XX}_n^{\tau}(x))
\]
where $\bigl(\overline{\XX}_n^{\tau}(x)\bigr)_{n\ge 0}$ is the solution of the auxiliary scheme with initial value $\overline{\XX}_0^{\tau}(x)=x$. The second error term may then be written as
\begin{align*}
\E[\varphi(\XX_N^{0,\tau})]-\varphi(\overline{\XX}_N^\tau)&=\E[\overline{u}_0^\tau(\XX_N^{0,\tau})]-\E[\overline{u}_N^\tau(\XX_0^{0,\tau})]\\
&=\sum_{n=0}^{N-1}\bigl(\E[\overline{u}_{N-n-1}^\tau(\XX_{n+1}^{0,\tau})]-\E[\overline{u}_{N-n}^\tau(\XX_n^{0,\tau})]\bigr)\\
&=\sum_{n=0}^{N-1}\bigl(\E[\overline{u}_{N-n-1}^\tau(\IA_\tau\XX_n^{0,\tau}+\tau \IA_\tau G(\XX_n^{0,\tau},\sigma(\XX_n^{0,\tau})Q^{\frac12}\Gamma_n))]\\
&\hspace{2cm}-\E[\overline{u}_{N-n-1}^\tau(\IA_\tau\XX_n^{0,\tau}+\tau\IA_\tau \overline{G}(\XX_n^{0,\tau}))]\bigr),
\end{align*}
using a telescoping sum argument, the definition of the limiting scheme~\eqref{eq:limitingscheme} and the identity
\[
\overline{u}_{k+1}^\tau(x)=\overline{u}_k^\tau(\IA_\tau x+\tau\overline{G}(x))
\]
in the last step.

It is straightforward to check that the auxiliary functions $u_n^\tau$ are of class $\mathcal{C}^2$, and satisfy the following estimate
\[
\underset{\tau\in(0,\tau_0)}\sup~\underset{n\in\N,0\le n\tau\le T}\sup~\vvvert\overline{u}_n^\tau\vvvert_2\le C(T)(\vvvert\varphi\vvvert_1+\vvvert\varphi\vvvert_2)
\]
for some $C(T)\in(0,\infty)$. We refer to~\cite[Lemma~4.2]{B} for a precise statement and the proof.

The crucial property for the proof of the consistency of the limiting scheme~\eqref{eq:limitingscheme} with the averaged equation~\eqref{eq:SPDEaveraged}, hence of the asymptotic preserving property, is the following inequality: for all $n\in\{0,\ldots,N-1\}$, one has
\[
\E[D\overline{u}_{N-n-1}(\IA_\tau\XX_n^{0,\tau}).\bigl(G(\XX_n^{0,\tau},\sigma(\XX_n^{0,\tau})Q^{\frac12}\Gamma_n)-\overline{G}(\XX_n^{0,\tau})\bigr)]=0,
\]
using a conditional expectation argument and the definition~\eqref{eq:averagedG} of the averaged nonlinearity $\overline{G}$: indeed $\XX_n^{0,\tau}$ and $\Gamma_n$ are independent and $Q^{\frac12}\Gamma_n\sim\nu$.

Using a Taylor expansion argument, one then obtains
\begin{align*}
\big|\E[\varphi(\XX_N^{0,\tau})]-\varphi(\overline{XX}_N^\tau)\big|&\le \tau^2\sum_{n=0}^{N-1}\vvvert\overline{u}_{N-n-1}^\tau\vvvert_2\E[|G(\XX_n^{0,\tau},\sigma(\XX_n^{0,\tau})Q^{\frac12}\Gamma_n)|^2+|\overline{G}(\XX_n)|^2]\\
&\le C(T)\tau(\vvvert\varphi\vvvert_1+\vvvert\varphi\vvvert_2),
\end{align*}
since $G$ is assumed to be bounded.

Gathering the estimates then concludes the proof of the weak error estimate~\eqref{eq:consistencylimitingscheme} and of item $(ii)$.

The proof of Theorem~\ref{theo:AP} is thus completed.
\end{proof}

The asymptotic preserving property stated in Theorem~\ref{theo:AP} can be written as follows: for any bounded and continuous mapping $\varphi:H\to\R$, one has
\begin{equation}\label{eq:APlimits}
\underset{\tau\to 0}\lim~\underset{\epsilon\to 0}\lim~\E[\varphi(\XX_N^{\epsilon,\tau})]=\underset{\epsilon\to 0}\lim~\underset{\tau\to 0}\lim~\E[\varphi(\XX_N^{\epsilon,\tau})],
\end{equation}
with fixed $T=N\tau$. Owing to this property, there is no restriction on the time-step size $\tau$ in terms of $\epsilon$, and the scheme~\eqref{eq:APscheme} provides an accurate approximation of $\XX^\epsilon(T)$ for any fixed $\epsilon$ and of $\overline{\XX}(T)$ when $\epsilon\to 0$. However, the asymptotic preserving property above does not provide relevant information for choosing the time-step size $\tau$ in order to achieve a given tolerance error when applying the scheme, either for a fixed $\epsilon$ or in the regime $\epsilon\to 0$. To study the computational cost of the scheme~\eqref{eq:APscheme} in terms of $\epsilon$, the relevant question to consider is whether the scheme is uniformly accurate, i.e. whether uniform weak error estimates
\begin{equation}\label{eq:UA}
\underset{\epsilon\in(0,\epsilon_0]}\sup~\big|\E[\varphi(\XX_N^{\epsilon,\tau})]-\E[\varphi(\XX^\epsilon(T))]\big|\underset{\tau\to 0}\to 0,
\end{equation}
when $T=N\tau$ is fixed, for a suitable class of functions $\varphi:H\to \R$. When~\eqref{eq:UA} holds, the time-step size $\tau$ may be chosen independently of $\epsilon$ in order to achieve a given accuracy of the approximation. We refer to~\cite{B} for the proof of the uniform accuracy property~\eqref{eq:UA}, when $\sigma$ is constant. More precisely, under appropriate conditions, one obtains in~\cite{B} uniform weak error estimates of the type
\[
\underset{\epsilon\in(0,\epsilon_0]}\sup~\big|\E[\varphi(\XX_N^{\epsilon,\tau})]-\E[\varphi(\XX^\epsilon(T))]\big|\le C_\delta(T,x_0,\varphi)\tau^{\frac13-\delta}
\]
for functions $\varphi$ of class $\mathcal{C}^3$. The proof of~\eqref{eq:UA} requires many additional technical arguments.

To conclude this subsection, let us compare the performances of the modified Euler scheme and of other schemes when applied to discretize the fast Ornstein--Uhlenbeck component $\YY^\epsilon$ in the SPDE system~\eqref{eq:SPDE-slowfast}. On the one hand, employing the accelerated exponential Euler method would result also in an asymptotic preserving scheme: a variant of Theorem~\ref{theo:AP} also holds for the scheme
\begin{equation}\label{eq:APscheme-expo}
\left\lbrace
\begin{aligned}
\XX_{n+1}^{\epsilon,\tau,\e}&=\IA_\tau\bigl(\XX_n^{\epsilon,\tau,\e}+\tau G(\XX_n^{\epsilon,\tau,\e},\YY_{n+1}^{\epsilon,\tau,\e})\bigr)\\
\YY_{n+1}^{\epsilon,\tau,\e}&=e^{-\frac{\tau}{\epsilon}\IL}\YY_n^{\epsilon,\tau,\e}+\frac{\sigma(\XX_n^{\epsilon,\tau,\e})}{\sqrt{\epsilon}}\int_{t_n}^{t_{n+1}}e^{-\frac{t_{n+1}-t}{\epsilon}\IL}dW(t).
\end{aligned}
\right.
\end{equation}
This result is not surprising since the accelerated exponential Euler scheme preserves the distribution of the Ornstein--Uhlenbeck component. The details of the proof of the asymptotic preserving property for the scheme~\eqref{eq:APscheme-expo} are omitted.

On the other hand, if the fast component $\YY^\epsilon$ is discretized using the standard Euler method, the scheme
\begin{equation}\label{eq:APscheme-standard}
\left\lbrace
\begin{aligned}
\XX_{n+1}^{\epsilon,\tau,\s}&=\IA_\tau\bigl(\XX_n^{\epsilon,\tau,\s}+\tau G(\XX_n^{\epsilon,\tau,\s},\YY_{n+1}^{\epsilon,\tau,\s})\bigr)\\
\YY_{n+1}^{\epsilon,\tau,\s}&=\IA_{\frac{\tau}{\epsilon}}\Bigl(\YY_n^{\epsilon,\tau,\s}+\sigma(\XX_n^{\epsilon,\tau,\s})\sqrt{\frac{\tau}{\epsilon}}\Gamma_{n}\Bigr)
\end{aligned}
\right.
\end{equation}
is not asymptotic preserving in general. Indeed, the associated limiting scheme is then given by
\[
\XX_n^{0,\tau,\s}=\IA_\tau\bigl(\XX_n^{0,\tau,\s}+\tau G(\XX_n^{0,\tau,\s},0)\bigr)
\]
which is consistent with the averaged equation~\eqref{eq:SPDEaveraged} if and only if $\overline{G}(x)=G(x,0)$ for all $x\in H$. In general, the identity~\eqref{eq:APlimits} thus does not hold: one has
\[
\underset{\tau\to 0}\lim~\underset{\epsilon\to 0}\lim~\E[\varphi(\XX_N^{\epsilon,\tau,\s})]\neq\underset{\epsilon\to 0}\lim~\underset{\tau\to 0}\lim~\E[\varphi(\XX_N^{\epsilon,\tau,\s})].
\]
Similarly, the uniform accuracy property~\eqref{eq:UA} does not hold when the standard Euler scheme is used: one has
\[
\underset{\tau\to 0}\limsup~\underset{\epsilon\in(0,\epsilon_0]}\sup~\big|\E[\varphi(\XX_N^{\epsilon,\tau})]-\E[\varphi(\XX^\epsilon(T))]\big|>0.
\]
As a consequence, it is not possible to choose $\tau$ independently of $\epsilon$ to achieve a given accuracy for the scheme~\eqref{eq:APscheme-standard}. The construction of the asymptotic preserving scheme~\eqref{eq:APscheme} for the SPDE system~\eqref{eq:SPDE-slowfast} is another illustration of the superiority of the modified Euler scheme compared with the standard method.

\begin{rem}\label{rem:postprocAP}
The scheme~\eqref{eq:APscheme-standard}, which is not asymptotic preserving, can be improved using the postprocessed integrator introduced in Remark~\ref{rem:postproc}:
\begin{equation}\label{eq:APscheme-standard-postproc}
\left\lbrace
\begin{aligned}
\XX_{n+1}^{\epsilon,\tau,{\rm pp}}&=\IA_\tau\bigl(\XX_n^{\epsilon,\tau,{\rm pp}}+\tau G(\XX_n^{\epsilon,\tau,{\rm pp}},\underline{\YY}_{n+1}^{\epsilon,\tau,{\rm pp}})\bigr)\\
\YY_{n+1}^{\epsilon,\tau,{\rm pp}}&=\IA_{\frac{\tau}{\epsilon}}\Bigl(\YY_n^{\epsilon,\tau,{\rm pp}}+\sigma(\XX_n^{\epsilon,\tau,{\rm pp}})\sqrt{\frac{\tau}{\epsilon}}\Gamma_{n}\Bigr)\\
\underline{\YY}_n^{\epsilon,\tau,{\rm pp}}&=\YY_n^{\epsilon,\tau,{\rm pp}}+\frac{\sigma(\XX_n^{\epsilon,\tau,{\rm pp}})}{2}\mathcal{J}_{\frac{\tau}{\epsilon}}\sqrt{\frac{\tau}{\epsilon}}\Gamma_n.
\end{aligned}
\right.
\end{equation}
The resulting scheme is asymptotic preserving: indeed it can be checked that the resulting limiting scheme is given by
\[
\XX_{n+1}^{0,\tau,{\rm pp}}=\IA_\tau\bigl(\XX_n^{0,\tau,{\rm pp}}+\tau G(\XX_n^{0,\tau,{\rm pp}},\sigma(\XX_n^{0,\tau,{\rm pp}})Q^{\frac12}\Gamma_{n+1})\bigr).
\]
Whereas in the context of Remark~\ref{rem:postproc} the computation of the postprocessed variable would be necessary at the final iteration $n=N$, for the scheme~\eqref{eq:APscheme-standard-postproc} it is necessary to compute the variable $\underline{\YY}_n^{\epsilon,\tau,pp}$ for all $n=1,\ldots,N+1$. As a consequence, the costs of each iteration of the schemes~\eqref{eq:APscheme} and~\eqref{eq:APscheme-standard-postproc} are of the same order. This justifies a preference for the application and the analysis of the scheme~\eqref{eq:APscheme} based on the modified Euler scheme.
\end{rem}

\subsection{Markov Chain Monte Carlo method}\label{sec:MCMC}

\subsubsection{Context}

The objective of this subsection is to describe how the modified Euler scheme proposed in this article can be used as a proposal kernel in a Markov Chain Monte Carlo (MCMC) method. Recall that the Gibbs distribution $\mu_\star$ is defined by~\eqref{eq:mu_star} (see Proposition~\ref{propo:mu_star}. It is assumed that the mapping $V:H\to \R$ is bounded and of class $\mathcal{C}^3$ with bounded derivatives. In order to approximate integrals $\int \varphi d\mu_\star$, the MCMC method consists in introducing an $H$-valued Markov chain $\bigl(\X_n\bigr)_{n\ge 0}$ which is ergodic and admits $\mu_\star$ as its unique invariant distribution: an estimator of $\int\varphi d\mu_\star$ is then defined as the temporal average
\[
\frac{1}{N}\sum_{n=1}^{N}\varphi(\X_n),
\]
which converges when $N\to\infty$ almost surely to $\int\varphi d\mu_\star$. To analyze the quality of the approximation, it may be convenient to study the spectral gap of the considered Markov chain, when it is reversible with respect to $\mu_\star$.

A popular strategy to design such Markov chains is the Metropolis--Hastings method, which requires two ingredients:
\begin{itemize}
\item the choice of a proposal kernel,
\item an acceptance-rejection rule,
\end{itemize}
which are needed to ensure that $\mu_\star$ is an invariant distribution. The resulting Markov chain is then reversible with respect to $\mu_\star$, by construction. In an infinite dimensional context, the choice of the proposal kernel is crucial in order to obtain well-defined acceptance-rejection ratios. As explained in~\cite{CotterRobertsStuart}, using the preconditioned Crank--Nicolson (pCN) proposal kernel
\[
\hat{\X}_{n+1}^{\tau,{\rm pCN}}=\X_n^{\tau,{\rm pCN}}-\frac{\tau}{2}(\X_n^{\tau,{\rm pCN}}+\X_{n+1}^{\tau,{\rm pCN}})+\IL^{-\frac12}\sqrt{\tau}\Gamma_n,
\]
leads to a well-defined Metropolis--Hastings MCMC method, for any value of the time-step size $\tau$. On the contrary, for all $\theta\in[0,1]\setminus\{\frac12\}$, using the proposal kernel
\[
\hat{\X}_{n+1}^{\tau,\theta}=\X_n^{\tau,\theta}-\bigl((1-\theta)\tau\X_n^{\tau,\theta}+\theta\tau\X_{n+1}^{\tau,\theta}\bigr)+\IL^{-\frac12}\sqrt{\tau}\Gamma_n,
\]
based on the $\theta$-method, the acceptance-rejection ratio is ill-defined in infinite dimension. For the pCN proposal kernel, the associated acceptance ratio is computed as
\[
a^{{\rm pCN}}(x,\hat{x})=\min(1,e^{2(V(x)-V(\hat{x}))}),
\]
which means that the Markov chain is constructed as follows: for all $n\ge 0$,
\[
\X_{n+1}^{\tau,{\rm pCN}}=\mathds{1}_{U_n\le a^{{\rm pCN}}(\X_n^{\tau,{\rm pCN}},\hat{X}_{n+1}^{\tau,{\rm pCN}})}\hat{X}_{n+1}^{\tau,{\rm pCN}}+\mathds{1}_{U_n>a^{{\rm pCN}}(\X_n^{\tau,{\rm pCN}},\hat{X}_{n+1}^{\tau,{\rm pCN}})}X_{n}^{\tau,{\rm pCN}},
\]
where $\bigl(\Gamma_n\bigr)_{n\ge 0}$ is a sequence of independent cylindrical $H$-valued Gaussian random variables, and $\bigl(U_n\bigr)_{n\ge 0}$ is a sequence of independent random variables which are uniformly distributed on $[0,1]$, and the two sequences are independent. The choice of the auxiliary parameter $\tau$ has an impact on the performance on the method, which is not discussed in this work.

The proposal kernel in the pCN Markov chain $\bigl(\X_n^{\tau,{\rm pCN}}\bigr)_{n\ge 0}$ consists in applying the Crank--Nicolson method to discretize the Ornstein--Uhlenbeck dynamics
\[
dZ^{{\rm p}}(t)=-Z^{{\rm p}}(t)dt+\IL^{-\frac12}dW(t)
\]
which can be interpreted as a preconditioned version of the stochastic evolution equation
\[
dZ(t)=-\IL Z(t)dt+dW(t).
\]
Note that the crucial property is the fact that $\nu$ is the invariant distribution of the Ornstein--Uhlenbeck process $\bigl(Z^{{\rm p}}(t)\bigr)_{t\ge 0}$, and that the Crank--Nicolson scheme preserves this invariant distribution, for any choice of $\tau$. See~\cite{BDV} for the analysis of integrators applied to preconditioned stochastic evolution equations for the approximation of the Gibbs invariant distribution $\mu_\star$.

To the best of our knowledge, the construction of MCMC methods using a numerical discretization of the process $\bigl(Z(t)\bigr)_{t\ge 0}$ (instead of its preconditioned version $\bigl(Z^{{\rm p}}(t)\bigr)_{t\ge 0}$), has not been treated in the literature so far. Note that using the Crank--Nicolson method
\[
Z_{n+1}=Z_n-\frac{\tau}{2}\IL(Z_n+Z_{n+1})+\sqrt{\tau}\Gamma_n=(I-\frac{\tau}{2}\IL)(I+\frac{\tau}{2}\IL)^{-1}Z_n+(I+\frac{\tau}{2}\IL)^{-1}\sqrt{\tau}\Gamma_n
\]
would not be appropriate: even if the acceptance-rejection ratio is well-defined, the Markov chain may not be ergodic, due to the fact that the Crank--Nicolson method is not L-stable: one has
\[
\|(I-\frac{\tau}{2}\IL)(I+\frac{\tau}{2}\IL)^{-1}\|_{\mathcal{L}(H)}=1
\]
for any choice of the time-step size.

\subsubsection{MCMC method based on the modified Euler scheme}

Let us state the main result of this subsection: a Metropolis--Hastings MCMC method is well-defined in infinite dimension when using the modified Euler scheme as the proposal kernel.
\begin{theo}\label{theo:MCMC}
For all $\tau\in(0,\tau_0)$, introduce the Markov chain defined by
\begin{equation}\label{eq:MCMC}
\left\lbrace
\begin{aligned}
\hat{\X}_{n+1}^{\tau}&=\IA_\tau\X_n^\tau+\sqrt{\tau}\IB_{\tau,1}\Gamma_{n,1}+\sqrt{\tau}\IB_{\tau,2}\Gamma_{n,2}\\
\X_{n+1}^{\tau}&=\mathds{1}_{U_n\le a(\X_n^{\tau},\hat{\X}_{n+1}^{\tau})}\hat{\X}_{n+1}^{\tau}+\mathds{1}_{U_n>a(\X_n^{\tau},\hat{\X}_{n+1}^{\tau})}\X_{n}^{\tau},
\end{aligned}
\right.
\end{equation}
where the acceptance-rejection ratio is defined by
\begin{equation}\label{eq:acceptanceMCMC}
a(x,\hat{x})=\min(1,e^{2(V(x)-V(\hat{x}))})
\end{equation}
for all $x,\hat{x}\in H$, and where $\bigl(\Gamma_{n,1}\bigr)_{n\ge 0}$ and $\bigl(\Gamma_{n,2}\bigr)_{n\ge 0}$ are two independent sequences of independent cylindrical $H$-valued Gaussian random variables, and $\bigl(U_n\bigr)_{n\ge 0}$ is a sequence of independent random variables which are uniformly distributed on $[0,1]$, which is independent of the two sequences $\bigl(\Gamma_{n,1}\bigr)_{n\ge 0}$ and $\bigl(\Gamma_{n,2}\bigr)_{n\ge 0}$. Let $\PP^\tau$ denote the transition operator associated with the Markov chain: for all $x\in H$ and $n\in\N$, and any bounded and measurable function $\varphi:H\to \R$,
\[
(\PP^\tau)^n\varphi(x)=\E_x[\varphi(\X_n)].
\]

Assume that $V$ is bounded and globally Lipschitz continuous.

The $H$-valued Markov chain $\bigl(\X_n^{\tau}\bigr)_{n\ge 0}$ is ergodic, and its invariant distribution is the Gibbs distribution $\mu_\star$. In addition, this Markov chain admits a spectral gap in the following sense: for all $\tau\in(0,\tau_0)$, there exists $\kappa(\tau)\in(0,1)$, such that for all $\varphi\in L^2(\mu_\star)$, one has
\begin{equation}\label{eq:MCMC-spectralgap}
\|(\PP^\tau)^n\varphi-\int\varphi d\mu_\star\|_{L^2(\mu_\star}\le e^{-\kappa(\tau)n\tau}\|\varphi-\int\varphi d\mu_\star\|_{L^2(\mu_\star)},
\end{equation}
with $\|\varphi\|_{L^2(\mu_\star)}^2=\int \varphi(x)^2d\mu_\star(x)$.
\end{theo}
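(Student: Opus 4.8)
The plan is to follow the general strategy of Hairer--Stuart--Vollmer~\cite{HairerStuartVollmer}, which establishes a spectral gap for a Metropolis--Hastings chain from two ingredients: the fact that the proposal kernel preserves the reference Gaussian measure $\nu$, and quantitative continuity/minorization estimates for the proposal kernel. First I would verify that the chain $(\X_n^\tau)_{n\ge 0}$ is well-defined in infinite dimension: this amounts to checking that the acceptance ratio~\eqref{eq:acceptanceMCMC} makes sense, which here is trivial because $V$ is assumed bounded (so $a(x,\hat x)\in[e^{-4\vvvert V\vvvert_\infty},1]$ for all $x,\hat x$), and that the proposal $\hat{\X}_{n+1}^\tau=\IA_\tau\X_n^\tau+\sqrt{\tau}\IB_{\tau,1}\Gamma_{n,1}+\sqrt{\tau}\IB_{\tau,2}\Gamma_{n,2}$ takes values in $H$, which follows from the Hilbert--Schmidt property of $\IB_{\tau,1}$ and $\IB_{\tau,2}$ established in Section~\ref{sec:scheme-1st}. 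Denote by $\mathcal{Q}^\tau$ the proposal transition kernel; by Proposition~\ref{propo:invarGauss} (equivalently, by the identity~\eqref{eq:identity-AB}), $\nu$ is invariant under $\mathcal{Q}^\tau$, and since the proposal is linear with additive Gaussian noise, $\mathcal{Q}^\tau$ is in fact reversible with respect to $\nu$ (the joint law of $(\X_n^\tau,\hat{\X}_{n+1}^\tau)$ under $\nu$ is symmetric — this needs a short check using that the covariance of the proposal increment together with $\IA_\tau$ and $\nu$'s covariance $\frac12\IL^{-1}$ fit together symmetrically, as in the finite-dimensional Crank--Nicolson computation).

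Next I would record that, by the standard Metropolis--Hastings construction with acceptance ratio~\eqref{eq:acceptanceMCMC} and $d\mu_\star/d\nu \propto e^{-2V}$, the chain $(\X_n^\tau)_{n\ge 0}$ is reversible with respect to $\mu_\star$; in particular $\mu_\star$ is invariant, giving the first assertion of the theorem. The ergodicity and the quantitative bound~\eqref{eq:MCMC-spectralgap} then reduce to proving a spectral gap for the self-adjoint operator $\PP^\tau$ on $L^2(\mu_\star)$. Here I would invoke the perturbation argument of~\cite{HairerStuartVollmer}: since $e^{-4\vvvert V\vvvert_\infty}\le a \le 1$ pointwise, the Metropolis kernel $\PP^\tau$ is bounded below (as a positive kernel) by $e^{-4\vvvert V\vvvert_\infty}$ times the proposal kernel $\mathcal{Q}^\tau$ restricted to its ``acceptance-free'' part, so a spectral gap for $\PP^\tau$ on $L^2(\mu_\star)$ follows from a spectral gap (or more precisely a suitable one-step minorization / Wasserstein contraction) for $\mathcal{Q}^\tau$ on $L^2(\nu)$, combined with the fact that $L^2(\mu_\star)$ and $L^2(\nu)$ are comparable norms because $e^{-2\vvvert V\vvvert_\infty}\le d\mu_\star/d\nu \le e^{2\vvvert V\vvvert_\infty}$ up to the normalization $\mathcal{Z}$. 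The needed contraction for $\mathcal{Q}^\tau$ itself comes from $\|\IA_\tau\|_{\mathcal{L}(H)}\le (1+\tau\lambda_1)^{-1}<1$: two coupled proposals started from $x$ and $x'$ satisfy $|\hat{\X}^{x}-\hat{\X}^{x'}| = |\IA_\tau(x-x')|\le (1+\tau\lambda_1)^{-1}|x-x'|$, which yields a Wasserstein-1 contraction and, together with the Gaussian smoothing of the additive noise (which provides a minorization on bounded sets), a spectral gap via the Harris-type / weak-Harris machinery of~\cite{HairerMattinglyScheutzow}. This is exactly where the modified Euler scheme is used in an essential way: the standard Euler proposal would also be $\IA_\tau$-contractive, but it does \emph{not} preserve $\nu$, so $\mu_\star$ would not be its invariant measure and the whole construction would fail; the Crank--Nicolson proposal for the (non-preconditioned) equation preserves $\nu$ but has $\|\text{drift operator}\|=1$, so the contraction is lost — the modified scheme is the combination that has both properties.

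The main obstacle I anticipate is establishing that the \emph{proposal} kernel $\mathcal{Q}^\tau$ itself has a spectral gap on $L^2(\nu)$ with a rate $\kappa(\tau)>0$ — the contraction in Wasserstein distance is immediate from $\|\IA_\tau\|<1$, but upgrading this to an $L^2(\nu)$ spectral gap requires either a direct spectral computation (feasible here since $\mathcal{Q}^\tau$ diagonalizes in the eigenbasis $(e_j)$ of $\IL$: on the $j$-th mode it acts as an Ornstein--Uhlenbeck-type kernel with contraction factor $(1+\tau\lambda_j)^{-1}$ and stationary variance $\frac{1}{2\lambda_j}$, so each mode has gap $1-(1+\tau\lambda_j)^{-2}\ge 1-(1+\tau\lambda_1)^{-2}$, and one tensorizes via Hermite polynomial chaos decomposition to get a dimension-free gap) or an appeal to the weak-Harris theorem. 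The Hermite-chaos route is cleaner and gives an explicit $\kappa(\tau)$ proportional to $-\frac{1}{\tau}\log(1+\tau\lambda_1)^{-2}$, uniformly over the infinite tensor product; I would carry that out and then transfer the gap from $\mathcal{Q}^\tau$ on $L^2(\nu)$ to $\PP^\tau$ on $L^2(\mu_\star)$ using the two-sided bound on $d\mu_\star/d\nu$ and the pointwise lower bound on the acceptance ratio, following~\cite[proof of the main theorem]{HairerStuartVollmer}. The remaining details — verifying reversibility of the proposal with respect to $\nu$, the standard detailed-balance check for the Metropolis step, and the norm-comparison estimates — are routine and would be compressed.
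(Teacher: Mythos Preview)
Your proposal is correct but takes a genuinely different route from the paper. The paper follows~\cite{HairerStuartVollmer} literally: it verifies the three hypotheses of the weak Harris theorem of~\cite{HairerMattinglyScheutzow} for the full Metropolized chain --- a Lyapunov inequality for $|\cdot|^2$, a one-step contraction in the distance $d_\varepsilon(x,y)=\min(1,|x-y|/\varepsilon)$ for $\varepsilon$ small (this is where the Lipschitz continuity of $V$ enters, to control $|a(x,\hat\X)-a(y,\hat\Y)|$), and $d_\varepsilon$-smallness of balls via the uniform lower bound $a\ge e^{-2\,{\rm osc}\,V}$ on the acceptance probability. It then deduces a contraction in the Wasserstein-like distance $\tilde d_\varepsilon$ and finally passes to the $L^2(\mu_\star)$ spectral gap.

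Your route --- compute the $L^2(\nu)$ spectral gap of the Gaussian proposal $\mathcal Q^\tau$ directly by Wiener--Hermite chaos tensorization, then transfer it to $\PP^\tau$ on $L^2(\mu_\star)$ via a Dirichlet-form comparison using $a\ge e^{-2\,{\rm osc}\,V}$ and the two-sided bound on $d\mu_\star/d\nu$ --- is more elementary here and yields an explicit $\kappa(\tau)$; it also uses only boundedness of $V$, not its Lipschitz continuity. Two small corrections: the per-mode gap of the AR(1) proposal is $1-(1+\tau\lambda_j)^{-1}$ (eigenvalue $\rho_j^k$ on the $k$-th Hermite polynomial), not $1-(1+\tau\lambda_j)^{-2}$; and the transfer step you describe is a standard Poincar\'e-inequality comparison, not what~\cite{HairerStuartVollmer} actually do --- that reference runs the weak Harris argument directly on the Metropolized chain, which is exactly what the paper reproduces. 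The paper's approach buys robustness (Wasserstein control, the template extends when $V$ is unbounded) at the cost of length; yours buys explicitness and brevity under the stated bounded-$V$ hypothesis.
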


Observe that the acceptance-ratio $a(\X_n^{\tau},\hat{\X}_{n+1}^\tau)$ appearing in~\eqref{eq:MCMC} is defined with the same expression as for the pCN Markov chain mentioned above. Before proceeding with the proof of Theorem~\ref{theo:MCMC}, it is worth mentioning that the spectral gap inequality~\eqref{eq:MCMC-spectralgap} from Theorem~\ref{theo:MCMC} and the error estimate~\eqref{eq:theo-weakinv_weakerror} from Theorem~\ref{theo:weakinv} have different formulations. The comparisons of the performances of these two methods to approximate $\int\varphi d\mu_\star$ is out of the scope of this article. However, it is possible to compare the results as follows. On the one hand, using the MCMC method~\eqref{eq:MCMC} instead of the modified Euler scheme~\eqref{eq:scheme}, results in the absence of bias due to the choice of the time-step size $\tau>0$. In addition, the assumptions on the function $V$ are weaker to obtain Theorem~\ref{theo:MCMC}: it is only assumed that $V$ is bounded and globally Lipschitz continuous, whereas it is required that $F=-DV$ satisfies Assumption~\ref{ass:ergo} to obtain Theorem~\ref{theo:weakinv}, meaning that $\vvvert V\vvvert_2<\lambda_1$ needs to be small enough. On the other hand, there are two disadvantages with the spectral gap inequality~\eqref{eq:MCMC-spectralgap} compared with the weak error estimate~\eqref{eq:theo-weakinv_weakerror}. First, the value of $\kappa(\tau)$ obtained in the proof of Theorem~\ref{theo:MCMC} is not explicit and may depend on $\tau$, in particular it may be the case that $\kappa(\tau)\to 0$ when $\tau>0$. On the contrary, the rate $\kappa$ of convergence to equilibrium in~\eqref{eq:theo-weakinv_weakerror} is independent of $\tau$. Second, the weak error estimate~\eqref{eq:theo-weakinv_weakerror} allows one to approximate $\int\varphi d\mu_\star$ starting the modified Euler scheme from an arbitrary initial condition $x\in H$. On the contrary, the spectral gap inequality is a $L^2$ estimate instead of a pointwise bound with respect to $x$. Finally, note that the spectral gap inequality~\eqref{eq:MCMC-spectralgap} is a consequence of estimates in a Wasserstein distance-like function, whereas the weak error estimate~\eqref{eq:theo-weakinv_weakerror} is related to estimates in total variation distance. The construction of the MCMC method in Theorem~\ref{theo:MCMC} is a new result, which provides an alternative to the widely used pCN sampler. It is not clear whether the performances of the MCMC method are better, compared either with using directly the modified Euler scheme, or with using the pCN method. This question may be investigated theoretically and numerically in future works.

The performance of the MCMC method depending on the value of the auxiliary time-step size parameter $\tau$ is not studied in this work. In addition to the analysis of the behavior of the spectral gap $\kappa(\tau)$, it may be appropriate to study whether diffusion limits hold, like in~\cite{MattinglyPillaiStuart}, in order to identify the optimal averaged acceptance probabilities to guide the choice of $\tau$ in practice. These questions are left open for future work.

Proving Theorem~\ref{theo:MCMC} requires two main contributions. First, one needs to check that the Markov chain~\eqref{eq:MCMC} is indeed the result of the Metropolis--Hastings procedure. In particular, $\mu_\star$ is an invariant distribution of the Markov chain. Second, one needs to prove the spectral gap inequality~\eqref{eq:MCMC-spectralgap}. The strategy is a variant of the one used in~\cite{HairerStuartVollmer} to prove the spectral gap property of the pCN method, by applying the weak Harris theorem from~\cite{HairerMattinglyScheutzow}.

\begin{proof}[Proof of Theorem~\ref{theo:MCMC}]
$\bullet$ Verification of the Metropolis--Hastings formulation.

Let us introduce the general formulation of the Metropolis--Hastings MCMC method, see for instance~\cite{CotterRobertsStuart}. Let $\bigl(q_x(\cdot)\bigr)_{x\in H}$ denote the proposal kernel, and introduce two probability distributions $\eta$ and $\eta^{\perp}$ on $H^2$, defined as follows: for any bounded and measurable function $\phi:H^2\to\R$, set
\begin{align*}
\iint \phi(x,y)d\eta(x,y)&=\iint \phi(x,y)dq_x(y)d\mu_\star(x)\\
\iint \phi(x,y)d\eta^\perp(x,y)&=\iint \phi(x,y)dq_y(x) d\mu_\star(y).
\end{align*}
Under the condition that the probability distributions $\eta$ and $\eta^\perp$ are equivalent, define the acceptance probability by
\[
a(x,\hat{x})=\min(1,\frac{d\eta^\perp}{d\eta}(x,\hat{x})),
\]
for all $x,\hat{x}\in H$, where $\frac{d\eta^\perp}{d\eta}:H^2\to\R$ denotes the Radon-Nikodym derivative. Then, the Metropolis--Hastings algorithm is well-defined, and $\mu_\star$ is invariant for the resulting Markov chain.

Let $\tau\in(0,\tau_0)$. The objective is to check that if the proposal kernel is defined using the modified Euler scheme with time-step size $\tau$, then the associated distributions $\eta$ and $\eta^\perp$ are equivalent and to compute the Radon-Nikodym derivative. Owing to the definition~\eqref{propo:mu_star} of the Gibbs distribution $\mu_\star$, one has
\begin{align*}
d\eta(x,y)&=dq_x(y)\mathcal{Z}^{-1}e^{-2V(x)}d\nu(x)=\mathcal{Z}^{-1}e^{-2V(x)}d\eta_0(x,y)\\
d\eta^\perp(x,y)&=dq_y(x)\mathcal{Z}^{-1}e^{-2V(y)}d\nu(y)=\mathcal{Z}^{-1}e^{-2V(y)}d\eta_0^\perp(x,y)
\end{align*}
where $\eta_0$ and $\eta_0^\perp$ are centered Gaussian distributions on $H^2$. Due to the fact that the Gaussian distribution $\nu$ is preserved by the modified Euler scheme in the Ornstein--Uhlenbeck case, for any value of the time-step size $\tau$, it is straightforward to check that the covariance operators of the Gaussian distributions $\eta_0$ and $\eta_0^\perp$ are identical, hence $\eta_0=\eta_0^\perp$. As a consequence, $\eta$ and $\eta^\perp$ are equivalent and one obtains the expression~\eqref{eq:acceptanceMCMC} for the acceptance probability. This concludes the first step of the proof, namely the verification of the Metropolis--Hastings structure of the Markov chain~\eqref{eq:MCMC}. In particular, the chain is reversible with respect to the probability distribution $\mu_\star$, which is thus an invariant distribution.

$\bullet$ Application of the weak Harris theorem.

The objective is to show that the weak Harris theorem~\cite{HairerMattinglyScheutzow} can be applied. The arguments of the proof follow those use in~\cite{HairerStuartVollmer}. To simplify notation, the time-step size parameter $\tau$ is omitted in the sequel. It is worth mentioning that the values of the auxiliary parameters introduced below may depend on $\tau$. For any $x\in H$, let the probability distribution $\PP(x,\cdot)$ be defined by
\[
\int\varphi(y)d\PP(x,y)=\E_{\X_0=x}[\varphi(\X_1)]
\]
for any bounded and measurable function $\varphi:H\to\R$.

For all positive $\varepsilon$, Introduce the auxiliary distance-like function $d_\varepsilon$ defined by
\[
d_\varepsilon(x,y)=\min(1,\frac{|x-y|}{\varepsilon}),
\]
for all $x,y\in H$. Define also
\[
\tilde{d}_\varepsilon(x,y)=\sqrt{d_\varepsilon(x,y)+|x|^2+|y|^2}
\]
for all $x,y\in H$

For any probability distributions $\mu_1,\mu_2$ on $H$, set
\begin{align*}
{d}_\varepsilon(\mu_1,\mu_2)&=\underset{\pi\in \Pi(\mu_1,\mu_2)}\inf\iint~{d}_\varepsilon(x,y)d\pi(x,y)\\
\tilde{d}_\varepsilon(\mu_1,\mu_2)&=\underset{\pi\in \Pi(\mu_1,\mu_2)}\inf~\iint\tilde{d}_\varepsilon(x,y)d\pi(x,y)
\end{align*}
where $\Pi(\mu_1,\mu_2)$ is the set of couplings of the probability distributions $\mu_1,\mu_2$. The functions $d_\varepsilon(\cdot,\cdot)$ and $\tilde{d}_\varepsilon(\cdot,\cdot)$ defined above are referred to as the Wasserstein distance-like functions associated with the functions $d_\varepsilon$ and $\tilde{d}_\varepsilon$ respectively.

To apply the weak Harris theorem, it suffices to check that there exists $\varepsilon\in(0,1)$ such that the three following claims hold.
\begin{itemize}
\item Lyapunov structure: there exists $\ell\in(0,1)$ and $C\in(0,\infty)$ such that for all $x\in H$ one has
\begin{equation}\label{eq:MCMC-Lyapunov}
\PP(|\cdot|^2)(x)\le \ell|x|^2+C.
\end{equation}
\item d-contraction: there exists $c\in(0,1)$ such that for all $x,y\in H$ with $d_\varepsilon(x,y)<1$, one has
\begin{equation}\label{eq:MCMC-contraction}
d_\varepsilon(\PP(x,\cdot),\PP(x,\cdot))\le cd_\varepsilon(x,y).
\end{equation}
\item d-smallness (of balls): for any $R\in(0,\infty)$, there exist $N_R\in\N$ and $s_R\in(0,1)$ such that
\begin{equation}\label{eq:MCMC-smallness}
\underset{|x|\le R,|y|\le R}\sup~d_\varepsilon(\PP^{N_R}(x,\cdot),\PP^{N_R}(y,\cdot))\le s_R.
\end{equation}
\end{itemize}
The application of the weak Harris theorem then provides the following result: there exists $\tilde{n}\in \N$ such that for any probability distribution $\mu$ on $H$, one has
\begin{equation}\label{eq:MCMC-Wasserstein-inequality}
\tilde{d}_\varepsilon(\mu\PP^{\tilde{n}},\mu_\star)\le \frac{1}{2}\tilde{d}_\varepsilon(\mu,\mu_\star).
\end{equation}

$\bullet$ Proof of the Lyapunov structure property~\eqref{eq:MCMC-Lyapunov}.

Let $r\in(0,\infty)$ be an arbitrary positive real number, and let $R\in(0,\infty)$ which will be chosen later. Introduce the auxiliary Gaussian random variable
\[
\xi=\sqrt{\tau}\IB_{\tau,1}\Gamma_{1,1}+\sqrt{\tau}\IB_{\tau,2}\Gamma_{1,2}.
\]
First, assume that $|x|< R$. Then one has
\begin{align*}
\PP(|\cdot|^2)(x)=\E_x[|\X_1|^2]&\le \E_x\bigl[\max\bigl(|x|^2,|\hat{X}_1|^2\bigr)\bigr]\\
&\le |x|^2+\E_x[|\hat{X}_1|^2]\\
&\le 2|x|^2+\E_x[|\xi|^2]\\
&\le C(R)<\infty.
\end{align*}
Second, assume that $|x|\ge R$. Then one has the decomposition
\begin{align*}
\PP(|\cdot|^2)(x)=\E_x[|\X_1|^2]&=\E_x\bigl[\mathds{1}_{|\xi|\le r}\mathds{1}_{\X_1=\hat{\X}_1}|\hat{\X}_1|^2\bigr]\\
&+\E_x\bigl[\mathds{1}_{|\xi|\le r}\mathds{1}_{\X_1\neq\hat{\X}_1}|\hat{\X}_1|^2\bigr]\\
&+\E_x\bigl[\mathds{1}_{|\xi|> r}\max\bigl(|x|^2,|\hat{\X}_1|^2\bigr)\bigr].
\end{align*}
Under the condition $|\xi|\le r$, one has
\begin{align*}
|\hat{X}_1|&\le |\IA_\tau x|+|\xi|\le \frac{1}{1+\lambda_1\tau}|x|+r\le (1-\frac{\lambda_1\tau}{2(1+\lambda_1\tau)})|x|+r-\frac{\lambda_1\tau}{2(1+\lambda_1\tau)}|x|\\
&\le (1-\frac{\lambda_1\tau}{2(1+\lambda_1\tau)})|x|+r-\frac{\lambda_1\tau}{2(1+\lambda_1\tau)}R\\
&\le (1-\frac{\lambda_1\tau}{2(1+\lambda_1\tau)})|x|,
\end{align*}
if $R$ is chosen such that $\frac{\lambda_1\tau}{2(1+\lambda_1\tau)}R\ge r$. Let $\theta=1-(1-\frac{\lambda_1\tau}{2(1+\lambda_1\tau)})^2$. Therefore, one obtains
\begin{align*}
\PP(|\cdot|^2)(x)&\le \E_x\bigl[\mathds{1}_{|\xi|\le r}\bigl((1-\theta)\mathds{1}_{\X_1=\hat{\X}_1}+\mathds{1}_{\X_1=\hat{\X}_1}\bigr)\bigr]|x|^2\\
&+\E_x\bigl[\mathds{1}_{|\xi|> r}\max\bigl(|x|^2,|\hat{\X}_1|^2\bigr)\bigr]\\
&\le \E_x\bigl[\mathds{1}_{|\xi|\le r}\bigl(1-\theta\mathds{1}_{\X_1=\hat{\X}_1}\bigr)]|x|^2\\
&+\E_x\bigl[\mathds{1}_{|\xi|> r}\max\bigl(|x|^2,|\hat{\X}_1|^2\bigr)\bigr].
\end{align*}
The acceptance probability $a$ defined by~\eqref{eq:acceptanceMCMC} satisfies
\[
\underset{x,\hat{x}\in H}\inf~a(x,\hat{x})\ge e^{\min(V)-\max(V)}=a_m>0,
\]
owing to the assumption that the function $V$ is bounded. As a consequence, by a conditioning argument, one obtains
\[
\E_x\bigl[\mathds{1}_{|\xi|\le r}\bigl(1-\theta\mathds{1}_{\X_1=\hat{\X}_1}\bigr)]|x|^2\le \mathbb{P}_x\bigl(\mathds{1}_{|\xi|\le r}\bigr)\bigl(1-\theta a_m\bigr)|x|^2.
\]
In addition, there exists $C\in(0,\infty)$ such that for all $x\in H$ one has
\[
\E_x\bigl[\mathds{1}_{|\xi|> r}\max\bigl(|x|^2,|\hat{\X}_1|^2\bigr)\bigr]\le \mathbb{P}_x\bigl(\mathds{1}_{|\xi|>r}\bigr)|x|^2+C.
\]
Finally, one obtains for all $x\in H$, such that $|x|\ge R$, the inequality
\begin{align*}
\PP(|\cdot|^2)(x)&\le \bigl(1-\theta a_m C\mathbb{P}_x(\mathds{1}_{|\xi|\le r})\bigr)|x|^2+C\\
&\le \ell|x|^2+C
\end{align*}
where $\ell\in(0,1)$, owing to the property that the $H$-valued Gaussian random variable $\xi$ satisfies $\mathbb{P}_x(\mathds{1}_{|\xi|\le r})\bigr)>0$ for all $r\in(0,\infty)$. Gathering the upper bounds in the two cases $|x|<R$ and $|x|\ge R$ concludes the proof of~\eqref{eq:MCMC-Lyapunov}.

$\bullet$ Proof of the d-contraction property~\eqref{eq:MCMC-contraction}.

Let $x,y$ be two arbitrary elements of $H$. Let the proposals $\hat{\X}_1$ and $\hat{\Y}_1$ be defined by~\eqref{eq:MCMC} using the same cylindrical Gaussian random variables $\Gamma_{1,1}$ and $\Gamma_{1,2}$, then let $\X_1$ and $\Y_1$ be defined using the same uniformly distribution random variable $U_1$:
\begin{align*}
\hat{\X}_{1}&=\IA_\tau x+\sqrt{\tau}\IB_{\tau,1}\Gamma_{1,1}+\sqrt{\tau}\IB_{\tau,2}\Gamma_{1,2}\\
\hat{\Y}_{1}&=\IA_\tau y+\sqrt{\tau}\IB_{\tau,1}\Gamma_{1,1}+\sqrt{\tau}\IB_{\tau,2}\Gamma_{1,2}
\end{align*}
and
\begin{align*}
\X_{1}&=\mathds{1}_{U_1\le a(x,\hat{\X}_{1})}\hat{\X}_{1}+\mathds{1}_{U_n>a(x,\hat{\X}_{1})}x\\
\Y_{1}&=\mathds{1}_{U_1\le a(y,\hat{\Y}_{1})}\hat{\Y}_{1}+\mathds{1}_{U_n>a(x,\hat{\Y}_{1})}y.
\end{align*}
By definition of the Wasserstein distance-like function $d_\varepsilon$, one has
\[
d_\varepsilon(\PP(x,\cdot),\PP(x,\cdot))\le \E[d_\varepsilon(\X_1,\Y_1)].
\]
A decomposition according to the different acceptance or rejection events for $\X_1$ and $\Y_1$, one obtains
\begin{align*}
\E[d_\varepsilon(\X_1-\Y_1)]&=\E[\mathds{1}_{U_1\le \min(a(x,\hat{\X}_{1}),a(y,\hat{\Y}_{1}))}d_\varepsilon(\hat{\X}_1,\hat{\Y}_1)]+\E[\mathds{1}_{U_1\ge \max (a(x,\hat{\X}_{1}),a(y,\hat{\Y}_{1}))}]d_\varepsilon(x,y)\\
&+\E[\mathds{1}_{a(y,\hat{\Y}_{1}) \le U\le a(x,\hat{\X}_{1})}d_\varepsilon(\hat{\X}_1,y)]+\E[\mathds{1}_{a(x,\hat{\X}_{1}) \le U\le a(y,\hat{\Y}_{1})}d_\varepsilon(x,\hat{\Y}_1)].
\end{align*}
Assume that $x,y$ satisfy $d_\varepsilon(x,y)<1$. Then by construction, one has $d_\varepsilon(x,y)=\frac{|x-y|}{\epsilon}$. In addition, one has $\hat{\X}_1-\hat{\Y}_1=\IA_\tau(x-y)$, hence the condition $d_\varepsilon(x,y)<1$ implies
\[
d_\varepsilon(\hat{\X}_1,\hat{\Y}_1)=\min\Bigl(1,\frac{|\hat{\X}_1-\hat{\Y}_1|}{\epsilon}\Bigr)\le \min\Bigl(1,\frac{|x-y|}{\epsilon(1+\lambda_1\tau)}\Bigr)=\frac{|x-y|}{\epsilon(1+\lambda_1\tau)}=\frac{d_\varepsilon(x,y)}{1+\lambda_1\tau}.
\]
On the one hand, one obtains, with $\rho=\frac{1}{1+\lambda_1\tau}$,
\begin{align*}
\E[\mathds{1}_{U_1\le \min(a(x,\hat{\X}_{1}),a(y,\hat{\Y}_{1}))}d_\varepsilon(\hat{\X}_1,\hat{\Y}_1)]&+\E[\mathds{1}_{U_1\ge \max (a(x,\hat{\X}_{1}),a(y,\hat{\Y}_{1}))}]d_\varepsilon(x,y)\\
&\le \frac{1}{1+\lambda_1\tau}\mathbb{P}(U_1\le \min(a(x,\hat{\X}_{1}),a(y,\hat{\Y}_{1})))d_{\varepsilon}(x,y)\\
&+\mathbb{P}(U_1\ge  \max(a(x,\hat{\X}_{1}),a(y,\hat{\Y}_{1})))d_\varepsilon(x,y)\\
&\le (1-\rho\mathbb{P}(U_1\le \min(a(x,\hat{\X}_{1}),a(y,\hat{\Y}_{1}))))d_\varepsilon(x,y)\\
&\le (1-\rho a_m)d_\varepsilon(x,y),
\end{align*}
using the lower bound on the acceptance probability above.

On the other hand, using the bound $d_\varepsilon(\cdot,\cdot)\le 1$, one has
\begin{align*}
\E[\mathds{1}_{a(y,\hat{\Y}_{1}) \le U\le a(x,\hat{\X}_{1})}d_\varepsilon(\hat{\X}_1,y)]&+\E[\mathds{1}_{a(x,\hat{\X}_{1}) \le U\le a(y,\hat{\Y}_{1})}d_\varepsilon(x,\hat{\Y}_1)]\\
&\le \E[|a(x,\hat{\X}_{1})-a(y,\hat{\Y}_{1})|]\\
&\le C|V(x)-V(y)|+C\E[|V(\hat{\X}_1)-V(\hat{\Y}_1|]\\
&\le C|x-y|+C\E[|\hat{\X}_1-\hat{\Y}_1]\\
&\le C|x-y|\\
&\le C\epsilon d_\varepsilon(x,y),
\end{align*}
using the assumptions that $V$ is bounded and globally Lipschitz continuous, and the observation that $|x-y|=\epsilon d_\varepsilon(x,y)$ owing to the condition $d_\varepsilon(x,y)<1$.

Gathering the estimates, one obtains
\[
d_\varepsilon(\PP(x,\cdot),\PP(x,\cdot))\le \E[d_\varepsilon(\X_1,\Y_1)]\le (1-\rho a_m+C\epsilon)d_\varepsilon(x,y)
\]
for all $x,y\in H$ such that $d_\varepsilon(x,y)<1$. It suffices to choose $\epsilon$ sufficiently small, to have $C\epsilon<\rho a_m$ and obtain the required estimate~\eqref{eq:MCMC-contraction}.

$\bullet$ Proof of the $d$-smallness property~\eqref{eq:MCMC-smallness}.

Let $R\in(0,\infty)$, and define $N$ as the smallest integer such that
\[
\frac{2R}{\epsilon(1+\tau\lambda_1)^N}\le \frac12.
\]
Let $x,y\in H$ be such that $|x|\le R$ and $|y|\le R$. Like in the proof of the d-contraction property above, introduce the sequences $\bigl(\X_n\bigr)_{n\in\N}$, $\bigl(\hat{X}_n\bigr)_{n\in\N}$, $\bigl(\Y_n\bigr)_{n\in\N}$ and $\bigl(\hat{\Y}_n\bigr)_{n\in\N}$, using the basic coupling strategy: for all $n\ge 0$,
\begin{align*}
\hat{\X}_{n+1}&=\IA_\tau\X_n+\sqrt{\tau}\IB_{\tau,1}\Gamma_{n,1}+\sqrt{\tau}\IB_{\tau,2}\Gamma_{n,2}\\
\X_{n+1}&=\mathds{1}_{U_n\le a(\X_n,\hat{\X}_{n+1})}\hat{\X}_{n+1}+\mathds{1}_{U_n>a(\X_n,\hat{\X}_{n+1})}\X_{n},\\
\hat{\Y}_{n+1}&=\IA_\tau\Y_n+\sqrt{\tau}\IB_{\tau,1}\Gamma_{n,1}+\sqrt{\tau}\IB_{\tau,2}\Gamma_{n,2}\\
\Y_{n+1}&=\mathds{1}_{U_n\le a(\Y_n,\hat{\Y}_{n+1})}\hat{\Y}_{n+1}+\mathds{1}_{U_n>a(\Y_n,\hat{\Y}_{n+1})}\Y_{n},
\end{align*}
with initial values $\X_0=x$ and $\Y_0=y$. By construction of the Wasserstein distance-like function $d_\varepsilon$, one has
\[
d_\varepsilon(\PP^{N_R}(x,\cdot),\PP^{N_R}(y,\cdot))\le \E_x[d_\varepsilon(\X_N,\Y_N)].
\]
Introduce the event
\[
\mathbf{A}=\left\{\X_1=\hat{\X}_1,\Y_1=\hat{\Y}_1,\ldots,\X_N=\hat{\X}_N,\Y_N=\hat{\Y}_N\right\},
\]
such that the proposals are accepted up to time $N$, for both chains. Note that the lower bound on the acceptance probability above gives the lower bound
\[
\mathbf{A}\ge a_m^N.
\]
As a consequence, using the definition of the distance-like function $d_\varepsilon$, for all $x,y\in H$, such that $|x|\le R$ and $|y|\le R$, one obtains
\begin{align*}
d_\varepsilon(\PP^{N_R}(x,\cdot),\PP^{N_R}(y,\cdot))&\le \E_x[d_\varepsilon(\X_N,\Y_N)]\\
&\le \E_x[\frac{|X_N-\Y_N|}{\epsilon}\mathds{1}_\mathbf{A}]+1-\mathbb{P}(\mathbf{A})\\
&\le \frac{2R}{\epsilon(1+\tau\lambda_1)^N}\mathds{1}_\mathbf{A}]+1-\mathds{1}_\mathbf{A}]\\
&\le 1-\frac12\mathds{1}_\mathbf{A}]\\
&\le 1-\frac{a_m^N}{2}=s_R\in(0,1),
\end{align*}
owing to the definition of $N$. This concludes the proof of the d-smallness property~\eqref{eq:MCMC-smallness}.

$\bullet$ Having proved the three claims~\eqref{eq:MCMC-Lyapunov},~\eqref{eq:MCMC-contraction} and~\eqref{eq:MCMC-smallness}, the weak Harris theorem can be applied and yiels the estimate~\eqref{eq:MCMC-Wasserstein-inequality}. The spectral gap inequality~\eqref{eq:MCMC-spectralgap} is obtained as a consequence of the estimate~\eqref{eq:MCMC-Wasserstein-inequality} above using the same arguments as in~\cite{HairerStuartVollmer}. The details are omitted.

$\bullet$ The proof of Theorem~\ref{theo:MCMC} is thus completed.
\end{proof}

\begin{rem}
The Markov chain $\bigl(\X_n^\tau\bigr)_{n\ge 0}$ may also be seen as a metropolized integrator in order to approximate the solution of the SPDE~\eqref{eq:SPDE} at any time $T$ (with $N\tau=T$), when the nonlinearity $F$ satisfies Assumption~\ref{ass:gradient}. For this interpretation to be valid, one would need to prove an error estimate of type
\[
\big|\E[\varphi(\X_N^\tau)]-\E[\varphi(X(T))]\big|\le C(T,x_0,\varphi)\tau^\alpha
\]
where $\alpha$ would be the order of convergence. Note that computing $\bigl(\X_n^\tau\bigr)_{n\ge 0}$ (given by the MCMC method~\eqref{eq:MCMC}) requires to evaluate the mapping $V$, whereas computing $\bigl(X_n^\tau\bigr)_{n\ge 0}$ (given by the modified Euler scheme~\eqref{eq:scheme}) requires to evaluate $F=-DV$.

Metropolized integrators for SDEs have been studied in~\cite{MR2583309} for instance. The analysis in the infinite dimensional case is not a straightforward extension of the arguments above and is thus left open for future work.
\end{rem}

\subsubsection{Comparisons with the standard and exponential Euler schemes}

To conclude this subsection concerning MCMC methods to approximate integrals $\int\varphi d\mu_\star$, let us study the behavior of the Metropolis--Hastings Monte Carlo Markov Chain algorithms obtained using either the exponential or the standard Euler scheme, instead of the modified Euler scheme, as the proposal kernel. Let $\bigl(\Gamma_n\bigr)_{n\ge 0}$ be a sequence of independent cylindrical $H$-valued Gaussian random variables, and $\bigl(U_n\bigr)_{n\ge 0}$ be a sequence of independent random variables which are uniformly distributed on $[0,1]$, such that the two sequences are independent.

On the one hand, a well-defined Metropolis--Hastings MCMC method is obtained when the proposal kernel is the accelerated exponential Euler scheme. For all $\tau\in(0,\tau_0)$, set
\begin{equation}\label{eq:MCMC-expo}
\left\lbrace
\begin{aligned}
\hat{\X}_{n+1}^{\tau,\e}&=e^{-\tau\IL}\X_n^{\tau,\e}+\bigl(\frac{1}{2}(I-e^{-2\tau\IL})\bigr)^{\frac12}\Gamma_n \\
\X_{n+1}^{\tau,\e}&=\mathds{1}_{U_n\le a(\X_n^{\tau,\e},\hat{X}_{n+1}^{\tau,\e})}\hat{X}_{n+1}^{\tau,\e}+\mathds{1}_{U_n>a(\X_n^{\tau,\e},\hat{X}_{n+1}^{\tau,\e})}X_{n}^{\tau,\e},
\end{aligned}
\right.
\end{equation}
where the acceptance probability is defined by~\eqref{eq:acceptanceMCMC}. The Markov chain $\bigl(\X_n^{\tau,\e}\bigr)_{n\ge 0}$ satisfies the results stated in Theorem~\ref{theo:MCMC} for the modified Euler scheme, for any value $\tau\in(0,\tau_0)$ of the time-step size. The proof is omitted, since the arguments are similar. The result is not suprising: indeed the accelerated exponential Euler scheme preserves the Gaussian invariant distribution $\nu$ in the Ornstein--Uhlenbeck case, since it is exact in distribution at all times. The convergence to the equilibrium is also exponentially fast (contrary to the Crank--Nicolson method which preserves the invariant distribution but is not L-stable). The comparison of the modified Euler scheme and of the (accelerated) exponential Euler scheme leads to the same conclusions as in the other parts of this article: the convergence results are identical for the two methods, however the modified Euler scheme does not require the knowledge of the eigendecomposition of the linear operator $\IL$ and may thus be applied in greater generality than the exponential Euler method.

On the other hand, using the standard Euler scheme is not appropriate. More precisely, set 
\begin{equation}\label{eq:MCMC-standard}
\left\lbrace
\begin{aligned}
\hat{\X}_{n+1}^{\tau,\s}&=e^{-\tau\IL}\bigl(\X_n^{\tau,\s}+\sqrt{\tau}\Gamma_n\bigr) \\
\X_{n+1}^{\tau,\s}&=\mathds{1}_{U_n\le a(\X_n^{\tau,\s},\hat{X}_{n+1}^{\tau,\s})}\hat{X}_{n+1}^{\tau,\s}+\mathds{1}_{U_n>a(\X_n^{\tau,\s},\hat{X}_{n+1}^{\tau,\s})}X_{n}^{\tau,\s},
\end{aligned}
\right.
\end{equation}
where the acceptance probability is defined by~\eqref{eq:acceptanceMCMC}. The scheme above cannot be interpreted as a Metropolis--Hastings MCMC method which targets the Gibbs distribution $\mu_\star$: in fact, the general rule which provides the acceptance ratio is ill-defined in the infinite dimensional situation, due to singularity of the Gaussian distributions appearing in its definition. This is another illustration of the superiority of the modified Euler scheme over the standard Euler method.

Observe that the Markov chain defined by~\eqref{eq:MCMC-standard} can be interpreted as a Metropolis--Hastings MCMC method which targets the modified Gibbs distribution $\mu_\star^{\tau}$ defined by~\eqref{eq:mu_startau} (see Section~\ref{sec:results_standard} and in particular Theorem~\ref{theo:weakinv-standard}). Since the targetted distribution is not independent of the auxiliary time-step size parameter $\tau$, the benefits of using a MCMC method compared with a standard integrator are not recovered. Note that a variant of Theorem~\ref{theo:MCMC} is expected to hold for the method defined by~\eqref{eq:MCMC-standard} if one considers the target distribution $\mu_\star^{\tau}$.

\subsection{Application to other SPDE systems}\label{sec:generalizations}

The main results stated in Section~\ref{sec:results} concerning the modified Euler scheme are stated and proved in the framework described in Section~\ref{sec:setting}, and are restricted to a particular class of stochastic evolution equations of the type~\eqref{eq:SPDE}: parabolic semilinear stochastic PDEs, in a bounded one-dimensional domain with homogeneous Dirichlet boundary conditions, driven by additive Gaussian space-time white noise. The objective of this subsection is to suggest possible extensions of the definition of the modified Euler scheme. In the more general  framework, the proposed scheme satisfies the following results.
\begin{itemize}
\item Theorem~\ref{theo:regularity} is satisfied by construction of the scheme, which means that the spatial regularity of the solution is preserved by the numerical approximation, for any value of the time-step size. However, Theorem~\ref{theo:equivalence} does not always hold.
\item Theorem~\ref{theo:weakinv} may not hold. Indeed, this result requires that the invariant distribution $\mu_\infty$ of~\eqref{eq:SPDE} is equal to the Gibbs distribution $\mu_\star$, and that this is also the invariant distribution of the modified equation~\eqref{eq:modifiedSPDE}. This crucial property is not satisfied for instance when the equation is driven by colored noise.
\item Theorem~\ref{theo:weak} always hold, with an order of convergence which depends on the considered problem.
\end{itemize}

Three generalizations are studied below, they may of course be combined to consider other generalizations which are omitted. Note also that we only consider homogeneous Dirichlet boundary conditions, however Neumann or periodic boundary conditions may be also considered.

\subsubsection{SPDEs with one-sided Lipschitz nonlinearities}

A first possible generalization is to weaken Assumption~\ref{ass:F}, which requires the nonlinearity $F$ to be globally Lipschitz continuous. In this section, it is assumed only that $F(x)=f(x(\cdot))$ is defined as a Nemytskii operator, such that the real-valued function $f$ satisfies a one-sided Lipschitz condition (but is not globally Lipschitz continuous):
\begin{equation}\label{eq:onesided}
\underset{z\in\R}\sup~f'(z)<\infty.
\end{equation}
It is also required to assume that $f$ has at most polynomial growth. In this setting, the stochastic evolution equation 
\begin{equation}\label{eq:SPDE-nonLip}
dX(t)=-\IL X(t)dt+F(X(t))dt+dW(t),\quad X(0)=x_0,
\end{equation}
is of the same form as~\eqref{eq:SPDE}. When $f(x)=x-x^3$, this gives the stochastic Allen--Cahn equation.

The modified Euler scheme~\eqref{eq:scheme} cannot be applied to the stochastic evolution equation~\eqref{eq:SPDE-nonLip}: since the nonlinearity $F$ is not globally Lipschitz continuous and may have superlinear growth, a standard explicit discretization of the nonlinearity leads to a scheme which does not satisfy moment bounds as given in Lemma~\ref{lem:scheme-bound}.

When the flow $(t,z)\mapsto \phi_t(z)$ of the nonlinear ordinary differential equation $\dot{z}=f(z)$ is known, which is the case for the Allen--Cahn equation, a splitting scheme can be designed: with the same notation as in the definition of the modified Euler scheme~\eqref{eq:scheme}, set
\begin{equation}\label{eq:splitting}
X_{n+1}^{\tau}=\IA_\tau \Phi_{\tau}(X_n^{\tau})+\IB_{\tau,1}\sqrt{\tau}\Gamma_{n,1}+\IB_{\tau,2}\sqrt{\tau}\Gamma_{n,2},
\end{equation}
where $\Phi_\tau(x)=\phi_\tau(x)$ for all $x\in H$.

When the flow of the nonlinear ordinary differential equation $\dot{z}=f(z)$ is not known, a split-step scheme may be used:
\begin{equation}\label{eq:splitstep}
\left\lbrace
\begin{aligned}
\hat{X}_n^\tau&=X_n+\tau F(\hat{X}_n^\tau)\\
X_{n+1}^{\tau}&=\IA_\tau \hat{X}_n^{\tau}+\IB_{\tau,1}\sqrt{\tau}\Gamma_{n,1}+\IB_{\tau,2}\sqrt{\tau}\Gamma_{n,2}.
\end{aligned}
\right.
\end{equation}
Alternatively, a fully implicit scheme may also be used:
\begin{equation}\label{eq:implicit}
X_{n+1}^{\tau}=\IA_\tau\bigl(X_n^{\tau}+\tau F(X_{n+1}^{\tau})\bigr)+\IB_{\tau,1}\sqrt{\tau}\Gamma_{n,1}+\IB_{\tau,2}\sqrt{\tau}\Gamma_{n,2}.
\end{equation}

Finally, a taming strategy may be used, to define an explicit integrator which satisfies moment bounds:
\begin{equation}\label{eq:tamed}
X_{n+1}^{\tau}=\IA_\tau\bigl(X_n^{\tau}+\frac{\tau}{1+\tau|F(X_n^\tau)|}F(X_n^{\tau})\bigr)+\IB_{\tau,1}\sqrt{\tau}\Gamma_{n,1}+\IB_{\tau,2}\sqrt{\tau}\Gamma_{n,2}.
\end{equation}

The schemes defined above are natural generalizations of schemes already studied in the literature, where the discretization of the stochastic convolution (linear part and noise) is performed using the modified Euler scheme instead of the standard Euler scheme or of the accelerated exponential Euler scheme. For instance, the splitting scheme~\eqref{eq:splitting} is a generalization of the scheme studied in~\cite{BrehierGoudenege}. The fully implicit scheme~\eqref{eq:implicit} is a generalization of the scheme studied in~\cite{CuiHongSun:21}. The tamed scheme~\eqref{eq:tamed} is a generalization of the scheme studied in~\cite{B:2022}. For all the schemes, the result of Theorem~\ref{theo:regularity} still holds, for any value of the time-step size $\tau$. In addition, note that the splitting scheme~\eqref{eq:splitting} and the split-step scheme~\eqref{eq:splitstep} can both be interpreted in terms of the accelerated exponential Euler scheme applied to a modified stochastic evolution equation of the type
\[
d\IX_\tau(t)=-\IL_\tau \IX_\tau(t)dt+Q_{\tau}\Psi_\tau(\IX_\tau(t))dt+Q_{\tau}^{\frac12}dW(t)
\]
with a modified nonlinearity $\Psi_\tau$. In the splitting scheme case, $\Psi_\tau(x)=\tau^{-1}(\Phi_\tau(x)-x)$. Similarly, the tamed scheme~\eqref{eq:tamed} can be interpreted in terms of the tamed accelerated exponential Euler scheme applied to the modified stochastic evolution equation~\eqref{eq:modifiedSPDE}, i.\,e. with $\Psi_\tau=F$. The analysis of the fully implicit scheme~\eqref{eq:implicit} would require different arguments.

To prove a version of Theorem~\ref{theo:weak}, giving weak error estimates of the type
\[
\big|\E[\varphi(X_N^\tau)]-\E[\varphi(X(T))]\big|\le C_{\epsilon}(T,x_0)\tau^{\frac12-\epsilon}\bigl(\vvvert\varphi\vvvert_1+\vvvert\varphi\vvvert_2\bigr)
\]
for functions $\varphi$ of class $\mathcal{C}^2$ and all fixed $T\in(0,\infty)$, one needs to modify the proof of Lemma~\ref{lem:utau-1} and of Lemma~\ref{lem:u-12}, which give regularity estimates for the solutions $u_\tau$ and $u$ of Kolmogorov equations. We refer to \cite[Theorems~4.1 and~4.2]{BrehierGoudenege}, see also~\cite{CuiHong:19}.

In the ergodic case, a version of Theorem~\ref{theo:weakinv}, which gives weak error estimates of the type
\[
\big|\E[\varphi(X_N^\tau)]-\int\varphi d\mu_\star\big|\le C_{\epsilon}(x_0)\vvvert\varphi\vvvert\Bigl(\tau^{\frac12-\epsilon}+e^{-\kappa N\tau}\Bigr)
\]
for functions $\varphi$ which are only bounded and continuous (or equivalently an error estimate in the total variation distance), can be proved for the splitting scheme~\eqref{eq:splitting} and for the split-step scheme~\eqref{eq:splitstep}. This generalization requires to replace Assumption~\ref{ass:ergo} by the condition
\[
\underset{z\in\R}\sup~f'(z)<\lambda_1
\]
and to prove versions of Lemma~\ref{lem:utau-ergo} and of Lemma~\ref{lem:u-ergo}: this is straightforward, we refer for instance to~\cite[Proposition~6.1]{B:2022}. One also needs to prove uniform moment bounds of the type~\eqref{eq:lem-scheme-bound-ergo}, in $L^\infty$ norms instead of $H=L^2$ norms. For the tamed scheme~\eqref{eq:tamed}, the arguments from~\cite{B:2022} maye be generalized. Note that the Gibbs distribution $\mu_\star$ defined by~\eqref{eq:mu_star} is also the unique invariant distribution of the stochastic evolution equation~\eqref{eq:SPDE-nonLip} even if $f$ is only one-sided Lipschitz continuous (and satisfies the ergodicity condition above). This is why a version of Theorem~\ref{theo:weakinv} is expected to hold also in the non globally Lipschitz case described above.

This concludes the description of the non-globally Lipschitz case.

\subsubsection{SPDEs with colored noise}

The framework described in Section~\ref{sec:setting} is restricted to consider stochastic evolution equations~\eqref{eq:SPDE} where $-\IL$ is an elliptic second-order operator in dimension $1$ and where $\bigl(W(t)\bigr)_{t\ge 0}$ is a cylindrical Wiener process, i.\,e. the   system is driven by space-time white noise. In this subsection, we explain how the modified Euler scheme can be applied to equations in higher dimension and/or driven by colored noise, and what are the expected results in those situations.

On the one hand, for any dimension $d\in\N$, $-\IL$ can be defined such that
\[
-\IL x(\cdot)={\rm div}\bigl(a(\cdot)\nabla x(\cdot)\bigr)
\]
for all $x\in D(\IL)=H_0^1((0,1)^d)\cup H^2((0,1)^d)$, where $a:[0,1]^d\to a(x)=(a_{i_1,i_2}(x)\bigr)_{1\le i_1,i_2\le d}$ is smooth and the ellipticity condition $\underset{z\in[0,1]^d}\min~\underset{\xi\in\R^d}\min~\frac{a(x)\xi\cdot\xi}{\xi\cdot\xi}>0$ is satisfied, where $\cdot$ denotes the inner product in $\R^d$. In this setting, Assumption~\ref{ass:Lambda} needs to be modified: one has $\lambda_j\sim cj^{\frac{2}{d}}$ when $j\to\infty$.

On the other hand, let the $Q$-Wiener process $\bigl(W^Q(t)\bigr)_{t\ge 0}$ be defined as follows. Let $\bigl(q_j\bigr)_{j\in\N}$ be a sequence of non-negative real numbers and $\bigl({\bf e}_j\bigr)_{j\in\N}$ be a complete orthonormal system of $H$. The linear operator $Q$ and $Q^{\frac12}$ are given by
\[
Qx=\sum_{j\in\N}q_j\langle x,{\bf e_j}\rangle {\bf e_j},\quad Q^{\frac12}x=\sum_{j\in\N}\sqrt{q_j}\langle x,{\bf e_j}\rangle {\bf e_j},
\]
and for all $t\ge 0$ set
\[
W^Q(t)=\sum_{j\in\N}\sqrt{q_j}\beta_j(t){\bf e_j}
\]
where $\bigl(\beta_j\bigr)_{j\in\N}$ is a sequence of independent standard real-valued Wiener processes.

The stochastic evolution equation driven by additive colored noise
\begin{equation}\label{eq:SPDE-colored}
dX(t)=-\IL X(t)dt+F(X(t))dt+dW^Q(t),\quad X(0)=x_0,
\end{equation}
is well-posed when the covariance operator $Q$ satisfies a condition of the type
\[
\int_0^T\|e^{-t\IL}Q^{\frac12}\|_{\mathcal{L}_2(H)}^2dt<\infty
\]
is satisfied, where we recall that $\|\cdot\|_{\mathcal{L}_2(H)}$ denotes the Hilbert-Schmidt norm. Owing to the smoothing property~\eqref{eq:smoothing}, a sufficient condition is the existence of $\alpha>0$ such that
\begin{equation}\label{eq:conditioncolored}
\|\IL^{\frac{2\alpha-1}{2}}Q^{\frac12}\|_{\mathcal{L}_2(H)}<\infty.
\end{equation}
When $Q=I$ (cylindrical Wiener process/space-time white noise), the condition above holds when $d=1$ (with $\alpha\in[0,\frac14)$), but is not satisfied if $d\ge 2$. To consider equations in dimension $d\ge 2$, the system needs to be driven colored noise, i.\,e. $Q\neq I$. In the trace-class noise case, meaning that ${\rm Tr}(Q)=\|Q^{\frac12}\|_{\mathcal{L}_2(H)}^2=\sum_{j\in\N}q_j<\infty$, the condition~\eqref{eq:conditioncolored} holds for $\alpha=\frac12$. In general, the range of values of $\alpha$ such that~\eqref{eq:conditioncolored} holds depends both on the covariance operator $Q$ and on the dimension $d$.

To discretize the stochastic evolution equation~\eqref{eq:SPDE-colored} driven by additive colored noise, the definition of the modified Euler scheme is modified as follows:
\begin{equation}\label{eq:scheme-colored}
X_{n+1}^{\tau}=\IA_\tau\bigl(X_n^{\tau}+\tau F(X_n^{\tau})\bigr)+\IB_{\tau,1}\sqrt{\tau}\Gamma_{n,1}^Q+\IB_{\tau,2}\sqrt{\tau}\Gamma_{n,2}^Q,
\end{equation}
where the operators $\IA_\tau$, $\IB_{\tau,1}$ and $\IB_{\tau,2}$ are defined by~\eqref{eq:operators}, and the Gaussian random variables $\Gamma_{n,1}^Q$ and $\Gamma_{n,2}^Q$ are defined as follows:
\[
\Gamma_{n,i}^Q=\sum_{j\in\N}\sqrt{q_j}\gamma_{n,i,j}{\rm e_j},
\]
where $\bigl(\gamma_{n,i,j}\bigr)_{n\in\N_0,i\in\{1,2\},j\in\N}$ are independent standard real-valued random variables.

If the covariance operator $Q$ and the linear operator $\IL$ commute (\emph{commutative noise case}), the interpretations of the modified Euler scheme presented in Subsections~\ref{sec:scheme-2nd} and~\ref{sec:scheme-3rd} are valid also for the scheme~\eqref{eq:scheme-colored}. In particular, the scheme~\eqref{eq:scheme-colored} can be interpreted as the accelerated exponential Euler scheme
\begin{equation}\label{eq:scheme-IX-colored}
\IX_{\tau,n+1}=e^{-\tau\IL_\tau}\IX_{\tau,n}+\IL_\tau^{-1}(I-e^{-\tau\IL_\tau})Q_\tau F(\IX_{\tau,n})+\int_{t_n}^{t_{n+1}}e^{-(t_{n+1}-s)\IL_\tau}Q_\tau^{\frac12}dW^Q(s)
\end{equation}
applied to the modified stochastic evolution equation
\begin{equation}\label{eq:modifiedSPDE-colored}
d\IX_\tau(t)=-\IL_\tau \IX_\tau(t)dt+Q_{\tau}F(\IX_\tau(t))dt+Q_{\tau}^{\frac12}dW^Q(t),
\end{equation}
where the linear operators $\IL_\tau$, $Q_\tau$ and $Q_\tau^{\frac12}$ are given by~\eqref{eq:modifiedILQ}.

However, when the operators $Q$ and $\IL$ do not commute, the interpretations of the modified Euler scheme presented in Subsections~\ref{sec:scheme-2nd} and~\ref{sec:scheme-3rd} are not valid for the scheme~\eqref{eq:scheme-colored}. Indeed, the covariance operator of the Gaussian random variable $\IB_{\tau,1}\Gamma_{n,1}^Q+\IB_{\tau,2}\Gamma_{n,2}^Q$ is equal to
\[
\bigl(\IB_{\tau,1}Q^{\frac12}\bigr)\bigl(\IB_{\tau,1} Q^{\frac12}\bigr)^{\star}+\bigl(\IB_{\tau,2}Q^{\frac12}\bigr)\bigl(\IB_{\tau,2}Q^{\frac12}\bigr)^{\star}=\IB_{\tau,1}Q\IB_{\tau,1}+\IB_{\tau,2}Q\IB_{\tau,2}
\]
and is different from $\bigl(\IB_{\tau}Q^{\frac12}\bigr)\bigl(\IB_\tau Q^{\frac12}\bigr)^{\star}=\IB_\tau Q\IB_\tau$, where the linear operator $\IB_\tau$ is defined by~\eqref{eq:IB} and satisfies $\IB_\tau^2=\IB_{\tau,1}^2+\IB_{\tau,2}^2$.

Let us now describe how the results of Section~\ref{sec:results} need to be modified in the case of stochastic evolution equations driven by additive colored noise.

First, Theorem~\ref{theo:equivalence} remains valid in the commutative noise case, with straightforward modifications of the proof, however it may not be satisfied in the non-commutative noise case. Second, Theorem~\ref{theo:regularity} holds in the general case, with a modification of the range of values $\alpha\in[0,\frac14)$ in (iii): instead, one needs to consider the interval of values of $\alpha$ such that the condition~\eqref{eq:conditioncolored} is satisfied. 

Concerning error estimates in the total variation distance, Theorem~\ref{theo:weakinv} does not hold in general: indeed, when Assumption~\ref{ass:gradient} is satisfied, the invariant distribution of~\eqref{eq:SPDE-colored} is not the Gibbs distribution $\mu_\star$. It is conjectured that a version of Theorem~\ref{theo:weakinv} holds if $Q$ is assumed to commute with $\IL$, if the nonlinearity is assumed to satisfy the condition $F=-QDV$ and if a suitable non-degeneracy condition is satisfied. A modification of Assumption~\ref{ass:Fregul1} may also be needed, as discussed below. As explained above, considering the commutative noise case is required to interpret the modified Euler scheme~\eqref{eq:scheme-colored} in terms of the accelerated exponential Euler scheme~\eqref{eq:scheme-IX-colored} applied to the modified stochastic evolution equation~\eqref{eq:modifiedSPDE-colored}. The condition $F=-QDV$ implies that the invariant distribution of~\eqref{eq:SPDE-colored} and of~\eqref{eq:modifiedSPDE-colored} is equal to a Gibbs distribution
\[
d\mu_{\star,Q}(x)=\mathcal{Z}_Q^{-1}e^{-2V(x)}d\nu_Q(x)
\]
where $\nu_Q$ is the Gaussian distribution with mean $0$ and covariance operator $\frac{Q\IL}{2}$ (which is the invariant distribution of~\eqref{eq:SPDE-colored} when $F=0$). Finally, a non-degeneracy condition is required to prove versions of Lemmas~\ref{lem:utau-0} and~\ref{lem:u-0}, which give regularity results for the derivatives $Du_\tau(t,\cdot)$ and $Du(t,\cdot)$, for $t>0$, when the initial value $\varphi=u_\tau(0,\cdot)=u(0,\cdot)$ is only assumed to be bounded and continuous. Precise statement and proofs are omitted and left for future work.

In the general case, a version of Theorem~\ref{theo:weak} is conjectured to hold for the modified Euler scheme~\eqref{eq:modifiedSPDE-colored} applied to the stochastic evolution equation~\eqref{eq:SPDE-colored} driven by additive colored noise. Note that the order of convergence depends on the values of $\alpha$ such that the condition~\eqref{eq:conditioncolored} is satisfied. For instance, the weak order of convergence is expected to be equal to $1$ in the trace-class noise case. Similarly, a version of Theorem~\ref{theo:weak-ergo} is conjectured to hold in the ergodic case (when Assumption~\ref{ass:ergo} is satisfied).

Note that a tool of the proof of weak error estimates in Sections~\ref{sec:proofs} is Assumption~\ref{ass:Fregul1}, which gives a regularity condition on the nonlinearity $F$, in order to exploit the temporal regularity, with H\"older exponent $2\alpha$ of the solutions in the norm $|\IL^{-\alpha}\cdot|$, for all $\alpha\in[0,\frac14)$, see for instance~\eqref{eq:tildeIX-increment} from Lemma~\ref{lem:tildeIX}. This argument is well-suited for stochastic evolution equations in dimension $1$ driven by space-time white noise. On the one hand, the arguments in Section~\ref{sec:example} to check that Assumption~\ref{ass:Fregul1} is satisfied for the example of Nemytskii operators, exploit Sobolev type inequalities which are valid only in dimension $1$. On the other hand, the argument is not sufficient to exhibit orders of convergence larger than $1/2$, since the H\"older regularity of the solutions is smaller than $1/2$. As a consequence, other arguments are needed, for instance to treat the trace-class noise case.

It is worth mentioning that in the non commutative noise case, applying the accelerated exponential Euler scheme to the stochastic evolution equation~\eqref{eq:SPDE-colored} is not feasible: it is not sufficient to know the eigenvalues and eigenfunctions of the operators $\IL$ and $Q$ to sample exactly Gaussian random variables
\[
\int_{t_n}^{t_{n+1}}e^{-(t_{n+1}-t)\IL}dW^Q(t).
\]
Using an approximation of the type $\sqrt{\tau}e^{-\tau\IL}\Gamma_n^{Q}$ leads to define a non-accelerated exponential Euler scheme of the type
\[
X_{n+1}^{\tau,\e}=e^{-\tau\IL}\bigl(X_n^{\tau,\e}+\tau F(X_n^{\tau,\e}+\sqrt{\tau}\Gamma_n^Q\bigr)
\]
For that scheme, Theorems~\ref{theo:regularity} and~\ref{theo:weak-expo} are not valid, even when $F=0$, and even in the commutative case: the resulting scheme does not preserve the regularity of the solution. On the contrary, the modified Euler scheme~\eqref{eq:scheme-colored} is applicable in the non-commutative noise case and Theorem~\ref{theo:regularity} is satisfied. In that case, the modified Euler scheme is thus qualitatively superior to the (non-accelerated) exponential Euler scheme.

This concludes the description of the colored noise case.

\subsubsection{SPDEs with non-additive noise}

Finally, it is possible to generalize the definition of the modified Euler scheme, to be applied to stochastic evolution equations driven by multiplicative (or non-additive) noise:
\begin{equation}\label{eq:SPDE-multi}
dX(t)=-\IL X(t)dt+F(X(t))dt+\sigma(X(t))dW(t),\quad X(0)=x_0,
\end{equation}
where $\sigma$ is a function from $H$ to $\mathcal{L}(H)$, assumed to be globally Lipschitz continuous. For instance, $\sigma$ may be defined as a Nemytskii operator, in the setting of Section~\ref{sec:example}. In addition, $\bigl(W(t)\bigr)_{t\ge 0}$ is a cylindrical Wiener process, however it would also be possible to consider $Q$-Wiener processes, under a condition of the type~\eqref{eq:conditioncolored}. In that situation, the modified Euler scheme applied to~\eqref{eq:SPDE-multi} is defined as
\begin{equation}\label{eq:scheme-multi}
X_{n+1}^{\tau}=\IA_\tau\bigl(X_n^{\tau}+\tau F(X_n^{\tau})\bigr)+\IB_{\tau,1}\sqrt{\tau}\sigma(X_n^\tau)\Gamma_{n,1}+\IB_{\tau,2}\sqrt{\tau}\sigma(X_n^\tau)\Gamma_{n,2}.
\end{equation}
First of all, even if the nonlinearity $F$ satisfies Assumption~\ref{ass:gradient}, there is no known expression for the invariant distribution~\eqref{eq:SPDE-multi} (which is unique when a version of Assumption~\ref{ass:ergo} is satisfied). Moreover, like in the case of equations driven by colored noise when the covariance operator does not commute with $\IL$, in general the modified Euler scheme~\eqref{eq:scheme-multi} cannot be interpreted in terms of the accelerated exponential Euler scheme applied to a modified stochastic evolution equation of the type~\eqref{eq:modifiedSPDE}. As a consequence of the two observations above, Theorem~\ref{theo:weakinv} is not expected to hold in the multiplicative noise case, since two of the main arguments of the proof are not applicable. Note also that Theorem~\ref{theo:weak-expo} does not hold in general for equations driven by multiplicative noise. Indeed, in the multiplicative noise case, the accelerated exponential Euler method cannot be implemented and the non-accelerated exponential Euler method suffers from the same issues as the standard linear Euler method. Whether it is possible to prove error estimates in the total variation distance for either the modified Euler scheme~\eqref{eq:scheme-multi} or an exponential Euler scheme when applied to~\eqref{eq:SPDE-multi} is an open question.

Like in the other situations described above, the main benefit of applying the modified Euler scheme~\eqref{eq:scheme-multi} over existing methods -- standard Euler scheme and (non-accelerated) exponential Euler scheme -- is the validity of Theorem~\ref{theo:regularity}: the modified Euler scheme preserves the spatial regularity of the solution, for any choice of the time-step size $\tau$.

Version of Theorems~\ref{theo:weak} and~\ref{theo:weak-ergo}, to state weak error estimates in the $d_2$ distance, i.\,e. for functions $\varphi$ of class $\mathcal{C}^2$, could be obtained also for the modified Euler scheme~\eqref{eq:scheme-multi} applied to~\eqref{eq:SPDE-multi}. As explained above, the interpretation of the modified Euler scheme in terms of an exponential Euler scheme applied to a modified stochastic evolution equation is not valid in the multiplicative noise case, and the analysis of the weak error needs to be performed the same approaches as used in the analysis of the standard Euler scheme. Note that the main difficulty in the analysis of the multiplicative noise case compared with the additive noise case is the proof of Lemma~\ref{lem:u-12}, which gives regularity results for the first and second order derivatives of the solution $u$ of the Kolmogorov equation associated with~\eqref{eq:SPDE-multi}: we refer to~\cite{BrehierDebussche}. The detailed analysis of the weak error for the scheme~\eqref{eq:scheme-multi} is not considered.

This concludes the description of the multiplicative noise case.

\section*{Acknowledgments}

The author warmly thanks Gilles Vilmart for crucial discussions about the construction of the proposed method at an early stage of this work, and Arnaud Debussche for the suggestion to state and prove Theorem~\ref{theo:weakinv-standard}. This work is partially supported by the following projects operated by the French National Research Agency: ADA (ANR-19-CE40-0019-02) and SIMALIN (ANR-19-CE40-0016).

\newpage

\begin{thebibliography}{10}

\bibitem{ABV}
A.~Abdulle, C.-E. Br\'{e}hier, and G.~Vilmart.
\newblock Convergence analysis of explicit stabilized integrators for parabolic
  semilinear stochastic pdes.
\newblock {\em IMA J. Numer. Anal.}, 2021.

\bibitem{AnderssonHefterJentzenKurniawan}
A.~Andersson, M.~Hefter, A.~Jentzen, and R.~Kurniawan.
\newblock Regularity properties for solutions of infinite dimensional
  {K}olmogorov equations in {H}ilbert spaces.
\newblock {\em Potential Anal.}, 50(3):347--379, 2019.

\bibitem{AnderssonKruseLarsson}
A.~Andersson, R.~Kruse, and S.~Larsson.
\newblock Duality in refined {S}obolev-{M}alliavin spaces and weak
  approximation of {SPDE}.
\newblock {\em Stoch. Partial Differ. Equ. Anal. Comput.}, 4(1):113--149, 2016.

\bibitem{AnderssonLarsson}
A.~Andersson and S.~Larsson.
\newblock Weak convergence for a spatial approximation of the nonlinear
  stochastic heat equation.
\newblock {\em Math. Comp.}, 85(299):1335--1358, 2016.

\bibitem{BallyTalay1}
V.~Bally and D.~Talay.
\newblock The law of the {E}uler scheme for stochastic differential equations.
  {I}. {C}onvergence rate of the distribution function.
\newblock {\em Probab. Theory Related Fields}, 104(1):43--60, 1996.

\bibitem{BallyTalay2}
V.~Bally and D.~Talay.
\newblock The law of the {E}uler scheme for stochastic differential equations.
  {II}. {C}onvergence rate of the density.
\newblock {\em Monte Carlo Methods Appl.}, 2(2):93--128, 1996.

\bibitem{MR2583309}
N.~Bou-Rabee and E.~Vanden-Eijnden.
\newblock Pathwise accuracy and ergodicity of metropolized integrators for
  {SDE}s.
\newblock {\em Comm. Pure Appl. Math.}, 63(5):655--696, 2010.

\bibitem{BoyavalMartelReygner}
S.~Boyaval, S.~Martel, and J.~Reygner.
\newblock Finite-volume approximation of the invariant measure of a viscous
  stochastic scalar conservation law.
\newblock {\em IMA J. Numer. Anal.}

\bibitem{B}
C.-E. Br\'{e}hier.
\newblock Uniform weak error estimates for an asymptotic preserving scheme
  applied to a class of slow-fast parabolic semilinear spdes.
\newblock {\em In preparation}.

\bibitem{B:2012}
C.-E. Br\'{e}hier.
\newblock Strong and weak orders in averaging for {SPDE}s.
\newblock {\em Stochastic Process. Appl.}, 122(7):2553--2593, 2012.

\bibitem{B:2013}
C.-E. Br\'{e}hier.
\newblock Analysis of an {HMM} time-discretization scheme for a system of
  stochastic {PDE}s.
\newblock {\em SIAM J. Numer. Anal.}, 51(2):1185--1210, 2013.

\bibitem{B:2014}
C.-E. Br\'{e}hier.
\newblock Approximation of the invariant measure with an {E}uler scheme for
  stochastic {PDE}s driven by space-time white noise.
\newblock {\em Potential Anal.}, 40(1):1--40, 2014.

\bibitem{B:2020}
C.-E. Br\'{e}hier.
\newblock Influence of the regularity of the test functions for weak
  convergence in numerical discretization of {SPDE}s.
\newblock {\em J. Complexity}, 56:101424, 15, 2020.

\bibitem{B:2022}
C.-E. Br\'{e}hier.
\newblock Approximation of the invariant distribution for a class of ergodic
  {SPDE}s using an explicit tamed exponential {E}uler scheme.
\newblock {\em ESAIM Math. Model. Numer. Anal.}, 56(1):151--175, 2022.

\bibitem{BrehierDebussche}
C.-E. Br\'{e}hier and A.~Debussche.
\newblock Kolmogorov equations and weak order analysis for {SPDE}s with
  nonlinear diffusion coefficient.
\newblock {\em J. Math. Pures Appl. (9)}, 119:193--254, 2018.

\bibitem{BDV}
C.-E. Br{\'e}hier, A.~Debussche, and G.~Vilmart.
\newblock Analysis of preconditioned schemes for the approximation of the
  invariant distribution for parabolic semilinear stochastic pdes.
\newblock {\em In preparation}.

\bibitem{BG}
C.-E. Br\'{e}hier and L.~Gouden\`ege.
\newblock High-order integrators for the approximation of the invariant
  distribution of stochastic allen-cahn equation.
\newblock {\em In preparation}.

\bibitem{BrehierGoudenege}
C.-E. Br\'{e}hier and L.~Gouden\`ege.
\newblock Weak convergence rates of splitting schemes for the stochastic
  {A}llen-{C}ahn equation.
\newblock {\em BIT}, 60(3):543--582, 2020.

\bibitem{BrehierHairerStuart}
C.-E. Br\'{e}hier, M.~Hairer, and A.~M. Stuart.
\newblock Weak error estimates for trajectories of {SPDE}s under spectral
  {G}alerkin discretization.
\newblock {\em J. Comput. Math.}, 36(2):159--182, 2018.

\bibitem{BrehierKopec}
C.-E. Br\'{e}hier and M.~Kopec.
\newblock Approximation of the invariant law of {SPDE}s: error analysis using a
  {P}oisson equation for a full-discretization scheme.
\newblock {\em IMA J. Numer. Anal.}, 37(3):1375--1410, 2017.

\bibitem{BR}
C.-E. Br\'{e}hier and S.~Rakotonirina-Ricquebourg.
\newblock On {A}symptotic {P}reserving {S}chemes for a {C}lass of {S}tochastic
  {D}ifferential {E}quations in {A}veraging and {D}iffusion {A}pproximation
  {R}egimes.
\newblock {\em Multiscale Model. Simul.}, 20(1):118--163, 2022.

\bibitem{BV}
C.-E. Br\'{e}hier and G.~Vilmart.
\newblock High order integrator for sampling the invariant distribution of a
  class of parabolic stochastic {PDE}s with additive space-time noise.
\newblock {\em SIAM J. Sci. Comput.}, 38(4):A2283--A2306, 2016.

\bibitem{Cai-Gan-Wang:21}
M.~Cai, S.~Gan, and X.~Wang.
\newblock Weak convergence rates for an explicit full-discretization of
  stochastic {A}llen-{C}ahn equation with additive noise.
\newblock {\em J. Sci. Comput.}, 86(3):Paper No. 34, 30, 2021.

\bibitem{Cerrai}
S.~Cerrai.
\newblock {\em Second order {PDE}'s in finite and infinite dimension}, volume
  1762 of {\em Lecture Notes in Mathematics}.
\newblock Springer-Verlag, Berlin, 2001.
\newblock A probabilistic approach.

\bibitem{MR2480788}
S.~Cerrai and M.~Freidlin.
\newblock Averaging principle for a class of stochastic reaction-diffusion
  equations.
\newblock {\em Probab. Theory Related Fields}, 144(1-2):137--177, 2009.

\bibitem{ChenChiHongSheng}
C.~Chen, J.~Cui, J.~Hong, and D.~Sheng.
\newblock Convergence of density approximations for stochastic heat equation.
\newblock {\em Preprint}, 2020.

\bibitem{Chen-Gan-Wang:20}
Z.~Chen, S.~Gan, and X.~Wang.
\newblock A full-discrete exponential {E}uler approximation of the invariant
  measure for parabolic stochastic partial differential equations.
\newblock {\em Appl. Numer. Math.}, 157:135--158, 2020.

\bibitem{Conus-Jentzen-Kurniawan}
D.~Conus, A.~Jentzen, and R.~Kurniawan.
\newblock Weak convergence rates of spectral {G}alerkin approximations for
  {SPDE}s with nonlinear diffusion coefficients.
\newblock {\em Ann. Appl. Probab.}, 29(2):653--716, 2019.

\bibitem{CotterRobertsStuart}
S.~L. Cotter, G.~O. Roberts, A.~M. Stuart, and D.~White.
\newblock M{CMC} methods for functions: modifying old algorithms to make them
  faster.
\newblock {\em Statist. Sci.}, 28(3):424--446, 2013.

\bibitem{CuiHong:19}
J.~Cui and J.~Hong.
\newblock Strong and weak convergence rates of a spatial approximation for
  stochastic partial differential equation with one-sided {L}ipschitz
  coefficient.
\newblock {\em SIAM J. Numer. Anal.}, 57(4):1815--1841, 2019.

\bibitem{CuiHongSun:21}
J.~Cui, J.~Hong, and L.~Sun.
\newblock Weak convergence and invariant measure of a full discretization for
  parabolic {SPDE}s with non-globally {L}ipschitz coefficients.
\newblock {\em Stochastic Process. Appl.}, 134:55--93, 2021.

\bibitem{DPZergo}
G.~Da~Prato and J.~Zabczyk.
\newblock {\em Ergodicity for infinite-dimensional systems}, volume 229 of {\em
  London Mathematical Society Lecture Note Series}.
\newblock Cambridge University Press, Cambridge, 1996.

\bibitem{DPZ}
G.~Da~Prato and J.~Zabczyk.
\newblock {\em Stochastic equations in infinite dimensions}, volume 152 of {\em
  Encyclopedia of Mathematics and its Applications}.
\newblock Cambridge University Press, Cambridge, second edition, 2014.

\bibitem{Debussche:11}
A.~Debussche.
\newblock Weak approximation of stochastic partial differential equations: the
  nonlinear case.
\newblock {\em Math. Comp.}, 80(273):89--117, 2011.

\bibitem{DebusschePrintems:09}
A.~Debussche and J.~Printems.
\newblock Weak order for the discretization of the stochastic heat equation.
\newblock {\em Math. Comp.}, 78(266):845--863, 2009.

\bibitem{EthierKurtz}
S.~N. Ethier and T.~G. Kurtz.
\newblock {\em Markov processes}.
\newblock Wiley Series in Probability and Mathematical Statistics: Probability
  and Mathematical Statistics. John Wiley \& Sons, Inc., New York, 1986.
\newblock Characterization and convergence.

\bibitem{MR1699161}
I.~Gy\"{o}ngy.
\newblock Lattice approximations for stochastic quasi-linear parabolic partial
  differential equations driven by space-time white noise. {II}.
\newblock {\em Potential Anal.}, 11(1):1--37, 1999.

\bibitem{MR1480861}
I.~Gy\"{o}ngy and D.~Nualart.
\newblock Implicit scheme for stochastic parabolic partial differential
  equations driven by space-time white noise.
\newblock {\em Potential Anal.}, 7(4):725--757, 1997.

\bibitem{HairerMattinglyScheutzow}
M.~Hairer, J.~C. Mattingly, and M.~Scheutzow.
\newblock Asymptotic coupling and a general form of {H}arris' theorem with
  applications to stochastic delay equations.
\newblock {\em Probab. Theory Related Fields}, 149(1-2):223--259, 2011.

\bibitem{HairerStuartVollmer}
M.~Hairer, A.~M. Stuart, and S.~J. Vollmer.
\newblock Spectral gaps for a {M}etropolis-{H}astings algorithm in infinite
  dimensions.
\newblock {\em Ann. Appl. Probab.}, 24(6):2455--2490, 2014.

\bibitem{HairerStuartVoss2}
M.~Hairer, A.~M. Stuart, and J.~Voss.
\newblock Analysis of {SPDE}s arising in path sampling. {II}. {T}he nonlinear
  case.
\newblock {\em Ann. Appl. Probab.}, 17(5-6):1657--1706, 2007.

\bibitem{HairerStuartVoss1}
M.~Hairer, A.~M. Stuart, J.~Voss, and P.~Wiberg.
\newblock Analysis of {SPDE}s arising in path sampling. {I}. {T}he {G}aussian
  case.
\newblock {\em Commun. Math. Sci.}, 3(4):587--603, 2005.

\bibitem{MR3967113}
J.~Hong and X.~Wang.
\newblock {\em Invariant measures for stochastic nonlinear {S}chr\"{o}dinger
  equations}, volume 2251 of {\em Lecture Notes in Mathematics}.
\newblock Springer, Singapore, 2019.
\newblock Numerical approximations and symplectic structures.

\bibitem{Jentzen}
A.~Jentzen.
\newblock Higher order pathwise numerical approximations of {SPDE}s with
  additive noise.
\newblock {\em SIAM J. Numer. Anal.}, 49(2):642--667, 2011.

\bibitem{JentzenKloeden}
A.~Jentzen and P.~E. Kloeden.
\newblock Overcoming the order barrier in the numerical approximation of
  stochastic partial differential equations with additive space-time noise.
\newblock {\em Proc. R. Soc. Lond. Ser. A Math. Phys. Eng. Sci.},
  465(2102):649--667, 2009.

\bibitem{JentzenKurniawan}
A.~Jentzen and R.~Kurniawan.
\newblock Weak convergence rates for {E}uler-type approximations of semilinear
  stochastic evolution equations with nonlinear diffusion coefficients.
\newblock {\em Found. Comput. Math.}, 21(2):445--536, 2021.

\bibitem{Kruse}
R.~Kruse.
\newblock {\em Strong and weak approximation of semilinear stochastic evolution
  equations}, volume 2093 of {\em Lecture Notes in Mathematics}.
\newblock Springer, Cham, 2014.

\bibitem{LPS}
G.~J. Lord, C.~E. Powell, and T.~Shardlow.
\newblock {\em An introduction to computational stochastic {PDE}s}.
\newblock Cambridge Texts in Applied Mathematics. Cambridge University Press,
  New York, 2014.

\bibitem{MattinglyPillaiStuart}
J.~C. Mattingly, N.~S. Pillai, and A.~M. Stuart.
\newblock Diffusion limits of the random walk {M}etropolis algorithm in high
  dimensions.
\newblock {\em Ann. Appl. Probab.}, 22(3):881--930, 2012.

\bibitem{Nualart}
D.~Nualart.
\newblock {\em Malliavin calculus and its applications}, volume 110 of {\em
  CBMS Regional Conference Series in Mathematics}.
\newblock Published for the Conference Board of the Mathematical Sciences,
  Washington, DC; by the American Mathematical Society, Providence, RI, 2009.

\bibitem{Printems}
J.~Printems.
\newblock On the discretization in time of parabolic stochastic partial
  differential equations.
\newblock {\em M2AN Math. Model. Numer. Anal.}, 35(6):1055--1078, 2001.

\bibitem{Wang}
X.~Wang.
\newblock Weak error estimates of the exponential {E}uler scheme for
  semi-linear {SPDE}s without {M}alliavin calculus.
\newblock {\em Discrete Contin. Dyn. Syst.}, 36(1):481--497, 2016.

\bibitem{WangGan}
X.~Wang and S.~Gan.
\newblock Weak convergence analysis of the linear implicit {E}uler method for
  semilinear stochastic partial differential equations with additive noise.
\newblock {\em J. Math. Anal. Appl.}, 398(1):151--169, 2013.

\bibitem{Yagi}
A.~Yagi.
\newblock {\em Abstract parabolic evolution equations and their applications}.
\newblock Springer Monographs in Mathematics. Springer-Verlag, Berlin, 2010.

\end{thebibliography}

\end{document}